\newcommand{\nc}{\newcommand}
\nc{\les}{\lesssim}
\nc{\nit}{\noindent}
\nc{\nn}{\nonumber}
\nc{\D}{\partial}
\nc{\diff}[2]{\frac{d #1}{d #2}}
\nc{\diffn}[3]{\frac{d^{#3} #1}{d {#2}^{#3}}}
\nc{\pdiff}[2]{\frac{\partial #1}{\partial #2}}
\nc{\pdiffn}[3]{\frac{\partial^{#3} #1}{\partial{#2}^{#3}}}
\nc{\abs}[1] {\lvert #1 \rvert}
\nc{\cAc}{{\cal A}_c}
\nc{\cE}{{\cal E}}
\nc{\cF}{{\cal F}}
\nc{\cP}{{\cal P}}
\nc{\cV}{{\cal V}}
\nc{\cQ}{{\cal Q}}
\nc{\cGin}{{\cal G}_{\rm in}}
\nc{\cGout}{{\cal G}_{\rm out}}
\nc{\cO}{{\cal O}}
\nc{\Lav}{{\cal L}_{\rm av}}
\nc{\cL}{{\cal L}}
\nc{\cB}{{\cal B}}
\nc{\cZ}{{\cal Z}}
\nc{\cR}{{\cal R}}
\nc{\cT}{{\cal T}}
\nc{\cY}{{\cal Y}}
\nc{\cX}{{\cal X}}
\nc{\cXT}{{{\cal X}(T)}}
\nc{\cBT}{{{\cal B}(T)}}
\nc{\vD}{{\vec \mathcal{D}}}
\nc{\efield}{\mathcal{E}}
\nc{\vE}{{\vec \efield}}
\nc{\vB}{{\vec \mathcal{B}}}
\nc{\vH}{{\vec \mathcal{H}}}
\nc{\ty}{{\tilde y}}
\nc{\tu}{{\tilde u}}
\nc{\tV}{{\tilde V}}
\nc{\Pc}{{\bf P_c}}
\nc{\bx}{{\bf x}}
\nc{\bX}{{\bf X}}
\nc{\bXYZ}{{\bf XYZ}}
\nc{\bY}{{\bf Y}}
\nc{\bF}{{\bf F}}
\nc{\bS}{{\bf S}}
\nc{\dV}{{\delta V}}
\nc{\dE}{{\delta E}}
\nc{\TT}{{\Theta}}
\nc{\dPsi}{{\delta\Psi}}
\nc{\order}{{\cal O}}
\nc{\Rout}{R_{\rm out}}
\nc{\eplus}{e_+}
\nc{\eminus}{e_-}
\nc{\epm}{e_\pm}
\nc{\eps}{\varepsilon}
\nc{\vnabla}{{\vec\nabla}}
\nc{\G}{\Gamma}
\nc{\w}{\omega}
\nc{\mh}{h}
\nc{\mg}{g}
\nc{\vphi}{\varphi}
\nc{\tlambda}{\tilde\lambda}
\nc{\be}{\begin{equation}}
\nc{\ee}{\end{equation}}
\nc{\ba}{\begin{eqnarray}}
\nc{\ea}{\end{eqnarray}}
\nc{\g}{\gamma}
\nc{\ol}{\overline}
\newtheorem{theorem}{Theorem}[section]
\newtheorem{lemma}[theorem]{Lemma}
\newtheorem{prop}[theorem]{Proposition}
\newtheorem{corollary}[theorem]{Corollary}
\newtheorem{defin}[theorem]{Definition}
\nc{\pT}{\partial_T}
\nc{\pz}{\partial_z}
\nc{\pt}{\partial_t}
\nc{\la}{\langle}
\nc{\ra}{\rangle}
\nc{\infint}{\int_{-\infty}^{\infty}}
\nc{\halfwidth}{6.5cm}
\nc{\figwidth}{10cm}
\newcommand{\f}{\frac}
\nc{\nlayers}{L} \nc{\nsectors}{M}
\nc{\indicator}{\mathbf{1}}
\nc{\Rhole}{R_{\rm hole}}
\nc{\Rring}{R_{\rm ring}}
\nc{\neff}{n_{\rm eff}}
\nc{\Frem}{F_{\rm rem}}
\nc{\R}{\mathbb R}
\nc{\Z}{\mathbb Z}
\nc{\DD}{\Delta}
\nc{\cD}{\mathcal D}
\nc{\lnorm}{\left\|}
\nc{\rnorm}{\right\|}
\nc{\rnormp}{\right\|_{\ell^{p,\eps}}}
\nc{\rar}{\rightarrow}
\begin{document}

\begin{abstract}

	We investigate $L^1(\R^2)\to L^\infty(\R^2)$ dispersive estimates for the   Schr\"odinger operator $H=-\Delta+V$ when there are obstructions, resonances or an eigenvalue, at zero energy. In particular, we show that the existence of an  s-wave resonance at zero energy does not destroy the $t^{-1}$ decay rate.  We also show that if there is a p-wave resonance or an eigenvalue at zero energy then there is a time dependent operator $F_t$ satisfying $\|F_t\|_{L^1\to L^\infty} \lesssim 1$  such that
$$\|e^{itH}P_{ac}-F_t\|_{L^1\to L^\infty} \lesssim |t|^{-1},\,\,\,\,\,\text{ for } |t|>1.$$
We also establish a weighted dispersive estimate with $t^{-1}$ decay rate in the case when there is an eigenvalue at zero energy but no resonances.
\end{abstract}

\title[Dispersive Estimates for  the Schr\"odinger Equation]{\textit{Dispersive estimates for
Schr\"{o}dinger operators in dimension two with obstructions at zero energy}}

\author[M.~B. Erdo\smash{\u{g}}an, W.~R. Green]{M. Burak Erdo\smash{\u{g}}an and William~R. Green}
\thanks{The first author was partially supported by NSF grant  DMS-0900865.}
\address{Department of Mathematics \\
University of Illinois \\
Urbana, IL 61801, U.S.A.}
\email{berdogan@math.uiuc.edu}
\address{Department of Mathematics and Computer Science\\
Eastern Illinois University \\
Charleston, IL 61920, U.S.A.}
\email{wrgreen2@eiu.edu}

\maketitle
\section{Introduction}
Consider the Schr\"{o}dinger operator  $H=-\Delta+V$ in $\R^2$, where $V$ is a real-valued potential.
Let $P_{ac}$ be the
orthogonal projection onto the absolutely continuous subspace of $L^2(\R^2)$, which is determined by $H$.
In \cite{Sc2}, Schlag proved that
$$\|e^{itH}P_{ac}\|_{L^1(\R^2)\to L^\infty(\R^2)}\lesssim |t|^{-1}$$
under the decay assumption $|V|\lesssim \langle x\rangle^{-3-}$ and the assumption that zero is neither an eigenvalue nor a resonance of~$H$.

Recall that (see, e.g., \cite{JN} or Section~\ref{sec:spectral} below) there is a resonance at zero energy if there is a distributional solution to the equation $H\psi =0$
where $\psi\notin L^2(\R^2)$ but $\psi\in L^p(\R^2)$ for some $p\in (2,\infty]$. There are two possible cases, either $\psi\in L^\infty (\R^2)$ and $\psi\notin L^p(\R^2)$ for any  $p<\infty$ or $\psi\in L^p(\R^2)$ for all
$p\in (2,\infty]$. In the case of $\psi\in L^\infty(\R^2)$ only,
the resonance is called an s-wave resonance.  In the second case, we
say there is a p-wave resonance.
We say that there is an eigenvalue at zero if $\psi\in L^2(\R^2)$. This definition for resonances differs from the case of dimension $n=3$ in which $\psi $ lies in weighted $L^2$
spaces.

We note that in the case of $V\equiv 0$ the function $\psi \equiv 1$ solves $H\psi =0 $  which corresponds to
an s-wave resonance.  It is important to note that in spite of this obstruction, the free evolution decays in time at the rate $t^{-1}$.

Much is known about dispersive estimates for the Schr\"odinger equation when zero is regular.
The history goes back to
Rauch, \cite{Rauch}, who studied the local decay in dimension three.  In \cite{Rauch}, he noted that
in the generic case, i.e. when there may be eigenvalues or resonances, the evolution decays at a rate of
$|t|^{-1/2}$ as $t\to\infty$ on exponentially weighted
$L^2$ spaces.  In the case when there are no eigenvalues or resonances, it was shown that the
 decay rate is $|t|^{-3/2}$.
  Jensen and Kato, \cite{JenKat}, improved this result to polynomially weighted $L^2$ spaces in dimension three, and
higher dimensions, \cite{Jen,Jen2}.   In \cite{JenKat}, it was noted that the presence of a zero energy eigenvalue or
resonance destroys the $|t|^{-3/2}$ decay even if one projects away from the eigenspace in dimension three.

Local decay estimates in the two dimensional case when zero is regular
were studied by Murata in \cite{Mur}.  Murata was able to prove
an estimate on weighted $L^2$ spaces that decays like $t^{-1}(\log t)^{-2}$, which is integrable at infinity.  Such estimates
have been used in analysis of the stability of certain two-dimensional non-linear equations.

The first result to discuss global decay, $L^1\to L^\infty$ estimates, was due to Journ\'e, Soffer and Sogge in
\cite{JSS}.  Their result relied on the integrability of $t^{-n/2}$ at infinity and is thus restricted to $n\geq 3$.  Much
is now known in this direction, mainly in dimension three.  Rodnianski and Schlag established such estimates in
dimension three, \cite{RodSch}, in addition to establishing Strichartz estimates.  Following from their methods, a great
number of results in dimension three followed, particularly \cite{goldberg, Gol2, GS}.  The one dimensional problem
was studied by Weder, \cite{Wed} and Goldberg and Schlag \cite{GS}.
Also see \cite{ES2,Yaj3, goldE} for global estimates in
the three-dimensional case when there is an eigenvalue
and/or resonance at zero energy, and \cite{ES} for a similar result for the  matrix Schr\"odinger equation.

There have also been studies of the wave-operators in dimension two.  In particular Yajima, \cite{Yaj2} established
that the wave operators are bounded on $L^p(\R^2)$ for $1<p<\infty$ if zero is regular.  The hypotheses
on the potential $V$ were
relaxed slightly in \cite{JenYaj}.  This result would imply global dispersive estimates if extended to the full range of $p$,
$1\leq p\leq \infty$.  High frequency dispersive estimates, similar to those obtained in \cite{Sc2} stated as
Theorem~\ref{high eng thm} below were obtained by Moulin, \cite{Mou}, under an integrability condition on the potential.

In this paper we investigate $L^1 \to L^\infty$ dispersive estimates in $\R^2$ when zero energy is not a regular point of  the spectrum of the operator $H=-\Delta+V$.
Our goal is to prove the following theorem.
\begin{theorem}\label{big thm}

	Assume that $|V(x)|\lesssim \langle x\rangle^{-\beta}$.  If there is only an s-wave resonance
	at zero energy, then for $\beta>4$, we have
	$$\|e^{itH} P_{ac}(H)\|_{L^1\to L^\infty}\lesssim |t|^{-1}.$$
	If there is a p-wave resonance or eigenvalue at zero, then for $\beta>6$, there is a time-dependent operator
	$F_t$ such that
	$$\|e^{itH} P_{ac}(H)-F_t \|_{L^1\to L^\infty}\lesssim |t|^{-1},\,\,\,\,\,\,\,\,\,|t|>1,$$
	with
	$$\sup_t \| F_t \|_{L^1\to L^\infty}\lesssim 1.$$
\end{theorem}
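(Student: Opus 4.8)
The plan is to represent the evolution by the Stone formula, split into high- and low-energy regimes, and reduce the low-energy analysis to understanding the singularity at zero energy of the inverse of a Birman--Schwinger-type operator. Writing the energy as $\lambda^2$, we use
\[
e^{itH}P_{ac}(H) = \frac{1}{\pi i}\int_0^\infty e^{it\lambda^2}\,\lambda\,\big[R_V^+(\lambda^2)-R_V^-(\lambda^2)\big]\,d\lambda,\qquad R_V^\pm(\lambda^2)=(H-(\lambda^2\pm i0))^{-1}.
\]
Inserting a cutoff $\chi(\lambda)$ supported near $\lambda=0$ splits this into two pieces. For the high-energy piece $1-\chi$ I would invoke the high-energy dispersive bound (Theorem~\ref{high eng thm}), which already yields $|t|^{-1}$ decay independently of the zero-energy structure; all the difficulty lies in the low-energy piece.

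\textbf{Reduction and inversion.} For the low-energy piece I apply the symmetric resolvent identity. With $v=|V|^{1/2}$ and $U=\operatorname{sgn}(V)$,
\[
R_V^\pm(\lambda^2) = R_0^\pm(\lambda^2) - R_0^\pm(\lambda^2)\,v\,\big(M^\pm(\lambda)\big)^{-1}\,v\,R_0^\pm(\lambda^2),\qquad M^\pm(\lambda)=U+v\,R_0^\pm(\lambda^2)\,v.
\]
In two dimensions the free resolvent carries a logarithmic singularity, $R_0^\pm(\lambda^2)(x,y)=-\tfrac{1}{2\pi}\log(\lambda|x-y|)+c^\pm+O(\lambda^2\log\lambda)$, so the leading behavior of $M^\pm(\lambda)$ is a scalar $g(\lambda)=\log\lambda+\mathrm{const}$ times a rank-one projection built from $v$. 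The core of the argument is to invert $M^\pm(\lambda)$ as $\lambda\to0$ via the Jensen--Nenciu/Schur-complement procedure, peeling off successive finite-rank projections onto the spaces spanned by $v$, by the s-wave resonance, by the p-wave resonance, and by the zero eigenfunctions. The classification of obstructions in Section~\ref{sec:spectral} corresponds precisely to which of these projections survive, and hence to the strength of the singularity of $(M^\pm(\lambda))^{-1}$.

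\textbf{Oscillatory estimates and the operator $F_t$.} Each term in the expansion of $(M^\pm(\lambda))^{-1}$ feeds a scalar oscillatory integral $\int_0^\infty e^{it\lambda^2}\lambda\,\chi(\lambda)\,a(\lambda)\,d\lambda$, estimated against the bounded kernels produced by the surrounding $R_0^\pm v$ factors; the governing lemma is that such an integral is $O(|t|^{-1})$ whenever $a$ and $\lambda a'(\lambda)$ are bounded. In the s-wave case the most singular contribution to $(M^\pm(\lambda))^{-1}$ is of size $g(\lambda)^{-1}\to0$, and this logarithmic smallness is exactly enough to place every term within the hypotheses of the lemma, giving the stated $|t|^{-1}$ bound with no loss. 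In the p-wave/eigenvalue case the inversion produces a genuinely singular leading term (of order $\lambda^{-2}$ up to logarithms); I would isolate its image under the Stone formula and \emph{define} $F_t$ to be exactly that contribution. Since the associated resonance or eigenfunction lies in $L^\infty(\R^2)$ (and decays at infinity), the kernel of $F_t$ factors as a bounded function of $x$ times a bounded function of $y$ times a scalar bounded uniformly in $t$, so $\sup_t\|F_t\|_{L^1\to L^\infty}\lesssim1$; the remaining, less singular terms again meet the oscillatory lemma and decay like $|t|^{-1}$.

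\textbf{Main obstacle.} The hard part will be the expansion of $(M^\pm(\lambda))^{-1}$, where one must track the nested logarithmic factors peculiar to two dimensions and verify that, after subtracting the non-decaying finite-rank piece $F_t$, every residual term carries either a gain of $g(\lambda)^{-1}$ or an extra power of $\lambda$ bringing it within the scope of the $|t|^{-1}$ bound. This bookkeeping, together with establishing enough pointwise decay of the resonance and eigenfunctions to render the kernels of the $R_0^\pm v\,(M^\pm)^{-1}\,v R_0^\pm$ terms bounded from $L^1$ to $L^\infty$, is where the decay hypotheses ($\beta>4$ for the s-wave case, $\beta>6$ for the p-wave/eigenvalue case) are consumed.
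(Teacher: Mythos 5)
Your architecture matches the paper's proof exactly (Stone's formula, $\chi(\lambda)$ splitting, Theorem~\ref{high eng thm} for high energies, symmetric resolvent identity, Jensen--Nenciu inversion of $M^\pm(\lambda)$ peeling off $P$, $S_1$, $S_2$, $S_3$, and defining $F_t$ as the image of the singular leading terms), but there is a genuine gap at the analytic heart of the argument: your ``governing lemma'' is false as stated, and the size claim it is applied to is wrong. First, boundedness of $a$ and $\lambda a'(\lambda)$ only gives $|a'(\lambda)|\lesssim \lambda^{-1}$, so one integration by parts in $\int_0^\infty e^{it\lambda^2}\lambda\chi(\lambda)a(\lambda)\,d\lambda$ produces $|t|^{-1}\int_0^{\lambda_1}|a'(\lambda)|\,d\lambda$, which diverges logarithmically; no $|t|^{-1}$ bound follows. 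Second, in the s-wave case the most singular term of $(M^\pm(\lambda))^{-1}$ is \emph{not} of size $g(\lambda)^{-1}\to 0$: by Corollary~\ref{M exp cor1} it is $-h_\pm(\lambda)S_1D_1S_1$, which \emph{grows} like $\log\lambda$, and it multiplies free resolvent kernels that themselves carry $\log(\lambda|x-y|)$ singularities not bounded uniformly in $x,y$. The mechanism that actually rescues the estimate is absent from your proposal: since $S_1\leq Q$ is orthogonal to $v$, one may subtract from $\chi Y_0(\lambda|x-x_1|)$ any function of $x$ alone (the identity \eqref{Q trick}), replacing it by $F(\lambda,x,x_1)$ of Lemma~\ref{lem:FG}, which is bounded by $k(x,x_1)$ with $|\partial_\lambda F|\lesssim \lambda^{-1}$ only in an integrated sense; together with the cancellation between the $+$ and $-$ boundary values packaged in \eqref{K defn}, this is what tames the nested logarithms.

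Two further omissions would sink the estimates even after fixing the above. In the regime $\lambda|x-y|\gtrsim 1$ the Hankel asymptotics $H_0^\pm(z)=e^{\pm iz}\omega_\pm(z)$ inject an oscillation $e^{\pm i\lambda|x-y|}$ whose frequency is unbounded over $x,y\in\R^2$; naive integration by parts then produces factors of $|x-y|$ that cannot be absorbed. The paper must shift the phase to $\phi_\pm(\lambda)=\lambda^2\pm\lambda q/t$, run the stationary phase Lemma~\ref{stat phase} around the moving critical point $\lambda_0=q/2t$, and use the difference functions $\widetilde G^\pm$ of Lemma~\ref{G2 lemma} with a case analysis $\lambda_0\lessgtr t^{-1/2}$. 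Finally, in the p-wave/eigenvalue case the uniform boundedness of $F_t$ does not come from a factorization of its kernel into bounded functions of $x$ and $y$: the term $S_2D_2S_2/g_1^\pm(\lambda)\sim (\lambda^2\log\lambda)^{-1}$ gives a divergent $\lambda$-integral on its own, and only the explicit $\pm$ cancellation \eqref{pwave hankel cancel}, which upgrades the singularity to the integrable $(\lambda(\log\lambda)^2)^{-1}$, combined again with \eqref{Q trick} (noting $S_2\leq Q$), yields $\sup_t\|F_t\|_{L^1\to L^\infty}\lesssim 1$; the $\lambda^{-2}$ eigenvalue term additionally needs the identification of $G_0vS_3D_3S_3vG_0$ as the zero-energy eigenprojection (Lemma~\ref{eigenproj lem}) and the real-valuedness of the kernel of $D_3$.
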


Note that it is natural to have the $t^{-1}$ decay rate in the  case of an s-wave resonance since the free Schr\"odinger has an s-wave resonance at zero energy. The reason that we can not get any decay in the case of a p-wave resonance or the zero eigenvalue is the behavior of the resolvent around zero energy. In the three dimensional case the resolvent $(H-z^2)^{-1}$ has an expansion of the form
$$(H-z^2)^{-1}=-G_{-2}z^{-2}+G_{-1}z^{-1}+O(1),\,\,\,z\to 0, \,\,\Im(z)>0. $$
The most singular term $G_{-2}z^{-2}$ gives the Riesz projection to zero energy eigenspace.
If one projects away from the zero  eigenspace, the worst singularity is $\frac1z$, which allows for $|t|^{-1/2}$
decay as  $t\to\pm \infty$, see \cite{ES2}. However, in the two dimensional case the resolvent expansion around zero
contains logarithmic terms. In particular, in the general case of zero energy resonances (even if one projects away the zero energy eigenspace), the most singular term is of the form $\frac{1}{z^2 \log(z)}$, which does not allow for any polynomial decay in $t$.
It may be possible to get a decay of the form $\frac1{\log(t)}$ as in \cite{Mur} but we won't pursue this issue here. However, it  is possible to improve this theorem in the case when zero is an eigenvalue but there are no resonances at zero. In particular, we show the following.

\begin{theorem}\label{eval only thm}

	Assume that $|V(x)|\les \la x\ra^{-\beta}$ for some $\beta>11$.
	If zero is an eigenvalue of $H=-\Delta+V$ and there are neither  s-wave nor
	p-wave resonances at
	zero,  then
	\begin{align*}
		\|e^{itH}P_{ac}\|_{L^{1,1+}\to L^{\infty,-1-}}\les |t|^{-1}.
	\end{align*}

\end{theorem}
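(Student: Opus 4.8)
The plan is to deduce the estimate from Theorem~\ref{big thm} together with a pointwise refinement of the low-energy analysis that produced the operator $F_t$. Recall that in the eigenvalue-only case Theorem~\ref{big thm} already yields a decomposition $e^{itH}P_{ac}=F_t+R_t$ with $\|R_t\|_{L^1\to L^\infty}\les |t|^{-1}$ and $\sup_t\|F_t\|_{L^1\to L^\infty}\les 1$, where $F_t$ is the contribution of the most singular term of the low-energy expansion of $R_V^\pm$, built from the projection onto the zero-energy eigenspace. Since $\la x\ra\ge 1$, one has the continuous embeddings $L^{1,1+}\hookrightarrow L^1$ and $L^\infty\hookrightarrow L^{\infty,-1-}$, whence $\|R_t\|_{L^{1,1+}\to L^{\infty,-1-}}\le\|R_t\|_{L^1\to L^\infty}\les |t|^{-1}$ and the whole problem reduces to showing
$$\|F_t\|_{L^{1,1+}\to L^{\infty,-1-}}\les |t|^{-1}.$$
Because $F_t$ has an explicit integral kernel $F_t(x,y)$, and the mapping norm is controlled by $\sup_{x,y}\la x\ra^{-1-}\la y\ra^{-1-}|F_t(x,y)|$, it suffices to prove the pointwise bound
$$|F_t(x,y)|\les \la x\ra\,\la y\ra\,|t|^{-1},\qquad |t|>1.$$

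To obtain this I return to the oscillatory-integral representation of $F_t$ coming from Stone's formula and the symmetric resolvent identity $R_V^\pm=R_0^\pm-R_0^\pm v(M^\pm)^{-1}vR_0^\pm$, with $v=|V|^{1/2}$, $U=\operatorname{sign}V$, and $M^\pm(z)=U+vR_0^\pm(z^2)v$. Schematically $F_t(x,y)=\int_0^\infty e^{itz^2}\,z\,\chi(z)\,\Phi^\pm(z,x,y)\,dz$, where $\chi$ localizes to low energy and $\Phi^\pm$ is the leading singular part of $R_0^\pm v(M^\pm)^{-1}vR_0^\pm$. In the eigenvalue-only case the absence of both s- and p-wave resonances removes the $\frac{1}{z^2\log z}$ obstruction discussed in the introduction, and the surviving contribution is governed by the zero-energy eigenfunctions $\psi$, which satisfy the orthogonality relations $\int V\psi\,dx=0$ and $\int x_jV\psi\,dx=0$ for $j=1,2$ (these are exactly the conditions forcing $\psi\in L^2$ rather than merely $L^\infty$ or $L^p$). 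These relations make the leading coefficients of the expansion of $vR_0^\pm(z^2)$ against the eigenprojection vanish, so that $\Phi^\pm(z,x,y)$ carries additional powers of $z$.

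The time decay is then extracted by a single integration by parts against $e^{itz^2}=\frac{1}{2itz}\frac{d}{dz}e^{itz^2}$, which produces the factor $|t|^{-1}$ provided $\frac{d}{dz}\big[\chi(z)\,\Phi^\pm(z,x,y)\big]$ is absolutely integrable in $z$ after weighting; the boundary terms vanish by the cutoff $\chi$ and the $z$-vanishing of $\Phi^\pm$ at the origin. The main obstacle is that each $z$-derivative may fall on one of the attached free resolvents $R_0^\pm(z^2)(x,\cdot)=\pm\tfrac i4H_0^\pm(z|x-\cdot|)$, and differentiating the two-dimensional Hankel expansion in $z$ brings down a factor of $|x-y|$ (through terms such as $|x-y|^2\log(z|x-y|)$ and $z|x-y|^2$). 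This is precisely the growth that prevents a uniform $|t|^{-1}$ bound and yields only $\sup_t\|F_t\|_{L^1\to L^\infty}\les 1$ in the unweighted setting. Using $|x-y|\les\la x\ra\la y\ra$, however, this single lost power is absorbed by the pair of weights $\la x\ra^{-1-}$ and $\la y\ra^{-1-}$ built into the mapping $L^{1,1+}\to L^{\infty,-1-}$, which is why the weighted estimate recovers the full $|t|^{-1}$ rate. The hypothesis $\beta>11$ enters to guarantee that the finite-rank operators in the low-energy expansion and all the moment integrals $\int\la x\ra^k|V\psi|\,dx$ are well defined, and that the $|x-y|$-growing remainders generated by the expansion and by differentiation remain absolutely integrable in $z$ once the weights are inserted; controlling these weighted remainders, rather than the oscillatory integration itself, is the technical heart of the argument.
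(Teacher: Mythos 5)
Your reduction to the operator $F_t$ from Theorem~\ref{big thm} does not work, for two concrete reasons. First, the $F_t$ constructed in the proof of Theorem~\ref{pwave thm} collects, besides the $\lambda^{-2}$ term, the contributions of the error operators $\Gamma_i^{\pm}(\lambda)=O(\lambda^{-2}(\log\lambda)^{-k})$, $k=2,3,4$, from Corollary~\ref{M exp cor2}. These are only $O(\cdot)$, not $\mathcal O_1(\cdot)$: no derivative bounds are available for them, so the integration by parts you propose cannot even be set up; and even granting the expected derivative behaviour, a model contribution $\int_0^{\lambda_1}e^{it\lambda^2}\chi(\lambda)\lambda^{-1}(\log\lambda)^{-2}\,d\lambda$ decays only like $(\log t)^{-1}$ --- as the introduction of the paper stresses, singularities of the type $\lambda^{-2}(\log\lambda)^{-1}$ preclude \emph{any} polynomial decay in $t$. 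Spatial weights $\langle x\rangle\langle y\rangle$ cannot repair a logarithmic singularity in the spectral variable, so your claimed pointwise bound $|F_t(x,y)|\lesssim\langle x\rangle\langle y\rangle|t|^{-1}$ is not attainable from the $\beta>6$ expansion. This is precisely why the paper does not reuse Corollary~\ref{M exp cor2} but rederives the entire low-energy expansion to higher order when $S_1=S_2=S_3$: the finer expansion \eqref{M0further} of $M_0^{\pm}$ (Bessel series through $z^6\log z$, error $\mathcal O_1(\lambda^{9/2})$) --- this, not merely the finiteness of moment integrals, is what consumes $\beta>11$ --- feeds into Proposition~\ref{prop:Binverseeval} and Corollary~\ref{M evalonlycor}, whose terms carry explicit, differentiable coefficients of the form $a\log\lambda+b_{\pm}$, $(a_2\log\lambda+b_{2,\pm})^{-1}$, $h^{\pm}(\lambda)^{-1}$, plus an $\mathcal O_1(\lambda^{1/2})$ error; only with this structure do integration by parts and Lemma~\ref{stat phase} yield $|t|^{-1}$.

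Second, your assertion that the orthogonality relations $\int V\psi\,dx=\int x_jV\psi\,dx=0$ make ``the leading coefficients vanish'' so that $\Phi^{\pm}$ gains powers of $z$ is false for the leading term: the $\lambda^{-2}$ singularity survives in the form $\lambda^{-2}\,G_0vS_3D_3S_3vG_0$, and Lemma~\ref{eigenproj lem} identifies this operator as exactly $P_e$, the orthogonal projection onto the zero-energy eigenspace. Its contribution \eqref{evalpart} disappears only because one composes with $P_{ac}$ (equivalently, it is independent of the $\pm$ sign and cancels in the Stone-formula difference), not because of any gain in $z$; your proposal never performs this identification and cancellation, and without it no integration by parts can produce $|t|^{-1}$ from a genuinely $\lambda^{-2}$-singular term --- which is exactly why Theorem~\ref{big thm} yields only a bounded $F_t$. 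Once the $P_e$ piece is removed, the remaining parts of the $\lambda^{-2}$ term are controlled in Lemma~\ref{D3 evalonlylem} by trading powers of $\lambda$ for spatial growth via $|R_0^{\pm}(\lambda^2)(x,y)-g^{\pm}(\lambda)-G_0(x,y)|\lesssim\lambda^{1+}|x-y|^{1+}$ and by differences such as $f(\lambda|y-y_1|)-f(\lambda(1+|y_1|))$; it is this trade, rather than a $z$-derivative falling on a Hankel function, that makes the estimate weighted. Your intuition about where the weights enter is right in spirit, but the argument as proposed is missing the two essential ingredients of the proof: the refined resolvent expansion and the eigenprojection cancellation.
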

Here $L^{1,1+}$ is the weighted $L^1$ space defined by
$L^{1,1+}(\R^2):=\{f : \int_{\R^2} |f(x)| \la x\ra^{1+}\, dx<\infty\}$.  Similarly,
$L^{\infty,-1-}=\{f: \la x\ra^{-1-}f\in L^\infty\}$.

Let   $\chi$ is an even smooth function supported in $[-\lambda_1,\lambda_1]$ and $\chi(x)=1$ for $|x|<\lambda_1/2$.
Let $K_{\lambda_1}$ be the kernel of $e^{itH}\chi(H)P_{ac}$:
\be\label{klambda}
K_{\lambda_1}(x,y)=\f{1}{\pi i}\int_0^\infty e^{it\lambda^2}\lambda \chi(\lambda)  [R_V^+(\lambda^2)-R_V^-(\lambda^2)](x,y)
d\lambda,
\ee
where
$$
R_V^\pm(\lambda^2)= R_V(\lambda^2\pm i0)=(H-(\lambda^2 \pm i0))^{-1}
$$
is the perturbed resolvent. By the limiting absorption principle, these boundary values are bounded operators on
weighted $L^2$-spaces, see e.g.~\cite{agmon}.

The high energies were studied in \cite{Sc2}:
\begin{theorem}\cite{Sc2} \label{high eng thm}
  Assume that $|V|\lesssim \langle x\rangle^{-2-}$, then for any $\lambda_1>0$
  \begin{align*}
	\left\|e^{itH} P_{ac}-e^{itH}\chi(H)P_{ac}  \right\|_{1\rightarrow\infty}\leq C_{\lambda_1} |t|^{-1}.
  \end{align*}
\end{theorem}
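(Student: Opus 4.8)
The plan is to begin from Stone's formula, which writes the high-frequency propagator as the oscillatory integral
\be
\big[e^{itH}P_{ac}-e^{itH}\chi(H)P_{ac}\big](x,y)=\f{1}{\pi i}\int_0^\infty e^{it\lambda^2}\,\lambda\,(1-\chi(\lambda))\,[R_V^+(\lambda^2)-R_V^-(\lambda^2)](x,y)\,d\lambda.
\ee
The task then reduces to bounding this $\lambda$-integral pointwise in $(x,y)$ by $C_{\lambda_1}|t|^{-1}$, uniformly. Since $1-\chi$ is supported in $\{\lambda\gtrsim\lambda_1\}$, the integration stays away from $\lambda=0$, which is precisely what avoids the logarithmic obstructions of the two-dimensional resolvent at zero energy and makes the high-energy regime the ``easy'' one.

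First I would insert a finite symmetric Born expansion
\be
R_V^\pm(\lambda^2)=\sum_{k=0}^{2K-1}(-1)^k R_0^\pm(\lambda^2)\big(VR_0^\pm(\lambda^2)\big)^{k}+(R_0^\pm V)^{K}R_V^\pm(\lambda^2)(VR_0^\pm)^{K},
\ee
with $K$ chosen large, and substitute $R_0^\pm(\lambda^2)(x,y)=\pm\f{i}{4}H_0^\pm(\lambda|x-y|)$ using the high-energy asymptotics $H_0^\pm(z)=e^{\pm iz}\,\omega_\pm(z)$, where $\omega_\pm$ is a symbol of order $-\f12$, i.e. $|\partial_z^j\omega_\pm(z)|\les\la z\ra^{-\f12-j}$. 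Each explicit term becomes a spatial integral over $y_1,\dots,y_k$ whose $\lambda$-integrand carries the quadratic-plus-linear phase $t\lambda^2\pm\lambda(|x-y_1|+\cdots+|y_k-y|)$ and a symbol amplitude built from the $\omega_\pm$ and the factors $V(y_j)$.

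The estimate for each explicit term is then an exercise in stationary phase in $\lambda$. For the model $k=0$ term the amplitude is $\sim\lambda^{1/2}r^{-1/2}$ with $r=|x-y|$; van der Corput against the quadratic phase (second derivative $2t$) gives $|t|^{-1/2}$, while evaluating the amplitude at the stationary point $\lambda_c\sim r/|t|$ produces an extra $\lambda_c^{1/2}\sim (r/|t|)^{1/2}$, and the two combine with the Hankel prefactor $r^{-1/2}$ to give exactly $|t|^{-1}$, uniformly in $r$ (with faster decay when no stationary point lies in the support). For $k\ge1$ the same scheme applies with $r$ replaced by the total path length; each extra resolvent contributes a factor $\lambda^{-1/2}$ that both improves the $\lambda$-decay and, after the spatial integrations $\int_{\R^2}|V(y_j)|\,dy_j$, keeps the constants summable in $k$. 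This is where the hypothesis $|V|\les\la x\ra^{-2-}$ enters: it is the borderline decay ensuring that all these spatial convolutions converge.

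The remaining and most delicate piece is the tail $(R_0^\pm V)^{K}R_V^\pm(VR_0^\pm)^{K}$, in which the perturbed resolvent cannot be expanded further. Here I would invoke the limiting absorption principle (already cited in the excerpt) to bound $R_V^\pm(\lambda^2)$ on weighted $L^2$ uniformly for $\lambda\gtrsim\lambda_1$, together with the high-energy decay $\|R_0^\pm(\lambda^2)\|_{L^{2,\sigma}\to L^{2,-\sigma}}\les\lambda^{-1}$ for $\sigma>\f12$ and the corresponding bound for $\partial_\lambda R_0^\pm$; taking $K$ large makes the product of these $\lambda$-decaying factors integrable, and a single integration by parts in $\lambda$ against $e^{it\lambda^2}$ extracts the gain $|t|^{-1}$, the end weights being absorbed by the outermost potentials. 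I expect the main obstacle to be twofold: upgrading the easy $|t|^{-1/2}$ van der Corput bound to the sharp $|t|^{-1}$ rate uniformly in $(x,y)$ --- which forces one to exploit the precise $\la\lambda r\ra^{-1/2}$ oscillatory decay of the Hankel functions rather than treating them as bounded amplitudes --- and verifying that the tail's combined spatial and $\lambda$-integrations converge with a constant summable in $K$, so that the final constant $C_{\lambda_1}$ is indeed finite.
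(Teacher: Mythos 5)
Note first that this paper contains no proof of the statement: it is imported verbatim from \cite{Sc2}, and your proposal reconstructs essentially Schlag's actual argument there --- Stone's formula localized to $\lambda\gtrsim\lambda_1$, a finite symmetric Born expansion, stationary phase in $\lambda$ on each explicit term via the oscillatory representation of the Hankel functions (the same stationary phase device the present paper borrows from \cite{Sc2} as Lemma~\ref{stat phase}), and the limiting absorption principle for the remainder containing $R_V^\pm$, with enough outer free resolvents to make the $\lambda$-integral convergent after a single integration by parts. So in architecture you are on the source's route.

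Two corrections to the execution, though. First, your global symbol bound $|\omega_\pm^{(j)}(z)|\lesssim \la z\ra^{-\frac12-j}$ is false near $z=0$: the representation \eqref{JYasymp2} holds only for $|z|>1$, and $H_0^\pm(z)$ is logarithmic as $z\to0$. Since $\lambda\gtrsim\lambda_1$ does not prevent the arguments $\lambda|y_j-y_{j+1}|$ from being small, each Born term must be split with the cutoffs $\chi(\lambda|y_j-y_{j+1}|)$ and $\widetilde\chi(\lambda|y_j-y_{j+1}|)$, exactly as in the low-energy sections of this paper; the small-argument pieces carry no oscillation but only a locally integrable logarithm, so they are harmless once isolated --- but your amplitude estimates as written do not cover them. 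Second, you conflate the finite expansion with the infinite series: with the symmetric expansion there are only $2K$ explicit terms and $K$ is fixed once the remainder is $\lambda$-integrable (in two dimensions each free resolvent buys only $\lambda^{-\frac12}$, so $K$ must be taken moderately large, and constants may grow with $K$ --- no summability in $K$ is needed or available). Geometric summability in $k$ would matter only if you ran the full Born series, which in $\R^2$ converges only for $\lambda$ large depending on $V$ (the factor $vR_0^\pm(\lambda^2)v$ has Hilbert--Schmidt norm $O(\lambda^{-\frac12})$), and that variant would then require a separate limiting absorption/compactness argument on the intermediate band to get the claim for \emph{every} $\lambda_1>0$. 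Relatedly, you should state explicitly why the LAP bounds are uniform on all of $[\lambda_1,\infty)$: this uses the absence of embedded positive eigenvalues for such potentials together with Agmon's high-energy decay $\|R_V^\pm(\lambda^2)\|_{L^{2,\frac12+}\to L^{2,-\frac12-}}\lesssim \lambda^{-1}$ and the corresponding bound for $\partial_\lambda R_V^\pm(\lambda^2)$.
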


\noindent
Therefore, in the proof of Theorem~\ref{big thm} and Theorem~\ref{eval only thm}, it suffices to obtain the stated bounds for the
operator $K_{\lambda_1}$ for some $\lambda_1>0$. Our analysis relies on expansions of the resolvent operator at zero energy following those of \cite{JN}, also see the previous work in \cite{BDG,BGW}. We repeat part of the argument to obtain more flexible and favorable error bounds for our purposes.

We also note that standard spectral theoretic results for $H$ apply.  Under our  assumptions  we have that the spectrum of $H$ can be expressed as the absolutely continuous spectrum, the interval
$[0,\infty)$, and finitely many eigenvalues of finite multiplicity on $(-\infty,0]$.  See
\cite{RS1} for spectral theory and \cite{Stoi} for Birman-Schwinger type
bounds.

Our paper is organized as follows.  We set out the necessary expansions for the resolvent in Section~\ref{sec:exp}.
We then study $K_{\lambda_1}$ to establish
Theorem~\ref{big thm} in the case when there is an s-wave resonance at zero
in Section~\ref{sec:swave}.  In Section~\ref{sec:pwave} we establish Theorem~\ref{big thm} in the case of a p-wave resonance or eigenvalue
at zero energy.  In Section~\ref{sec:spectral} we discuss the spectral structure of $-\Delta+V$ at zero energy.  Finally we prove Theorem~\ref{eval only thm} in Section~\ref{sec:weighted}.

\section{Resolvent expansions around zero energy in the case of an s-wave resonance}\label{sec:exp}

In this section, following \cite{JN}, we obtain resolvent expansions around the threshold
$\lambda=0$ in the case when there is only   s-wave  resonance at zero (resonance of the first kind, see Definition~\ref{resondef} below and the remarks following it).
We now introduce some definitions and notation.

\begin{defin}
	We say an operator $T:L^2(\R^2)\to L^2(\R^2)$ with kernel
	$T(\cdot,\cdot)$ is absolutely bounded if the operator with kernel
	$|T(\cdot,\cdot)|$ is bounded from $L^2(\R^2)$ to $L^2(\R^2)$.
\end{defin}

It is worth noting that a Hilbert-Schmidt operator is an absolutely bounded operator.

We say that an absolutely bounded operator $T(\lambda)(\cdot,\cdot)$ is $\mathcal O_1(\lambda^s)$ if the integral
kernel satisfies the following estimates:
\begin{align}\label{O def 1}
	 \big\|\sup_{0<\lambda<\lambda_1} \lambda^{-s}|T(\lambda)(\cdot,\cdot)|\big\|_{L^2\to L^2}\lesssim 1, \,\,\,\,\,\,\,\,\,\,\,\,\,\,
	 \big\|\sup_{0<\lambda<\lambda_1} \lambda^{1-s}|\partial_\lambda T(\lambda)(\cdot,\cdot)|\big\|_{L^2\to L^2}\lesssim 1.
\end{align}
If only the first bound in \eqref{O def 1} holds, we say that $T(\lambda)(\cdot,\cdot)$ is $O(\lambda^s)$.  We also note that we can
replace $\lambda^{-s}$ with $f(\lambda)^{-1}$ in \eqref{O def 1}, in which case we say
$T(\lambda)(\cdot,\cdot)$ is $O(f(\lambda))$.

Recall that
\begin{align*}
	R_0^\pm(\lambda^2)(x,y)=\pm\frac{i}{4} H_0^\pm(\lambda|x-y|)
\end{align*}
where $H_0^\pm$ are the Hankel functions of order zero:
\begin{align}\label{H0}
H_0^{\pm}(z)=J_0(z)\pm iY_0(z).
\end{align}
From the series expansions for the Bessel functions, see \cite{AS}, as $z\to 0$ we have
\begin{align}
	J_0(z)&=1-\frac{1}{4}z^2+\frac{1}{64}z^4+O(z^6)=1+O(z^2),\label{J0 def}\\
	Y_0(z)&=\frac{2}{\pi}(\log(z/2)+\gamma)J_0(z)+\frac{2}{\pi}\bigg(\frac{1}{4}z^2-\frac{3}{128}z^4+O(z^6)\bigg)\label{Y0 def}\\
&=\frac2\pi \log(z)+O(1).\label{Y0 def2}
\end{align}
We also have the following estimates for the derivatives as $z\to 0$
\begin{align}\label{Jprime}
	 J_0^\prime(z)=O(z),\,\,\,\,\,\,\,J_0^{\prime\prime}(z)=O(1), \,\,\,\,\,\,\, Y_0^\prime(z)=\frac{2}{\pi z}+O(1). 
\end{align}
Further, for $|z|>1 $, we have the representation (see, {\em e.g.}, \cite{AS})
\begin{align}\label{JYasymp2}
	&H_0^\pm(z)= e^{\pm iz} \omega_\pm(z),\,\,\,\, |\omega_{\pm}^{(\ell)}(z)|\lesssim (1+|z|)^{-\frac{1}{2}-\ell},\,\,\,\ell=0,1,2,\ldots.
\end{align}
This implies that for $|z|>1$
\begin{align}\label{largeJYH}
	\mathcal C(z)=e^{iz} \omega_+(z)+e^{-iz}\omega_-(z), \qquad
	|\omega_{\pm}^{(\ell)}(z)|\lesssim (1+|z|)^{-\frac{1}{2}-\ell},\,\,\,\ell=0,1,2,\ldots,
\end{align}
 for any $\mathcal C\in\{J_0, Y_0\}$ respectively with different $\omega_{\pm}$.

Let $U(x)=1$ if $V(x)\geq 0$ and $U(x)=-1$ if $V(x)<0$, and let $v=|V|^{1/2}$. We have $V=Uv^2$.
We use the symmetric resolvent identity, valid for $\Im\lambda>0$:
\be\label{res_exp}
R_V^\pm(\lambda^2)=  R_0^\pm(\lambda^2)-R_0^\pm(\lambda^2)vM^\pm(\lambda)^{-1}vR_0^\pm(\lambda^2),
\ee
where $M^\pm(\lambda)=U+vR_0^\pm(\lambda^2)v$.
The key issue in the resolvent expansions is the invertibility of the operator $M^\pm(\lambda)$ for small $\lambda$
under various spectral assumptions at zero. Below, we obtain expansions of the operator $M^\pm(\lambda)$ around
$\lambda =0$ using the properties of the free resolvent listed
above. A similar lemma was proved in \cite{Sc2}, however we need to expand the operator further and obtain slightly
more general error bounds. The following operators  arise naturally in the expansion of $M^{\pm}(\lambda)$
(see \eqref{J0 def}, \eqref{Y0 def})
\begin{align}
	G_0f(x)&=-\frac{1}{2\pi}\int_{\R^2} \log|x-y|f(y)\,dy, \label{G0 def}\\
	G_1f(x)&=\int_{\R^2} |x-y|^2f(y)\, dy,\label{G1 def}\\
	G_2f(x)&= \frac1{8\pi}\int_{\R^2} |x-y|^2\log|x-y|f(x)\, dy.\label{G2 def}
\end{align}

\begin{lemma} \label{lem:M_exp}
	For $\lambda>0$ define $M^\pm(\lambda):=U+vR_0^\pm(\lambda^2)v$.
	Let $P=v\langle \cdot, v\rangle \|V\|_1^{-1}$ denote the orthogonal projection onto $v$.  Then
	\begin{align*}
		M^{\pm}(\lambda)=g^{\pm}(\lambda)P+T+M_0^{\pm}(\lambda).
	\end{align*}
	Here $g^{\pm}(\lambda)=a\ln \lambda+z$ where $a\in\R\backslash\{0\}$ and $z\in\mathbb C\backslash\R$,
	and $T=U+vG_0v$ where $G_0$ is an
	integral operator defined in \eqref{G0 def}.
	Further, for any $\frac{1}{2}\leq k<2$,
	\begin{align*}
		M_0^{\pm}(\lambda)=\mathcal O_1(\lambda^k)
	\end{align*}
	if $v(x)\lesssim \langle x\rangle^{-\beta}$ for some $\beta>1+k$. Moreover,
	\begin{align}\label{M0_defn}
		M_0^{\pm}(\lambda)= g_1^{\pm}(\lambda)vG_1v
		+\lambda^2 vG_2 v+M_1^{\pm}(\lambda).
	\end{align}
	Here $G_1$, $G_2$ are integral operators defined in
	\eqref{G1 def}, \eqref{G2 def},  and
	$g_1^{\pm}(\lambda)=\lambda^2 (\alpha \log \lambda+\beta_{\pm})$ where $\alpha \in\R\backslash\{0\}$ and $\beta_\pm \in\mathbb C\backslash\R$.
  Further, for any $2<\ell<4$,
	\begin{align*}
		M_1^{\pm}(\lambda)=\mathcal O_1(\lambda^\ell)
	\end{align*}
	if $\beta>1+\ell$.
\end{lemma}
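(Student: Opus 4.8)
The plan is to expand the kernel of the free resolvent
$R_0^\pm(\lambda^2)(x,y)=\pm\frac i4 H_0^\pm(\lambda|x-y|)=\pm\frac i4 J_0(\lambda|x-y|)-\frac14 Y_0(\lambda|x-y|)$
using the small-argument series \eqref{J0 def}--\eqref{Y0 def} and to read off the leading logarithmic and constant contributions. Setting $z=\lambda|x-y|$ and using $\log(z/2)=\log\lambda+\log(|x-y|/2)$, the order-one part of the series produces a term $-\frac1{2\pi}\log\lambda+c_\pm$ independent of the spatial separation (with $c_\pm=\pm\frac i4-\frac\gamma{2\pi}+\frac{\log 2}{2\pi}$), together with the kernel $-\frac1{2\pi}\log|x-y|$ of $G_0$. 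After conjugating by $v$, the first of these is rank one: since $v(x)v(y)=\|V\|_1\,P(x,y)$, it equals $g^\pm(\lambda)P$ with $g^\pm(\lambda)=\|V\|_1\big(-\frac1{2\pi}\log\lambda+c_\pm\big)=a\log\lambda+z$. Here $a=-\|V\|_1/(2\pi)\in\R\setminus\{0\}$ because $V\not\equiv0$, and $z=\|V\|_1 c_\pm\in\mathbb C\setminus\R$ because $\Im c_\pm=\pm\frac14$. The $G_0$ contribution combines with the $U$ in the definition of $M^\pm$ to give $T=U+vG_0v$, and everything else is collected into $M_0^\pm(\lambda)$, whose kernel is $v(x)F^\pm(\lambda,x,y)v(y)$ with $F^\pm=R_0^\pm(\lambda^2)(x,y)+\frac1{2\pi}\log\lambda-c_\pm-G_0(x,y)$.

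The heart of the argument is the bound $M_0^\pm(\lambda)=\mathcal O_1(\lambda^k)$, for which I would verify the two estimates in \eqref{O def 1} by showing that the relevant kernels are Hilbert--Schmidt (hence absolutely bounded), splitting into the regions $\lambda|x-y|\le1$ and $\lambda|x-y|>1$. On the first region the series gives $|F^\pm|\lesssim (\lambda|x-y|)^2(1+|\log(\lambda|x-y|)|)$; writing $(\lambda|x-y|)^2=(\lambda|x-y|)^k(\lambda|x-y|)^{2-k}$ and using $\lambda|x-y|\le1$ with $k<2$ to absorb the remaining power and the logarithm, one obtains $\lambda^{-k}|F^\pm|\lesssim|x-y|^k$. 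On the second region $|H_0^\pm(\lambda|x-y|)|\lesssim(\lambda|x-y|)^{-1/2}\le1$ by \eqref{JYasymp2}, while $\frac1{2\pi}\log\lambda-G_0(x,y)=\frac1{2\pi}\log(\lambda|x-y|)$, so $|F^\pm|\lesssim 1+\log(\lambda|x-y|)$; using $\lambda^{-k}\le|x-y|^k$ there gives $\lambda^{-k}|F^\pm|\lesssim|x-y|^{k}\log\la x-y\ra$. In both cases the supremum over $\lambda\in(0,\lambda_1)$ is dominated by $v(x)|x-y|^{k}\log\la x-y\ra\,v(y)$, whose square integrates since $|x-y|^{2k}\lesssim\la x\ra^{2k}\la y\ra^{2k}$ and $v\lesssim\la x\ra^{-\beta}$ force exactly the condition $\beta>1+k$. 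For the derivative estimate I would use $\partial_\lambda F^\pm=\pm\frac i4|x-y|(H_0^\pm)'(\lambda|x-y|)+\frac1{2\pi\lambda}$; the small-argument behavior $Y_0'(z)=\frac2{\pi z}+O(1)$, $J_0'(z)=O(z)$ from \eqref{Jprime} produces a leading $-\frac1{2\pi\lambda}$ that cancels the added $\frac1{2\pi\lambda}$, leaving a remainder of size $O(\lambda|x-y|^2(1+|\log(\lambda|x-y|)|))$, which yields $\lambda^{1-k}|\partial_\lambda F^\pm|$ with the same $|x-y|^k$-type Hilbert--Schmidt bound.

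The second expansion \eqref{M0_defn} is obtained by carrying the Bessel series one order further and extracting all terms of order $\lambda^2$. Collecting the $z^2$ contributions from $\pm\frac i4 J_0$, from $-\frac1{2\pi}(\log(z/2)+\gamma)J_0$, and from the $\frac{z^2}{4}$ term of $Y_0$, the $\lambda^2$-order part of $R_0^\pm$ equals $\frac1{8\pi}\lambda^2|x-y|^2\log\lambda+\lambda^2\big(\frac1{8\pi}|x-y|^2\log|x-y|\big)$ plus a complex constant multiple of $\lambda^2|x-y|^2$. After conjugation by $v$ the first two become $g_1^\pm(\lambda)vG_1v$ and $\lambda^2 vG_2v$, with $g_1^\pm(\lambda)=\lambda^2(\alpha\log\lambda+\beta_\pm)$, where $\alpha=\frac1{8\pi}\in\R\setminus\{0\}$ and $\beta_\pm\in\mathbb C\setminus\R$ (its imaginary part coming from the $\mp\frac i{16}$ in the $J_0$ expansion). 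The leftover $M_1^\pm(\lambda)$ now begins at order $\lambda^4$, carrying a $\lambda^4\log\lambda$ factor, and the same two-region Hilbert--Schmidt analysis—now producing kernels of size $v(x)|x-y|^{\ell}\log\la x-y\ra\,v(y)$ for $2<\ell<4$—gives $M_1^\pm(\lambda)=\mathcal O_1(\lambda^\ell)$ under $\beta>1+\ell$.

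I expect the main obstacle to be the large-argument region $\lambda|x-y|>1$, where the Taylor expansion is unavailable: there one must argue that the slow $\la\cdot\ra^{-1/2}$ decay of the Hankel function together with the subtracted, polynomially growing log and monomial terms still yields the claimed power $\lambda^k$, which is made to work only through the elementary but crucial inequality $\lambda^{-k}\le|x-y|^k$ valid precisely on that region, combined with the decay of $v$. A secondary delicate point is the exact cancellation of the $\lambda^{-1}$ singularity in the derivative bound between $\partial_\lambda R_0^\pm$ and $\partial_\lambda(\frac1{2\pi}\log\lambda)$, which is what keeps the second estimate in \eqref{O def 1} finite down to $\lambda=0$.
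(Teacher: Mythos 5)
Your proposal is correct and follows essentially the same route as the paper's proof: the same Hankel-function expansion \eqref{Hankel exp}--\eqref{exp1} with $z=\lambda|x-y|$ identifying $g^{\pm}$, $T$, $g_1^{\pm}$ and $\alpha=\frac{1}{8\pi}$, the same splitting at $\lambda|x-y|=1$ with the inequality $\lambda^{-k}\le |x-y|^{k}$ on the far region, and the same Hilbert--Schmidt criterion on $v(x)|x-y|^{k}v(y)$ forcing $\beta>1+k$ (resp.\ $\beta>1+\ell$). The only cosmetic difference is that you cancel the $\frac{1}{2\pi\lambda}$ singularity explicitly at the level of $Y_0'$ against $\partial_\lambda\log\lambda$, whereas the paper differentiates the already-subtracted small-$z$ expansion and invokes the global bound \eqref{partial R0} on $\lambda|x-y|>1$, which is exactly where the restriction $k\ge \frac12$ enters (a point your sketch uses but does not flag).
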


\begin{proof}
	The first part with $k=\frac{1}{2}$ was proven in  \cite[Lemma 5]{Sc2}.
	To obtain the expansions recall that, for $\lambda>0$,
	$$
		R_0^{\pm}(\lambda^2)(x,y)=\pm \frac{i}{4} H_0^{\pm}(\lambda|x-y|).
	$$
	Using the definition of $H_0^{\pm}(z)$, and the expansions \eqref{J0 def} and
	\eqref{Y0 def} around $z=0$,  we have
	\begin{align}
		\pm \frac{i}{4} H_0^{\pm}(z)&=\pm \frac{i}{4}  J_0(z)-\frac{1}{4} Y_0(z) =-\frac{1}{2\pi}\log(z/2) \pm\frac{i}4 -\frac\gamma{2\pi}   +  \alpha z^2 \log z+\beta_\pm z^2+O(z^4\log z)
		\label{Hankel exp}\\
&= -\frac{1}{2\pi}\log(z/2) \pm\frac{i}4 -\frac\gamma{2\pi}   + O( z^2 \log z) \label{exp1}
	\end{align}
	with $\alpha=1/8\pi$ and $\beta_\pm\in \mathbb C$.  The expansions are now obtained by setting $z=\lambda|x-y|$.
	In particular, we see
	\begin{align}\label{g form}
		g^{\pm}(\lambda)=-\|V\|_1\bigg(\frac{1}{2\pi}\log(\lambda/2)+\frac{1}{2\pi}\gamma\mp \frac{i}{4}\bigg).
	\end{align}
	Noting that
	\begin{align*}
		M_0^{\pm}(\lambda)&=[U+vR_0^{\pm}(\lambda^2)v]-[g^{\pm}(\lambda)P+U+vG_0v],\\
		M_1^{\pm}(\lambda)&=[U+vR_0^{\pm}(\lambda^2)v]-[g^{\pm}(\lambda)P+U+vG_0v+g_1^{\pm}(\lambda)
		vG_1 v+ \lambda^2 vG_2v].
	\end{align*}
	Using \eqref{exp1} and \eqref{Hankel exp}  for $M_0$ and $M_1$ respectively, we obtain for $z=\lambda|x-y|<1$ that
	\begin{align*}
		|M_0^{\pm}(\lambda)(x,y) \chi_{\{\lambda|x-y|<1\}}|&\lesssim v(x)v(y) (\lambda|x-y|)^2\log(\lambda|x-y|)  
\lesssim v(x)v(y) (\lambda|x-y|)^{2-},\\
		|M_1^{\pm}(\lambda)(x,y)\chi_{\{\lambda|x-y|<1\}}|&\lesssim v(x)v(y) (\lambda|x-y|)^4\log(\lambda|x-y|) \lesssim v(x)v(y)(\lambda|x-y|)^{4-}.
	\end{align*}
	For large $z$, using the expansion of the Hankel function about $z=\infty$,  recall \eqref{JYasymp2},
	we have
	$|H_0^{\pm}(z)|\lesssim 1$ and $|\frac{d}{dz}H_0^{\pm}(z)|\lesssim z^{-1/2}$.
	 So that for large $z>1$, for $M_0^{\pm}(z)$ the $\log z$ term dominates and for
	$M_1^{\pm}(z)$ the $z^2\log z$ term in \eqref{Hankel exp} dominates, and we have
	\begin{align*}
		|M_0^{\pm}(\lambda)(x,y)\chi_{\{\lambda|x-y|>1\}}|&\lesssim v(x)v(y)  \log(\lambda|x-y|)\chi_{\{\lambda|x-y|>1\}} \lesssim v(x)v(y)
		(\lambda|x-y|)^{0+}\chi_{\{\lambda|x-y|>1\}},\\
		|M_1^{\pm}(\lambda)(x,y)\chi_{\{\lambda|x-y|>1\}}|&   
\lesssim v(x)v(y)(\lambda|x-y|)^{2+}\chi_{\{\lambda|x-y|>1\}}.
	\end{align*}
	Hence, for any $0<k<2$, and for any $2<\ell<4$ we have
	\begin{align*}
		|M_0^{\pm}(\lambda)(x,y)| &\lesssim v(x)v(y)\big[(\lambda|x-y|)^{2-}\chi_{\{\lambda|x-y|<1\}}
		+ (\lambda|x-y|)^{0+}\chi_{\{\lambda|x-y|>1\}} \big]   \lesssim v(x)v(y)(\lambda|x-y|)^{k},\\
		|M_1^{\pm}(\lambda)(x,y)| &\lesssim v(x)v(y)\big[(\lambda|x-y|)^{4-}\chi_{\{\lambda|x-y|<1\}}
		+ (\lambda|x-y|)^{2+}\chi_{\{\lambda|x-y|>1\}}\big]  \lesssim v(x)v(y) (\lambda|x-y|)^{\ell}.
	\end{align*}
	 This yields the claim for $M_0$ and $M_1$ since $v(x)v(y)|x-y|^{\ell}$ is Hilbert-Schmidt from $L^{2}(\R^2)$ to
	$L^{2}(\R^2)$ for $\beta>1+\ell$.  For $\lambda$-derivatives,  we note that
	\begin{align}\label{partial R0}
		|\partial_\lambda R_0^{\pm}(\lambda^2)(x,y)|\lesssim \bigg(\frac{|x-y|}{\lambda}\bigg)^{\frac{1}{2}},
	\end{align}
	and
	\begin{align*}
		\partial_\lambda F(\lambda|x-y|)=|x-y|\partial_zF(z)\bigg|_{z=\lambda|x-y|}.
	\end{align*}
	For the terms in $M_0$ and $M_1$ other than $R_0$, the effect of $\partial_\lambda$ is comparable
	to division by $\lambda$.
	However, due to \eqref{partial R0}, on $\lambda|x-y|>1$ we have for any $k\geq \frac{1}{2}$,
	\begin{align*}
		|\partial_\lambda M_0(\lambda)(x,y)|\lesssim v(x)v(y)\bigg[ \bigg(\frac{|x-y|}{\lambda}\bigg)^{\frac{1}{2}}
		+\lambda^{-1}\bigg] \lesssim v(x)v(y) \lambda^{k-1} |x-y|^{k}.
	\end{align*}
	Similarly,
	\begin{align*}
		|\partial_\lambda M_1(\lambda)(x,y)|&\lesssim v(x)v(y)\bigg[ \bigg(\frac{|x-y|}{\lambda}\bigg)^{\frac{1}{2}}
		+\lambda^{-1}(\lambda|x-y|)^\ell \bigg] \lesssim v(x)v(y) \lambda^{\ell-1}|x-y|^\ell. \qedhere
	\end{align*}
\end{proof}

We now give the definition of resonances from \cite{JN}, also see \cite{Sc2}. Recall that
$Q:=\mathbbm{1}-P$.

\begin{defin}\label{resondef}\begin{enumerate}
\item We say zero is a regular point of the spectrum
of $H = -\Delta+ V$ provided $ QTQ=Q(U + vG_0v)Q$ is invertible on $QL^2(\mathbb R^2)$.

\item Assume that zero is not a regular point of the spectrum. Let $S_1$ be the Riesz projection
onto the kernel of $QTQ$ as an operator on $QL^2(\mathbb R^2)$.
Then $QTQ+S_1$ is invertible on $QL^2(\mathbb R^2)$.  Accordingly, we define $D_0=(QTQ+S_1)^{-1}$ as an operator
on $QL^2(\R^2)$.
We say there is a resonance of the first kind at zero if the operator $T_1:= S_1TPTS_1$ is invertible on
$S_1L^2(\mathbb R^2)$.

\item We say there is a resonance of the second kind at zero if $T_1$ is not invertible on
$S_1L^2(\R^2)$ but
$T_2:=S_2vG_1vS_2$ is invertible
on $S_2L^2(\R^2)$, where $S_2$ is the Riesz projection onto the kernel of $T_1$ (recall the definition of
$G_1$ and $G_2$ in \eqref{G1 def} and \eqref{G2 def}).

\item Finally, if $T_2$ is not invertible on $S_2L^2(\R^2)$, we say there is a resonance of the third kind at zero.
We note that in this case the operator $T_3:=S_3vG_2vS_3$ is always invertible on $S_3L^2$, where $S_3$ is the Riesz
projection onto the kernel of $T_2$ (see (6.41) in \cite{JN} or Section~\ref{sec:spectral} below).

\end{enumerate}

\end{defin}

\noindent
{\bf Remarks.} i) In \cite{JN}, it is noted that the projections
$S_1-S_2$, $S_2-S_3$ and $S_3$  correspond  to   s-wave  resonances,  p-wave  resonances,
and zero eigenspace respectively. In particular, resonance of the first kind means that there is only an s-wave resonance at zero.
Resonance of the second kind means that there is a p-wave resonance, and there may or may not be an s-wave resonance.
Finally, resonance of the third kind means that zero is an eigenvalue, and there may or may not be s-wave and p-wave resonances.
We characterize these projections in Section~\ref{sec:spectral}. \\
ii) Since $QTQ$ is self-adjoint, $S_1$ is the orthogonal projection onto the kernel of $QTQ$, and we have
(with $D_0=(QTQ+S_1)^{-1}$) $$S_1D_0=D_0S_1=S_1.$$
This statement also valid for $S_2$ and $(T_1+S_2)^{-1}$, and for $S_3$ and $(T_2+S_3)^{-1}$.\\
iii) The operator $QD_0Q$ is absolutely bounded in $L^2$. This was proved in
Lemma~8 of \cite{Sc2} in the case $S_1=0$.
With minor modifications, the same proof works in our case, too.\\
iv) The operators with kernel $vG_i v$ are Hilbert-Schmidt operators on $L^2(\R^2)$ if $v(x)\les \langle x\rangle^{-\beta}$
for $\beta>\frac{3}{2}$ if $i=1$ and $\beta>3$ for $i=2,3$.

To invert $M^\pm(\lambda)=U+vR_0^\pm(\lambda^2)v$, for small $\lambda$, we will use the
following  lemma (see Lemma 2.1  in \cite{JN}) repeatedly.
\begin{lemma}\label{JNlemma}
Let $A$ be a closed operator on a Hilbert space $\mathcal{H}$ and $S$ a projection. Suppose $A+S$ has a bounded
inverse. Then $A$ has a bounded inverse if and only if
$$
B:=S-S(A+S)^{-1}S
$$
has a bounded inverse in $S\mathcal{H}$, and in this case
$$
A^{-1}=(A+S)^{-1}+(A+S)^{-1}SB^{-1}S(A+S)^{-1}.
$$
\end{lemma}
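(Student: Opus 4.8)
The plan is to reduce the invertibility of $A$ to that of the Schur complement $B$ by a single algebraic factorization, and then to confirm the explicit formula by direct computation. Throughout, write $C=(A+S)^{-1}$, which is bounded by hypothesis, and regard $B^{-1}$ as the inverse of $B$ on the subspace $S\mathcal{H}$, extended by zero on $(I-S)\mathcal{H}$, so that $SB^{-1}=B^{-1}S=B^{-1}$ and $BB^{-1}=B^{-1}B=S$ (the identity on $S\mathcal{H}$). First I would record the factorization
\[
A=(A+S)-S=(A+S)\big(I-CS\big),
\]
which holds because $(A+S)C=I$ forces $(A+S)CS=S$. Since $A+S$ is invertible, this already shows that $A$ is invertible if and only if $I-CS$ is invertible on $\mathcal{H}$.

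Next I would exploit the projection structure to identify the invertibility of $I-CS$ with that of $B$. The point is that $S(I-S)=0$, so $CS$ annihilates $(I-S)\mathcal{H}$; hence, with respect to the splitting $\mathcal{H}=S\mathcal{H}\oplus(I-S)\mathcal{H}$, the operator $I-CS$ is block lower-triangular,
\[
I-CS=\begin{pmatrix} S-SCS & 0\\ -(I-S)CS & I-S\end{pmatrix},
\]
with diagonal blocks $B=S-SCS$ on $S\mathcal{H}$ and the identity $I-S$ on $(I-S)\mathcal{H}$. A block-triangular operator with invertible diagonal blocks is invertible, and conversely; since the lower-right block is already the identity on its subspace, $I-CS$ is invertible precisely when $B$ is invertible on $S\mathcal{H}$. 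Combined with the factorization, this establishes the stated equivalence in both directions.

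Finally, assuming $B$ is invertible, I would verify that $E:=(A+S)^{-1}+(A+S)^{-1}SB^{-1}S(A+S)^{-1}=C+CSB^{-1}SC$ is a two-sided inverse. Using $A=(A+S)-S$ and $(A+S)C=I$,
\[
AE=I+SB^{-1}SC-SC-SCSB^{-1}SC=I+(S-SCS)B^{-1}SC-SC,
\]
and the decisive cancellation is $(S-SCS)B^{-1}=BB^{-1}=S$, so the middle term collapses to $SC$ and $AE=I$; the identity $EA=I$ follows by the symmetric computation grouping $S$ on the right.

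The main obstacle here is bookkeeping rather than analysis: one must track the projections $S$ and $I-S$ carefully and keep the convention that $B^{-1}$ acts on $S\mathcal{H}$ (extended by zero), and one must check that the factorization $A=(A+S)(I-CS)$ is legitimate when $A$ is merely closed and possibly unbounded. This last point is exactly where the hypothesis does the work: because $C$ and $S$ are bounded, $CS$ is bounded and the entire reduction takes place among bounded operators, so no domain subtleties intrude and the block-triangular argument applies verbatim.
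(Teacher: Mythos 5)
Your proof is correct, and there is no proof in the paper to compare it against line by line: the lemma is quoted from Lemma~2.1 of \cite{JN}, where the argument is a short direct verification based on resolvent-type identities --- one checks by multiplication that the displayed operator inverts $A$ when $B^{-1}$ exists, and, for the converse, that $S+SA^{-1}S$ inverts $B$ on $S\mathcal{H}$ when $A^{-1}$ exists, using $A^{-1}=(A+S)^{-1}+(A+S)^{-1}SA^{-1}$. Your route is genuinely different and more structural: the factorization $A=(A+S)(I-CS)$ with $C=(A+S)^{-1}$, combined with the block lower-triangular form of $I-CS$ relative to the (not necessarily orthogonal) splitting $\mathcal{H}=S\mathcal{H}\oplus(I-S)\mathcal{H}$, exhibits $B$ as a Schur complement and thus explains \emph{why} the Feshbach-type formula holds rather than merely confirming it; this is consonant with how the paper actually deploys the lemma, since the Feshbach formula appears explicitly in the proof of Lemma~\ref{M+S1inverse}. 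Your algebra in the final verification is sound (the conventions $SB^{-1}=B^{-1}S=B^{-1}$ and $BB^{-1}=B^{-1}B=S$ are used consistently, and the cancellation $(S-SCS)B^{-1}=S$ is correct), and the domain bookkeeping is right: since $\operatorname{ran}C=D(A)$, one has $(I-CS)x\in D(A)$ if and only if $x\in D(A)$, so the factorization holds with equal domains, and $\operatorname{ran}E\subseteq D(A)$ makes $AE$ everywhere defined. Note also that your argument never uses orthogonality or self-adjointness of $S$, only that it is a bounded idempotent, matching the generality of \cite{JN}.

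Two steps are asserted rather than argued; neither is a genuine gap, but both sit exactly where the unboundedness of $A$ could intrude and each deserves a line. First, the direction ``$A$ boundedly invertible $\Rightarrow$ $I-CS$ invertible'' is not the trivial bounded-operator manipulation $(I-CS)^{-1}=A^{-1}(A+S)$, because $A^{-1}(A+S)$ is a priori defined only on $D(A)$; one should observe that it coincides there with the bounded, everywhere-defined operator $I+A^{-1}S$, and then check via the identities $A^{-1}=C+CSA^{-1}=C+A^{-1}SC$ that $I+A^{-1}S$ is a two-sided inverse of $I-CS$ (for instance, $(I-CS)(I+A^{-1}S)=I+A^{-1}S-CS-CSA^{-1}S=I$ since $CSA^{-1}=A^{-1}-C$). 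Second, in the converse half of your block-triangular step, invertibility of a block lower-triangular operator does \emph{not} in general force invertibility of its diagonal blocks in infinite dimensions; your argument works precisely because the lower-right block is the identity on $(I-S)\mathcal{H}$, as you flag: if $Bu=0$ with $u\in S\mathcal{H}$, then $(I-CS)\bigl(u+(I-S)CSu\bigr)=0$, so injectivity of $I-CS$ and an application of $S$ give $u=0$; and solving $(I-CS)x=w$ for $w\in S\mathcal{H}$ yields $B(Sx)=S(I-CS)x=w$, giving surjectivity of $B$. With these two remarks supplied, your proof is complete and fully rigorous.
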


We will apply this lemma with $A=M^\pm(\lambda)$ and $S=S_1$. Thus, we need to show that $M^{\pm}(\lambda)+S_1$
has a bounded inverse in $L^2(\mathbb R^2)$ and
\begin{align}\label{B defn}
  B_{\pm}=S_1-S_1(M^\pm(\lambda)+S_1)^{-1}S_1
\end{align}
has a bounded inverse in $S_1L^2(\mathbb R^2)$.
We prove these claims and obtain expansions for the inverses for each type of resonance
in Lemma~\ref{M+S1inverse}, Proposition~\ref{prop:Binverse1}, and Proposition~\ref{prop:Binverse2} below.

\begin{lemma}\label{M+S1inverse}

	Suppose that zero is not a regular point of the spectrum of  $H=-\Delta+V$, and let $S_1$ be the corresponding
	Riesz projection. Then for   sufficiently small $\lambda_1>0$, the operators
	$M^{\pm}(\lambda)+S_1$ are invertible for all $0<\lambda<\lambda_1$ as bounded operators on $L^2(\R^2)$.
	Further, one has
	\begin{align}
	\label{M plus S size}
        	=h_{\pm}(\lambda)^{-1}S+QD_0Q+\mathcal O_1(\lambda^k),
	\end{align}
	for any $\frac{1}{2}\leq k<2$ if $v(x)\lesssim \langle x\rangle^{-(1+k)-}$.
	Here
   $h_+(\lambda)=\overline{h_-(\lambda)}=a\ln \lambda+z$ (with $a\in\R\backslash \{0\}$ and
  $z\in\mathbb C$,  $\Im z\neq 0$), and
  \be\label{S_defn}
    S=\left[\begin{array}{cc} P & -PTQD_0Q\\ -QD_0QTP & QD_0QTPTQD_0Q
		\end{array}\right]
  \ee
is a finite-rank operator with real-valued kernel.
\end{lemma}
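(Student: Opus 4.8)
The plan is to work in the orthogonal splitting $L^2(\R^2)=PL^2\oplus QL^2$ and to invert $M^\pm(\lambda)+S_1$ by a Schur-complement (Feshbach) argument built on the expansion $M^\pm(\lambda)=g^\pm(\lambda)P+T+M_0^\pm(\lambda)$ from Lemma~\ref{lem:M_exp}. The two structural facts I would lean on are that $S_1\le Q$, so $S_1$ sits entirely in the $Q$-to-$Q$ block, and that the $(1,1)$ block is governed by the logarithmically divergent scalar $g^\pm(\lambda)$, while the $(2,2)$ block is a small perturbation of the invertible operator $QTQ+S_1$. Writing the operator as a $2\times2$ array of blocks and using that $P$ has a nonnegative rank-one kernel (so $PM_0^\pm$-type products stay $\mathcal{O}_1(\lambda^k)$) gives
\begin{align*}
 \begin{pmatrix} P M^\pm(\lambda) P & P M^\pm(\lambda) Q\\ QM^\pm(\lambda) P & QM^\pm(\lambda) Q + S_1\end{pmatrix}
 =\begin{pmatrix} g^\pm(\lambda) P + PTP + \mathcal{O}_1(\lambda^k) & PTQ + \mathcal{O}_1(\lambda^k)\\ QTP + \mathcal{O}_1(\lambda^k) & QTQ + S_1 + \mathcal{O}_1(\lambda^k)\end{pmatrix}.
\end{align*}

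The main step is to invert the $(2,2)$ block $d:=QTQ+S_1+QM_0^\pm(\lambda)Q$. Since $QD_0Q=(QTQ+S_1)^{-1}$ is absolutely bounded, as noted in the remarks following Definition~\ref{resondef}, and $QM_0^\pm Q=\mathcal{O}_1(\lambda^k)$, for $\lambda_1$ small enough the Neumann series converges in the absolutely-bounded norm and yields $d^{-1}=QD_0Q+\mathcal{O}_1(\lambda^k)$. I would verify the $\partial_\lambda$ half of \eqref{O def 1} by differentiating the series term by term, each extra $\partial_\lambda M_0^\pm$ factor contributing the admissible $\lambda^{k-1}$. This is where the absolute boundedness of $QD_0Q$ and the two-sided estimates of Lemma~\ref{lem:M_exp} are essential, and I expect it to be the principal technical obstacle, since every subsequent error term must then be tracked as a genuine $\mathcal{O}_1(\lambda^k)$ operator under composition (including its derivative).

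With $d^{-1}$ in hand I would form the Schur complement on the one-dimensional space $PL^2$,
\begin{align*}
 \Sigma(\lambda) = PM^\pm(\lambda) P - PM^\pm(\lambda) Q\, d^{-1}\, QM^\pm(\lambda) P = g^\pm(\lambda)P + PTP - PTQD_0QTP + \mathcal{O}_1(\lambda^k).
\end{align*}
Because $P$ is rank one and $T$, $P$, $D_0$ all have real symmetric kernels, $PTP$ and $PTQD_0QTP$ act as real scalars on $PL^2$, so $\Sigma(\lambda)=h_\pm(\lambda)P+\mathcal{O}_1(\lambda^k)$ with $h_\pm(\lambda)=g^\pm(\lambda)+c$, $c\in\R$. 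From \eqref{g form} one reads off $h_\pm(\lambda)=a\ln\lambda+z_\pm$ with $a=-\|V\|_1/2\pi\neq0$ and $\Im z_\pm=\pm\|V\|_1/4\neq0$, while $h_+=\overline{h_-}$ because the scalar correction $c$ is real. As $\lambda\to0^+$ we have $|h_\pm(\lambda)|\gtrsim|\log\lambda|\to\infty$ but $|h_\pm(\lambda)|\ge|\Im z_\pm|>0$, so $h_\pm^{-1}$ is bounded, tends to $0$, and dominates the error on this one-dimensional block; hence $\Sigma$ is invertible for small $\lambda$ with $\Sigma^{-1}=h_\pm(\lambda)^{-1}P+\mathcal{O}_1(\lambda^k)$. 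Invertibility of both $d$ and $\Sigma$ gives the invertibility of $M^\pm(\lambda)+S_1$ asserted in the lemma.

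Finally I would assemble the four blocks via the standard formula: $(1,1)$-block $\Sigma^{-1}$, off-diagonal blocks $-\Sigma^{-1}PM^\pm Q\,d^{-1}$ and $-d^{-1}QM^\pm P\,\Sigma^{-1}$, and $(2,2)$-block $d^{-1}+d^{-1}QM^\pm P\,\Sigma^{-1}PM^\pm Q\,d^{-1}$. Substituting $\Sigma^{-1}=h_\pm^{-1}P+\mathcal{O}_1(\lambda^k)$ and $d^{-1}=QD_0Q+\mathcal{O}_1(\lambda^k)$, retaining the leading $h_\pm^{-1}$ terms and collecting everything else (each carrying an extra $\mathcal{O}_1(\lambda^k)$ factor or a harmless extra $h_\pm^{-1}\to0$) into the error, the blocks reduce exactly to $h_\pm^{-1}$ times the entries of $S$ in \eqref{S_defn}, plus the lone $\mathcal{O}(1)$ term $QD_0Q$ in the $(2,2)$ slot, giving
\begin{align*}
 (M^\pm(\lambda)+S_1)^{-1}=h_\pm(\lambda)^{-1}S+QD_0Q+\mathcal{O}_1(\lambda^k).
\end{align*}
The factorization $S=\left(\begin{smallmatrix} P \\ -QD_0QTP \end{smallmatrix}\right)\left(\begin{smallmatrix} P & -PTQD_0Q\end{smallmatrix}\right)$ (using $P^2=P$) shows $S$ is finite rank with real-valued kernel, which completes the claim; the $\mathcal{O}_1$ derivative bounds on the error follow as before by differentiating the finitely many composed factors.
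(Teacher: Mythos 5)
Your proposal is correct and takes essentially the same route as the paper: the $PL^2\oplus QL^2$ block decomposition with $S_1$ sitting in the $Q$-block, inversion of $Q(T+S_1)Q$ via the absolutely bounded $D_0$, the Feshbach/Schur complement producing the scalar $h_\pm(\lambda)=g^\pm(\lambda)+\operatorname{Tr}(PTP-PTQD_0QTP)$ on the rank-one space $PL^2$, and a Neumann series exploiting $M_0^\pm=\mathcal O_1(\lambda^k)$. The only difference is bookkeeping order: the paper inverts the unperturbed matrix $A(\lambda)$ exactly by the Feshbach formula and then absorbs $M_0^\pm$ in a single global factorization $M^\pm(\lambda)+S_1=[\mathbbm{1}+M_0^\pm(\lambda)A^{-1}(\lambda)]A(\lambda)$, whereas you carry the $\mathcal O_1(\lambda^k)$ errors inside each block through the Schur complement and the final assembly, which yields the identical leading terms and error structure.
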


\begin{proof}

	  We will give the proof for $M^+$ and drop the superscript ``$+$" from formulas.  Using Lemma~\ref{lem:M_exp}, we write $M(\lambda)+S_1$ with respect to the decomposition
	$L^2(\R^2)=PL^2(\R^2)\oplus QL^2(\R^2)$.
	\begin{align*}
		M(\lambda)+S_1=\left[ \begin{array}{cc} g(\lambda)P+P(T+S_1)P & P(T+S_1)Q\\
		Q(T+S_1)P & Q(T+S_1)Q
		\end{array}\right]+M_0(\lambda).
	\end{align*}
	Noting that $Q\geq S_1$, we have $S_1P=PS_1=0$.  Therefore,
	\begin{align*}
		M(\lambda)+S_1=\left[ \begin{array}{cc} g(\lambda)P+PTP & PTQ\\
		QTP & Q(T+S_1)Q
		\end{array}\right]+M_0(\lambda).
	\end{align*}
	Denote the matrix component of the above equation by $A(\lambda)=\{a_{ij}(\lambda)\}_{i,j=1}^{2}$.

    Since  $Q(T+S_1)Q$ is invertible, by the Fehsbach formula invertibility of  $A(\lambda)$ hinges upon the existence
    of $d=(a_{11}-a_{12}a_{22}^{-1}a_{21})^{-1}$. Denoting $D_0=(Q(T+S_1)Q)^{-1}:QL^2\to QL^2$, we have
	\begin{align*}
		d= (g(\lambda)P+PTP-PTQD_0QTP)^{-1} =h(\lambda)^{-1}P
	\end{align*}
	with $h(\lambda)=g(\lambda)+Tr(PTP-PTQD_0QTP)=a\ln(\lambda)+z$, with $a\in\R$ and $z\in\mathbb C$.
	This follows from \eqref{g form} and the fact that $Tr(PTP-PTQD_0QTP)$ is $\lambda$ independent and
	real-valued, as the kernels of $T$, $QD_0Q$ and $v$ are real-valued.
	Therefore,  $d$ exists if $\lambda$ is sufficiently small.

Thus, by the
	Fehsbach formula,
	\begin{align}\nonumber
		A(\lambda)^{-1}&=\left[\begin{array}{cc} d & -da_{12}a_{22}^{-1}\\
		-a_{22}^{-1}a_{21}d & a_{22}^{-1}a_{21}da_{12}a_{22}^{-1}+a_{22}^{-1}
		\end{array}\right]\\
		&=h^{-1}(\lambda)\left[\begin{array}{cc} P & -PTQD_0Q\\ -QD_0QTP & QD_0QTPTQD_0Q
		\end{array}\right]+QD_0Q \nonumber\\
		&=:h^{-1}(\lambda)S+QD_0Q. \label{Ainverse}
	\end{align}
    Note that $S$ has rank at most two. This and the absolute boundedness of $QD_0Q$ imply that $A^{-1}(\lambda)= \mathcal O_1(1)$.

    Finally, we write
    $$
    M(\lambda)+S_1=A(\lambda)+M_0(\lambda)=[\mathbbm{1}+M_0(\lambda) A^{-1}(\lambda)] A(\lambda).
    $$
Since $A^{-1}(\lambda)= \mathcal O_1(1)$ and, by Lemma~\ref{lem:M_exp},
    $M_0(\lambda)=\mathcal O_1(\lambda^{k})$ provided $|v(x)|\lesssim \langle x\rangle^{-(1+k)-}$, we obtain
\be\label{M plus S}
        (M^{\pm}(\lambda)+S_1)^{-1}=A_{\pm}^{-1}(\lambda) \big[\mathbbm{1}+M_0^\pm(\lambda) A_\pm^{-1}(\lambda)\big]^{-1}=h_{\pm}(\lambda)^{-1}S+QD_0Q+\mathcal O_1(\lambda^k),
\ee
  by a Neumann series expansion.
\end{proof}

We now prove the invertibility of the operators $B_{\pm}=S_1-S_1(M^\pm(\lambda)+S_1)^{-1}S_1$.

\begin{prop}\label{prop:Binverse1} Assume that $|v(x)|\lesssim \langle x\rangle^{-1-k-}$ for some
$k\in [\frac{1}{2},2)$. Then, in the case of a resonance of the first kind, $B_{\pm}$ is invertible on
$S_1L^2(\R^2)$ and we have
	\begin{align}\label{firstk}
		B^{-1}_{\pm} =-h_{\pm}(\lambda) D_1 +\mathcal O_1(\lambda^k),
	\end{align}
where $D_1=T_1^{-1}=(S_1TPTS_1)^{-1}$, and $h_{\pm}(\lambda)$ is as in Lemma~\ref{M+S1inverse}.
\end{prop}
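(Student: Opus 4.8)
The plan is to feed the expansion of $(M^\pm(\lambda)+S_1)^{-1}$ furnished by Lemma~\ref{M+S1inverse} into the definition of $B_\pm$ and then simplify the sandwich $S_1(\cdots)S_1$ using the algebraic identities satisfied by $S_1$. Writing
$$B_\pm=S_1-S_1\big[h_\pm(\lambda)^{-1}S+QD_0Q+\mathcal O_1(\lambda^k)\big]S_1,$$
I would evaluate the three pieces separately. Since $S_1\leq Q$ we have $S_1P=PS_1=0$, so in the block form \eqref{S_defn} only the $(2,2)$-entry of $S$ survives the sandwiching; combining this with the identities $S_1D_0=D_0S_1=S_1$ from Remark (ii) collapses $S_1QD_0Q=QD_0Q\,S_1=S_1$, which gives $S_1SS_1=S_1TPTS_1=T_1$ and $S_1QD_0Q\,S_1=S_1$. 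The two bare $S_1$ terms then cancel, leaving the clean leading behavior
$$B_\pm=-h_\pm(\lambda)^{-1}T_1+\mathcal O_1(\lambda^k).$$

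In the case of a resonance of the first kind $T_1=S_1TPTS_1$ is invertible on $S_1L^2(\R^2)$ with inverse $D_1$, so I would factor
$$B_\pm=-h_\pm(\lambda)^{-1}T_1\big(S_1-h_\pm(\lambda)D_1\,\mathcal O_1(\lambda^k)\big)$$
and invert the parenthetical factor by a Neumann series. The crucial quantitative point is that although the leading coefficient $h_\pm(\lambda)^{-1}\sim(a\ln\lambda)^{-1}$ tends to $0$, it does so only logarithmically, so the perturbation $h_\pm(\lambda)D_1\,\mathcal O_1(\lambda^k)=\mathcal O_1(\lambda^k\ln\lambda)$ is genuinely small as $\lambda\to0$ and the series converges for $\lambda$ small. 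Multiplying the resulting inverse by $(-h_\pm(\lambda)^{-1}T_1)^{-1}=-h_\pm(\lambda)D_1$ then produces $B_\pm^{-1}=-h_\pm(\lambda)D_1+\mathcal O_1(\lambda^k)$, which is the claimed form \eqref{firstk}.

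The main obstacle, and where the bookkeeping has to be done carefully, is controlling the logarithmic factors $h_\pm(\lambda)$ generated at each multiplication while keeping the remainder in the absolutely bounded class $\mathcal O_1(\cdot)$, which in particular carries the $\partial_\lambda$-bounds of \eqref{O def 1}. Each Neumann iterate contributes an extra power of $\ln\lambda$, so the naive error is only $\mathcal O_1(\lambda^k(\ln\lambda)^2)$. The remedy is to exploit the slack in the hypothesis: since $|v(x)|\lesssim\langle x\rangle^{-1-k-}$ provides strictly more than $1+k$ decay, I would run Lemma~\ref{lem:M_exp} and Lemma~\ref{M+S1inverse} with an exponent $k'$ slightly larger than $k$, so that $\lambda^{k'}(\ln\lambda)^2\lesssim\lambda^k$ and all the logarithmic losses are absorbed into the stated $\mathcal O_1(\lambda^k)$ remainder. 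Verifying that products such as $h_\pm D_1\,\mathcal O_1(\lambda^{k'})$ and the geometric series stay absolutely bounded with the correct derivative bounds—using that $\lambda\partial_\lambda h_\pm=a$ is bounded and that $D_1$ and $S_1$ are finite rank—is then routine.
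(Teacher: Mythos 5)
Your proposal is correct and follows essentially the same route as the paper: you plug the expansion of $(M^\pm(\lambda)+S_1)^{-1}$ from Lemma~\ref{M+S1inverse} into \eqref{B defn}, use $S_1P=PS_1=0$ and $S_1D_0=D_0S_1=S_1$ to reduce the leading term to $-h_\pm(\lambda)^{-1}T_1$, and invert by a Neumann series. Even your device for absorbing the logarithmic losses is the paper's own---its error term is written $\mathcal O_1(\lambda^{k+})$ precisely because the slack in the hypothesis $|v(x)|\lesssim\langle x\rangle^{-1-k-}$ permits running Lemma~\ref{lem:M_exp} with an exponent slightly above $k$, so that the factors of $h_\pm(\lambda)$ picked up in the inversion are dominated by $\lambda^k$.
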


\begin{proof}
	We again prove the case of the ``$+$" superscripts and subscripts and omit them from the notation.
    Using Lemma~\ref{M+S1inverse}, we obtain
	\begin{align*}
		B=S_1-S_1(h(\lambda)^{-1}S+QD_0Q)S_1+ \mathcal O_1(\lambda^{k+}).
	\end{align*}
	Recall that $S_1 D_0=D_0 S_1=S_1$.
	Further, from the definition \eqref{S_defn} of $S$,  and the fact that $S_1P=PS_1=0$, we obtain
$S_1SS_1=S_1 TPT S_1=T_1$. Therefore
	\begin{align}\label{B defn 2}
		B=-h(\lambda)^{-1}S_1SS_1+\mathcal O_1(\lambda^{k+}) =-h(\lambda)^{-1} T_1+ \mathcal O_1(\lambda^{k+}).
	\end{align}	
Recall that by the definition of a resonance of the first kind, the leading term $T_1$ in the
definition of $B$ is invertible on $S_1 L^2(\R^2)$.  Therefore, for sufficiently small $\lambda$,
	\begin{align*}
		B^{-1} =-h(\lambda)[T_1-h(\lambda) \mathcal O_1(\lambda^{k+}) ]^{-1}=-h(\lambda)[T_1+\mathcal O_1(\lambda^{k+})]^{-1} =-h(\lambda) D_1 +  \mathcal O_1(\lambda^{k}). 
	\end{align*}
\end{proof}

Combining Lemma~\ref{JNlemma}, Lemma~\ref{M+S1inverse}, and Proposition~\ref{prop:Binverse1}, we obtain

\begin{corollary}\label{M exp cor1} Assume that $|v(x)|\lesssim \langle x\rangle^{-1-k-}$ for some
$k\in [\frac{1}{2},2)$. Then in the case of a resonance of the first kind, we have
	\begin{multline*}
		M^{\pm}(\lambda)^{-1} =-h_{\pm}(\lambda)S_1D_1S_1-SS_1D_1S_1-S_1D_1S_1S\\
		 -h_{\pm}(\lambda)^{-1}SS_1D_1S_1S+
		h_{\pm}(\lambda)^{-1} S+QD_0Q +\mathcal O_1(\lambda^k),
	\end{multline*}
provided that $\lambda$ is sufficiently small.
\end{corollary}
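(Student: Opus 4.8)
The plan is to feed the two expansions of Lemma~\ref{M+S1inverse} and Proposition~\ref{prop:Binverse1} into the inversion formula of Lemma~\ref{JNlemma}, applied with $A=M^\pm(\lambda)$ and $S=S_1$, and then to simplify the resulting products. Writing $W:=(M^\pm(\lambda)+S_1)^{-1}$, Lemma~\ref{JNlemma} gives
$$ M^\pm(\lambda)^{-1}=W+W S_1 B_\pm^{-1} S_1 W, $$
where $B_\pm=S_1-S_1 W S_1$. Into this I substitute $W=h_\pm(\lambda)^{-1}S+QD_0Q+\mathcal O_1(\lambda^k)$ from Lemma~\ref{M+S1inverse} and $B_\pm^{-1}=-h_\pm(\lambda)D_1+\mathcal O_1(\lambda^k)$ from Proposition~\ref{prop:Binverse1}, with $D_1=S_1D_1S_1$. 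The rest is algebra; the inputs are the two expansions, the relation $S_1D_0=D_0S_1=S_1$, and the fact that $S_1\le Q$, so that $PS_1=S_1P=0$.

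The key step is to compute the one-sided products $S_1W$ and $WS_1$. Since $S_1\le Q$ one has $S_1\,QD_0Q=QD_0Q\,S_1=S_1$, so the $QD_0Q$ summand of $W$ contributes only $S_1$ when met by $S_1$ on either side. For the $S$ summand I would use the block form \eqref{S_defn}: because $PS_1=0$, only the lower row, resp.\ column, of $S$ survives, giving $S_1S=-S_1TP+S_1TPTQD_0Q$ and $SS_1=-PTS_1+QD_0QTPTS_1$. Hence
$$ S_1W=S_1+h_\pm(\lambda)^{-1}S_1S+\mathcal O_1(\lambda^k),\qquad WS_1=S_1+h_\pm(\lambda)^{-1}SS_1+\mathcal O_1(\lambda^k). $$
Multiplying $WS_1$, $B_\pm^{-1}$, $S_1W$ in this order and using $S_1D_1S_1=D_1$ and $S_1^2=S_1$, the four surviving products are $-h_\pm(\lambda)S_1D_1S_1$ (from $S_1\cdot(-h_\pm D_1)\cdot S_1$), the two cross terms $-SS_1D_1S_1$ and $-S_1D_1S_1S$ (in each of which one factor $h_\pm^{-1}S$ cancels against $-h_\pm D_1$), and $-h_\pm(\lambda)^{-1}SS_1D_1S_1S$. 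Adding back $W=h_\pm(\lambda)^{-1}S+QD_0Q+\mathcal O_1(\lambda^k)$ then assembles exactly the seven terms in the statement.

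The point that needs care, and which I expect to be the only real (if minor) obstacle, is the bookkeeping of the error terms. Several products pair an $\mathcal O_1(\lambda^k)$ factor with the large coefficient $h_\pm(\lambda)\sim a\ln\lambda$; as in the proof of Proposition~\ref{prop:Binverse1}, such a product is a priori only $\mathcal O_1(\lambda^{k-})$, and one recovers $\mathcal O_1(\lambda^k)$ by starting from the slightly better bound $\mathcal O_1(\lambda^{k+})$ afforded by the hypothesis $|v(x)|\les\langle x\rangle^{-1-k-}$. The remaining products — an $\mathcal O_1(\lambda^k)$ against a fixed finite-rank operator such as $S$, $S_1$, $D_1$ or $QD_0Q$, against the bounded (indeed vanishing as $\lambda\to0$) scalar $h_\pm(\lambda)^{-1}$, or against another $\mathcal O_1(\lambda^k)$ — are all absorbed into $\mathcal O_1(\lambda^k)$, using that the $\mathcal O_1$ class is closed under composition with absolutely bounded operators and that $k>0$. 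Collecting all terms yields the claimed expansion for $\lambda$ sufficiently small.
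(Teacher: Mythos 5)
Your proposal is correct and is essentially the paper's own proof: both apply Lemma~\ref{JNlemma} with $A=M^\pm(\lambda)$, $S=S_1$, substitute the expansions from Lemma~\ref{M+S1inverse} and Proposition~\ref{prop:Binverse1} into $W+WS_1B_\pm^{-1}S_1W$, and simplify using $S_1QD_0Q=QD_0QS_1=S_1$ and $PS_1=S_1P=0$. Your extra remark on the error bookkeeping (absorbing factors of $h_\pm(\lambda)\sim\log\lambda$ by starting from $\mathcal O_1(\lambda^{k+})$, available since $|v(x)|\lesssim\langle x\rangle^{-1-k-}$) correctly fills in the one detail the paper leaves implicit, exactly as in its proof of Proposition~\ref{prop:Binverse1}.
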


\begin{proof}
    Combining Lemma~\ref{JNlemma}, Lemma~\ref{M+S1inverse}, and Proposition~\ref{prop:Binverse1}, we have
\begin{multline*}
		M^{\pm}(\lambda)^{-1} =(M^{\pm}(\lambda)+S_1)^{-1}+(M^{\pm}(\lambda)+S_1)^{-1}S_1B^{-1}S_1
		(M^{\pm}(\lambda)+S_1)^{-1}\\
		 =h_{\pm}(\lambda)^{-1}S+QD_0Q  -h(\lambda)\big(h_{\pm}(\lambda)^{-1}S+QD_0Q\big) S_1D_1
		S_1 \big(h_{\pm}(\lambda)^{-1}S+QD_0Q\big) +\mathcal O_1(\lambda^k)\\
=-h_{\pm}(\lambda)S_1D_1S_1-SS_1D_1S_1-S_1D_1S_1S-h_{\pm}(\lambda)^{-1}SS_1D_1S_1S\\ +
h_{\pm}(\lambda)^{-1} S+QD_0Q +\mathcal O_1(\lambda^k).
	\end{multline*} 
Here we used the fact that $S_1QD_0Q=QD_0QS_1=S_1$.
\end{proof}

\noindent
{\bf Remark.}  Under the conditions of Corollary~\ref{M exp cor1}, the resolvent identity
	\begin{align}\label{resolvent id}
	    R_V^{\pm}(\lambda^2)=R_0^{\pm}(\lambda^2)-R_0^{\pm}(\lambda^2) v M^{\pm}(\lambda)^{-1}v
	    R_0^{\pm}(\lambda^2)
	\end{align}
	holds as an  operator identity  on $L^{2,\frac{1}{2}+}(\R^2)\to L^{2,-\frac{1}{2}-}(\R^2)$, as in the limiting absorption principle, \cite{agmon}.

\section{Resonance of the first kind}\label{sec:swave}
In this section, we establish the estimates needed to prove Theorem~\ref{big thm}.  We assume that there
is a resonance of the first kind, $\lambda_1$ is sufficiently small (so that the analysis in the previous section is valid), and that
$v(x)\lesssim \langle x\rangle^{-(1+k)-}$ for $k=1$, or equivalently $|V(x)|\lesssim \langle x\rangle^{-4-}$.
 It suffices to  prove that
\begin{theorem}\label{swave thm} Under the conditions above, we have
\be\label{disp}
|\langle K_{\lambda_1} f, g\rangle| \lesssim |t|^{-1},
\ee
for Schwartz functions $f$ and $g$ with $\|f\|_1=\|g\|_1=1$.
\end{theorem}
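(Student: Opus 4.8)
The plan is to insert the symmetric resolvent identity \eqref{res_exp} into the representation \eqref{klambda} of $K_{\lambda_1}$ and to handle the free part and the correction part separately. The free contribution, coming from $R_0^\pm(\lambda^2)$, is (up to the cutoff $\chi$) the low-energy part of the free two-dimensional propagator; since $R_0^+(\lambda^2)-R_0^-(\lambda^2)=\frac{i}{2}J_0(\lambda|x-y|)$ is free of the logarithmic singularity at $\lambda=0$ (recall \eqref{H0}, \eqref{J0 def}), the standard $|t|^{-1}$ stationary-phase bound for $e^{it\Delta}$ applies. It then remains to control the bilinear form arising from $-R_0^\pm v M^\pm(\lambda)^{-1} v R_0^\pm$. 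Using $(R_0^\pm v)^*=vR_0^\mp$, I would reduce \eqref{disp} to bounding, uniformly over $L^1$-normalized Schwartz data,
\[
\int_0^\infty e^{it\lambda^2}\lambda\chi(\lambda)\Big[\langle M^+(\lambda)^{-1}vR_0^+ f,\,vR_0^- g\rangle-\langle M^-(\lambda)^{-1}vR_0^- f,\,vR_0^+ g\rangle\Big]\,d\lambda
\]
by $|t|^{-1}$, into which I substitute the expansion of $M^\pm(\lambda)^{-1}$ from Corollary~\ref{M exp cor1}.

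The key is that two cancellations defuse every logarithm. First, since $P$ projects onto $v$ we have $Pv=v$, hence $Qv=0$ and $S_1v=0$: wherever a free resolvent is adjacent to $Q$ or $S_1$, its $(x,y)$-independent part $g^\pm(\lambda)/\|V\|_1$ — the piece carrying the $\log\lambda$ singularity, see \eqref{g form} — is annihilated, leaving the $\lambda$-independent operator $vG_0$ at leading order. This neutralizes the $\log\lambda$ growth of $vR_0^\pm f$, $vR_0^\pm g$ against the projections in the $S_1D_1S_1$, $QD_0Q$ and off-diagonal-$S$ terms. Second, I would telescope the difference as $(A^+-A^-)B^+C^+ + A^-(B^+-B^-)C^+ + A^-B^-(C^+-C^-)$ with $A^\pm=R_0^\pm v$, $B^\pm=M^\pm(\lambda)^{-1}$, $C^\pm=vR_0^\pm$, and exploit that every factor-difference drops a singularity: $A^+-A^-=\frac{i}{2}J_0(\lambda|x-y|)v$ and $C^+-C^-=v\frac{i}{2}J_0(\lambda|x-y|)$ vanish to order $\lambda^2$ against $S_1$ (as $J_0=1+O(\lambda^2)$ and $S_1v=0$), while in $B^+-B^-$ the leading term loses its $\log\lambda$ because $h_+(\lambda)-h_-(\lambda)=z-\bar z$ is a nonzero imaginary constant and $h_+(\lambda)^{-1}-h_-(\lambda)^{-1}=\mathcal O((\log\lambda)^{-2})$.

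Once the logarithms are removed, each term becomes an oscillatory integral $\int_0^{\lambda_1}e^{it\lambda^2}\lambda\chi(\lambda)a(\lambda)\,d\lambda$ with a bounded scalar amplitude $a$ satisfying $|a'(\lambda)|\lesssim\lambda^{-1}(\log(1/\lambda))^{C}$, the $\mathcal O_1(\lambda^k)$ error of Corollary~\ref{M exp cor1} (with $k=1$) contributing an amplitude carrying an extra factor $\lambda$. A single integration by parts through $\lambda e^{it\lambda^2}=\frac{1}{2it}\partial_\lambda e^{it\lambda^2}$ then produces $|t|^{-1}$: the $\lambda_1$-endpoint vanishes since $\mathrm{supp}\,\chi\subset[-\lambda_1,\lambda_1]$, and the $\lambda=0$ endpoint vanishes precisely because the cancellations above make $a(0)$ finite (indeed $a(0)=0$ for the error terms), leaving $\int_0^{\lambda_1}|\partial_\lambda(\chi a)|\,d\lambda<\infty$. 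All the weighted pairings converge because $v\lesssim\langle x\rangle^{-2-}$ (i.e.\ $\beta>4$) dominates the $\log|x|$ growth of $R_0^\pm f$, $R_0^\pm g$ and $f,g$ are Schwartz.

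I expect the main obstacle to be the $h_\pm(\lambda)^{-1}$ terms, $h_\pm^{-1}S$ and $h_\pm^{-1}SS_1D_1S_1S$. Here a decaying factor $(\log\lambda)^{-1}$ multiplies the $P$-block of $S$, and against $P$ (rather than $Q$) the $\log\lambda$ growth of $vR_0^\pm f$ is \emph{not} killed by $Qv=0$; the product is bounded but nondecaying, and a careless integration by parts would both produce a divergent boundary term at $\lambda=0$ and cost a spurious $|t|^{-1}\log t$. The resolution rests on the exact matching of leading logarithms: by \eqref{g form} the $(x,y)$-independent part of $R_0^\pm(\lambda^2)$ is $g^\pm(\lambda)/\|V\|_1$, while $h_\pm(\lambda)=g^\pm(\lambda)+\mathrm{const}$, so $g^\pm(\lambda)/(\|V\|_1\,h_\pm(\lambda))\to\|V\|_1^{-1}$ with error $\mathcal O((\log\lambda)^{-2})$. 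Combining this with the $+/-$ difference forces the surviving leading logarithms to cancel, yielding an amplitude with integrable $\lambda$-derivative and hence the clean $|t|^{-1}$ bound.
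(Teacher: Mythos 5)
Your skeleton coincides with the paper's: insert the symmetric resolvent identity \eqref{res_exp} into \eqref{klambda}, substitute the expansion of $M^{\pm}(\lambda)^{-1}$ from Corollary~\ref{M exp cor1}, use the orthogonality \eqref{Q trick} (i.e.\ $S_1v=Qv=0$) to trade the $\log\lambda$ singularities for spatial logarithms, and exploit the $+/-$ cancellation in Stone's formula (your telescoping identity is the paper's computation \eqref{K defn}, and your remark about the matched logarithms $g^{\pm}(\lambda)/h_{\pm}(\lambda)\to \|V\|_1^{-1}+\mathcal O((\log\lambda)^{-2})$ is the correct heuristic behind Proposition~\ref{S prop}). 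The genuine gap is your claim that, once the logarithms are removed, every term is an oscillatory integral with \emph{bounded} amplitude satisfying $|a'(\lambda)|\les \lambda^{-1}(\log(1/\lambda))^{C}$, to be dispatched by a single integration by parts against $e^{it\lambda^2}$. This fails in the high-energy regime $\lambda|x-x_1|\gtrsim 1$, which is unavoidable: $f,g$ are only $L^1$-normalized, so $x,y$ range over all of $\R^2$ with no relation to $t$. There $H_0^{\pm}(\lambda|x-x_1|)=e^{\pm i\lambda|x-x_1|}\omega_{\pm}(\lambda|x-x_1|)$ by \eqref{JYasymp2}, so if you treat $e^{\pm i\lambda|x-x_1|}$ as amplitude its $\lambda$-derivative produces a factor $|x-x_1|(\lambda|x-x_1|)^{-1/2}$, and the resulting bound after one integration by parts grows like $|t|^{-1}|x-x_1|$ --- unbounded in $x$ and not integrable against $f\in L^1$ (the weight $v(x_1)$ helps only in $x_1$). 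This is exactly where the paper spends its technical effort: in Lemmas~\ref{low high log YJ lemma} and \ref{high high log YJ lemma} the slow oscillation $e^{\pm i\lambda q}$ with $q=\min(|y-y_1|,1+|y|)$ is absorbed into the phase $\phi_{\pm}(\lambda)=\lambda^2\pm\lambda q/t$, the stationary-phase variant Lemma~\ref{stat phase} is applied around the shifted critical point $\lambda_0=q/(2t)$ (with a two-case analysis $\lambda_0\les t^{-1/2}$ versus $\lambda_0\gtrsim t^{-1/2}$), and the difference functions $\widetilde G^{\pm}$ of Lemma~\ref{G2 lemma} --- again built from \eqref{Q trick} --- are used so that the amplitude derivative costs only $\la y_1\ra$ rather than $|y-y_1|$; this weight is then absorbed by $v$, which is precisely why $\beta>4$ is assumed. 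Without this mechanism your argument cannot close.

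Two secondary imprecisions point the same way. First, even at low energies the stated bound $|a'(\lambda)|\les\lambda^{-1}(\log(1/\lambda))^{C}$ is not integrable at $\lambda=0$; what makes the paper's integration by parts close (Lemma~\ref{low log YJ lemma}) is the finer structure of Lemma~\ref{lem:FG}: the $J_0$-difference $G$ gains a factor $\lambda^{\tau}\la y_1\ra^{\tau}$, so the cross terms $F\,\partial_\lambda G$ and $(\partial_\lambda F)\,G$ are integrable, while $\partial_\lambda F$, though only $O(1/\lambda)$ pointwise, has total $\lambda$-mass $\les k(x,x_1)$ --- one of these refinements must appear in your derivative estimate. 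Second, your one-line disposal of the $\mathcal O_1(\lambda^{k})$ error and of the $h_{\pm}(\lambda)^{-1}S$, $h_{\pm}(\lambda)^{-1}SS_1D_1S_1S$ terms understates them: the paper delegates these to Propositions~\ref{S prop} and \ref{Error term lemma} (Lemmas~17--18 of \cite{Sc2}), whose proofs again require the high-energy oscillation, since $|\partial_\lambda R_0^{\pm}(\lambda^2)(x,x_1)|\les (|x-x_1|/\lambda)^{1/2}$ by \eqref{partial R0} is unbounded in $x$; the matched-logarithm cancellation you describe controls only the $\lambda\to 0$ singularity, not this spatial growth.
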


 This theorem will be established in Propositions~\ref{prop:summand2}, \ref{S and S_1 terms lemma}, \ref{S prop}, and \ref{Error term lemma}. All statements in this section are valid under the conditions above.
\begin{prop}\label{prop:summand2}
	The contribution of the first term in Corollary~\ref{M exp cor1} in \eqref{klambda} satisfies \eqref{disp}. More explicitly, we have the bound
\begin{align*}
  	\bigg|\int_{\R^8} \int_0^\infty e^{it\lambda^2} \lambda \chi(\lambda)\mathcal{K}(\lambda,p,q)
	v(x_1)S_1D_1S_1(x_1,y_1)
   	v(y_1)  f(x)g(y) d\lambda dx_1 dy_1 dx dy\bigg|\lesssim |t|^{-1},
\end{align*}
where $p=|x-x_1|$, $q=|y-y_1|$, and
\begin{multline}  
	\mathcal{K}(\lambda,p,q) =h^+(\lambda) H^{+}_0(\lambda p)H_0^+(\lambda q)-h^-(\lambda)
	H^{-}_0(\lambda p)H_0^-(\lambda q) \\
	 =2ia\log(\lambda) [Y_0(\lambda p)J_0(\lambda q)+J_0(\lambda p)Y_0(\lambda q)]
+2z [J_0(\lambda p)J_0(\lambda q)+Y_0(\lambda p)Y_0(\lambda q)].\label{K defn}
\end{multline}
\end{prop}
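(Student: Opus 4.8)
The plan is to combine the orthogonality encoded in $S_1$ with a single integration by parts in $\lambda$, handling the small and large Bessel-argument regimes separately. First I would record how $\mathcal K$ arises: inserting the leading term $-h_\pm(\lambda)S_1D_1S_1$ of Corollary~\ref{M exp cor1} into the symmetric resolvent identity \eqref{resolvent id} and using $R_0^\pm(\lambda^2)(x,y)=\pm\tfrac{i}{4}H_0^\pm(\lambda|x-y|)$ produces exactly the kernel $\mathcal K(\lambda,p,q)$ of \eqref{K defn}, up to the harmless constant $-\tfrac{1}{16\pi i}$, with $p=|x-x_1|$ and $q=|y-y_1|$. The structural fact I would lean on is that $Pv=v$, hence $Qv=0$, and since $S_1\le Q$ we get $S_1v=0$; by self-adjointness and reality of the kernel of $S_1$ this yields $\int v(x_1)S_1D_1S_1(x_1,y_1)\,dx_1=0$ and $\int S_1D_1S_1(x_1,y_1)v(y_1)\,dy_1=0$. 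Consequently, inside the integral I may subtract from $\mathcal K$ any summand independent of $p$ (annihilated by the $x_1$-integration) or independent of $q$ (annihilated by the $y_1$-integration) without changing its value.

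Next I would choose the subtractions to cancel the singularity at $\lambda=0$. Using \eqref{J0 def} and \eqref{Y0 def}, the $\lambda$-only leading parts of the Bessel factors are $J_0(\lambda p)\approx 1$ and $Y_0(\lambda p)\approx \alpha(\lambda):=\tfrac2\pi\log\lambda+\tfrac2\pi(\gamma-\log2)$. Orthogonality then lets me replace every product $X(\lambda p)\,Y(\lambda q)$ in $\mathcal K$ (with $X,Y\in\{J_0,Y_0\}$) by $(X(\lambda p)-X^{(0)})(Y(\lambda q)-Y^{(0)})$, where $J_0^{(0)}=1$ and $Y_0^{(0)}=\alpha(\lambda)$; call the resulting kernel $\widetilde{\mathcal K}$. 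Localizing first to $\lambda(p+q)\lesssim1$ by a smooth cutoff, this is exactly what I need: $J_0(\lambda p)-1=O((\lambda p)^2)$ and $Y_0(\lambda p)-\alpha(\lambda)=\tfrac2\pi\log p+O((\lambda p)^2\log(\lambda p))$ stay bounded as $\lambda\to0^+$, killing the $(\log\lambda)^2$ blow-up of $\mathcal K$; and crucially $\partial_\lambda\big(Y_0(\lambda p)-\alpha(\lambda)\big)=pY_0'(\lambda p)-\tfrac2{\pi\lambda}$, where the two $1/\lambda$ singularities cancel, leaving a quantity bounded in $\lambda$. Thus both $\widetilde{\mathcal K}$ and $\partial_\lambda\widetilde{\mathcal K}$ are integrable near $\lambda=0$ with bounds growing at most logarithmically in $p,q$. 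I would then write $\lambda e^{it\lambda^2}=\tfrac1{2it}\partial_\lambda e^{it\lambda^2}$ and integrate by parts once; the boundary term at $\lambda=0$ equals $-\tfrac1{2it}\chi(0)\widetilde{\mathcal K}(0,p,q)$, finite and of size $|t|^{-1}$ (its leading piece is $\propto z\log p\log q$, from the $Y_0Y_0$ term), while the remaining integral $-\tfrac1{2it}\int_0^\infty e^{it\lambda^2}\partial_\lambda(\chi\widetilde{\mathcal K})\,d\lambda$ is $O(|t|^{-1})$ by the integrability just noted.

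For the complementary region $\lambda(p+q)\gtrsim1$ (possible since $p,q$ are unbounded although $\lambda<\lambda_1$) I would use the representation $H_0^\pm(z)=e^{\pm iz}\omega_\pm(z)$, $|\omega_\pm^{(\ell)}(z)|\lesssim(1+|z|)^{-1/2-\ell}$, from \eqref{JYasymp2}. The two summands of $\mathcal K$ then carry phases $t\lambda^2\pm\lambda(p+q)$. Taking $t>1$ (the case $t<0$ being conjugate), the $h^+H_0^+H_0^+$ term has monotone phase ($2t\lambda+(p+q)>0$) and gives rapid decay by repeated integration by parts; the delicate term is $h^-H_0^-H_0^-$, whose phase has a stationary point at $\lambda_*=(p+q)/2t$. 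A van der Corput estimate ($\phi''=2t$) bounds its contribution by $|t|^{-1/2}$ times the amplitude $\lambda|h^-(\lambda)|\,|\omega_-(\lambda p)|\,|\omega_-(\lambda q)|\sim|\log\lambda|(pq)^{-1/2}$ near $\lambda_*$. Here I would exploit that being simultaneously in this regime ($\lambda_*p,\lambda_*q>1$) and having $\lambda_*\in(0,\lambda_1)$ forces $p,q>2t/(p+q)$, whence $(pq)^{-1/2}<(p+q)/2t$ and the contribution is in fact $\lesssim|t|^{-3/2}(p+q)|\log\lambda_*|$; this is the gain that restores (indeed beats) the $|t|^{-1}$ rate after the spatial integration, while the regime $\lambda_*>\lambda_1$ is again nonstationary.

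Finally I would assemble: the pointwise-in-$(p,q)$ bounds are uniform up to factors growing like $\log p$, $\log q$, or $p+q$, and since $v(x)\lesssim\langle x\rangle^{-2-}$ (the case $k=1$, $\beta>4$) the weights $v(x_1)\langle x_1\rangle$ and $v(x_1)\log\langle x_1\rangle$ lie in $L^1(\R^2)$, $S_1D_1S_1$ has a finite-rank (hence Hilbert--Schmidt) kernel, and $\|f\|_1=\|g\|_1=1$ with $f,g$ Schwartz absorbs the remaining growth through $p\le|x|+|x_1|$, $q\le|y|+|y_1|$. Fubini, justified by absolute convergence, then gives \eqref{disp}. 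The step I expect to be the real work is the large-argument analysis: securing the full $|t|^{-1}$ uniformly in $(p,q)$, matching it to the small-argument regime across $\lambda\sim 1/(p+q)$ (the cutoff-derivative terms are localized there and lower order), and controlling the non-oscillatory $(\log\lambda)^2$ artifacts introduced by the subtraction, which survive only on a $p$-dependent subinterval and are handled by the substitution $u=\lambda^2$ together with the induced $p$-dependent lower cutoff.
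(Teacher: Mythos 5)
Your setup matches the paper's: the derivation of $\mathcal K$ from the first term of Corollary~\ref{M exp cor1}, the observation $S_1v=0$ underlying \eqref{Q trick}, and the cancellation of the two $1/\lambda$ singularities in $\partial_\lambda\big(Y_0(\lambda p)-\tfrac2\pi\log\lambda\big)$ are all exactly the right starting points. But there is a genuine gap precisely where you flagged "the real work," and it infects your low-energy regime as well. The root cause is that you subtract functions of $\lambda$ alone ($1$ and $\alpha(\lambda)$). Your renormalized kernel then carries factors $\log p\,\log q\sim \log|x-x_1|\,\log|y-y_1|$ (e.g., in your boundary term at $\lambda=0$), and your high-energy bound carries a factor $(p+q)$. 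These grow in the \emph{external} variables $x,y$, and an $L^1\to L^\infty$ operator bound requires the kernel estimate to be uniform in $x,y$: you cannot absorb growth in $x$ through $\|f\|_1=1$ (test against an approximate delta at large $x$). The paper's device — the one idea you are missing — is to use the same orthogonality to subtract the Bessel factor evaluated at the $x$-dependent comparison point $1+|x|$ (resp.\ $1+|y|$), which is still independent of $x_1$ (resp.\ $y_1$): these are the functions $F,G$ of Lemma~\ref{lem:FG} and $\widetilde G$ of Lemma~\ref{G2 lemma}. The resulting differences are bounded by $k(x,x_1)=1+\log^+|x_1|+\log^-|x-x_1|$, resp.\ by $(\lambda|p-q|)^{\tau}$ with $|p-q|\les \la x_1\ra$, so all growth sits in the \emph{inner} variables $x_1,y_1$ and is absorbed by the decay of $v$ (this is where $\beta>4$ enters), while the local singularity $\log^-|x-x_1|$ is controlled through $\sup_x\|k(x,\cdot)v(\cdot)\|_2$ as in \eqref{1st der bound}.

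Quantitatively, your high-energy estimate does not close even before the spatial integration: at the stationary point $\lambda_*=(p+q)/2t\le\lambda_1$ your bound $|t|^{-3/2}(p+q)|\log\lambda_*|$ is only $O(|t|^{-1/2}|\log\lambda_*|)$ in the worst case $p+q\sim t\lambda_1$, so the claimed gain that "restores (indeed beats) $|t|^{-1}$" is not there. The paper obtains the full $t^{-1}$ by applying the tailored stationary-phase bound of Lemma~\ref{stat phase} to the \emph{renormalized} amplitude built from $\widetilde G$, splitting the cases $\lambda_0\les t^{-1/2}$ and $\lambda_0\gtrsim t^{-1/2}$ and exploiting the inequality $p_iq_j\gtrsim t$ valid on the support of the cutoffs (see \eqref{piqjcalc} in Lemma~\ref{high high log YJ lemma}); the leftover weights are only $\la x_1\ra$, $\la y_1\ra$. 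Finally, your two-regime decomposition by $\lambda(p+q)\lessgtr 1$ is structurally incomplete: on $\lambda(p+q)\gtrsim 1$ one may still have $\lambda p\le 1$, where the representation $H_0^\pm(z)=e^{\pm iz}\omega_\pm(z)$ of \eqref{JYasymp2} is simply invalid. The mixed low--high configuration needs its own argument — the paper's Lemma~\ref{low high log YJ lemma}, with $F$ on the low side, $\widetilde G$ on the high side, and the singular derivative bound $|\partial_\lambda F|\les 1/\lambda$ compensated by the $\la y_1\ra$ weight — so you would need to cut off each Bessel factor separately, as the paper does, rather than jointly in $\lambda(p+q)$.
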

To prove this proposition, we need  to consider the high and low energy contributions of the Bessel functions separately. To this end we use the partitions of unity
 $1=\chi(\lambda |y-y_1|)+\tilde{\chi}(\lambda|y-y_1|)$ and
$1=\chi(\lambda |x-x_1|)+\tilde{\chi}(\lambda|x-x_1|)$.   We divide the proof of Proposition~\ref{prop:summand2} into
Lemmas~\ref{low log YJ lemma}, \ref{low high log YJ lemma},
\ref{high high log YJ lemma} and
their respective corollaries, Corollaries~\ref{low YJ lemma2}, \ref{low high YJ lemma2},
 due to the various terms arising in \eqref{K defn}.

For the low energy parts, the following lemma will be useful:

\begin{lemma}\label{lem:FG}
	
	Let $p=|x-x_1|$, $q=|x|+1$, and
	\begin{align*}
		F(\lambda,x,x_1)&:=\chi(\lambda p)Y_0(\lambda p)-\chi(\lambda q) Y_0(\lambda q ),\\
		G(\lambda,x,x_1)&:=\chi(\lambda p)J_0(\lambda p)-\chi(\lambda q) J_0(\lambda q).
	\end{align*}
	Then for any $\tau\in [0,1]$ and $\lambda \leq 2\lambda_1$ we have
	\begin{align*}
		|G(\lambda,x,x_1)|&\lesssim  \lambda^{\tau} \langle x_1\rangle^{\tau},
		\qquad \,\,\,\,\,\,|\partial_\lambda G(\lambda,x,x_1)|\lesssim \langle x_1\rangle^\tau\lambda^{\tau-1},\\
		|F(\lambda,x,x_1)|&\leq \int_0^{2\lambda_1} |\partial_\lambda F(\lambda,x,x_1)| d\lambda
		+|F(0+,x,x_1)|\lesssim k(x,x_1),\,\,\,\,\,\,\,\,\,\,\,
|\partial_\lambda F(\lambda,x,x_1)| \lesssim \frac1\lambda. 
	\end{align*}
Here $k(x,x_1):=1+\log^+|x_1|+\log^- |x-x_1|$, where $\log^- y:=\chi_{\{0<y<1\}} |\log y|$ and $\log^+ y:=\chi_{\{y>1\}} \log y$.G
\end{lemma}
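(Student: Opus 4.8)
The unifying observation is that each of $F$ and $G$ has the form $\Phi(\lambda p)-\Phi(\lambda q)$ for a single one–variable profile $\Phi$ built from $\chi$ and a Bessel function, so the two–variable estimates reduce to one–dimensional bounds on $\Phi$ together with the elementary geometric fact that $|p-q|=\big||x-x_1|-|x|-1\big|\le 1+|x_1|\lesssim \langle x_1\rangle$, by the reverse triangle inequality. Since $\chi$ is supported in $[-\lambda_1,\lambda_1]$ with $\lambda_1$ small, every profile is supported in $[0,\lambda_1]$, and only the small–argument expansions \eqref{J0 def}, \eqref{Y0 def}, \eqref{Jprime} of the Bessel functions are needed.

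For $G$ I set $\Phi_J(z):=\chi(z)J_0(z)$; by \eqref{J0 def} and \eqref{Jprime} this is smooth and compactly supported with $\Phi_J,\Phi_J',\Phi_J''$ bounded. Then $G=\Phi_J(\lambda p)-\Phi_J(\lambda q)$, which I bound two ways: trivially $|G|\le |\Phi_J(\lambda p)|+|\Phi_J(\lambda q)|\lesssim 1$, and by the mean value theorem $|G|\le \|\Phi_J'\|_\infty\,\lambda|p-q|\lesssim \lambda\langle x_1\rangle$. Because $\min(1,A)\le A^\tau$ for $A\ge 0$ and $\tau\in[0,1]$, these combine to $|G|\lesssim (\lambda\langle x_1\rangle)^\tau=\lambda^\tau\langle x_1\rangle^\tau$. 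Writing $\partial_\lambda G=\tfrac1\lambda\big[\Psi_J(\lambda p)-\Psi_J(\lambda q)\big]$ with $\Psi_J(z):=z\Phi_J'(z)$ (again smooth, bounded, with bounded derivative), the same trivial/mean–value pair and interpolation give $|\Psi_J(\lambda p)-\Psi_J(\lambda q)|\lesssim \lambda^\tau\langle x_1\rangle^\tau$ and hence $|\partial_\lambda G|\lesssim \lambda^{\tau-1}\langle x_1\rangle^\tau$.

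For $F$ the profile $\Phi_Y(z):=\chi(z)Y_0(z)$ carries the logarithm. Using \eqref{Y0 def}, \eqref{Jprime} I write $zY_0'(z)=\tfrac2\pi+O(z^2\log z)$, so that $\Psi_Y(z):=z\Phi_Y'(z)=\tfrac{2}{\pi}\chi(z)+\rho(z)$, where $\rho$ is supported in $[0,\lambda_1]$, is bounded, and satisfies $\rho(z)=O(z^2\log z)$ near $0$. Boundedness of $\Psi_Y$ immediately gives the coarse bound $|\partial_\lambda F|=\tfrac1\lambda|\Psi_Y(\lambda p)-\Psi_Y(\lambda q)|\lesssim \tfrac1\lambda$, which is the second claim of the lemma.

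The delicate claim is $|F|\lesssim k(x,x_1)$, for which I use $|F(\lambda)|\le |F(0+)|+\int_0^{2\lambda_1}|\partial_\mu F|\,d\mu$. As $\mu\to 0$ the logarithms combine to $F(0+,x,x_1)=\tfrac2\pi\log(p/q)=\tfrac2\pi\log\frac{|x-x_1|}{|x|+1}$; a short case analysis — splitting on whether $p\gtrless q$ and, in the lower case, whether $p\lessgtr 1$, and each time estimating $p$ or $q$ by $|x|\le |x-x_1|+|x_1|$ — yields $|\log(p/q)|\lesssim 1+\log^+|x_1|+\log^-|x-x_1|=k(x,x_1)$. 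For the integral the naive bound $\tfrac1\mu$ is \emph{not} integrable, so the crux is to extract the cancellation of the $\tfrac1\mu$ singularities: from $\Psi_Y=\tfrac2\pi\chi+\rho$ one has $\partial_\mu F=\tfrac1\mu\big\{\tfrac2\pi[\chi(\mu p)-\chi(\mu q)]+[\rho(\mu p)-\rho(\mu q)]\big\}$. The $\chi$–difference vanishes unless $\mu\in[\lambda_1/(2M),\lambda_1/m]$ with $M=\max(p,q)$, $m=\min(p,q)$, so $\int_0^{2\lambda_1}\tfrac1\mu|\chi(\mu p)-\chi(\mu q)|\,d\mu\lesssim \log(2M/m)=\log 2+|\log(p/q)|\lesssim k$; for the remainder, the substitution $s=\mu p$ (resp.\ $s=\mu q$) together with $|\rho(s)|\lesssim s^{2-}$ gives $\int_0^{2\lambda_1}\tfrac1\mu|\rho(\mu p)|\,d\mu=\int_0^{2\lambda_1 p}\tfrac{|\rho(s)|}{s}\,ds\lesssim \int_0^{\lambda_1}s^{1-}\,ds\lesssim 1$. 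Summing the pieces gives $\int_0^{2\lambda_1}|\partial_\mu F|\,d\mu\lesssim k$ and completes the bound. I expect this $F$–analysis — the identification $F(0+)=\tfrac2\pi\log(p/q)$ and its pinning to $k(x,x_1)$, and the extraction of the $\log$ from the $\chi$–transition term after the $\tfrac1\mu$ singularities cancel — to be the main obstacle, the $G$–estimates being routine.
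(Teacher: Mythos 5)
Your proposal is correct and takes essentially the same route as the paper: the $G$-bounds via the profiles $g(s)=\chi(s)J_0(s)$, $g_1(s)=sg'(s)$ with the mean value theorem and the interpolation $\min(1,A)\lesssim A^{\tau}$ are identical, and your treatment of $F$ (fundamental theorem of calculus from $\lambda=0+$, boundary value $\tfrac{2}{\pi}\log(p/q)\lesssim k(x,x_1)$, cancellation of the $1/\lambda$ singularities leaving only the cutoff-transition interval, which contributes $\log(M/m)\lesssim k(x,x_1)$) matches the paper's bound on $\int_0^{2\lambda_1}|\partial_\lambda F|\,d\lambda$ via the $\chi'$-terms and $\tfrac1\lambda|\chi(\lambda p)-\chi(\lambda q)|$. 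Your decomposition $\Psi_Y=\tfrac2\pi\chi+\rho$ is merely a slightly more systematic packaging of the same cancellation (and it handles the $O(z\log z)$ correction to $Y_0'$ a bit more explicitly than the paper does).
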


\begin{proof}
We start with $G$. Let $g(s):=\chi(s)J_0(s)$. We have $g^\prime(s) =O(1).$ Therefore,  by the  mean value theorem and the boundedness of $g$, we have
\begin{align*}
	|G(\lambda,x,x_1)|\lesssim \min(\lambda |p-q|,1) \lesssim \min( \lambda \langle x_1\rangle,1)
	\lesssim \lambda^{\tau} \langle x_1\rangle^{\tau},
\end{align*}
for any $0\leq \tau\leq 1.$

Now consider
\begin{align*}
	|\partial_\lambda G(\lambda,x,x_1)|=|p g^\prime(\lambda p)-q g^\prime(\lambda q)|.
\end{align*}
Let $g_1(s)=sg^\prime(s)$. We have
$|g_1(s)|\lesssim 1$ and $|g_1^\prime(s)|\lesssim 1$. Therefore, by the mean value theorem and
the boundedness of $g_1$, we have
\begin{align*}
	|\partial_\lambda G(\lambda,x,x_1)|=\bigg|\frac{g_1(\lambda p)-g_1(\lambda q)}{\lambda}\bigg|
	\lesssim \bigg|\frac{\min(\lambda|p-q|,1)}{\lambda}\bigg|\lesssim
	\langle x_1\rangle^\tau\lambda^{\tau-1},
\end{align*}
for any $\tau\in [0,1]$.

The bounds for $F$ were obtained in \cite{Sc2}. We repeat them for completeness.
Note that $F(0+,x,x_1)=\log\Big(\frac{|x-x_1|}{|x|+1}\Big)+c\lesssim k(x,x_1)$. Therefore it suffices to bound
\begin{align}\label{F prime int}
    \int_0^{2\lambda_1} |\partial_\lambda F(\lambda,x,x_1)| d\lambda
    & \lesssim \int_0^{2\lambda_1} p|\chi^\prime(\lambda p)\log(\lambda p)| d\lambda + \int_0^{2\lambda_1} q|\chi^\prime(\lambda q)\log(\lambda q)| d\lambda \\
    &+\int_0^{2\lambda_1} \frac{1}{\lambda} |\chi(\lambda p)-\chi(\lambda q)| d\lambda. 
\end{align}
By inspecting the integrands on the right hand side, we see that $|\partial_\lambda F|$ is bounded by $1/\lambda$. To obtain the statement for $|F|$ first note that,
since $\chi^\prime$ is supported in the set $[\lambda_1/2,2\lambda_1]$, the first  line  in \eqref{F prime int} is $\lesssim 1$.
To estimate the second line note that $\chi(\lambda p)-\chi(\lambda q)$ is supported on the set
$[\frac{\lambda_1}{2 p}, 2\frac{\lambda_1}{q}]$, which implies that the last line is
$\lesssim \big|\log\Big(\frac{|x-x_1|}{|x|+1}\Big)\big|\lesssim k(x,x_1)$.
\end{proof}

\begin{lemma}\label{low log YJ lemma}

  We have the bound
  \begin{multline}\label{low low bound}
    \bigg| \int_{\R^8} \int_0^\infty e^{it\lambda^2} \lambda \chi(\lambda)\log \lambda \chi(\lambda|x-x_1|)Y_0(\lambda|x-x_1|)v(x_1)
     S_1D_1S_1(x_1,y_1) v(y_1) \\ J_0(\lambda|y-y_1|)
    \chi(\lambda|y-y_1|) \, d\lambda f(x) g(y) \, dx_1\, dy_1\, dx\, dy\bigg|\lesssim |t|^{-1}.
  \end{multline}
\end{lemma}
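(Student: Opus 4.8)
The plan is to collapse the fourfold spatial integral to a single term using the orthogonality relation $S_1v=0$, and then extract the $|t|^{-1}$ decay from one integration by parts in $\lambda$, with the spatial weights controlled by Lemma~\ref{lem:FG} and the absolute boundedness of $S_1D_1S_1$. For the orthogonality, note that since $P=v\la\cdot,v\ra\|V\|_1^{-1}$ and $\|v\|_2^2=\|V\|_1$ we have $Pv=v$, hence $Qv=0$; as $S_1$ is a subprojection of $Q$ this gives $S_1v=0$, and therefore $S_1D_1S_1v=0$. Writing, via Lemma~\ref{lem:FG}, $\chi(\lambda|x-x_1|)Y_0(\lambda|x-x_1|)=\chi(\lambda(|x|+1))Y_0(\lambda(|x|+1))+F(\lambda,x,x_1)$ and $\chi(\lambda|y-y_1|)J_0(\lambda|y-y_1|)=\chi(\lambda(|y|+1))J_0(\lambda(|y|+1))+G(\lambda,y,y_1)$, the factor $v(x_1)S_1D_1S_1(x_1,y_1)v(y_1)$ annihilates every product in which at least one Bessel factor is replaced by its $x_1$- (resp.\ $y_1$-) independent part: integrating that factor in $x_1$ (resp.\ $y_1$) against $v$ yields $S_1D_1S_1v=0$ by self-adjointness and the real symmetry of the kernel. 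Only the product $F(\lambda,x,x_1)\,G(\lambda,y,y_1)$ survives, so it suffices to bound \eqref{low low bound} with the two Bessel factors replaced by $F(\lambda,x,x_1)G(\lambda,y,y_1)$.

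For fixed $x,y,x_1,y_1$ set $E(\lambda)=\chi(\lambda)\log\lambda\,F(\lambda,x,x_1)G(\lambda,y,y_1)$ and consider $I=\int_0^\infty e^{it\lambda^2}\lambda E(\lambda)\,d\lambda$. Using $\lambda e^{it\lambda^2}=\frac1{2it}\partial_\lambda e^{it\lambda^2}$ and integrating by parts, the boundary term at $\infty$ vanishes by $\chi$, while the one at $\lambda=0$ vanishes because $|E(\lambda)|\les|\log\lambda|\,k(x,x_1)\lambda^\tau\la y_1\ra^\tau\to0$ for any $\tau\in(0,1)$; thus $|I|\les|t|^{-1}\int_0^{2\lambda_1}|\partial_\lambda E(\lambda)|\,d\lambda$. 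Fixing such a $\tau$ and using the Lemma~\ref{lem:FG} bounds $|F|\les k(x,x_1)$, $|\partial_\lambda F|\les\lambda^{-1}$, $|G|\les\lambda^\tau\la y_1\ra^\tau$, $|\partial_\lambda G|\les\lambda^{\tau-1}\la y_1\ra^\tau$ together with $|\partial_\lambda(\chi\log\lambda)|\les\lambda^{-1}$, each of the three contributions to $\partial_\lambda E$ is $\les(1+|\log\lambda|)\,k(x,x_1)\la y_1\ra^\tau\lambda^{\tau-1}$, which is integrable on $(0,2\lambda_1)$ precisely because $\tau>0$. Hence $|I|\les|t|^{-1}k(x,x_1)\la y_1\ra^\tau$.

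It remains to integrate this bound against $|f(x)|\,|g(y)|\,v(x_1)|S_1D_1S_1(x_1,y_1)|v(y_1)$. The $y$-integral gives $\|g\|_1=1$. For fixed $x$, the $(x_1,y_1)$-integral is a bilinear pairing of $k(x,\cdot)v$ and $\la\cdot\ra^\tau v$ against $|S_1D_1S_1|$; since $S_1$ is finite rank, $S_1D_1S_1$ is Hilbert--Schmidt, hence absolutely bounded, so this pairing is $\les\|k(x,\cdot)v\|_2\,\|\la\cdot\ra^\tau v\|_2$. The second factor is finite because $\beta>4$, and the first is bounded uniformly in $x$: the contribution of $\log^-|x-\cdot|$ is controlled by $\|v\|_\infty^2\int_{|z|<1}(\log|z|)^2\,dz\les1$, while the remaining part of $k$ is $x$-independent. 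Integrating the resulting uniform bound against $|f(x)|$ gives $\|f\|_1=1$, which yields \eqref{low low bound}.

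The main obstacle is the orthogonality step: without $S_1v=0$ the surviving product would retain the factor $Y_0(\lambda(|x|+1))\sim\log\lambda$, so the $\lambda$-integrand would carry a non-integrable $\log^2\lambda$ singularity and the integration by parts would not close. A secondary technical point is that the logarithmic weight $k(x,x_1)$ must be paired with $v(x_1)$ in $L^2$ rather than with $f$, since $\int|f(x)|\log^-|x-x_1|\,dx$ cannot be controlled by $\|f\|_1$ alone.
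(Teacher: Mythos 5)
Your proof is correct and follows essentially the same route as the paper's: the orthogonality $S_1v=0$ (the paper's identity \eqref{Q trick}) to replace $\chi Y_0$ and $\chi J_0$ by $F$ and $G$ from Lemma~\ref{lem:FG}, a single integration by parts with vanishing boundary terms, the $\tau=0+$ bounds of Lemma~\ref{lem:FG} to make the $\lambda$-integral converge, and the closing spatial estimate via $\sup_x\|k(x,\cdot)v\|_2\lesssim 1$, $\|\la \cdot\ra^{0+}v\|_2<\infty$, and the absolute boundedness of $S_1D_1S_1$. The only differences are cosmetic: you spell out why $S_1v=0$ and why $S_1D_1S_1$ is Hilbert--Schmidt, points the paper invokes implicitly through \eqref{Q trick} and the bound $\||D_1|\|_{2\to2}<\infty$.
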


\begin{proof}
	Since $S_1\leq Q$ are projections and $Q$ is the projection orthogonal to $v$, we have
  	\begin{align}
   	 	\int_{\R^4} v(x) [S_1 D_1 S_1](x,y) h(y) dx\, dy
		=\int_{\R^4} h(x) [S_1 D_1 S_1](x,y) v(y)\, dx\, dy=0\label{Q trick}
  	\end{align}
  	for all $h\in L^2(\R^2)$.  As such, we can subtract functions of $x$ (resp. $y$) only from $\chi Y_0$
	(resp.  $\chi J_0$) in the integrand of \eqref{low low bound}. We use the functions defined in Lemma~\ref{lem:FG}. Thus we replace $\chi(\lambda|x-x_1|)Y_0(\lambda|x-x_1|)$ with $F(\lambda,x,x_1)$ and
	$\chi(\lambda|y-y_1|)J_0(\lambda|y-y_1|)$ with $G(\lambda,y,y_1)$ on the left hand side of \eqref{low low bound}. Therefore the $\lambda$ integral of \eqref{low low bound} is equivalent to
  \begin{align}\label{FG log integral}
      \int_0^\infty e^{it\lambda^2} \lambda \chi(\lambda) \log (\lambda) F(\lambda, x,x_1) G(\lambda, y, y_1)\, d\lambda.
  \end{align}
  We integrate by parts once to get
  \begin{align}
    \eqref{FG log integral} &\lesssim  |t|^{-1}\int_0^{\infty} [\log \lambda\chi'(\lambda)+\lambda^{-1}]|F(\lambda,x,x_1)|
    |G(\lambda,y,y_1)|\, d\lambda\label{no der FG}\\
    &+|t|^{-1} \int_0^\infty |\chi(\lambda)\log \lambda||\partial_\lambda F(\lambda, x,x_1)| |G(\lambda,y,y_1)|\,
    d\lambda \label{F der}\\
    &+|t|^{-1} \int_0^\infty |\chi(\lambda)\log \lambda|| F(\lambda, x,x_1)| |\partial_\lambda G(\lambda,y,y_1)|\,
    d\lambda.\label{G der}
  \end{align}
There is no boundary term since, by Lemma~\ref{lem:FG}, we have that $F(0+,y_1,y)  \lesssim k(x,x_1)$ and $G(0,y,y_1)=0$.
  From Lemma~\ref{lem:FG} again, we have for any $\tau\in(0,1]$
  \begin{align*}
    \eqref{no der FG}  \lesssim \int_0^{2\lambda_1} [\log \lambda +\lambda^{-1}] k(x,x_1)
    \lambda^{\tau} \langle y_1\rangle^{\tau}\, d\lambda
    \lesssim \langle y_1\rangle^{\tau} k(x,x_1).
  \end{align*}
  Taking $\tau=0+$, this term now contributes the following to \eqref{low low bound},
  \begin{align}
    &\lesssim|t|^{-1} \int_{\R^8} k(x,x_1)v(x_1) | D_1 (x_1,y_1)| v(y_1) \langle y_1\rangle^{0+} |f(x)|
    |g(y)|\, dx_1\, dy_1\, dx\, dy\label{1st der bound}\\
    &\lesssim |t|^{-1} \sup_{x\in \R^2} \|k(x,\cdot) v(\cdot)\|_2 \|| D_1 |\|_{2\to 2}
    \| v\|_{L^{2,0+}} \|f\|_1 \|g\|_1 \lesssim |t|^{-1}.\nn
  \end{align}
  For the case of \eqref{F der} and \eqref{G der}, we again note the bounds in Lemma~\ref{lem:FG},
	and that on the support of $\chi(\lambda)$,
  	$|\lambda^{\tau} \log \lambda|\lesssim 1$ for any $\tau>0$.  The desired bound follows as in \eqref{1st der bound}.
\end{proof}

We also need the following bounds taking care of the contributions of the remaining terms in \eqref{K defn}:

\begin{corollary}\label{low YJ lemma2}

  For $\mathcal C(z)=J_0(z)$ or $\mathcal C(z)=Y_0(z)$, we have the bound
  \begin{multline}
  \label{low low bound2}  \bigg| \int_{\R^8} \int_0^\infty  e^{it\lambda^2} \lambda \chi(\lambda)\chi(\lambda|x-x_1|)
    \mathcal C(\lambda|x-x_1|)v(x_1)
    S_1D_1S_1(x_1,y_1) v(y_1) \\
    \mathcal C(\lambda|y-y_1|) \chi(\lambda|y-y_1|) \, d\lambda f(x) g(y) \, dx_1\, dy_1\, dx\, dy\bigg|
    \lesssim |t|^{-1}.
  \end{multline}
\end{corollary}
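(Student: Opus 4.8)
The plan is to follow the proof of Lemma~\ref{low log YJ lemma} almost verbatim, observing that dropping the factor $\log\lambda$ only helps, while the genuinely new feature---that the two Bessel factors are now of the same type---is handled by the estimates of Lemma~\ref{lem:FG}. First I would invoke the orthogonality relations \eqref{Q trick}: since $S_1\leq Q$ and $Q$ is the projection orthogonal to $v$, one may subtract from $\chi(\lambda p)\mathcal C(\lambda p)$ any function of $x$ alone, and from $\chi(\lambda q)\mathcal C(\lambda q)$ any function of $y$ alone, without changing the value of the integral in \eqref{low low bound2}. Making the subtractions used in Lemma~\ref{lem:FG}, I replace $\chi(\lambda p)\mathcal C(\lambda p)$ by $\Phi(\lambda,x,x_1)$ and $\chi(\lambda q)\mathcal C(\lambda q)$ by $\Psi(\lambda,y,y_1)$, where $\Phi=\Psi=G$ when $\mathcal C=J_0$ and $\Phi=\Psi=F$ when $\mathcal C=Y_0$. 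This reduces the inner $\lambda$-integral to
\[
\int_0^\infty e^{it\lambda^2}\,\lambda\,\chi(\lambda)\,\Phi(\lambda,x,x_1)\,\Psi(\lambda,y,y_1)\,d\lambda .
\]

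Next I would integrate by parts once, writing $e^{it\lambda^2}\lambda=\frac{1}{2it}\partial_\lambda e^{it\lambda^2}$, which produces the desired prefactor $|t|^{-1}$. The boundary term at $\lambda=\infty$ vanishes since $\chi$ is compactly supported; the boundary term at $\lambda=0$ vanishes in the case $\mathcal C=J_0$ because $G(0,\cdot)=0$, and in the case $\mathcal C=Y_0$ it is bounded by $|t|^{-1}|F(0+,x,x_1)||F(0+,y,y_1)|\lesssim|t|^{-1}k(x,x_1)k(y,y_1)$, which is admissible. A small simplification relative to Lemma~\ref{low log YJ lemma} is that, with no $\log\lambda$ present, differentiating $\chi$ produces only $\chi'(\lambda)$, supported in $[\lambda_1/2,2\lambda_1]$, so no $\lambda^{-1}$ singularity is generated. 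It then remains to bound
\[
|t|^{-1}\int_0^{2\lambda_1}\Big(|\chi'|\,|\Phi|\,|\Psi|+|\chi|\,|\partial_\lambda\Phi|\,|\Psi|+|\chi|\,|\Phi|\,|\partial_\lambda\Psi|\Big)\,d\lambda .
\]

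For $\mathcal C=J_0$ the estimates $|G|\lesssim\lambda^\tau\langle x_1\rangle^\tau$ and $|\partial_\lambda G|\lesssim\lambda^{\tau-1}\langle x_1\rangle^\tau$ from Lemma~\ref{lem:FG} make the integrand $\lesssim\lambda^{2\tau-1}\langle x_1\rangle^\tau\langle y_1\rangle^\tau$, which is integrable near zero for any $\tau>0$; taking $\tau=0+$ bounds the $\lambda$-integral by $\langle x_1\rangle^{0+}\langle y_1\rangle^{0+}$. The main obstacle is the case $\mathcal C=Y_0$, where both factors are $F$ and the pointwise bound $|\partial_\lambda F|\lesssim\lambda^{-1}$ is \emph{not} integrable at the origin. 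The resolution is to avoid that pointwise bound entirely: in $\int|\chi|\,|\partial_\lambda F(\lambda,x,x_1)|\,|F(\lambda,y,y_1)|\,d\lambda$ I pull out the uniform bound $|F(\lambda,y,y_1)|\lesssim k(y,y_1)$ and then use the integrated estimate $\int_0^{2\lambda_1}|\partial_\lambda F(\lambda,x,x_1)|\,d\lambda\lesssim k(x,x_1)$ supplied by Lemma~\ref{lem:FG}, so the $\lambda$-integral is $\lesssim k(x,x_1)k(y,y_1)$. In either case, inserting the resulting $\lambda$-bound and integrating in the remaining variables yields
\[
|t|^{-1}\sup_{x}\|k(x,\cdot)v\|_2\,\big\|\,|D_1|\,\big\|_{2\to 2}\,\sup_{y}\|k(y,\cdot)v\|_2\,\|f\|_1\,\|g\|_1\lesssim|t|^{-1},
\]
since $k(x,\cdot)v\in L^2$ uniformly in $x$ under the decay hypothesis on $v$ and $|D_1|$ is bounded on $L^2$. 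This completes the proof of both cases.
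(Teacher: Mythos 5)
Your proposal is correct and follows essentially the same route as the paper, which simply replaces $\chi\,\mathcal C$ by $F$ (resp.\ $G$) via \eqref{Q trick}, integrates by parts once, and invokes the bounds of Lemma~\ref{lem:FG} together with the $L^2$ mapping argument of Lemma~\ref{low log YJ lemma}. You have merely made explicit two details the paper leaves implicit: the admissible boundary term $|t|^{-1}|F(0+,x,x_1)||F(0+,y,y_1)|$ in the $Y_0$--$Y_0$ case, and the use of the integrated estimate $\int_0^{2\lambda_1}|\partial_\lambda F|\,d\lambda\lesssim k(x,x_1)$ in place of the non-integrable pointwise bound $|\partial_\lambda F|\lesssim \lambda^{-1}$, which is exactly how the cited bounds of Lemma~\ref{lem:FG} must be deployed.
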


\begin{proof}

  Using the notation of Lemma~\ref{low log YJ lemma}, we need to bound
  \begin{align*}
    \bigg|\int_0^\infty e^{it\lambda^2} \lambda \chi(\lambda) F(\lambda, x,x_1) F(\lambda, y,y_1)\, d\lambda\bigg|,
  \end{align*}
  and the similar term when $F$ is replaced by $G$.  This follows easily from
  one integration by parts and the bounds of Lemma~\ref{lem:FG} as
  in the previous lemma.
\end{proof}

We now need to bound the resulting terms when one of the Bessel functions is supported on large energies.
The following variation of stationary phase from \cite{Sc2} will be useful in the analysis.  For completeness we give the proof.
\begin{lemma}\label{stat phase}

	Let $\phi'(0)=0$ and $1\leq \phi'' \leq C$.  Then,
  	\begin{align*}
    		\bigg| \int_{-\infty}^{\infty} e^{it\phi(\lambda)} a(\lambda)\, d\lambda \bigg|
    		\lesssim \int_{|\lambda|<|t|^{-\frac{1}{2}}} |a(\lambda)|\, d\lambda
    		+|t|^{-1} \int_{|\lambda|>|t|^{-\frac{1}{2}}} \bigg( \frac{|a(\lambda)|}{|\lambda^2|}+
    		\frac{|a'(\lambda)|}{|\lambda|}\bigg)\, d\lambda.
  	\end{align*}

\end{lemma}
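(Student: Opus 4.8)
The plan is to split the integral at the scale $|\lambda| = |t|^{-1/2}$, treating the region near the (unique, nondegenerate) critical point $\lambda=0$ trivially and handling the outer region by repeated integration by parts exploiting $\phi'$.

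First I would write $\int_{-\infty}^\infty = \int_{|\lambda|<|t|^{-1/2}} + \int_{|\lambda|>|t|^{-1/2}}$. On the inner region there is nothing to gain from oscillation, so I simply bound $|e^{it\phi(\lambda)}|=1$ and pull out the integrand, giving the first term $\int_{|\lambda|<|t|^{-1/2}}|a(\lambda)|\,d\lambda$ directly. The width of this region is $\sim |t|^{-1/2}$, which is exactly the scale on which the phase fails to oscillate, since $\phi(\lambda)-\phi(0)\approx \tfrac12\phi''(0)\lambda^2$ and $t\lambda^2 \lesssim 1$ there.

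For the outer region I would integrate by parts once using the identity $e^{it\phi}=\frac{1}{it\phi'}\,\partial_\lambda e^{it\phi}$, valid since $\phi'(\lambda)\neq 0$ away from $\lambda=0$. The hypotheses $\phi'(0)=0$ and $1\leq \phi''\leq C$ give, by the mean value theorem, $|\phi'(\lambda)|\approx |\lambda|$ and $|\phi''(\lambda)|\lesssim 1$, so the integration by parts produces a factor $\frac{1}{|t||\lambda|}$ together with a derivative falling on either $a(\lambda)$ or on $1/\phi'$. Differentiating the amplitude contributes $\frac{|a'(\lambda)|}{|t||\lambda|}$, and differentiating $1/\phi'$ brings down $\phi''/(\phi')^2 \approx 1/\lambda^2$, contributing $\frac{|a(\lambda)|}{|t|\lambda^2}$; these are precisely the two terms in the stated bound. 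The boundary term at $|\lambda|=|t|^{-1/2}$ is of size $\frac{|a(|t|^{-1/2})|}{|t|\cdot |t|^{-1/2}} = |t|^{-1/2}|a(|t|^{-1/2})|$, which is controlled by the inner-region term and so may be absorbed.

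The main technical point, rather than an obstacle, is verifying that the integration by parts is legitimate and that the boundary contributions are dominated by the terms already present. Because $\phi'$ vanishes only at $\lambda=0$, which has been excised, $1/\phi'$ is smooth on the outer region and the manipulation is justified; the lower endpoints $\pm|t|^{-1/2}$ generate boundary terms that match the inner bound, while any contribution at $\pm\infty$ is assumed to vanish (as is implicit for the Schwartz-type amplitudes to which the lemma is applied). Assembling the inner and outer estimates yields the claimed inequality.
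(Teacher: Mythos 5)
Your decomposition at the scale $|\lambda|=|t|^{-1/2}$, the trivial bound on the inner region, and the single integration by parts with $|\phi'(\lambda)|\approx|\lambda|$ (via $\phi'(\lambda)=\lambda\phi''(c)$) are exactly the paper's strategy. But there is a genuine flaw in your treatment of the boundary term. You claim the term $|t|^{-1/2}|a(\pm|t|^{-1/2})|$ is "controlled by the inner-region term" $\int_{|\lambda|<|t|^{-1/2}}|a(\lambda)|\,d\lambda$. This is false in general: a pointwise value of $|a|$ at the splitting point cannot be bounded by an integral of $|a|$ over the adjacent interval. For instance, if $a$ is a bump concentrated in a tiny neighborhood of $\lambda=|t|^{-1/2}$ with $a(|t|^{-1/2})=1$, the inner integral can be made arbitrarily small while the boundary term stays of size $|t|^{-1/2}$, and nothing in the hypotheses rules this out.

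The paper sidesteps this entirely by using a \emph{smooth} partition $1=\eta_2+(1-\eta_2)$ with $\eta_2(\lambda)=\eta(\lambda/2|t|^{-1/2})$: integrating by parts against $(1-\eta_2)a$ produces no boundary terms, and the term where the derivative hits $1-\eta_2$ is supported on the annulus $|\lambda|\sim|t|^{-1/2}$, where $|\eta_2'(\lambda)|\lesssim|t|^{1/2}\lesssim|\lambda|^{-1}$, so that
\begin{align*}
|t|^{-1}\int_{|\lambda|\sim|t|^{-1/2}}\frac{|a(\lambda)|\,|\eta_2'(\lambda)|}{|\phi'(\lambda)|}\,d\lambda
\lesssim |t|^{-1}\int_{|\lambda|>|t|^{-1/2}}\frac{|a(\lambda)|}{\lambda^2}\,d\lambda,
\end{align*}
which is already one of the terms on the right-hand side. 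If you want to keep your sharp cutoff, the correct repair is to absorb the boundary term into the \emph{outer} terms, not the inner one: writing $a(|t|^{-1/2})=a(\lambda)-\int_{|t|^{-1/2}}^{\lambda}a'(s)\,ds$ and averaging over $\lambda\in[|t|^{-1/2},2|t|^{-1/2}]$ gives
$|t|^{-1/2}|a(|t|^{-1/2})|\lesssim \int_{|t|^{-1/2}}^{2|t|^{-1/2}}|a|\,d\lambda+|t|^{-1/2}\int_{|t|^{-1/2}}^{2|t|^{-1/2}}|a'|\,d\lambda$, and on this dyadic interval $1\approx |t|^{-1}\lambda^{-2}$ and $|t|^{-1/2}\approx|t|^{-1}\lambda^{-1}$, so both pieces are dominated by the $|a|/\lambda^2$ and $|a'|/|\lambda|$ terms. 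Either fix is routine, but as written the absorption step fails.
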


\begin{proof}

Let $\eta\in C_c^{\infty}(\R)$ be such that $\eta(x)=1$ if $|x|<1$ and $\eta(x)=0$ if $|x|>2$.  Let
$\eta_2(x)=\eta(x/2|t|^{-1/2})$.  Writing $1=\eta_2+(1-\eta_2)$, we rewrite the integral as follows
\begin{align*}
    \bigg| \int_{-\infty}^{\infty} e^{it\phi(\lambda)} a(\lambda)\, d\lambda \bigg|
    &\lesssim\bigg| \int_{-\infty}^{\infty} e^{it\phi(\lambda)} a(\lambda) \eta_2(\lambda)\, d\lambda \bigg|
    +\bigg| \int_{-\infty}^{\infty} e^{it\phi(\lambda)} a(\lambda)(1-\eta_2(\lambda))\, d\lambda \bigg|
\end{align*}
The first term is bounded as in the claim since supp$(\eta_2)=[-|t|^{-\frac{1}{2}},|t|^{-\frac{1}{2}}]$.
For the second term, we integrate by parts once in $\lambda$ to bound with
\begin{align*}
  |t|^{-1}\bigg| \int_{-\infty}^{\infty} e^{it\phi(\lambda)} &\frac{d}{d\lambda}\bigg( \frac{a(\lambda)(1-\eta_2(\lambda))}
  {\phi'(\lambda)} \bigg)\, d\lambda \bigg|
\end{align*}
By Taylor's Theorem,
\begin{align*}
  \phi'(\lambda)=\phi'(0)+\lambda \phi''(c)=\lambda \phi''(c)\approx \lambda,
\end{align*}
Considering the terms when the derivative acts on $a(\lambda)$, $1-\eta_2(\lambda)$ and $1/\phi'(\lambda)$ finishes
the proof.
\end{proof}

In addition we have the following high-energy analogue of Lemma~\ref{lem:FG}.  In light of the high energy representations
of the Bessel functions \eqref{largeJYH}, recall that for $\mathcal C\in \{J_0,Y_0, H_0\}$,
\begin{align*}
	\mathcal C(y) \widetilde\chi(y)=e^{iy}\omega_+(y)+e^{-iy}\omega_-(y),\qquad
	|\omega_{\pm}^{(\ell)}(y)|\les \langle y\rangle^{-\frac{1}{2}-\ell}.
\end{align*}

\begin{lemma}\label{G2 lemma}

	Define for $p,q>0$
	\begin{align}\label{G2defn}
		\widetilde G^{\pm}(\lambda,p,q):=\widetilde\chi(\lambda p)\omega_{\pm}(\lambda p)
		-e^{\pm i \lambda (p-  q)}\widetilde\chi(\lambda q)\omega_{\pm}(\lambda q).
	\end{align}
	with $\omega_{\pm}$ as in \eqref{largeJYH}.
	Then for any $0\leq \tau\leq 1$ and $\lambda\leq 2\lambda_1$,
	\begin{align*}
		&|\widetilde G^{\pm}(\lambda,p,q)|\les (\lambda |p-q|)^{\tau} \bigg(\frac{\widetilde\chi(\lambda p)}{|\lambda p|^{\frac{1}{2}}}
		+\frac{\widetilde\chi(\lambda q)}{| \lambda q|^{\frac{1}{2}}}\bigg),\\
		&|\partial_\lambda \widetilde G^{\pm}(\lambda,p,q)|\les |p-q| \bigg(\frac{\widetilde\chi(\lambda p)}{| \lambda p|^{\frac{1}{2}}}
		+\frac{\widetilde\chi(\lambda q)}{|\lambda q|^{\frac{1}{2}}}\bigg)
	\end{align*}

\end{lemma}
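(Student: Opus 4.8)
The engine of the proof is the phase factorization around which the definition \eqref{G2defn} is built. Writing $\Psi_\pm(s):=\widetilde\chi(s)\omega_\pm(s)$ and pulling out the common oscillation, one has $\widetilde G^\pm(\lambda,p,q)=e^{\pm i\lambda p}\big(H_\pm(\lambda p)-H_\pm(\lambda q)\big)$, where $H_\pm(s):=e^{\mp is}\Psi_\pm(s)$; indeed $e^{\pm i\lambda p}H_\pm(\lambda q)=e^{\pm i\lambda(p-q)}\Psi_\pm(\lambda q)$ reproduces the second term exactly. Since $|e^{\pm i\lambda p}|=1$, estimating $\widetilde G^\pm$ reduces to estimating the difference of the \emph{single} function $H_\pm$ evaluated at $\lambda p$ and $\lambda q$. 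From the decay $|\omega_\pm^{(\ell)}(s)|\les\langle s\rangle^{-1/2-\ell}$ on the support of $\widetilde\chi$, I first record the two ingredients $|H_\pm(s)|\les\widetilde\chi(s)\langle s\rangle^{-1/2}$ and $|H_\pm'(s)|\les\big(\widetilde\chi(s)+|\widetilde\chi'(s)|\big)\langle s\rangle^{-1/2}$, the leading contribution to the derivative coming from differentiating the phase $e^{\mp is}$.

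The first bound is obtained by interpolating the endpoints $\tau=0$ and $\tau=1$. For $\tau=0$ I use the triangle inequality $|H_\pm(\lambda p)-H_\pm(\lambda q)|\le|H_\pm(\lambda p)|+|H_\pm(\lambda q)|$, which yields the factor $\widetilde\chi(\lambda p)|\lambda p|^{-1/2}+\widetilde\chi(\lambda q)|\lambda q|^{-1/2}$. For $\tau=1$ I write the difference as $\int_{\lambda q}^{\lambda p}H_\pm'(s)\,ds$ and insert the derivative bound above; the monotonicity of $s\mapsto s^{-1/2}$ then controls $\int_{\lambda q}^{\lambda p}\langle s\rangle^{-1/2}\,ds$ by $|\lambda p-\lambda q|$ times the decay factor at the smaller of the two endpoints, producing exactly $\lambda|p-q|\big(\widetilde\chi(\lambda p)|\lambda p|^{-1/2}+\widetilde\chi(\lambda q)|\lambda q|^{-1/2}\big)$. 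Interpolating the two bounds geometrically via $|\widetilde G^\pm|=|\widetilde G^\pm|^{1-\tau}|\widetilde G^\pm|^{\tau}$ then gives the claim for every $\tau\in[0,1]$.

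For the derivative estimate I differentiate \eqref{G2defn} directly. The term in which $\partial_\lambda$ falls on the phase $e^{\pm i\lambda(p-q)}$ produces the factor $\pm i(p-q)$ multiplying $\Psi_\pm(\lambda q)$, and is therefore bounded at once by $|p-q|\,\widetilde\chi(\lambda q)|\lambda q|^{-1/2}$. The remaining terms assemble into $p\,\Psi_\pm'(\lambda p)-e^{\pm i\lambda(p-q)}q\,\Psi_\pm'(\lambda q)$, which I treat by the same device: after factoring out $e^{\pm i\lambda p}$ and writing $p\,\Psi_\pm'(\lambda p)=\lambda^{-1}R_\pm(\lambda p)$ with $R_\pm(s):=e^{\mp is}s\,\Psi_\pm'(s)$, this piece equals $\lambda^{-1}e^{\pm i\lambda p}\big(R_\pm(\lambda p)-R_\pm(\lambda q)\big)$, and the fundamental theorem of calculus together with $|R_\pm'(s)|\les\langle s\rangle^{-1/2}(\widetilde\chi(s)+|\widetilde\chi'(s)|)$ and the same monotonicity argument again gives $|p-q|$ times the endpoint decay factors.

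I expect the main obstacle to be the $\tau=1$ (Lipschitz) step, and specifically the passage from $\int_{\lambda q}^{\lambda p}\langle s\rangle^{-1/2}\,ds$ to the endpoint decay factors: this uses the monotonicity of $s^{-1/2}$ to charge the whole integral to the smaller endpoint, and it requires some care in the region where the cutoff $\widetilde\chi$ transitions, where $\widetilde\chi'$ is supported and one of $\widetilde\chi(\lambda p),\widetilde\chi(\lambda q)$ may vanish, so that the interior contribution of $\widetilde\chi'$ is still absorbed into the boundary terms. Once this endpoint-control step is in hand, the interpolation and the derivative bound follow routinely.
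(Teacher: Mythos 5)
Your factorization $\widetilde G^{\pm}(\lambda,p,q)=e^{\pm i\lambda p}\bigl(H_\pm(\lambda p)-H_\pm(\lambda q)\bigr)$ with $H_\pm(s)=e^{\mp is}\widetilde\chi(s)\omega_\pm(s)$ is correct and is a genuine, if small, variant of the paper's argument: the paper instead writes $\widetilde G=\bigl[b(\lambda p)-b(\lambda q)\bigr]+\bigl(1-e^{i\lambda(p-q)}\bigr)b(\lambda q)$ with $b(s)=\widetilde\chi(s)\omega_+(s)$, treating the phase factor by $|1-e^{i\theta}|\le|\theta|$ and the amplitude difference by the fundamental theorem of calculus with the faster-decaying integrand $|b'(s)|\les\widetilde\chi(s)|s|^{-3/2}$. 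By absorbing the phase into $H_\pm$ you unify these two pieces into a single difference, at the price that your Lipschitz integrand decays only like $s^{-1/2}$ (the phase derivative gains nothing). Your $\tau=0$ endpoint, the geometric interpolation, and the derivative computation (your $R_\pm(s)=e^{\mp is}s\Psi_\pm'(s)$ is the phase-dressed analogue of the paper's $b_1(s)=sb'(s)$, and your $(p-q)$-phase term is identical to theirs) all track the paper's proof step for step.

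The one place where your justification, as written, does not close is exactly the regime you flagged, but the difficulty is quantitative rather than merely ``care with the cutoff.'' Normalizing the bottom of $\operatorname{supp}\widetilde\chi$ to $1$: when $\lambda q\le 1\le\lambda p$, charging $\int_{\lambda q}^{\lambda p}(\widetilde\chi(s)+|\widetilde\chi'(s)|)\,s^{-1/2}\,ds$ to ``length times the decay factor at the smaller endpoint'' yields only $\lambda|p-q|\cdot O(1)$, since the effective smaller endpoint sits at the bottom of the support where $s^{-1/2}\approx 1$; but the bound you must produce there is $\lambda|p-q|\,(\lambda p)^{-1/2}$, because $\widetilde\chi(\lambda q)=0$ removes the $q$-term from the right-hand side. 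So the sup-times-length mechanism loses an unbounded factor $(\lambda p)^{1/2}$. The repair is short but must go through the exact primitive: $\int_{1}^{\lambda p}s^{-1/2}\,ds=2\bigl((\lambda p)^{1/2}-1\bigr)\le 2\bigl((\lambda p)^{1/2}-\lambda q(\lambda p)^{-1/2}\bigr)=2\lambda|p-q|\,(\lambda p)^{-1/2}$, using $\lambda q\le 1\le(\lambda p)^{1/2}$. This is precisely the computation the paper carries out in its case $\lambda q<1<\lambda p$ (there with integrand $s^{-3/2}$, via $(\lambda p)^{1/2}-1\le(\lambda p)^{1/2}-(\lambda q)^{1/2}$). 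With that substitution --- and with the convention, which your write-up and the paper's both implicitly need, that the cutoff $\widetilde\chi$ appearing on the right-hand sides of the stated bounds is a slightly fattened one equal to $1$ on $\operatorname{supp}\widetilde\chi$, so that the $\widetilde\chi'$ contributions from the transition region are genuinely absorbed --- your proof is complete and fully parallel to the paper's.
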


\begin{proof}

	We note first that from \eqref{largeJYH}, we have
	\begin{align}\label{g2triv}
		|\widetilde G^{\pm}(\lambda, p,q)|\les \frac{\widetilde\chi(\lambda p)}{|\lambda p|^{\frac{1}{2}}}
		+\frac{\widetilde\chi(\lambda q)}{| \lambda q|^{\frac{1}{2}}}.
	\end{align}
	We consider the case of $\widetilde G^{+}$, the case of $\widetilde G^-$ is similar.
	Define the function
	\begin{align*}
		b(s):=\widetilde\chi(s)\omega_{+}(s).
	\end{align*}
	 Using \eqref{largeJYH}, one obtains that for $k=0,1,2,...,$
	\begin{align*}
		|b^{(k)}(s)|\lesssim  \widetilde\chi(s)
		|s|^{-\frac{1}{2}-k}.
	\end{align*}
	We now rewrite $\widetilde G$ in terms of $b$:
	\begin{align*}
		\widetilde G(\lambda,p,q)=b(\lambda p)-b(\lambda q)+\big(1-e^{i\lambda(p-q)}\big) b(\lambda q).
	\end{align*}
Note that the absolute value of the last summand is
$$\lesssim \lambda |p-q| \frac{\widetilde\chi(\lambda q)}{|\lambda q|^{\frac{1}{2}}}.$$
To estimate the difference of the first two we assume without loss of generality that $p>q$ and write
$$
|b(\lambda p)-b(\lambda q)|=\Big| \int_{\lambda q}^{\lambda p} b^\prime(s) ds\Big|  \lesssim \int_{\lambda q}^{\lambda p} \widetilde\chi(s)
		|s|^{-\frac{3}{2} } ds.
$$
	 In the case, $1<\lambda q<\lambda p$, we estimate this integral by
$$
\lambda |p-q| \frac{\widetilde\chi(\lambda q)}{|\lambda q|^{\frac{3}{2}}}.
$$
In the case $\lambda q< 1<\lambda p$, we estimate it as follows
\begin{multline}\nn
\int_{\lambda q}^{\lambda p} \widetilde\chi(s)
		|s|^{-\frac{3}{2} } ds\lesssim \widetilde\chi(\lambda p) \int_1^{\lambda p} s^{-3/2} ds
\lesssim \widetilde\chi(\lambda p) \frac{(\lambda p)^{1/2}-1}{|\lambda p|^{1/2}} \leq \\ \widetilde\chi(\lambda p) \frac{(\lambda p)^{1/2}-(\lambda q)^{1/2}}{|\lambda p|^{1/2}} \lesssim  \lambda |p-q| \frac{\widetilde\chi(\lambda p)}{|\lambda p|}.
\end{multline}
Combining these bounds and interpolating with \eqref{g2triv} we obtain the first assertion of the lemma.

	We now turn to the derivative.  We note that
	\begin{align}\nn
		\partial_\lambda \widetilde G(\lambda, p,q)=&pb^\prime(\lambda p)-qb^\prime(\lambda q)+\big(1-e^{i\lambda(p-q)}\big) q b^\prime (\lambda q)-i (p-q)e^{i\lambda(p-q)}b(\lambda q)\\
&=\frac1\lambda [b_1(\lambda p)-b_1(\lambda q)]+\frac{1-e^{i\lambda(p-q)}}{\lambda} b_1(\lambda q)-i (p-q)e^{i\lambda(p-q)}b(\lambda q),\nn
	\end{align}
where $b_1(s):=sb^\prime(s)$ satisfies the same bounds that $b(s)$ does. Therefore the second assertion of the lemma follows as above.
\end{proof}

\begin{lemma}\label{low high log YJ lemma}  We
  have the bound
  \begin{multline}
    \label{low high bound} \bigg| \int_{\R^8} \int_0^\infty e^{it\lambda^2} \lambda \chi(\lambda)\log
    \lambda \chi(\lambda|x-x_1|)Y_0(\lambda|x-x_1|)v(x_1)
     S_1D_1S_1 (x_1,y_1) v(y_1) \\  J_0(\lambda|y-y_1|)
    \widetilde\chi(\lambda|y-y_1|) \, d\lambda f(x) g(y) \, dx_1\, dy_1\, dx\, dy\bigg|\lesssim |t|^{-1}.
  \end{multline}
\end{lemma}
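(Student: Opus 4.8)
The plan is to reduce \eqref{low high bound} to a one–dimensional oscillatory $\lambda$–integral to which I can apply the stationary phase bound of Lemma~\ref{stat phase}, after first using the orthogonality \eqref{Q trick} in \emph{both} spatial variables to manufacture the cancellation that produces the full $|t|^{-1}$ rather than the generic $|t|^{-1/2}$. For the low–energy ($Y_0$) factor on the $x$–side I would proceed exactly as in Lemma~\ref{low log YJ lemma}: since $\int v(x_1)S_1D_1S_1(x_1,y_1)\,dx_1=0$, I may replace $\chi(\lambda|x-x_1|)Y_0(\lambda|x-x_1|)$ by $F(\lambda,x,x_1)$ of Lemma~\ref{lem:FG}. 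The gain is twofold: subtracting the $x$–only reference cancels the $\log\lambda$ carried by $Y_0$, and it leaves the $\lambda$–uniform, $|x|$–free bounds $|F|\les k(x,x_1)$ and $|\partial_\lambda F|\les\lambda^{-1}$.

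Next I would treat the high–energy ($J_0$) factor on the $y$–side. Using \eqref{largeJYH} I write $J_0(\lambda|y-y_1|)\widetilde\chi(\lambda|y-y_1|)=\sum_\pm e^{\pm i\lambda|y-y_1|}\omega_\pm(\lambda|y-y_1|)$, and—crucially—I again invoke \eqref{Q trick}, now in $y_1$ (valid because $\int S_1D_1S_1(x_1,y_1)v(y_1)\,dy_1=0$), subtracting the $y$–only reference obtained by replacing $|y-y_1|$ with $|y|+1$. This turns the $y$–factor into the difference controlled by Lemma~\ref{G2 lemma}: with $q=|y-y_1|$, $q_0=|y|+1$, and $|q-q_0|\les\la y_1\ra$, one has $|\widetilde G|\les\lambda\la y_1\ra(\lambda q)^{-1/2}$ and, most importantly, $|\partial_\lambda\widetilde G|\les\la y_1\ra(\lambda q)^{-1/2}$. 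The point is that differentiating a \emph{single} term $e^{\pm i\lambda q}\omega_\pm(\lambda q)$ would cost a factor of $q$, but in the difference the leading oscillations cancel and this is replaced by the harmless $|q-q_0|\les\la y_1\ra$; this is exactly the cancellation that boosts the decay, and it carries no $|y|$–growth.

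I would then absorb the surviving oscillation $e^{\pm i\lambda q}$ into the phase, so the $\lambda$–integral becomes $\int_0^\infty e^{i(t\lambda^2\pm\lambda q)}a(\lambda)\,d\lambda$ with the slowly varying amplitude $a=\lambda\chi(\lambda)\log\lambda\,F\,\widetilde G$, obeying $|a|\les k(x,x_1)\la y_1\ra\,\lambda^{1/2}q^{-1/2}$ and $|a'|\les k(x,x_1)\la y_1\ra\,\lambda^{-1/2}q^{-1/2}$. After completing the square $t\lambda^2\pm\lambda q=t(\lambda\pm\tfrac{q}{2t})^2\mp\tfrac{q^2}{4t}$ and translating, Lemma~\ref{stat phase} applies with a quadratic phase whose critical point sits at the origin. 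The term whose critical point $\mp q/(2t)$ lies outside $[0,\infty)$ is nonstationary, and the integration–by–parts part of Lemma~\ref{stat phase} returns $|t|^{-1}k\la y_1\ra$ directly; the decisive term is the one with interior critical point $\lambda_0=q/(2t)$, which occurs only when $\widetilde\chi(\lambda_0 q)\neq0$, i.e. $q\gtrsim\sqrt t$. There the first term of Lemma~\ref{stat phase} contributes $\int_{|\lambda-\lambda_0|<|t|^{-1/2}}|a|\,d\lambda\les k\la y_1\ra\,q^{-1/2}\lambda_0^{1/2}|t|^{-1/2}=k\la y_1\ra(2t)^{-1/2}|t|^{-1/2}\les k\la y_1\ra|t|^{-1}$, the extra half–power of $t$ coming precisely from the factor $\lambda^{1/2}$ in $a$ (the manifestation of the $\widetilde G$–smallness). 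Finally the spatial integration is identical to that in Lemma~\ref{low log YJ lemma}: having bounded the $\lambda$–integral by $|t|^{-1}k(x,x_1)\la y_1\ra$, I estimate \eqref{low high bound} by $|t|^{-1}\sup_x\|k(x,\cdot)v\|_2\,\| |S_1D_1S_1| \|_{2\to 2}\,\|\la\cdot\ra v\|_2\,\|f\|_1\|g\|_1\les|t|^{-1}$, using $\la\cdot\ra v\in L^2$ under the running hypothesis $v\les\la x\ra^{-2-}$.

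The main obstacle I anticipate is exactly the interior–critical–point term: two–dimensional dispersion only offers the universal $|t|^{-1/2}$ at $\lambda_0=q/(2t)$, and recovering $|t|^{-1}$ forces one to spend the second orthogonality in a way that keeps the resulting weight ($\la y_1\ra$, coming from $|q-q_0|$) free of \emph{any} $|y|$–growth, since no weight on $g$ is available. Matching the oscillation to the correct phase while retaining the difference–smallness—the content of the $\partial_\lambda\widetilde G$ bound in Lemma~\ref{G2 lemma}—is the delicate step: the pieces in which the two members of the difference oscillate at genuinely different rates $q$ and $q_0$ must be handled through the careful bookkeeping of the supports $\widetilde\chi(\lambda q)$ and $\widetilde\chi(\lambda q_0)$ that Lemma~\ref{G2 lemma} is built to supply, so that the $q_0$–contribution is controlled by $v$–decay rather than by a nonexistent weight on $g$.
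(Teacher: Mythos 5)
Your proposal is correct and follows essentially the same route as the paper's proof: the orthogonality \eqref{Q trick} applied in both $x_1$ and $y_1$, the function $F$ of Lemma~\ref{lem:FG} for the low-energy $Y_0$ factor, the difference $\widetilde G^{\pm}$ of Lemma~\ref{G2 lemma} with reference point $1+|y|$ for the high-energy $J_0$ factor, absorption of $e^{\pm i\lambda q}$ into the phase with the shifted stationary point $\lambda_0=q/(2t)$, Lemma~\ref{stat phase} plus a nonstationary integration by parts for the $\phi_+$ phase, and the closing $L^2$ mapping bounds under $v(x)\les \la x\ra^{-2-}$. The one imprecision is that your displayed amplitude bounds drop the cutoffs $\widetilde\chi(\lambda p)$, $\widetilde\chi(\lambda q)$ and your window analysis speaks only of the $q$-piece (the $p$-piece can be active near $\lambda_0$ even when $q\ll\sqrt{t}$, and the tail integrals over $|\lambda-\lambda_0|>|t|^{-1/2}$ genuinely need the supports to close, exactly as in the paper's case analysis) --- but your final paragraph explicitly flags this support bookkeeping, so the argument is complete in substance.
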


\begin{proof}
	Without loss of generality we assume that $t>0$. As in the proof of the previous statements, it suffices to prove that for fixed $x,x_1,y,y_1$ the  $\lambda$-integral is bounded by $k(x,x_1)\langle y_1\rangle t^{-1}$.  This power of $\langle y_1 \rangle$ necessitates extra decay on
	the potential to push through the $L^2$ mapping bounds as in the previous lemmas.  Accordingly, we assume
	that $v(x)\les \langle x\rangle^{-2-}$ or equivalently that $|V(x)|\les \langle x\rangle^{-4-}$.

Let $p=\max(|y-y_1|,1+|y|)$ and
$q=\min(|y-y_1|,1+|y|)$. Using \eqref{Q trick}, it suffices to consider
\begin{align*}
		\int_0^\infty  e^{it\lambda^2} \lambda \chi(\lambda)\log
    \lambda \,
		F(\lambda, x,x_1) (J_0(\lambda p)
    \widetilde\chi(\lambda p)-J_0(\lambda q)
    \widetilde\chi(\lambda q)) \, d\lambda,
	\end{align*}
	where $F(\lambda, x,x_1)$ is as in Lemma~\ref{lem:FG}. The oscillatory term in
	the definition  \eqref{largeJYH} of $J_0$ for large energies  will move the stationary
	point of the phase. Pulling out the slower oscillation $e^{\pm i\lambda q}$, we rewrite this integral as a sum of
\begin{align*}
		\int_0^\infty  e^{it\phi_\pm(\lambda)} \lambda \chi(\lambda)\log
    \lambda \,
		F(\lambda, x,x_1) \widetilde G^\pm(\lambda,p,q)  \, d\lambda,
	\end{align*}
	where $\phi_{\pm}(\lambda)=\lambda^2\pm
	\lambda q t^{-1}$, and
	$\widetilde G$ is from Lemma~\ref{G2 lemma}.
	Note that this moves the stationary point of the oscillatory integral to $ \lambda_0=\mp\frac{ q}{2t}= \mp\frac{\min(|y-y_1|,1+|y|)}{2t}$.

  We first consider the contribution of the term with the phase  $\phi_-(\lambda)$ in which case the critical point satisfies $\lambda_0\geq 0$.
 Let
	$$a(\lambda):=\lambda \chi(\lambda) \log\lambda F(\lambda,x,x_1) \widetilde G(\lambda,p,q).$$

	
	Using the bounds in Lemma~\ref{lem:FG} and Lemma~\ref{G2 lemma} (with $\tau=0+$), we have
\begin{align}
	|a(\lambda)| & \lesssim k(x,x_1) \langle y_1\rangle^{0+}\lambda \chi(\lambda)
	\bigg(\frac{\widetilde\chi(\lambda p)}{|\lambda p|^{\frac{1}{2}}}
		+\frac{\widetilde\chi(\lambda q)}{| \lambda q|^{\frac{1}{2}}}\bigg),\,\,\,\text{ and }\label{abound23}\\
	|a^\prime(\lambda)| &
\lesssim k(x,x_1)
	\langle y_1\rangle \chi(\lambda) \bigg(\frac{\widetilde\chi(\lambda p)}{|\lambda p|^{\frac{1}{2}}}
		+\frac{\widetilde\chi(\lambda q)}{| \lambda q|^{\frac{1}{2}}}\bigg).\label{aprimebound21}
\end{align}
	We now apply Lemma~\ref{stat phase} with  $a(\lambda)$ as above to bound the $\lambda$-integral in this case by
\begin{align} \label{alemmabound}
		\int_{|\lambda-\lambda_0|<t^{-1/2}}|a(\lambda)|\, d\lambda+ t^{-1}
		\int_{|\lambda-\lambda_0|>t^{-1/2}} \Big( \frac{|a(\lambda)|}{|\lambda-\lambda_0|^2}
	    +\frac{|a'(\lambda)|}{|\lambda-\lambda_0|}\Big)\, d\lambda.
	\end{align}
 Using \eqref{abound23}, we bound the first integral in \eqref{alemmabound} by
	\begin{align}\label{a bound3}
		\int_{|\lambda-\lambda_0|<t^{-1/2}}|a(\lambda)|\, d\lambda
		&\lesssim k(x,x_1) \langle y_1\rangle^{0+} \int_{|\lambda-\lambda_0|<t^{-\frac{1}{2}}}
		\sqrt{\lambda} \bigg(\frac{\widetilde\chi(\lambda p)}{p^{\frac{1}{2}}}
		+\frac{\widetilde\chi(\lambda q)}{ q^{\frac{1}{2}}}\bigg)\, d\lambda.
	\end{align}
There are two cases: $\lambda_0\gtrsim t^{-\frac{1}{2}}$ and $\lambda_0\les t^{-\frac{1}{2}}$. In the former case,
	 on the support of the integral, we have $\lambda\les \lambda_0$. Therefore,
	\begin{align*}
		\eqref{a bound3} &\les k(x,x_1) \langle y_1\rangle^{0+}
		t^{-\frac{1}{2}} \lambda_0^{\frac{1}{2}}(p^{-\frac{1}{2}}+q^{-\frac{1}{2}})
		\les t^{-1} k(x,x_1) \langle y_1\rangle^{0+}.
	\end{align*}
	In the last inequality, we used $p^{-1}\lambda_0\leq q^{-1}\lambda_0 \leq t^{-1}$.
	In the latter case,  on the support of the integral, we have
	$\lambda\les t^{-\frac{1}{2}}$.  So that
	\begin{align*}
		\eqref{a bound3} &\les k(x,x_1) \langle y_1\rangle^{0+}
		\int_{0}^{t^{-\frac{1}{2}}} \sqrt{\lambda}\bigg(\frac{\widetilde\chi(\lambda p)}{ p^{\frac{1}{2}}}
		+\frac{\widetilde\chi(\lambda q)}{ q^{\frac{1}{2}}}\bigg)\,d\lambda
	\end{align*}
	For the $\widetilde\chi(\lambda p)$ term to have any contribution to the integral, we must have that
	$p^{-1}  \les t^{-\frac{1}{2}}$, similarly for $q^{-1}$.  So that,
	\begin{align*}
		\eqref{a bound3}&\les k(x,x_1) \langle y_1\rangle^{0+} (p^{-\frac{1}{2}} +q^{-\frac{1}{2}})t^{-\frac{3}{4}}
		\les k(x,x_1) \langle y_1\rangle^{0+} t^{-1}.
	\end{align*}
	
It suffices to bound the second integral in \eqref{alemmabound} by $k(x,x_1)\langle y_1\rangle$.
	We first establish the bounds for the $a(\lambda)$ term and then consider the derivative
	$a^\prime(\lambda)$.

We have two cases: $\lambda_0\ll t^{-\frac{1}{2}}$ and $\lambda_0\gtrsim t^{-\frac{1}{2}}$. In the former case, we have $|\lambda-\lambda_0|\approx \lambda$. Thus, using \eqref{abound23}, we obtain
	\begin{align}
		\int_{|\lambda-\lambda_0|>t^{-1/2}} \frac{|a(\lambda)|}{|\lambda-\lambda_0|^2} \, d\lambda
\les k(x,x_1) \langle y_1\rangle^{0+}
		\int_{\R} \lambda^{-\frac{3}{2}} \bigg(\frac{\widetilde\chi(\lambda p)}{p^{\frac{1}{2}}}
		+\frac{\widetilde\chi(\lambda q)}{ q^{\frac{1}{2}}}\bigg)\, d\lambda
\les k(x,x_1) \langle y_1\rangle^{0+}.\nn
	\end{align}
	
In the latter case,   we have
\begin{align*}
	    \int_{|\lambda-\lambda_0|>t^{-1/2}} \frac{|a(\lambda)|}{|\lambda-\lambda_0|^2}
	    \, d\lambda
	    &\les k(x,x_1) \langle y_1\rangle^{0+}
	    \int_{|\lambda-\lambda_0|>t^{-\frac{1}{2}}}\frac{\lambda^{\frac{1}{2}}}{|\lambda-\lambda_0|^2}
	     \bigg(\frac{\widetilde\chi(\lambda p)}{p^{\frac{1}{2}}}
		+\frac{\widetilde\chi(\lambda q)}{ q^{\frac{1}{2}}}\bigg)\,d\lambda
\end{align*}
Changing the variable $s=\lambda-\lambda_0$ and recalling that $p\geq q$, we bound this   by
	\begin{align*}
		k(x,x_1) \langle y_1\rangle^{0+} q^{-\frac{1}{2}}
	    \int_{|s|>t^{-\frac{1}{2}}}\frac{s^{\frac{1}{2}}+\lambda_0^{\frac12}}{s^2}\,ds
	  \les k(x,x_1) \langle y_1\rangle^{0+} q^{-\frac{1}{2}} (t^{1/4}+t^{1/2}\lambda_0^{1/2}) \les k(x,x_1) \langle y_1\rangle^{0+}.
	\end{align*}
The last inequality follows from the assumption  $t^{-1/2}\les \lambda_0=\frac{q}{2t}$.
	
	Now, we consider the contribution of $a^\prime(\lambda)$. Again we have two cases: $\lambda_0\ll t^{-\frac{1}{2}}$ and $\lambda_0\gtrsim t^{-\frac{1}{2}}$. In the former case, we have $|\lambda-\lambda_0|\approx \lambda$. Thus, using \eqref{aprimebound21}, we obtain
\begin{align*}
	   \int_{|\lambda-\lambda_0|>t^{-1/2}} \frac{|a'(\lambda)|}{|\lambda-\lambda_0|}\, d\lambda&\les k(x,x_1) \langle y_1 \rangle \int_{\R}\lambda^{-\frac{3}{2}}
	    \bigg(\frac{\widetilde\chi(\lambda p)}{p^{\frac{1}{2}}}
		+\frac{\widetilde\chi(\lambda q)}{ q^{\frac{1}{2}}}\bigg)\, d\lambda \les k(x,x_1) \langle y_1 \rangle.
	\end{align*}

In the latter case, we have
\begin{multline*}
	      \int_{|\lambda-\lambda_0|>t^{-1/2}} \frac{|a'(\lambda)|}{|\lambda-\lambda_0|}\, d\lambda
	    \les k(x,x_1)\langle y_1\rangle \int_{|\lambda-\lambda_0|>t^{-\frac{1}{2}}}
	    \frac{1}{|\lambda-\lambda_0|\lambda^\frac12 }\bigg(\frac{\widetilde\chi(\lambda p)}{ p^{\frac{1}{2}}}
		+\frac{\widetilde\chi(\lambda q)}{q^{\frac{1}{2}}}\bigg)\, d\lambda\\
 \lesssim  k(x,x_1)\langle y_1\rangle \Big[ q^{-\frac12}\int_{|\lambda-\lambda_0|>t^{-\frac{1}{2}}}  \frac{d\lambda }{|\lambda-\lambda_0|^{\frac32} }+ \int_\R \frac{d\lambda }{\lambda^{\frac32} }
	    \bigg(\frac{\widetilde\chi(\lambda p)}{p^{\frac{1}{2}}}
		+\frac{\widetilde\chi(\lambda q)}{ q^{\frac{1}{2}}}\bigg)\Big] \\
  \les k(x,x_1)\langle y_1\rangle \big[q^{-\frac12}t^{1/4} +1\big]\les  k(x,x_1)\langle y_1\rangle.
\end{multline*}
The second inequality follows from $\frac1{|\lambda-\lambda_0|	\lambda^{1/2}}\leq \frac1{|\lambda-\lambda_0|^{3/2}}+\frac1{ 	\lambda^{3/2}}	$, and the last one from the assumption  $t^{-1/2}\les \lambda_0=\frac{q}{2t}$.

	When considering the phase $\phi_{+}(\lambda)=\lambda^2+
	\lambda q t^{-1}$, integration by parts suffices to obtain the desired bound since the phase has no critical points on $(0,\infty)$. We have
	\begin{align*}
		\bigg|\int_0^\infty e^{it\phi_+(\lambda)}a(\lambda)\, d\lambda\bigg|
		&\les t^{-1}\bigg( \int_0^\infty \frac{|a(\lambda)|}{|\phi_+'(\lambda)|^2}\,d\lambda
		+\int_0^\infty \frac{|a'(\lambda)|}{|\phi_+'(\lambda)|}\, d\lambda\bigg).
\end{align*}
Using \eqref{abound23},  \eqref{aprimebound21}, and     $\phi_{+}^\prime(\lambda)\geq 2\lambda$, we bound the right hand side by
\begin{align*}
&\les k(x,x_1)\langle y_1\rangle t^{-1} \int_0^\infty \lambda^{-\frac32}  \bigg(
	 \frac{\widetilde\chi(\lambda p)}{ p^{\frac{1}{2}}}
		+\frac{\widetilde\chi(\lambda q)}{  q^{\frac{1}{2}}}\bigg) \, d\lambda \les  k(x,x_1)\langle y_1\rangle t^{-1}. \qedhere
\end{align*}
\end{proof}

	When switching roles of $\widetilde{\chi}$ and $\chi$ in \eqref{low high bound}, we note that
	from \eqref{largeJYH} the high energy Bessel function representation holds for
	$Y_0(y)$ as well.  The proof will move along   the same line, with $G(\lambda,y,y_1)$ in place
	of $\widetilde G(\lambda,y,y_1)$ and
	using $\widetilde G(\lambda,x,x_1)$ in place of $F(\lambda, x,x_1)$.
	The case when both Bessel functions are $Y_0$ or $J_0$ is similar:

\begin{corollary}\label{low high YJ lemma2}

	For $\mathcal C(z)=J_0(z)$ or $\mathcal C(z)=Y_0(z)$, we have the bound
	\begin{multline}    		\label{low high bound2}
    		\bigg|  \int_{\R^8} \int_0^\infty e^{it\lambda^2} \lambda \chi(\lambda)\chi(\lambda|x-x_1|)
		\mathcal C(\lambda|x-x_1|)v(x_1) S_1D_1S_1 (x_1,y_1) v(y_1) \\
		 \mathcal C(\lambda|y-y_1|)
    		\widetilde\chi(\lambda|y-y_1|) \, d\lambda f(x) g(y) \, dx_1\, dy_1\, dx\, dy\bigg|\les |t|^{-1}.
  	\end{multline}
\end{corollary}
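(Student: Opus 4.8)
The plan is to follow the proof of Lemma~\ref{low high log YJ lemma} line by line, making only the substitutions forced by the fact that here both Bessel factors are the same function $\mathcal C$ and the weight $\log\lambda$ is absent. Assuming $t>0$ as before, I would fix $x,x_1,y,y_1$ and reduce matters to the pointwise estimate that the inner $\lambda$-integral is $\les k(x,x_1)\langle y_1\rangle\, t^{-1}$. Once this is established, the outer integral over $\R^8$ is controlled exactly as in the earlier lemmas, by the $L^2\to L^2$ boundedness of $S_1D_1S_1$ together with $\sup_x\|k(x,\cdot)v\|_2<\infty$ and $\|v\|_{L^{2,1}}<\infty$, both of which hold under the standing hypothesis $v(x)\les\langle x\rangle^{-2-}$.

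The first step is the orthogonality reduction via \eqref{Q trick}: since $S_1\le Q$ and $Q$ is orthogonal to $v$, integration against $v(x_1)S_1D_1S_1(x_1,y_1)v(y_1)$ is unchanged if I subtract from each Bessel factor any function of the corresponding outer variable alone. On the low-energy $x$-factor $\chi(\lambda|x-x_1|)\mathcal C(\lambda|x-x_1|)$ I subtract $\chi(\lambda(1+|x|))\mathcal C(\lambda(1+|x|))$, replacing it by $F(\lambda,x,x_1)$ when $\mathcal C=Y_0$ and by $G(\lambda,x,x_1)$ when $\mathcal C=J_0$. The decisive observation is that, with $\tau=0+$, the bounds $|G|\les\langle x_1\rangle^{0+}$ and $|\partial_\lambda G|\les\langle x_1\rangle^{0+}\lambda^{-1}$ from Lemma~\ref{lem:FG} are of exactly the same strength as the bounds $|F|\les k(x,x_1)$ and $|\partial_\lambda F|\les\lambda^{-1}$ used before, so the two choices of $\mathcal C$ are handled identically.

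On the high-energy $y$-factor $\mathcal C(\lambda|y-y_1|)\widetilde\chi(\lambda|y-y_1|)$ I would use the representation in \eqref{largeJYH}, which is valid for both $J_0$ and $Y_0$, and again apply \eqref{Q trick}: setting $p=\max(|y-y_1|,1+|y|)$ and $q=\min(|y-y_1|,1+|y|)$ and factoring out the slower oscillation $e^{\pm i\lambda q}$ reduces this factor to the functions $\widetilde G^\pm$ of Lemma~\ref{G2 lemma} tested against the shifted phases $\phi_\pm(\lambda)=\lambda^2\pm\lambda q t^{-1}$, whose stationary points sit at $\lambda_0=\mp q/(2t)$. Writing $a(\lambda)=\lambda\chi(\lambda)\,F(\lambda,x,x_1)\,\widetilde G^\pm(\lambda,p,q)$ (with $F$ replaced by $G$ in the $\mathcal C=J_0$ case), the estimates \eqref{abound23} and \eqref{aprimebound21} continue to hold a fortiori, since dropping the bounded factor $\log\lambda$ can only improve them. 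I would then feed $a$ into the stationary phase estimate Lemma~\ref{stat phase} for the $\phi_-$ piece and integrate by parts for the $\phi_+$ piece, splitting into the cases $\lambda_0\les t^{-1/2}$ and $\lambda_0\gtrsim t^{-1/2}$ precisely as in Lemma~\ref{low high log YJ lemma}.

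I expect no genuinely new obstacle: the entire content is checking that the $\mathcal C=J_0$ case, in which the low-energy factor becomes $G$ rather than $F$, is governed by bounds of the same order, and that the absence of $\log\lambda$ cannot worsen anything. The one bookkeeping point to watch is the full power of $\langle y_1\rangle$ produced by $\widetilde G^\pm$ through $|p-q|\les\langle y_1\rangle$; it is this factor that forces the decay assumption $v\les\langle x\rangle^{-2-}$, exactly as recorded in the proof of Lemma~\ref{low high log YJ lemma}, and it is absorbed at the end by the same $L^2$ mapping bounds.
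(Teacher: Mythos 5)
Your proposal is correct and coincides with the paper's own argument: the paper disposes of this corollary precisely by remarking that the proof of Lemma~\ref{low high log YJ lemma} carries over verbatim, using \eqref{largeJYH} (valid for $Y_0$ as well as $J_0$) on the high-energy factor and substituting $G(\lambda,x,x_1)$ for $F(\lambda,x,x_1)$ in the $\mathcal C=J_0$ case, with the absent $\log\lambda$ only improving the bounds \eqref{abound23}--\eqref{aprimebound21}. Your bookkeeping of the full $\langle y_1\rangle$ weight from $\widetilde G^{\pm}$ and its absorption via $v(x)\les\langle x\rangle^{-2-}$ matches the paper's treatment exactly.
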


We now consider the case when both Bessel functions are supported on high energies. For this we will use the first line of \eqref{K defn}:
\begin{lemma}\label{high high log YJ lemma}

  We have the bound
  \begin{multline} \label{high high bound}
    \bigg| \int_{\R^8} \int_0^\infty e^{it\lambda^2} \lambda \chi(\lambda)h^\pm(\lambda)  \widetilde\chi(\lambda|x-x_1|)
    H^\pm_0(\lambda|x-x_1|)v(x_1)
    S_1D_1S_1 (x_1,y_1) v(y_1)\\   H^\pm_0(\lambda|y-y_1|)
    \widetilde\chi(\lambda|y-y_1|) \, d\lambda f(x) g(y) \, dx_1\, dy_1\, dx\, dy\bigg|\les |t|^{-1}.
  \end{multline}
\end{lemma}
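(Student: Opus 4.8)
The plan is to follow the scheme of Lemma~\ref{low high log YJ lemma}, now applying the large-argument representation \eqref{largeJYH} to \emph{both} Hankel factors. Assume without loss of generality that $t>0$ and treat the two terms $h^+H_0^+H_0^+$ and $h^-H_0^-H_0^-$ appearing in the first line of \eqref{K defn} separately. By \eqref{JYasymp2} each factor $H_0^\pm(\lambda r)\widetilde\chi(\lambda r)=e^{\pm i\lambda r}\omega_\pm(\lambda r)\widetilde\chi(\lambda r)$ is a single oscillatory term, so no cross terms arise and the product of the two Hankel factors contributes the phase $e^{\pm i\lambda(|x-x_1|+|y-y_1|)}$.

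The first step is to invoke the orthogonality \eqref{Q trick} in both spatial variables. Since the integrand is paired with $v(x_1)S_1D_1S_1(x_1,y_1)v(y_1)$, relation \eqref{Q trick} lets us subtract from $H_0^\pm(\lambda|x-x_1|)\widetilde\chi(\lambda|x-x_1|)$ any function of $x$ alone, and similarly in $y$. Subtracting the value at $1+|x|$ (resp. $1+|y|$), ordering $p_x=\max(|x-x_1|,1+|x|)$, $q_x=\min(|x-x_1|,1+|x|)$ (and $p_y,q_y$ analogously), and pulling out the slower oscillation $e^{\pm i\lambda q_x}$ (resp.\ $e^{\pm i\lambda q_y}$), reduces each spatial factor to $\widetilde G^\pm(\lambda,p_x,q_x)$ (resp.\ $\widetilde G^\pm(\lambda,p_y,q_y)$) from Lemma~\ref{G2 lemma}. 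The essential gain is that $|p_x-q_x|=\big||x-x_1|-(1+|x|)\big|\les\langle x_1\rangle$ and $|p_y-q_y|\les\langle y_1\rangle$, so the smallness factor $(\lambda|p-q|)^\tau$ in the bounds of Lemma~\ref{G2 lemma} contributes only $\langle x_1\rangle^\tau\langle y_1\rangle^\tau$ and, crucially, no growth in $x$ or $y$. This is exactly what avoids the $\log\max(|x-x_1|,|y-y_1|)$ growth that a direct estimate (without \eqref{Q trick}) would produce, which could not be absorbed by $f,g\in L^1$.

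After these reductions the $\lambda$-integral becomes $\int_0^\infty e^{it\phi_\pm(\lambda)}a(\lambda)\,d\lambda$, with $\phi_\pm(\lambda)=\lambda^2\pm\lambda(q_x+q_y)t^{-1}$ and $a(\lambda)=\lambda\chi(\lambda)h^\pm(\lambda)\widetilde G^\pm(\lambda,p_x,q_x)\widetilde G^\pm(\lambda,p_y,q_y)$. Lemma~\ref{G2 lemma} (with $\tau=0+$ for $a$, and the factor $|p-q|$ when $\partial_\lambda$ falls on a $\widetilde G$ in $a'$), together with $|h^\pm(\lambda)|\les|\log\lambda|$ and $\lambda^{0+}|\log\lambda|\les1$ on the support of $\chi$, give $|a(\lambda)|\les\langle x_1\rangle^{0+}\langle y_1\rangle^{0+}\,\lambda\chi(\lambda)W(\lambda)$ and $|a'(\lambda)|\les\langle x_1\rangle\langle y_1\rangle\,\chi(\lambda)W(\lambda)$, where
\[
W(\lambda):=\Big(\tfrac{\widetilde\chi(\lambda p_x)}{(\lambda p_x)^{1/2}}+\tfrac{\widetilde\chi(\lambda q_x)}{(\lambda q_x)^{1/2}}\Big)\Big(\tfrac{\widetilde\chi(\lambda p_y)}{(\lambda p_y)^{1/2}}+\tfrac{\widetilde\chi(\lambda q_y)}{(\lambda q_y)^{1/2}}\Big).
\]
For the term carrying $\phi_+$ (nonstationary, since $\phi_+'(\lambda)\ge2\lambda$ when $t>0$) a single integration by parts bounds the integral by $t^{-1}\big(\int|a|\lambda^{-2}+\int|a'|\lambda^{-1}\big)$; the half-power decay in $W$ and the lower cutoffs $\lambda p_x,\lambda q_x\gtrsim1$ make $\int W(\lambda)\lambda^{-1}\,d\lambda\les1$ uniformly, yielding the bound $t^{-1}\langle x_1\rangle\langle y_1\rangle$ with a factor bounded in $x,y$. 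For the term carrying $\phi_-$ (whose critical point $\lambda_0=(q_x+q_y)/(2t)$ lies in $(0,\infty)$) I would apply Lemma~\ref{stat phase} and run the exact case analysis of Lemma~\ref{low high log YJ lemma}, separating $\lambda_0\gtrsim t^{-1/2}$ from $\lambda_0\ll t^{-1/2}$ in each of the three pieces of the stationary-phase bound. Integrating the resulting pointwise bound $\les t^{-1}\langle x_1\rangle\langle y_1\rangle$ over $\R^8$ against $v(x_1)|S_1D_1S_1(x_1,y_1)|v(y_1)|f(x)||g(y)|$, and using $\|f\|_1=\|g\|_1=1$, the absolute boundedness of $D_1$ on $L^2$, and the decay $v\les\langle x\rangle^{-2-}$ (i.e.\ $|V|\les\langle x\rangle^{-4-}$) to guarantee $\langle x_1\rangle v,\langle y_1\rangle v\in L^2$, gives \eqref{high high bound}.

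The main obstacle is the $\phi_-$ term: the critical point $\lambda_0$ sweeps through precisely the range where both large-argument cutoffs are active, and one must extract the $t^{-1}$ decay while keeping the spatial bound free of growth in $x$ and $y$ so that $f,g\in L^1$ suffice. The double application of \eqref{Q trick}, combined with the difference estimates of Lemma~\ref{G2 lemma} (which convert both Hankel factors into the $\widetilde G^\pm$ with the decisive bounds $|p_x-q_x|\les\langle x_1\rangle$, $|p_y-q_y|\les\langle y_1\rangle$), is exactly what supplies this; the near-critical-point analysis is then identical in spirit to that already carried out in Lemma~\ref{low high log YJ lemma}.
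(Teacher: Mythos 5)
Your setup reproduces the paper's proof almost verbatim: double application of \eqref{Q trick}, replacement of both Hankel factors by $\widetilde G^{\pm}$ from Lemma~\ref{G2 lemma} (the paper writes $p_1,p_2,q_1,q_2$ for your $p_x,q_x,p_y,q_y$), the shifted phase $\phi_{\pm}(\lambda)=\lambda^2\pm\lambda(q_x+q_y)/t$, absorption of $h^{\pm}(\lambda)$ via $\lambda^{0+}|\log\lambda|\les 1$, nonstationary integration by parts for $\phi_+$, Lemma~\ref{stat phase} for $\phi_-$, and the final integration using $v\les\la x\ra^{-2-}$ and the absolute boundedness of $D_1$. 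Your bounds on $a$ and $a'$ agree with the paper's \eqref{newabound23} and \eqref{newaprimebound21}.

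However, there is a genuine gap at the one point where you defer to ``the exact case analysis of Lemma~\ref{low high log YJ lemma}'': that analysis does not transfer. In the low-high lemma the amplitude retains a spare half power of $\lambda$, $|a(\lambda)|\les k(x,x_1)\la y_1\ra^{0+}\sqrt{\lambda}\,\big(p^{-1/2}+q^{-1/2}\big)$, and the critical-window and tail estimates close via the one-variable identity $\lambda_0 q^{-1}\leq t^{-1}$, immediate from $\lambda_0=q/(2t)$ with $p\geq q$. In the high-high case your own $W(\lambda)$ shows the net amplitude is $|a(\lambda)|\les \la x_1\ra^{0+}\sum_{i,j}(p_iq_j)^{-1/2}\widetilde\chi(\lambda p_i)\widetilde\chi(\lambda q_j)$ with \emph{no} leftover power of $\lambda$; the window term then gives only $\int_{|\lambda-\lambda_0|<t^{-1/2}}|a|\,d\lambda\les t^{-1/2}(p_iq_j)^{-1/2}$, and when $\lambda_0\gtrsim t^{-1/2}$ this is $\les t^{-1}$ only if $p_iq_j\gtrsim t$ on the support of the cutoffs --- which does not follow from anything in the low-high proof. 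The paper supplies exactly this missing ingredient as \eqref{piqjcalc}: if both cutoffs are active near $\lambda_0$ then $p_i,q_j\gtrsim\lambda_0^{-1}$, so, using $p_i\geq q_x$ and $q_j\geq q_y$,
\begin{align*}
\frac{q_x+q_y}{p_iq_j}\leq\frac{p_i+q_j}{p_iq_j}=\frac{1}{p_i}+\frac{1}{q_j}\les\lambda_0=\frac{q_x+q_y}{2t},
\end{align*}
and cancelling $q_x+q_y$ yields $p_iq_j\gtrsim t$. This product inequality is needed not only in the window term but again, combined with a case analysis on $m=\min(p_i,q_j)$ (distinguishing $\lambda_0\ll 1/m$, where $|\lambda-\lambda_0|\gtrsim 1/m$ on the cutoff support, from $\lambda_0\gtrsim 1/m$), to control the $|a|/|\lambda-\lambda_0|^2$ and $|a'|/|\lambda-\lambda_0|$ integrals when $\lambda_0\gtrsim t^{-1/2}$. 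It is the genuinely new step of the high-high case, and your proposal, while correctly structured everywhere else, does not contain it.
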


\begin{proof}

Again we assume that $t>0$. Recall \eqref{JYasymp2}:
	\begin{align}\label{Hankel exp high}
		H_0^{\pm}(z)\widetilde\chi(z)=e^{\pm iz}\omega_{\pm}(z)
	\end{align}
	with  $|\omega_{\pm}^{(\ell)}(z)|\lesssim (1+|z|)^{-\frac{1}{2}-\ell}$.
As in Lemma~\ref{low high log YJ lemma} we need
	to use the auxiliary function $\widetilde G$.  Denote $p_1=\max(|x-x_1|,1+|x|)$,
	$p_2=\min(|x-x_1|,1+|x|)$, $q_1=\max(|y-y_1|,1+|y|)$ and $q_2=\min(|y-y_1|,1+|y|)$.
	Without loss of generality, $p_1,p_2,q_1,q_2>0$.
 We note that by \eqref{Q trick},
	we can replace $H_0^\pm(\lambda |x-x_1|)$ with $\widetilde G(\lambda,p_1,p_2)$ (and similarly replace $H_0^\pm(\lambda |y-y_1|)$) as in Lemma~\ref{G2 lemma}. This changes the phase with
\begin{align*}
		\phi_{\pm}(\lambda)=\lambda^2\pm \lambda \frac{p_2+q_2}{t}.
\end{align*}
We first consider
	$\phi_{-}(\lambda)$, which has a stationary point at $\lambda_0=\frac{p_2+q_2}{2t}>0$.
We  apply  Lemma~\ref{stat phase} with
	$$a(\lambda)=\lambda \chi(\lambda)h^{\pm}(\lambda) \widetilde G(\lambda ,p_1,p_2)\widetilde G(\lambda, q_1,q_2).$$
Using the bounds in Lemma~\ref{G2 lemma} (with $\tau=0+$ for $\widetilde G(\lambda ,p_1,p_2)$ and $\tau=0$ for the other), we have
\begin{align}
	|a(\lambda)| &\lesssim \langle x_1\rangle^{0+}  \chi(\lambda)
	\bigg(\frac{\widetilde\chi(\lambda p_1)}{ p_1^{1/2}}
		+\frac{\widetilde\chi(\lambda p_2)}{p_2^{1/2}}\bigg)\bigg(\frac{\widetilde\chi(\lambda q_1)}{ q_1^{1/2}}
		+\frac{\widetilde\chi(\lambda q_2)}{q_2^{1/2}}\bigg),\,\,\,\,\,\text{ and }\label{newabound23} \\
	|a^\prime(\lambda)|& 
\lesssim  \langle x_1\rangle
	\langle y_1\rangle \frac{\chi(\lambda)}{\lambda} \bigg(\frac{\widetilde\chi(\lambda p_1)}{ p_1^{1/2}}
		+\frac{\widetilde\chi(\lambda p_2)}{p_2^{1/2}}\bigg)\bigg(\frac{\widetilde\chi(\lambda q_1)}{ q_1^{1/2}}
		+\frac{\widetilde\chi(\lambda q_2)}{q_2^{1/2}}\bigg).\label{newaprimebound21}
\end{align}
Let $\tau=2\max(\lambda_0,t^{-1/2})$. Since $\tilde\chi$ is a nondecreasing function supported on $[1,\infty)$ and $\lambda\leq \tau$ on the support of the integral, we have the bound
\begin{multline}\label{ilksira}
		\int_{|\lambda-\lambda_0|<t^{-1/2}} |a(\lambda)|\, d\lambda  \lesssim \langle x_1\rangle^{0+}  t^{-1/2}
\bigg(\frac{\widetilde\chi(\tau p_1)}{ p_1^{1/2}}
		+\frac{\widetilde\chi(\tau p_2)}{p_2^{1/2}}\bigg)\bigg(\frac{\widetilde\chi(\tau q_1)}{ q_1^{1/2}}
		+\frac{\widetilde\chi(\tau q_2)}{q_2^{1/2}}\bigg) \\
		 \lesssim \langle x_1\rangle^{0+}  t^{-1/2} \tau \lesssim  \langle x_1\rangle^{0+}   t^{-1}, 
	\end{multline}
if $\tau=2t^{-1/2}$. On the other hand if $\tau=2\lambda_0$, we consider the contributions of the products of $\tilde\chi$'s more carefully. Consider the contribution of
\begin{align*}
		\frac{\widetilde\chi(\tau p_i)\widetilde\chi(\tau q_j)}{p_i^{1/2}q_j^{1/2}}
\end{align*}
to \eqref{ilksira}. This term is zero unless $p_i\gtrsim 1/\lambda_0$ and $q_j\gtrsim 1/\lambda_0$. Therefore,
using  $p_1\geq p_2$ and $q_1\geq q_2$, we have
\begin{align}\label{piqjcalc}
	\frac{p_2+q_2}{p_iq_j}\leq \frac{p_i+q_j}{p_i q_j}=\frac{1}{p_i}+\frac{1}{q_j}\les   \lambda_0=	\frac{p_2+q_2}{2t}.
\end{align}
Therefore, $p_iq_j\gtrsim t$, and we can estimate the contribution of  each product to \eqref{ilksira} by $\langle x_1\rangle^{0+}  t^{-1}$.

	For the portion of $a(\lambda)$ supported on $|\lambda-\lambda_0|>t^{-\frac{1}{2}}$, we note that
	if $\lambda_0\ll t^{-\frac{1}{2}}$, then $|\lambda-\lambda_0|\approx \lambda$ so that
	\begin{multline*}
		\int_{|\lambda-\lambda_0|>t^{-\frac{1}{2}}} \frac{|a(\lambda)|}{|\lambda-\lambda_0|^2}\, d\lambda
		 \les \langle x_1\rangle^{0+} \sum_{i,j=1}^2 \int_{\R}   \frac{\widetilde\chi(\lambda p_i)}{ p_i^{1/2}}
		 \frac{\widetilde\chi(\lambda q_j)}{ q_j^{1/2}}
		 \, \frac{d\lambda}{\lambda^2}\\
		 \les \langle x_1\rangle^{0+} \sum_{i,j=1}^2 \bigg( \int_\R \frac{\widetilde \chi(\lambda p_i)^2}{p_i }\,\frac{d\lambda}{\lambda^2} \bigg)^{\frac{1}{2}}
		\bigg(\int_{\R} \frac{\widetilde\chi(\lambda q_j)^2}{q_j }\,\frac{d\lambda}{\lambda^2} \bigg)^{\frac{1}{2}}\les \langle x_1\rangle^{0+}.
	\end{multline*}
	On the other hand if $\lambda_0 \gtrsim t^{-\frac{1}{2}}$ , we have
	\begin{align*}
		\int_{|\lambda-\lambda_0|>t^{-\frac{1}{2}}} \frac{|a(\lambda)|}{|\lambda-\lambda_0|^2}\, d\lambda
		&\les \langle x_1\rangle^{0+} \sum_{i,j=1}^2 (p_iq_j)^{-\frac{1}{2}} \int_{|\lambda-\lambda_0|>t^{-\frac{1}{2}}}
		\frac{\widetilde\chi(\lambda p_i)  \widetilde\chi(\lambda q_j)}{|\lambda-\lambda_0|^2}\, d\lambda.
	\end{align*}
	Fix $i,j$ and let $m=\min(p_i,q_j)$. We have two cases: $\lambda_0\ll 1/m$ and $\lambda_0\gtrsim 1/m$. In the former case, we note that $|\lambda-\lambda_0|\gtrsim 1/m$ on the support of the cutoffs. Therefore,
\begin{align*}(p_iq_j)^{-\frac{1}{2}} \int_{|\lambda-\lambda_0|>t^{-\frac{1}{2}}}
		\frac{\widetilde\chi(\lambda p_i)  \widetilde\chi(\lambda q_j)}{|\lambda-\lambda_0|^2}\, d\lambda
\les (p_iq_j)^{-\frac{1}{2}} \int_{|\lambda-\lambda_0|\gtrsim 1/m}
		\frac{d\lambda}{|\lambda-\lambda_0|^2}  \les (p_iq_j)^{-\frac{1}{2}} m \leq 1.
\end{align*}
In the latter case, using \eqref{piqjcalc}, we conclude that $p_iq_j\gtrsim t$. This implies the desired bound by ignoring the cutoffs in the integral.

	We now turn to the term in Lemma~\ref{stat phase} that involves $a'(\lambda)$.
Using \eqref{newaprimebound21}, we have 	
	\begin{align*}\nn 
		&\int_{|\lambda-\lambda_0|>t^{-1/2}}\frac{|a^\prime(\lambda)|}{|\lambda-\lambda_0|} \,d\lambda   \les \langle x_1\rangle \langle y_1\rangle \sum_{i,j=1}^2 (p_iq_j)^{-\frac{1}{2}}
		\int_{|\lambda-\lambda_0|>t^{-1/2}}
		\frac{ \widetilde\chi(\lambda p_i)  \widetilde\chi(\lambda q_j)}{\lambda|\lambda-\lambda_0|}d\lambda\\
& \les \langle x_1\rangle \langle y_1\rangle \sum_{i,j=1}^2 (p_iq_j)^{-\frac{1}{2}}
		\Big[\int_{|\lambda-\lambda_0|>t^{-1/2}}
		\frac{ \widetilde\chi(\lambda p_i)  \widetilde\chi(\lambda q_j)}{\lambda^2}d\lambda
+\int_{|\lambda-\lambda_0|>t^{-1/2}}
		\frac{ \widetilde\chi(\lambda p_i)  \widetilde\chi(\lambda q_j)}{ |\lambda-\lambda_0|^2}d\lambda\Big].
	\end{align*}
	The required bounds for each of these terms appeared above in the bound for $a(\lambda) /|\lambda-\lambda_0|^2$ integral.
	This establishes the desired bound for the phase $\phi_{-}$.  For the case of $\phi_{+}$, integration by parts and
	the bounds on $a(\lambda)$ and $a'(\lambda)$ suffice, we leave the details to the reader.		
\end{proof}

With these estimates established, we are ready to prove Proposition~\ref{prop:summand2}.

\begin{proof}[Proof of Proposition~\ref{prop:summand2}]

	Lemmas~\ref{low log YJ lemma}, \ref{low high log YJ lemma},
	\ref{high high log YJ lemma} and Corollaries~\ref{low YJ lemma2}, \ref{low high YJ lemma2} bound each term of  \eqref{K defn} as desired.
\end{proof}

We now turn to the terms involving $SS_1D_1S_1$ and $S_1D_1 S_1 S$ in Corollary~\ref{M exp cor1}.
\begin{prop}\label{S and S_1 terms lemma} The contribution of the  terms $SS_1D_1S_1$,  $S_1D_1S_1S$ and $QD_0Q$ in Corollary~\ref{M exp cor1} in \eqref{klambda} satisfies \eqref{disp}. More explicitly,
we have the bound
  	\begin{multline} \nn
    		\bigg| \int_{\R^8} \int_0^\infty e^{it\lambda^2} \lambda \chi(\lambda)
		R_0^{\pm}(\lambda^2)(x,x_1)v(x_1)
    		SS_1D_1S_1 (x_1,y_1) v(y_1) \\
		R_0^{\pm}(\lambda^2)(y,y_1) \, d\lambda f(x) g(y) \, dx_1\, dy_1\, dx\, dy\bigg|\les |t|^{-1}.
  	\end{multline}
  	The same bound holds when $SS_1D_1S_1$ is replaced by $S_1D_1S_1S$ or by $QD_0Q$.
\end{prop}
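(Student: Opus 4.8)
The plan is to push the three operators through the symmetric resolvent identity \eqref{resolvent id} exactly as the first term of Corollary~\ref{M exp cor1} was pushed into \eqref{klambda}, and to exploit that these operators carry \emph{no} $\log\lambda$ factor. Writing $L$ for any of $SS_1D_1S_1$, $S_1D_1S_1S$, $QD_0Q$, factoring $R_0^\pm=\pm\frac i4H_0^\pm$ and using $H_0^-=\overline{H_0^+}$, the $+/-$ difference of $R_0^\pm vLvR_0^\pm$ sandwiches the real kernel $v(x_1)L(x_1,y_1)v(y_1)$ against
\[
\widetilde{\mathcal K}(\lambda,p,q):=H_0^+(\lambda p)H_0^+(\lambda q)-H_0^-(\lambda p)H_0^-(\lambda q)=2i\big[J_0(\lambda p)Y_0(\lambda q)+Y_0(\lambda p)J_0(\lambda q)\big],
\]
with $p=|x-x_1|$, $q=|y-y_1|$. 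This is $\mathcal K$ from \eqref{K defn} with the singular factor $h^\pm(\lambda)$ deleted and the $J_0J_0+Y_0Y_0$ part absent. First I would therefore rerun the three–scale splitting of Proposition~\ref{prop:summand2}, $1=\chi(\lambda p)+\widetilde\chi(\lambda p)$ and $1=\chi(\lambda q)+\widetilde\chi(\lambda q)$, giving low/low, low/high and high/high pieces, and invoke Lemmas~\ref{lem:FG}, \ref{G2 lemma} and the stationary phase Lemma~\ref{stat phase} as in Lemmas~\ref{low log YJ lemma}--\ref{high high log YJ lemma} and Corollaries~\ref{low YJ lemma2}, \ref{low high YJ lemma2}. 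Since $\widetilde{\mathcal K}$ has no $\log\lambda$, every one of those estimates becomes strictly easier.

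The one genuinely new ingredient is the orthogonality \eqref{Q trick}, which is what allows a bare Bessel factor to be replaced by a bounded ($F$) or small ($G$, $\widetilde G$) auxiliary function. For $QD_0Q$ this holds on both sides: $Qv=0$ gives $\int v(x_1)(QD_0Q)(x_1,y_1)\,dx_1=0$ and likewise in $y_1$, so every $Y_0$ becomes an $F$ and every $J_0$ a $G$, and the bound \eqref{disp} for this term is literally Corollaries~\ref{low YJ lemma2}, \ref{low high YJ lemma2} with the $\log\lambda$ removed. The delicate point is that $SS_1D_1S_1$ and $S_1D_1S_1S$ are orthogonal on only one side. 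Since the rightmost factor of $SS_1D_1S_1$ is $S_1$ and $S_1v=0$, the right orthogonality $\int(SS_1D_1S_1)(x_1,y_1)v(y_1)\,dy_1=0$ holds, but the left integral equals $(S_1D_1S_1Sv)(y_1)=-(S_1D_1S_1Tv)(y_1)$, which is nonzero: for a resonance of the first kind $T_1=\|V\|_1^{-1}|S_1Tv\rangle\langle S_1Tv|$ is invertible on the one–dimensional space $S_1L^2$, so $S_1Tv\neq0$. (The operator $S_1D_1S_1S$ is the transpose, orthogonal only on the left.)

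To recover orthogonality on the bad side I would split off this defect by decomposing $\mathbbm 1=P+Q$ on the left index, $SS_1D_1S_1=P\,SS_1D_1S_1+Q\,SS_1D_1S_1$. The $Q$–summand obeys $\int v(x_1)(Q\,SS_1D_1S_1)(x_1,y_1)\,dx_1=(S_1D_1S_1SQv)(y_1)=0$ together with the right orthogonality, so it is two–sided and is treated exactly like $QD_0Q$. The $P$–summand is rank one: using $P=v\langle\cdot,v\rangle\|V\|_1^{-1}$ and the left computation above,
\[
(P\,SS_1D_1S_1)(x_1,y_1)=-\frac{v(x_1)\,\psi(y_1)}{\|V\|_1},\qquad \psi:=S_1D_1S_1Tv\in S_1L^2,\quad \langle\psi,v\rangle=0,
\]
so its contribution to \eqref{klambda} factorizes, for each fixed $\lambda$, as a product of an ``$x$–cloud'' $\int R_0^\pm(x,x_1)|V(x_1)|\,dx_1$ and a ``$y$–cloud'' $\int R_0^\pm(y,y_1)\psi(y_1)v(y_1)\,dy_1$. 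In $\widetilde{\mathcal K}$ the two $Y_0$'s sit on opposite clouds, so the single available orthogonality $\langle\psi,v\rangle=0$ does double duty: in $Y_0(\lambda p)J_0(\lambda q)$ the $J_0$ on the $y$–cloud is $O(\lambda^{0+})$ (the $G$ mechanism), which absorbs the $\log$ of $Y_0$ on the $x$–cloud; and in $J_0(\lambda p)Y_0(\lambda q)$ the $\log\lambda$ of the $y$–cloud's $Y_0$ is annihilated by $\langle\psi,v\rangle=0$ (the $F$ bound of Lemma~\ref{lem:FG}). One then extracts $|t|^{-1}$ by a single integration by parts in $\lambda^2$ at low energy and by Lemma~\ref{stat phase} whenever a Bessel factor is at high energy, and absorbs the resulting $\langle x_1\rangle^{0+}$, $\langle y_1\rangle^{0+}$, $k(x,x_1)$, $k(y,y_1)$ weights against $|V|$, $\psi v$, $f$, $g$ via $\|f\|_1=\|g\|_1=1$ and the Hilbert--Schmidt bounds of Remark (iv).

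The hard part will be the term in which the $\log$-bearing $Y_0$ lands on the \emph{orthogonal} $y$–cloud while the non–orthogonal $x$–cloud only contributes a bounded $J_0$ (the $J_0(\lambda p)Y_0(\lambda q)$ piece): here neither factor is small, and a naive integration by parts leaves a $\lambda^{-1}$ (equivalently $u^{-1}$ in $u=\lambda^2$) which integrates to a spurious $\log t$, while the obvious Taylor remainders reintroduce growth in the output variables. The key will be to organize the $x$–cloud as the clean leading free–evolution integral $\int e^{it\lambda^2}\lambda\chi(\lambda)\,d\lambda=O(|t|^{-1})$ plus a genuinely smoothing correction, and to show that the correction always multiplies the $O(\lambda^{0+})$ vanishing furnished by $\langle\psi,v\rangle=0$ on the $y$–cloud; checking this matching term by term in $\widetilde{\mathcal K}$, so that every singular $x$–factor is paired with a vanishing $y$–factor, is the crux of the argument.
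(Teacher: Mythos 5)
Your skeleton is essentially the paper's: the $QD_0Q$ term is treated exactly as in Proposition~\ref{prop:summand2} (easier, since no $h^\pm(\lambda)$ factor), and for $SS_1D_1S_1$ and $S_1D_1S_1S$ the paper likewise applies \eqref{Q trick} on the single available side, replaces the Bessel factor there by $F$, $G$ or $\widetilde G$, and handles high energies via stationary phase (deferring to Lemmas~14 and~15 of \cite{Sc2} when the non-orthogonal side, or both sides, sit at high energy, which matches your use of Lemma~\ref{G2 lemma} and Lemma~\ref{stat phase}). Two of your moves differ in organization. First, your reduction via the $+/-$ cancellation to the cross terms $J_0Y_0+Y_0J_0$ is correct and is in fact the honest reading of the statement: for a fixed sign, the low--low $Y_0Y_0$ piece with only one-sided orthogonality retains a bare $\log\lambda$, and the model integral $\int_0^1 e^{it\lambda^2}\lambda\log\lambda\,d\lambda$ is of size $|t|^{-1}\log|t|$, so it is the contribution to \eqref{klambda} (the $\pm$ difference, where $J_0J_0$ and $Y_0Y_0$ cancel as in \eqref{K defn}) that comes out at $|t|^{-1}$; the paper's proof indeed only treats cross-term configurations. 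Second, your $P+Q$ splitting of $SS_1D_1S_1$ into a two-sided-orthogonal piece plus the rank-one defect $-\|V\|_1^{-1}\,v\otimes\psi$ with $\psi=S_1D_1S_1Tv$, $\langle\psi,v\rangle=0$, is correct (your identities $S_1Sv=-S_1Tv$ and $S_1QD_0Q=S_1$ check out), but it buys nothing: the $P$-part's $x$-cloud is still non-orthogonal, and the paper instead simply keeps the bare factor $\chi(\lambda)\chi(\lambda|x-x_1|)Y_0(\lambda|x-x_1|)$ on that side, observing via \eqref{chichilog} that it obeys the same bounds as $\log(\lambda)F$ with $1+\log^-(|x-x_1|)$ in place of $k(x,x_1)$.

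The genuine flaw is your final paragraph: the configuration you single out as the crux ($J_0$ on the non-orthogonal side against the $\log$-bearing $Y_0$, replaced by $F$, on the orthogonal side) is not hard, and your proposed resolution misdiagnoses it. Lemma~\ref{lem:FG} gives more than the pointwise bound $|\partial_\lambda F|\lesssim\lambda^{-1}$: its proof, via \eqref{F prime int}, establishes the \emph{integrated} bound $\int_0^{2\lambda_1}|\partial_\lambda F(\lambda,y,y_1)|\,d\lambda\lesssim k(y,y_1)$. Hence a single integration by parts suffices:
\begin{multline*}
\Big|\int_0^\infty e^{it\lambda^2}\lambda\chi(\lambda)\chi(\lambda p)J_0(\lambda p)\,F(\lambda,y,y_1)\,d\lambda\Big|
\lesssim |t|^{-1}\Big(|F(0+,y,y_1)|\\
+\int_0^{2\lambda_1}\big|\partial_\lambda\big[\chi(\lambda)\chi(\lambda p)J_0(\lambda p)\big]\big|\,|F(\lambda,y,y_1)|\,d\lambda
+\int_0^{2\lambda_1}|\partial_\lambda F(\lambda,y,y_1)|\,d\lambda\Big)\lesssim |t|^{-1}k(y,y_1),
\end{multline*}
since $\partial_\lambda[\chi(\lambda)\chi(\lambda p)J_0(\lambda p)]$ has $O(1)$ mass in $\lambda$ (the $\chi'$ terms live on annuli of bounded $\lambda$-measure and $pJ_0'(\lambda p)=O(\lambda p^2)$ on $\lambda p\lesssim 1$). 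No $\log t$ appears, and no pairing of a ``singular $x$-factor with a vanishing $y$-factor'' is needed; indeed no vanishing $y$-factor exists in this term, since $F$ is merely bounded, so the matching principle your ``free evolution plus smoothing correction'' scheme rests on is false for exactly this term while also being unnecessary. Once you invoke Lemma~\ref{lem:FG} in its integrated form, your argument closes and coincides with the paper's treatment of the $S_1D_1S_1S$ case.
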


\begin{proof}
	The $QD_0Q$ term can be  handled as in Proposition~\ref{prop:summand2}, it is in fact easier since there is no $\log(\lambda)$ term.

	The other terms are somehow different since they have a projection orthogonal to $v$ only on one side. Therefore, one can   use \eqref{Q trick} only on one side.  However, since there is no $\log(\lambda)$ term, the bounds established in   in Lemmas~\ref{low log YJ lemma}, \ref{low high log YJ lemma},
	and~\ref{high high log YJ lemma} go through.    For instance, to establish the bound
	\begin{multline}\nn
    		\bigg| \int_{\R^8} \int_0^\infty e^{it\lambda^2} \lambda \chi(\lambda)\chi(\lambda|x-x_1|)Y_0(\lambda|x-x_1|)v(x_1)
    		SS_1 D_1S_1(x_1,y_1) v(y_1)  \\  J_0(\lambda|y-y_1|)
    		\chi(\lambda|y-y_1|) \, d\lambda f(x) g(y) \, dx_1\, dy_1\, dx\, dy\bigg|\les |t|^{-1},
  	\end{multline}
	we can use $G(\lambda,y,y_1)$ in place of $J_0(\lambda|y-y_1|)$. After an integration by parts the boundary terms   vanish since  $G(\lambda,y,y_1)\to 0$ as $\lambda\to 0$, and the $\lambda$-integral can be bounded by
\begin{multline}\nn
    		 t^{-1} \int_0^\infty     \Big|\partial_\lambda\Big(\chi(\lambda)\chi(\lambda|x-x_1|)Y_0(\lambda|x-x_1|)G(\lambda,y,y_1)\Big)\big| \, d\lambda  \les \\
t^{-1} \langle y_1\rangle^{\tau} (1+\log^{-}(|x-x_1|)) \int_0^1  (1+|\log(\lambda)|)\lambda^{\tau-1} d\lambda \les  t^{-1} \langle y_1\rangle^{\tau} (1+\log^{-}(|x-x_1|)).
  	\end{multline}
Here we used the bounds for $G$ from Lemma~\ref{lem:FG}, the bounds \eqref{Y0 def2} and \eqref{Jprime}, and the following estimate:
\be \label{chichilog}
 \chi(\lambda) \chi(\lambda|x-x_1|) \log(\lambda|x-x_1|) \les (1+|\log(\lambda)|)(1+\log^{-}(|x-x_1|)).
\ee
This estimate follows easily by considering the cases $|x-x_1|<1$ and $|x-x_1|>1$ separately.

	When we have $S_1D_1S_1 S$ instead, we must use $F(\lambda,x,x_1)$
	instead of $Y_0(\lambda|x-x_1|)$, and the boundary terms are now controlled by $|t|^{-1}k(x,x_1)$ as
	in Lemma~\ref{lem:FG}.  The other cases when both projections are onto low energies  can be handled similarly.

The case when $\chi$ is replaced with $\widetilde\chi$ on both sides can be handled as in Lemma~15 from \cite{Sc2} since the argument there does not make use of the projections orthogonal to $v$.

Similarly, in the case when $\chi$ is replaced with $\widetilde \chi$ on the side which does not have a projection orthogonal to $v$, the proof of Lemma~14 from \cite{Sc2} applies.

It remains to prove that
\begin{multline}\label{nolog}
    		\bigg| \int_{\R^8} \int_0^\infty e^{it\lambda^2} \lambda \chi(\lambda)\chi(\lambda|x-x_1|)Y_0(\lambda|x-x_1|)v(x_1)
    		SS_1 D_1S_1(x_1,y_1) v(y_1)  \\  J_0(\lambda|y-y_1|)
    		\widetilde \chi(\lambda|y-y_1|) \, d\lambda f(x) g(y) \, dx_1\, dy_1\, dx\, dy\bigg|\les |t|^{-1}.
\end{multline}
Since $S$ is not orthogonal to $v$, we can not replace $\chi Y_0$ with $F$.   However, we can replace  $\widetilde\chi J_0$ with $\widetilde G$ shifting the critical point of the $\lambda$-integral as in the proof of Lemma~\ref{low high log YJ lemma}.
The argument in the proof of that lemma relies on the bounds
\be\label{log F bounds}
|\log(\lambda) F(\lambda,x,x_1)|\les \log(\lambda) k(x,x_1),\,\,\,\,\,\,\,
\big|\partial_\lambda\big(\log(\lambda) F(\lambda,x,x_1)\big)\big|\les k(x,x_1) \log(\lambda) \lambda^{-1}.
\ee
Since we don't have an additional $\log(\lambda)$ in \eqref{nolog}, it suffices to note that  $\chi(\lambda) \chi(\lambda|x-x_1|)Y_0(\lambda|x-x_1|)$ satisfies similar bounds  as in \eqref{log F bounds} (with $1+|\log^-(|x-x_1)|$ instead of $k(x,x_1)$, {\em c.f.}, \eqref{chichilog}).
\end{proof}

  The terms arising from $h_{\pm}(\lambda)^{-1}S$ and $h_{\pm}(\lambda)^{-1}SS_1D_1S_1S$
are handled in Lemma~17 in \cite{Sc2}, which we restate below for completeness.

\begin{prop}\cite{Sc2} \label{S prop} The contribution of the  terms $h_{\pm}(\lambda)^{-1}S$ and $h_{\pm}(\lambda)^{-1}SS_1D_1S_1S$ in Corollary~\ref{M exp cor1} in \eqref{klambda} satisfies \eqref{disp}. More explicitly,
	we have the bound
	\begin{align*}
		 \bigg|\int_0^\infty e^{it\lambda^2}\lambda \chi(\lambda) \bigg\langle \bigg[
		\frac{R_0^{+}(\lambda^2)vSvR_0^{+}(\lambda^2)}{h_+(\lambda)}
		-\frac{R_0^{-}(\lambda^2)vSvR_0^{-}(\lambda^2)}{h_-(\lambda)}
		\bigg]f,g\bigg\rangle \, d\lambda \bigg|
		 \lesssim |t|^{-1}.
	\end{align*}
A similar bound holds if we replace $S$ with $SS_1D_1S_1S$.
\end{prop}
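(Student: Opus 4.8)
The plan is to follow the scheme of Proposition~\ref{prop:summand2}, reducing to a scalar oscillatory integral in $\lambda$ for fixed $x,x_1,y,y_1$, but exploiting that the multiplier is now $h_\pm(\lambda)^{-1}$ rather than $h_\pm(\lambda)\sim\log\lambda$. Writing $R_0^\pm(\lambda^2)(x,x_1)=\pm\frac{i}{4}H_0^\pm(\lambda p)$ with $p=|x-x_1|$, $q=|y-y_1|$, the bracketed kernel equals $-\frac1{16}v(x_1)S(x_1,y_1)v(y_1)$ times
\[
\mathcal K'(\lambda,p,q):=\frac{H_0^+(\lambda p)H_0^+(\lambda q)}{h_+(\lambda)}-\frac{H_0^-(\lambda p)H_0^-(\lambda q)}{h_-(\lambda)}.
\]
Since $H_0^-=\overline{H_0^+}$ on the positive real axis and $h_-=\overline{h_+}$ (Lemma~\ref{M+S1inverse}), we have $\mathcal K'(\lambda,p,q)=2i\,\Im\big(H_0^+(\lambda p)H_0^+(\lambda q)/h_+(\lambda)\big)$, so $\mathcal K'$ is real-valued up to the factor $2i$; by linearity it suffices to take $f,g$ real. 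The decisive structural point I would record first is that $1/h_+(\lambda)=(a\log\lambda+\overline z)/|a\log\lambda+z|^2$ is \emph{bounded} (the hypothesis $\Im z\neq0$ keeps the denominator $\ges(\Im z)^2$ away from zero), vanishes like $1/|\log\lambda|$ as $\lambda\to0^+$, and satisfies $|\partial_\lambda h_+(\lambda)^{-1}|\les \lambda^{-1}(\log\lambda)^{-2}$, which is integrable near $\lambda=0$.

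Next I would insert the partitions of unity $1=\chi(\lambda p)+\widetilde\chi(\lambda p)$ and $1=\chi(\lambda q)+\widetilde\chi(\lambda q)$ and treat the low-low, low-high, and high-high regimes as in Lemmas~\ref{low log YJ lemma}, \ref{low high log YJ lemma}, and \ref{high high log YJ lemma}. In the low-low regime one integrates by parts once in $\lambda$ against $e^{it\lambda^2}$ to extract the factor $|t|^{-1}$; there is no boundary contribution at $\lambda=0$, since the integrand carries a factor $\lambda$ and $1/h_+$ vanishes there. By \eqref{J0 def} and \eqref{Y0 def2} the Hankel functions are $\les1+|\log(\lambda p)|$, and on the support of $\chi(\lambda)\chi(\lambda p)$ one has $|\log(\lambda p)|\les1+|\log\lambda|$ exactly as in \eqref{chichilog}; combined with the boundedness of $1/h_+$ this gives $|H_0^+(\lambda p)/h_+(\lambda)|\les1$ and shows that $\partial_\lambda$ of the full low-energy amplitude is integrable on $(0,\lambda_1)$ once the $\log^-$ pieces are accounted for. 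The $x_1,y_1$ weights that arise are of the $k(x,x_1)$ type from Lemma~\ref{lem:FG} and are absorbed against $v(x_1)S(\cdot,\cdot)v(y_1)$ provided $|V|\les\langle x\rangle^{-4-}$, which is the hypothesis $k=1$ of this section.

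For the mixed and high-high regimes I would use the large-argument representation \eqref{Hankel exp high}, $H_0^\pm(z)\widetilde\chi(z)=e^{\pm iz}\omega_\pm(z)$, pull out the slower oscillation $e^{\pm i\lambda q}$ (respectively both slow oscillations), and apply the stationary phase estimate Lemma~\ref{stat phase} with the shifted phase $\phi_\pm(\lambda)=\lambda^2\pm\lambda q\,t^{-1}$ (respectively $\lambda^2\pm\lambda(p+q)\,t^{-1}$), precisely as in Lemma~\ref{high high log YJ lemma}. Here $1/h_+(\lambda)$ enters only as a benign smooth multiplier absorbed into the amplitude $a(\lambda)$, so these cases are in fact easier than their $\log\lambda$-weighted counterparts already handled. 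The identical argument applies with $S$ replaced by $SS_1D_1S_1S$, since that operator also has a real kernel.

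The main obstacle is that, unlike the $S_1D_1S_1$ term of Proposition~\ref{prop:summand2}, the operator $S$ in \eqref{S_defn} contains the $P$-block, so $vSv$ is \emph{not} orthogonal to $v$; consequently the cancellation identity \eqref{Q trick} is unavailable and we cannot subtract off the logarithmic singularities of $Y_0$ at low energy. The remedy, and the crux of the estimate, is that the multiplier $h_\pm(\lambda)^{-1}$ supplies exactly the decay needed to tame those logarithms directly: the bound $|\log(\lambda p)|\les1+|\log\lambda|$ together with $1/|h_\pm|\les1$ plays the role filled by the auxiliary functions $F,G$ of Lemma~\ref{lem:FG} in the earlier arguments. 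This is the analytic reflection of the fact that this term mimics the free evolution, which carries an s-wave resonance yet still decays at the rate $|t|^{-1}$.
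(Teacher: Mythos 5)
Your overall architecture (reduce to the $\pm$ difference, split low--low/mixed/high--high, stationary phase for the high pieces) is sound, and you correctly identified that \eqref{Q trick} is unavailable because $S$ contains the $P$-block. But note first that the paper does not prove this proposition at all --- it is quoted verbatim from \cite[Lemma~17]{Sc2} --- so the benchmark is Schlag's argument, and against that benchmark your low--low regime has a genuine gap. Your claimed mechanism, ``$1/h_\pm$ bounded and $\les |\log\lambda|^{-1}$ together with \eqref{chichilog} tames the logarithms directly,'' accounts for only \emph{one} logarithm, whereas the kernel contains the product of \emph{two} Hankel factors: $H_0^\pm(\lambda p)H_0^\pm(\lambda q)$ grows like $(\log\lambda)^2$ via $Y_0Y_0$, so the single-sign amplitude $\chi(\lambda)H_0^+(\lambda p)H_0^+(\lambda q)/h_+(\lambda)$ grows like $|\log\lambda|$, and after one integration by parts its derivative is $\gtrsim 1/\lambda$ (e.g.\ $\partial_\lambda Y_0(\lambda q)\approx \tfrac{2}{\pi\lambda}$ times the bounded factor $H_0^+(\lambda p)/h_+$), which is not integrable on $(0,\lambda_1)$; Lemma~\ref{stat phase} cannot rescue a logarithmically growing amplitude either. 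Your reduction to $2i\,\Im(\cdot)$ at the start is exactly where the cure lives, but you never use it quantitatively --- all your subsequent estimates are absolute-value bounds on a single branch, where the estimate genuinely fails. Relatedly, your boundary-term claim is wrong as justified: the factor $\lambda$ is consumed by the antiderivative of $e^{it\lambda^2}\lambda$, and the boundary value of the single-sign amplitude at $\lambda=0+$ diverges like $\log\lambda$; only the $\pm$ combination has a finite (nonzero, but harmless, constant) boundary value.

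The missing idea is the algebraic cancellation between the branches. Writing $H_0^\pm(\lambda p)H_0^\pm(\lambda q)=u\pm iv$ with real $u=J_0(\lambda p)J_0(\lambda q)-Y_0(\lambda p)Y_0(\lambda q)$ and $v=J_0(\lambda p)Y_0(\lambda q)+Y_0(\lambda p)J_0(\lambda q)$, one has
\begin{align*}
	\frac{H_0^+(\lambda p)H_0^+(\lambda q)}{h_+(\lambda)}-\frac{H_0^-(\lambda p)H_0^-(\lambda q)}{h_-(\lambda)}
	=u\Big(\frac{1}{h_+}-\frac{1}{h_-}\Big)+iv\Big(\frac{1}{h_+}+\frac{1}{h_-}\Big),
\end{align*}
and since $h_-=\overline{h_+}$ with $\Im z\neq 0$, $\frac{1}{h_+}-\frac{1}{h_-}=\frac{-2i\Im z}{|h_+|^2}=O((\log\lambda)^{-2})$, which exactly neutralizes the double logarithm in $u$, while $\frac{1}{h_+}+\frac{1}{h_-}=\frac{2\Re h_+}{|h_+|^2}=O((\log\lambda)^{-1})$ handles the single logarithm in $v$. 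Better still, in the low--low regime each of these two expressions is (up to harmless $\lambda$-power corrections and the spatial weights $1+\log^-p$, $1+\log^-q$, $\la x_1\ra^{0+}$, $\la y_1\ra^{0+}$ extracted as in \eqref{chichilog}) a quotient of quadratic polynomials in $\log\lambda$ of equal degree; such a quotient is bounded and its $\lambda$-derivative is $O\big(\lambda^{-1}(\log\lambda)^{-2}\big)$ because the top-degree terms of $N'D-ND'$ cancel identically. That integrable derivative bound, plus the finite boundary value, is what closes the integration by parts and yields the $|t|^{-1}$ decay; this is the content of \cite[Lemma~17]{Sc2}. Your treatment of the mixed and high--high regimes is essentially correct as written, since there at most one low-energy Hankel factor appears and your one-logarithm mechanism does suffice there.
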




Finally the following proposition (Lemma~18 from \cite{Sc2}) takes care of the contribution of the error
term in Corollary~\ref{M exp cor1} to \eqref{klambda}.
\begin{prop}\cite{Sc2} \label{Error term lemma}
	Assume that $\Phi(\lambda)$ is an absolutely bounded operator on $L^2(\R^2)$ that satisfies
	$\Phi(\lambda)=\mathcal O_1(\lambda^{\frac{1}{2}})$.  We have the bound
  	\begin{multline}\nn
    		\bigg| \int_{\R^8}  \int_0^\infty e^{it\lambda^2} \lambda \chi(\lambda)
		R_0^{\pm}(\lambda^2)(x,x_1)v(x_1)
    		\Phi(\lambda) (x_1,y_1)v(y_1)
		 R_0^{\pm}(\lambda^2)(y,y_1) \, d\lambda  \\
f(x) g(y) \, dx_1\, dy_1\, dx\, dy\bigg|\les |t|^{-1}.
  	\end{multline}
\end{prop}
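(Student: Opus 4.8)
The plan is to follow the scheme of Lemmas~\ref{low log YJ lemma}--\ref{high high log YJ lemma}, with the essential difference that no projection orthogonal to $v$ is present, so the cancellation identity \eqref{Q trick} is unavailable; in its place we exploit the gain $\Phi(\lambda)=\mathcal O_1(\lambda^{1/2})$. Writing $p=|x-x_1|$, $q=|y-y_1|$ and using $R_0^\pm(\lambda^2)(x,x_1)=\pm\tfrac{i}{4}H_0^\pm(\lambda p)$, it suffices by duality to bound the $(\lambda,x_1,y_1)$-integral of the kernel against $v(x_1)v(y_1)$ by $C|t|^{-1}$ uniformly in $x,y$; testing against $f,g\in L^1$ then gives \eqref{disp}. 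Throughout I will use the two absolutely bounded sup-kernels $W_0(x_1,y_1):=\sup_{0<\lambda<\lambda_1}\lambda^{-1/2}|\Phi(\lambda)(x_1,y_1)|$ and $W_1(x_1,y_1):=\sup_{0<\lambda<\lambda_1}\lambda^{1/2}|\partial_\lambda\Phi(\lambda)(x_1,y_1)|$, which are $L^2\to L^2$ bounded by \eqref{O def 1}, and I split each resolvent factor by the partition $1=\chi(\lambda p)+\widetilde\chi(\lambda p)$ (and likewise in $q$) into low and high energy, giving four cases.

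In the low--low case both Bessel factors are non-oscillatory, with $|\chi(\lambda p)H_0^\pm(\lambda p)|\lesssim 1+|\log(\lambda p)|$ and $|\partial_\lambda(\chi(\lambda p)H_0^\pm(\lambda p))|\lesssim \lambda^{-1}$ by \eqref{Y0 def2} and \eqref{Jprime}. I would integrate by parts once via $\lambda e^{it\lambda^2}=\tfrac{1}{2it}\partial_\lambda e^{it\lambda^2}$. The boundary term at $\lambda=0$ vanishes because the two logarithmic singularities are beaten by the factor $\lambda^{1/2}$ carried by $\Phi(\lambda)$ (the integrand is $\lesssim\lambda^{1/2}(\log\lambda)^2 W_0\to0$), while the term at $\lambda=\lambda_1$ vanishes by the cutoff $\chi$. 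This produces $|t|^{-1}$ times an integral of $|\partial_\lambda(\chi(\lambda)\,\text{product})|$. Distributing the derivative and inserting $|\Phi|\le\lambda^{1/2}W_0$, $|\partial_\lambda\Phi|\le\lambda^{-1/2}W_1$, the three resulting terms are controlled near $\lambda=0$ by $\lambda^{-1}\cdot\lambda^{1/2}\cdot\log$, $\log\cdot\lambda^{-1/2}\cdot\log$, and $\log\cdot\lambda^{1/2}\cdot\lambda^{-1}$, each integrable. After the $\lambda$-integral one is left with $W_0$ (resp.\ $W_1$) times logarithmic weights in $p,q$, and Cauchy--Schwarz together with the $L^2\to L^2$ bounds on $W_0,W_1$ reduces matters to the uniform finiteness of $\sup_x\big(\int |v(x_1)|^2(1+|\log|x-x_1||)^2\,dx_1\big)^{1/2}$, which holds under the decay hypothesis on $v$.

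For the mixed and high--high cases I would use the oscillatory representation $\widetilde\chi(\lambda p)H_0^\pm(\lambda p)=e^{\pm i\lambda p}\omega_\pm(\lambda p)$ with $|\omega_\pm^{(\ell)}(z)|\lesssim(1+|z|)^{-1/2-\ell}$ from \eqref{largeJYH}, absorbing the slow oscillations into the phase $\phi_\pm(\lambda)=\lambda^2\pm\lambda(p+q)/t$ (or with a single term when only one factor is at high energy), as in Lemmas~\ref{low high log YJ lemma} and~\ref{high high log YJ lemma}. For the non-resonant sign the phase has no critical point on $(0,\infty)$, so a single integration by parts yields $|t|^{-1}$. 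For the resonant sign, whose stationary point sits at $\lambda_0\sim(p+q)/(2t)>0$, I would apply Lemma~\ref{stat phase} with amplitude $a(\lambda)=\lambda\chi(\lambda)(\text{factor in }p)\,\Phi(\lambda)(\text{factor in }q)$, bounding $a$ and $a'$ by means of the $\omega$-decay, the logarithmic bounds on any low-energy factor, and $|\Phi|\le\lambda^{1/2}W_0$, $|\partial_\lambda\Phi|\le\lambda^{-1/2}W_1$. The two integrals produced by Lemma~\ref{stat phase} are then estimated exactly as in the high-energy analysis of Lemma~\ref{high high log YJ lemma}: the cutoffs $\widetilde\chi(\lambda p),\widetilde\chi(\lambda q)$ force $p,q\gtrsim\lambda_0^{-1}$ on the support of the integrand, so $pq\gtrsim t$, which absorbs the geometric factors $(pq)^{-1/2}$ and delivers the $|t|^{-1}$ decay, after which $L^2\to L^2$ boundedness of $W_0,W_1$ and Cauchy--Schwarz close the estimate.

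The main obstacle is the resonant high-energy regime. There one must simultaneously manage the moving stationary point through Lemma~\ref{stat phase} and verify, after factoring out $W_0,W_1$ and applying Cauchy--Schwarz in $(x_1,y_1)$, that the remaining weighted Bessel factors $\|v(\cdot)\,(\text{Bessel})(x,\cdot)\|_{L^2}$ stay bounded uniformly in $x$. Because the orthogonality identity \eqref{Q trick} is unavailable, the single power $\lambda^{1/2}$ supplied by $\Phi=\mathcal O_1(\lambda^{1/2})$ is the only source of both the boundary-term cancellation at $\lambda=0$ and the convergence of the $\lambda$-integrals against the logarithmic and $\lambda^{-1/2}$ singularities; carefully tracking this gain through every subcase, and confirming it is exactly enough to reach $|t|^{-1}$ but no faster, is the crux of the argument.
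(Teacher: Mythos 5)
The paper offers no independent proof of this proposition: it is quoted as Lemma~18 of \cite{Sc2}, and your argument reconstructs that proof along essentially the same lines --- the low/high splitting $1=\chi(\lambda p)+\widetilde\chi(\lambda p)$, the pointwise dominations $|\Phi(\lambda)(x_1,y_1)|\leq \lambda^{1/2}W_0(x_1,y_1)$ and $|\partial_\lambda\Phi(\lambda)(x_1,y_1)|\leq\lambda^{-1/2}W_1(x_1,y_1)$ permitted by \eqref{O def 1}, one integration by parts in the non-oscillatory regime, and Lemma~\ref{stat phase} with the shifted phases $\lambda^2\pm\lambda(p+q)/t$ in the oscillatory regime, where the observation $pq\gtrsim t$ (exactly as in \eqref{piqjcalc} in the proof of Lemma~\ref{high high log YJ lemma}) absorbs the geometric factor $(pq)^{-1/2}$. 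Your central accounting --- that the single factor $\lambda^{1/2}$ replaces the unavailable cancellation \eqref{Q trick}, killing the boundary term at $\lambda=0$ and rendering all the $\lambda^{-1/2}(\log\lambda)^k$ singularities integrable --- is precisely the mechanism of the cited proof, and your case analysis for the stationary-phase integrals correctly defers to the (strictly harder, since there the amplitude carries $\log\lambda$ rather than $\lambda^{1/2}$) estimates already carried out in Lemma~\ref{high high log YJ lemma}.

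One stated step is false as written and needs the standard repair. In the low--low case you claim uniform finiteness of $\sup_x\big(\int|v(x_1)|^2(1+|\log|x-x_1||)^2\,dx_1\big)^{1/2}$; this fails, since for $|x|\to\infty$ the region $|x_1|\lesssim 1$ contributes $\approx(\log|x|)^2\|v\|_{L^2}^2$, i.e.\ the $\log^+|x-x_1|$ part of your weight is not uniformly square integrable against $v^2$. The correct bookkeeping is \eqref{chichilog}: on the support of $\chi(\lambda)\chi(\lambda|x-x_1|)$ one has $\log^+|x-x_1|\lesssim 1+|\log\lambda|$, so after splitting the logarithm the weight left in $x_1$ is only $1+\log^-|x-x_1|$, which \emph{is} uniformly in $L^2(v^2\,dx_1)$, while the extra powers of $|\log\lambda|$ are harmless against the $\lambda^{1/2}$ gain. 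With this one-line correction your argument closes.
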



\section{Resonances of the second and third kind}\label{sec:pwave}

We now consider the evolution in the case of a p-wave
resonance and/or an eigenvalue at zero. Recall that this case is characterized by the non-invertibility of  $T_1=S_1TPTS_1$.
 To obtain resolvent expansions around zero, we need to invert the operator $B_\pm$, \eqref{B defn}. The expansions in this section are  considerably more complicated
than those in the case of a resonance of the first kind given in Proposition~\ref{prop:Binverse1}.

Recall the operators $S_2, S_3, T_2,$ and $T_3$ from  Definition~\ref{resondef}. With a slight abuse of the notation, we define $D_1:=(T_1+S_2)^{-1}=(S_1TPTS_1+S_2)^{-1}$
as an operator on $S_1L^2(\R^2)$.
We define $D_2:=(T_2+S_3)^{-1}=(S_2vG_1vS_2+S_3)^{-1}$ on $S_2L^2(\R^2)$, we will also use $D_2$ for $T_2^{-1}$ when $T_2$ is invertible, i.e. when $S_2=0$. We also define $D_3:=T_3^{-1}=(S_3vG_2vS_3)^{-1}$ on $S_3 L^2(\R^2)$.

\begin{prop}\label{prop:Binverse2}
 Assume that $v(x)\les \langle x\rangle^{-3-}$. Then, $B_{\pm}$ is invertible on
$S_1L^2(\R^2)$. In the case of a resonance of the second kind,  we have
	\begin{align}\label{secondk}
		B^{-1}_{\pm}=\frac{S_2D_2S_2}{g_1^\pm(\lambda)}+O(\lambda^{-2}|\log\lambda|^{-2}),
	\end{align}
where  $g_{1}^{\pm}(\lambda)$ is as in Lemma~\ref{lem:M_exp}.

In the case of a resonance of the third kind,  we have
\begin{align}\label{thirdk}
		B^{-1}_{\pm}=\frac{S_3D_3S_3}{\lambda^2 }+\frac{S_2\mathcal D S_2}{ g_1^\pm(\lambda)} +O(\lambda^{-2}|\log\lambda|^{-2}),
	\end{align}
Here $\mathcal D=D_2+S_3D_3S_3vG_2vS_2D_2S_2vG_2vS_3D_3S_3-S_3D_3S_3vG_2vS_2D_2-D_2S_2vG_2vS_3D_3S_3$.
\end{prop}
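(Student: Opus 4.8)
The plan is to invert $B_\pm$ by peeling off the projections $S_2$ and, in the third-kind case, $S_3$ through repeated use of the Jensen--Nenciu lemma (Lemma~\ref{JNlemma}), exactly as $S_1$ was peeled off in Proposition~\ref{prop:Binverse1}. The one genuinely new ingredient is an expansion of $B_\pm$ that is one order finer than what Lemma~\ref{M+S1inverse} provides; to get it I would carry the Neumann series behind that lemma one further step and feed in the refined expansion $M_0^\pm=g_1^\pm vG_1v+\lambda^2 vG_2v+M_1^\pm$ from Lemma~\ref{lem:M_exp}. Throughout, $vG_1v$ and $vG_2v$ are Hilbert--Schmidt, hence absolutely bounded, under the hypothesis $v\lesssim\langle x\rangle^{-3-}$, so $T_2=S_2vG_1vS_2$ and $T_3=S_3vG_2vS_3$ are legitimate absolutely bounded operators.

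Concretely, writing $M^\pm+S_1=A_\pm(\lambda)+M_0^\pm(\lambda)$ as in Lemma~\ref{M+S1inverse} and expanding $(M^\pm+S_1)^{-1}=A_\pm^{-1}-A_\pm^{-1}M_0^\pm A_\pm^{-1}+\cdots$, then conjugating by $S_1$ and using the identities $S_1SS_1=T_1$, $QD_0QS_1=S_1QD_0Q=S_1$ and $PS_1=S_1P=0$, I obtain
\[
B_\pm=-h_\pm^{-1}T_1+S_1M_0^\pm S_1+\big(\text{terms suppressed by a factor }h_\pm^{-1}\big).
\]
Since $S_1M_0^\pm S_1=g_1^\pm\,S_1vG_1vS_1+\lambda^2 S_1vG_2vS_1+\mathcal O_1(\lambda^{2+})$, this expansion has exactly two relevant tiers: the leading term $-h_\pm^{-1}T_1$, of size $(\log\lambda)^{-1}$ but degenerate on $S_2L^2=\ker T_1$, and the term $g_1^\pm S_1vG_1vS_1$, of size $\lambda^2\log\lambda$. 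Every remaining contribution — the $\lambda^2 S_1vG_2vS_1$ term, the error $S_1M_1^\pm S_1=\mathcal O_1(\lambda^{2+})$ (valid precisely because $v\lesssim\langle x\rangle^{-3-}$), and the $h_\pm^{-1}$-suppressed pieces — is smaller than $g_1^\pm S_1vG_1vS_1$ by at least one power of $\log\lambda$.

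Next I apply Lemma~\ref{JNlemma} with $A=B_\pm$ and $S=S_2$. Invertibility of $B_\pm+S_2$ for small $\lambda$ follows by factoring $-h_\pm^{-1}T_1+S_2=-h_\pm^{-1}(T_1-h_\pm S_2)$: since $T_1S_2=S_2T_1=0$, this is block diagonal along $(S_1-S_2)L^2\oplus S_2L^2$ and invertible, and the lower-order terms are absorbed by a Neumann series. The decisive step is the Feshbach computation of $B_2:=S_2-S_2(B_\pm+S_2)^{-1}S_2$: because $-h_\pm^{-1}T_1$ annihilates $S_2$ and the off-diagonal couplings it induces are negligible, one finds $S_2(B_\pm+S_2)^{-1}S_2=(S_2+g_1^\pm T_2+\cdots)^{-1}$ and hence $B_2=g_1^\pm T_2+O(\lambda^2)=g_1^\pm T_2\,(1+O((\log\lambda)^{-1}))$. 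For a resonance of the second kind $T_2$ is invertible, so $B_2^{-1}=(g_1^\pm)^{-1}D_2+O(\lambda^{-2}(\log\lambda)^{-2})$ with $D_2=T_2^{-1}$; substituting into $B_\pm^{-1}=(B_\pm+S_2)^{-1}+(B_\pm+S_2)^{-1}S_2B_2^{-1}S_2(B_\pm+S_2)^{-1}$ and using $(B_\pm+S_2)^{-1}S_2=S_2+(\text{lower order})$ yields \eqref{secondk}; the stand-alone term $(B_\pm+S_2)^{-1}=O(\log\lambda)$ is absorbed into the error.

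For the third kind $T_2$ fails to be invertible, so one inverts $B_2$ itself by a further application of Lemma~\ref{JNlemma}, now with $S=S_3$, the Riesz projection onto $\ker T_2$. The same mechanism repeats one level down: $B_2+S_3$ is invertible, its Schur complement is $B_3=S_3-S_3(B_2+S_3)^{-1}S_3=\lambda^2 T_3+\cdots$, and $T_3$ is always invertible, so $B_3^{-1}=\lambda^{-2}D_3+\cdots$ with $D_3=T_3^{-1}$. I expect the main obstacle to be the final reassembly: one must expand the doubly-nested Jensen--Nenciu formula and track how the only surviving off-diagonal couplings — the $\lambda^2 vG_2v$ blocks linking $S_3L^2$ and $(S_2-S_3)L^2$, which survive because $T_2S_3=0$ kills the $g_1^\pm T_2$ coupling — propagate through $(B_2+S_3)^{-1}S_3B_3^{-1}S_3(B_2+S_3)^{-1}$. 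These produce the leading tier $\lambda^{-2}S_3D_3S_3$ and, at the $(g_1^\pm)^{-1}$ scale, precisely the cross-terms comprising $\mathcal D$ in \eqref{thirdk}. The delicate part is verifying that these cross-terms collapse to exactly $\mathcal D$ and that every discarded term is genuinely $O(\lambda^{-2}(\log\lambda)^{-2})$, i.e. a full power of $\log\lambda$ below the $(g_1^\pm)^{-1}$ tier, which forces careful use of the relative size $\lambda^2/g_1^\pm\sim(\log\lambda)^{-1}$ and of $S_1M_1^\pm S_1=\mathcal O_1(\lambda^{2+})$.
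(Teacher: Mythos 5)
Your architecture is the paper's own: the refined expansion $M_0^\pm=g_1^\pm vG_1v+\lambda^2vG_2v+M_1^\pm$ with $M_1^\pm=\mathcal O_1(\lambda^{2+})$, two nested applications of Lemma~\ref{JNlemma} (peel off $S_2$, then $S_3$), invertibility of $T_3$, and reassembly; the only structural difference (you apply the lemma to $B_\pm$ directly, the paper to the normalized $B_1=-h_\pm B_\pm$) is cosmetic, and your treatment of the second kind is sound. But there is a genuine gap at the $S_2$ level: you never establish the identity $PTS_2=S_2TP=0$ (the paper's \eqref{pts2}), and the third-kind expansion \eqref{thirdk} cannot be obtained without it. The ``$h_\pm^{-1}$-suppressed pieces'' you discard are the terms mediated by the rank-two operator $S$ inside $A_\pm^{-1}(\lambda)=h_\pm^{-1}(\lambda)S+QD_0Q$, e.g.
\[
h_\pm^{-1}S_2SM_0^\pm S_2+h_\pm^{-1}S_2M_0^\pm SS_2\approx h_\pm^{-1}g_1^\pm\big[S_2SvG_1vS_2+S_2vG_1vSS_2\big]=O(\lambda^2),
\]
where $SS_2=(QD_0QT-\mathbbm{1})PTS_2$. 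These are indeed one power of $\log\lambda$ below the leading $g_1^\pm T_2$ tier, which is why \eqref{secondk}, whose error $O(\lambda^{-2}|\log\lambda|^{-2})$ sits exactly one log below the main term, survives your ``negligible'' argument. For the third kind, however, they are of \emph{exactly the same size} as the $\lambda^2S_2vG_2vS_2$ block that you must retain to generate $T_3$ and $\mathcal D$: after the second Feshbach step they would enter $B_3=S_3-S_3(B_2+S_3)^{-1}S_3$ at the same order $(\log\lambda)^{-1}$ as its leading term $\lambda^2(g_1^\pm)^{-1}T_3$. Your observation that $T_2S_3=0$ kills only the $T_2$ coupling, not these $S$-mediated ones; so both the leading $\lambda^{-2}S_3D_3S_3$ term and the identification of the $(g_1^\pm)^{-1}$ tier as precisely $\mathcal D$ are contaminated unless these terms vanish identically. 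A size estimate cannot save you here.

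They do vanish, but by algebra rather than by size, and this is the one idea missing from your proposal. Since $P$ is an orthogonal projection and $S_2$ is the orthogonal projection onto the kernel of $T_1=S_1TPTS_1$,
\[
\|PTS_2f\|^2=\la TPTS_2f,S_2f\ra=\la S_2S_1TPTS_1S_2f,f\ra=\la S_2T_1S_2f,f\ra=0,
\]
hence $PTS_2=S_2TP=0$, and consequently $SS_2=S_2S=0$, i.e. $S_2A_\pm^{-1}(\lambda)=A_\pm^{-1}(\lambda)S_2=S_2$ (the paper's \eqref{A-1S2}). This single identity collapses the $S_2$-block exactly to $S_2M_0^\pm S_2=g_1^\pm T_2+\lambda^2S_2vG_2vS_2+\mathcal O_1(\lambda^{2+})$, after which your nested Jensen--Nenciu computation does reproduce $\lambda^{-2}S_3D_3S_3$ and the cross-terms constituting $\mathcal D$, and it also simplifies the reassembly you flagged as delicate, since $(B_\pm+S_2)^{-1}S_2=S_2+\mathcal O_1(\lambda^{2-})$ then holds with no $S$-tail. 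Insert this lemma and your proof closes; without it, the proof of \eqref{thirdk} does not.
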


\begin{proof}
	We give the proof for the case of the ``$+$" superscripts and subscripts and omit them from the proof.
    Recall the definition \eqref{B defn} of $B$:
$$ B=S_1-S_1(M(\lambda)+S_1)^{-1}S_1.$$
First we repeat the expansion that we obtained in Proposition~\ref{prop:Binverse1} by keeping   track of the error term better.
 Using Lemma~\ref{M+S1inverse}, the identity
    $$A^{-1}(\lambda)\big[\mathbbm{1}+M_0A^{-1}(\lambda)\big]^{-1}=A^{-1}(\lambda)-
    A^{-1}(\lambda)M_0A^{-1}(\lambda)\big[\mathbbm{1}+M_0A^{-1}(\lambda)\big]^{-1},$$
    and the definition \eqref{Ainverse} of $A^{-1}(\lambda)$, we obtain
	\begin{align}\label{bbb}
		B=S_1-S_1(h(\lambda)^{-1}S+QD_0Q)S_1+ E(\lambda),
	\end{align}
where
\begin{align}\label{E defn}
	&E(\lambda)=S_1A^{-1}(\lambda)M_0(\lambda)A^{-1}(\lambda)\big[\mathbbm{1}
	+M_0(\lambda)A^{-1}(\lambda)\big]^{-1}S_1\\
	&=S_1A^{-1}(\lambda)M_0(\lambda)A^{-1}(\lambda)S_1
-S_1A^{-1}(\lambda)
	[M_0(\lambda)A^{-1}(\lambda)]^2\big[\mathbbm{1}+M_0(\lambda)A^{-1}(\lambda)\big]^{-1}S_1.\nn
\end{align}
Since  $v(x)\les \langle x\rangle^{-3-}$, by Lemma~\ref{lem:M_exp}, we have $M_0= \mathcal O_1(\lambda^{2-})$. Also using $A^{-1}(\lambda)=\mathcal O_1(1)$ (from \eqref{Ainverse}), we conclude that
\be\label{elambda}
E(\lambda)=\mathcal O_1(\lambda^{2-}).
\ee
	
Recall that $S_1 D_0=D_0 S_1=S_1$.
	Further, from the definition \eqref{S_defn} of $S$,  and the fact that $S_1P=PS_1=0$, we obtain
$S_1SS_1=S_1 TPT S_1$. Therefore
	\begin{align}\label{newBdefn}
		B=-h(\lambda)^{-1}S_1SS_1+E(\lambda) =-h(\lambda)^{-1} S_1 TPT S_1+ E(\lambda).
	\end{align}
In the case of a resonance of the second kind (unlike the case of a resonance of the first kind), the leading term $T_1=S_1TPTS_1$ above is not invertible. We will invert the operator
\begin{align*}
	B_1:= -h(\lambda)B= T_1 -h(\lambda) E(\lambda),
\end{align*}
by using Lemma~\ref{JNlemma}. Let $S_2$ be the Riesz projection onto the kernel of $T_1$, and let $  D_1:=(T_1+S_2)^{-1}$. We have
\begin{align}
	(B_1+S_2)^{-1}&=[T_1+S_2 -h(\lambda) E(\lambda)]^{-1}
	=  D_1\big[\mathbbm{1}-h(\lambda) E(\lambda)  D_1\big]^{-1}\nn \\
	&=  D_1+ D_1 h(\lambda) E(\lambda) D_1 + D_1 \big[h(\lambda) E(\lambda)  D_1\big]^2
	\big[\mathbbm{1}-h(\lambda) E(\lambda)  D_1\big]^{-1}
 =D_1+ O(\lambda^{2-}). \label{b1s2-11}
\end{align}
By Lemma~\ref{JNlemma}, $ B_1$ is invertible if
\begin{align*}
	B_2:=S_2-S_2(B_1+S_2)^{-1}S_2
\end{align*}
is invertible on $S_2L^2$.
Using \eqref{b1s2-11}, the identities $S_2D_1=D_1S_2=S_2$, and the definition \eqref{E defn} of $E(\lambda)$, we have
\begin{align*}
	B_2 & =-h(\lambda) S_2E(\lambda)S_2 - S_2\big[h(\lambda) E(\lambda)D_1\big]^2
	\big[\mathbbm{1}-h(\lambda) E(\lambda)D_1\big]^{-1}S_2\\
	&=-h(\lambda) S_2A^{-1}(\lambda)M_0(\lambda)A^{-1}(\lambda)S_2+E_1(\lambda),
\end{align*}
where
\begin{multline}\label{E1 defn}
	E_1(\lambda) =  h(\lambda) S_2A^{-1}(\lambda)[M_0(\lambda)A^{-1}(\lambda)]^2\big[\mathbbm{1}
	+M_0(\lambda)A^{-1}(\lambda)\big]^{-1} S_2\\
	 - S_2\big[h(\lambda) E(\lambda)D_1\big]^2\big[\mathbbm{1}-h(\lambda) E(\lambda)D_1\big]^{-1}S_2.
\end{multline}

 We now claim that
\be \label{pts2}
PTS_2=S_2TP=0.
\ee  To see this, note that   since $S_2$, $T$ and $P$ are self-adjoint, and $S_2$ is the projection onto the kernel of $S_1TPTS_1$, we have
\begin{align*}
	\langle PTS_2 f, PTS_2 f\rangle=\langle S_2TPTS_2 f, f\rangle=
	\langle S_2S_1TPTS_1S_2 f, f\rangle=0.
\end{align*}
 Therefore,
\be\label{A-1S2} A^{-1}(\lambda)S_2=S_2A^{-1}(\lambda)=S_2.
\ee
Using this and the expansion \eqref{M0_defn} of $M_0$, we rewrite $B_2$ as
\begin{align*}
B_2&=-h(\lambda)g_1(\lambda)S_2vG_1vS_2-h(\lambda)\lambda^2 S_2vG_2vS_2-h(\lambda)S_2M_1(\lambda)S_2+E_1(\lambda)\\
&=:-h(\lambda)g_1(\lambda)\big[T_2+ \lambda^2g_1^{-1}(\lambda) S_2vG_2vS_2+E_2(\lambda)\big].
\end{align*}
By Lemma~\ref{lem:M_exp}, we have $M_0= \mathcal O_1(\lambda^{2-})$ and $h(\lambda)S_2M_1(\lambda)S_2=\mathcal O_1(\lambda^{2-})$. Also using $A^{-1}(\lambda)=\mathcal O_1(1)$ and \eqref{elambda}, we conclude that
 $E_1(\lambda)=\mathcal O_1(\lambda^{4-})$. This yields that $E_2(\lambda)=\mathcal O_1(\lambda^{2-})$.  In the case of a resonance of the second kind the leading term is invertible.
Therefore, for small $\lambda$,
\begin{align}
B_2^{-1} =-\frac{\big[T_2+\lambda^2g_1^{-1}(\lambda)S_2vG_2vS_2+E_2(\lambda)\big]^{-1}}{h(\lambda)g_1(\lambda)} =-\frac{D_2}{h(\lambda)g_1(\lambda)} +O(\lambda^{-2} |\log \lambda|^{-3}).\label{b2-1}
\end{align}
Using Lemma~\ref{JNlemma}, \eqref{b1s2-11},  \eqref{b2-1}, and the identities $S_2D_1=D_1S_2=S_2$, we have
\begin{align}\label{b-1b2form}
B^{-1}&=-h(\lambda)B_1^{-1}=-h(\lambda)\big[(B_1+S_2)^{-1}+(B_1+S_2)^{-1}S_2B_2^{-1}S_2(B_1+S_2)^{-1}\big]\\
&= \frac{S_2D_2S_2}{g_1(\lambda)}
+O((\lambda \log \lambda)^{-2}).\nn
\end{align}

In the case of a resonance of the third kind, the leading term in $B_2$ is not invertible. Analogously, we will invert the   operator
\begin{align*}
B_3 =-h^{-1}(\lambda)g_1^{-1}(\lambda)B_2 =T_2+\lambda^2g_1^{-1}(\lambda)S_2vG_2vS_2+E_2(\lambda)
\end{align*}
by using Lemma~\ref{JNlemma}. Let $S_3$ be the Riesz projection onto the kernel of $T_2$, and let $D_2:=(T_2+S_3)^{-1}$. We have
\begin{align}
\nn	(B_3+&S_3)^{-1}=\big[T_2+S_3+\lambda^2g_1^{-1}(\lambda)S_2vG_2vS_2+E_2(\lambda)\big]^{-1}\\
\label{b3s31}	&=D_2- \lambda^2 g_1^{-1}(\lambda)D_2 S_2vG_2vS_2D_2+ \lambda^4g_1^{-2}(\lambda) D_2[S_2vG_2vS_2D_2]^2+O(|\log \lambda|^{-3}).
\end{align}
In the second line we used the definition of $g_1(\lambda)$ in
Lemma~\ref{lem:M_exp} and the estimate on $E_2(\lambda)$.

By Lemma~\ref{JNlemma}, $ B_3$ is invertible if
\begin{align*}
	B_4:=S_3-S_3(B_3+S_3)^{-1}S_3
\end{align*}
is invertible on $S_3L^2$.
Using \eqref{b3s31}, the identities $S_3D_2=D_2S_3=S_3$, and $T_3=S_3vG_2vS_3$,  we have
\begin{align*}
	B_4&=\lambda^2g_1^{-1}(\lambda) T_3- \lambda^4g_1^{-2}(\lambda) S_3 [S_2vG_2vS_2D_2]^2S_3+O(|\log \lambda|^{-3}).
\end{align*}
Since $T_3$ is always invertible (see Section~4 of \cite{JN}), $B_4$ is invertible for small $\lambda$, and we have
\begin{align*}
	B_4^{-1}&=\lambda^{-2}g_1(\lambda) D_3+\widetilde D_2 + O(|\log\lambda|^{-1}).
\end{align*}
where $\widetilde D_2= D_3S_3vG_2vS_2D_2S_2vG_2vS_3D_3$.

Using this, Lemma~\ref{JNlemma}, and \eqref{b3s31},    we have
\begin{align*}
&B_2^{-1} =-\frac1{h(\lambda)g_1(\lambda)}B_3^{-1}=-\frac1{h(\lambda)g_1(\lambda)}\big[(B_3+S_3)^{-1}+(B_3+S_3)^{-1}S_3B_4^{-1}S_3(B_3+S_3)^{-1}\big]\\
&=-\frac{S_3D_3S_3}{\lambda^2 h(\lambda)}-\frac{D_2+S_3\widetilde D_2S_3-S_3D_3S_3vG_2vS_2D_2-D_2S_2vG_2vS_3D_3S_3}{h(\lambda)g_1(\lambda)} +O(\lambda^{-2}|\log\lambda|^{-3}).
\end{align*}
Using this (instead of \eqref{b2-1}) for $B^{-1}$ in \eqref{b-1b2form}  yields the assertion of the proposition.
\end{proof}

\begin{corollary}\label{M exp cor2}

	Assume that $v(x)\les \langle x\rangle^{-3-}$.  Then,
	in the case of a resonance of the second kind, we have
	\begin{multline}\label{M pwave}
		M^{\pm}(\lambda)^{-1} =\frac{S_2D_2S_2}{g_1^{\pm}(\lambda)}
		+Q\Gamma_{1}^{\pm}(\lambda)Q+Q\Gamma_2^{\pm}(\lambda) + \Gamma_3^{\pm}(\lambda)Q+
		 \Gamma_4^{\pm}(\lambda) \\
		  +(M^{\pm}(\lambda)+S_1)^{-1}+\mathcal O_1(\lambda^{2-}),
	\end{multline}
	where
	$\Gamma_{i}^{\pm}$, $i=1,2,3,4$ are absolutely bounded operators on $L^2(\R^2)$ with
	$\Gamma_{1}^{\pm}(\lambda)=O(\lambda^{-2}( \log \lambda)^{-2})$,
	$\Gamma_{2}^{\pm}(\lambda), \Gamma_3^{\pm}(\lambda)=O(\lambda^{-2}( \log \lambda)^{-3})$, and
	$\Gamma_{4}^{\pm}(\lambda)=O(\lambda^{-2}( \log \lambda)^{-4})$.

 In the case of a resonance of the third kind, we have
	\begin{multline}\label{M eval}
		M^{\pm}(\lambda)^{-1} =\frac{S_3D_3S_3}{\lambda^2 }+\frac{S_2\mathcal D S_2}{ g_1^\pm(\lambda)}
		+Q\Gamma_{1}^{\pm}(\lambda)Q+Q\Gamma_2^{\pm}(\lambda) + \Gamma_3^{\pm}(\lambda)Q+
		 \Gamma_4^{\pm}(\lambda) \\
		+(M^{\pm}(\lambda)+S_1)^{-1}+\mathcal O_1(\lambda^{2-}),
	\end{multline}
	where $\mathcal D$ is as in Proposition~\ref{prop:Binverse2}, and $\Gamma_i$ are absolutely bounded operators on $L^2(\R^2)$.  These operators are distinct from the
	$\Gamma_i $ in the case of a resonance of the second kind, but satisfy the same size estimates.

\end{corollary}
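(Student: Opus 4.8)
The plan is to substitute the expansions of Lemma~\ref{M+S1inverse} and Proposition~\ref{prop:Binverse2} into the inversion formula of Lemma~\ref{JNlemma},
$$M^\pm(\lambda)^{-1}=(M^\pm(\lambda)+S_1)^{-1}+(M^\pm(\lambda)+S_1)^{-1}S_1 B_\pm^{-1}S_1(M^\pm(\lambda)+S_1)^{-1},$$
and then to collect the resulting terms according to their order of singularity in $\lambda$ and according to which outer factors carry a projection $Q$. Writing $L:=(M^\pm(\lambda)+S_1)^{-1}=h_\pm(\lambda)^{-1}S+QD_0Q+\mathcal O_1(\lambda^{2-})$, which is \eqref{M plus S} with $k=2-$ under the hypothesis $v\les\langle x\rangle^{-3-}$, the first summand is retained verbatim as the stated term $(M^\pm(\lambda)+S_1)^{-1}$ (its contribution will later be treated exactly as in the resonance-of-first-kind analysis), so I only need to expand the sandwich $LS_1 B_\pm^{-1}S_1 L$.

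The algebraic input I would use consists of three identities: $QD_0Q\,S_1=S_1\,QD_0Q=S_1$ (Remark~ii), $S_1S_2=S_2S_1=S_2$, and, crucially, that $S$ annihilates $S_2$ on both sides. Indeed, by \eqref{pts2} we have $PTS_2=S_2TP=0$, and since $QD_0Q\,S_2=S_2$ the block form \eqref{S_defn} of $S$ collapses to give $SS_2=S_2S=0$; in the third-kind case $S_3\le S_2$ yields in addition $SS_3=S_3S=0$. Using $LS_1=h_\pm(\lambda)^{-1}SS_1+S_1+\mathcal O_1(\lambda^{2-})S_1$ and the analogous expression for $S_1L$, the product $LS_1B_\pm^{-1}S_1L$ splits into nine pieces. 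In the piece carrying $QD_0Q$ on both sides the outer factors reduce to $S_1$, so it equals $S_1B_\pm^{-1}S_1$, whose leading part is, by \eqref{secondk} (resp.\ \eqref{thirdk}), exactly $S_2D_2S_2/g_1^\pm(\lambda)$ (resp.\ $S_3D_3S_3/\lambda^2+S_2\mathcal D S_2/g_1^\pm(\lambda)$); this produces the explicit leading terms of \eqref{M pwave}, \eqref{M eval} together with the $Q\Gamma_1^\pm Q$ remainder of order $\lambda^{-2}(\log\lambda)^{-2}$.

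The remaining pieces each contain at least one factor $S$, so by $SS_2=S_2S=0$ (and $SS_3=S_3S=0$) the most singular contributions of $B_\pm^{-1}$ are killed and only its $O(\lambda^{-2}(\log\lambda)^{-2})$ remainder survives. Each extra factor $h_\pm(\lambda)^{-1}=O((\log\lambda)^{-1})$ then costs one power of $\log\lambda$: a single factor $S$ gives order $\lambda^{-2}(\log\lambda)^{-3}$, placed as $Q\Gamma_2^\pm$ when $S$ sits on the right factor and as $\Gamma_3^\pm Q$ when it sits on the left, while two factors of $S$ give $\Gamma_4^\pm=O(\lambda^{-2}(\log\lambda)^{-4})$. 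The pieces containing the $\mathcal O_1(\lambda^{2-})$ error of $L$ are strictly less singular than the corresponding $\Gamma$-terms, since $\lambda^{2-}(\log\lambda)^{2}\to0$, and are absorbed either into the $\Gamma_i^\pm$ or into the final $\mathcal O_1(\lambda^{2-})$. All the operators appearing—$S$, $QD_0Q$, and the finite-rank pieces $S_2D_2S_2$, $S_3D_3S_3$, $S_2\mathcal D S_2$—are absolutely bounded, and products of absolutely bounded operators are absolutely bounded because $|AB|\le|A|\,|B|$ as integral kernels; hence each $\Gamma_i^\pm$ is absolutely bounded.

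The main obstacle I anticipate is the bookkeeping rather than any single hard estimate: one must verify that after the cancellations forced by $SS_2=0$ every surviving term is no more singular than the order claimed for the relevant $\Gamma_i^\pm$, and that the cross terms involving the $\mathcal O_1(\lambda^{2-})$ error of $L$ are genuinely subordinate to them. Once the three identities $QD_0Q\,S_1=S_1$, $S_1S_2=S_2$, and $SS_2=0$ are in place, the algebra producing \eqref{M pwave} and \eqref{M eval} is routine.
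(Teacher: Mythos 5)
Your proposal is correct and follows essentially the same route as the paper's proof: substitute the expansion \eqref{M plus S} and Proposition~\ref{prop:Binverse2} into Lemma~\ref{JNlemma}, and use the cancellation $SS_2=S_2S=0$ (equivalently the paper's identity \eqref{A-1S2}, $A^{-1}(\lambda)S_2=S_2A^{-1}(\lambda)=S_2$, resting on \eqref{pts2}) to isolate the leading terms, with the factors $h_{\pm}(\lambda)^{-1}S$ versus $QD_0Q$ generating exactly the four $\Gamma_i^{\pm}$ with the stated $\log$-losses. The only difference is presentational: you split the sandwich into nine pieces and absorb the $\mathcal O_1(\lambda^{2-})$ cross terms into the $\Gamma_i^{\pm}$ explicitly, whereas the paper performs the same bookkeeping in grouped form.
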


\begin{proof}
For a resonance of the second kind, combining Proposition~\ref{prop:Binverse2} with Lemma~\ref{JNlemma} and Lemma~\ref{M+S1inverse} (taking the decay condition on $v$ into account),  we have
\begin{align*}
		M(\lambda)^{-1}&=(M(\lambda)+S_1)^{-1}+(M(\lambda)+S_1)^{-1}S_1B^{-1}S_1
		(M(\lambda)+S_1)^{-1}\\
		&=(M(\lambda)+S_1)^{-1}
+\frac1{g_1(\lambda)}\big(A(\lambda)^{-1}  +\mathcal O_1(\lambda^{2-})\big) S_2D_2
		S_2 \big(A(\lambda)^{-1}  +\mathcal O_1(\lambda^{2-})\big) \\
&+ \big(A(\lambda)^{-1}  +\mathcal O_1(\lambda^{2-})\big) O((\lambda \log \lambda)^{-2})  \big(A(\lambda)^{-1}  +\mathcal O_1(\lambda^{2-}))\big).
\end{align*}
Using \eqref{A-1S2} and the definition \eqref{Ainverse} of $A^{-1}$, we obtain
\begin{align*}
		M(\lambda)^{-1}&=\frac{S_2D_2S_2}{g_1(\lambda)} + (M(\lambda)+S_1)^{-1}+\mathcal O_1(\lambda^{2-})  \\
&+ \big(QD_0Q  +O(|\log \lambda|^{-1}) \big) O((\lambda \log \lambda)^{-2})  \big(QD_0Q  +O(|\log \lambda|^{-1})\big).
\end{align*}
The second line leads to four different terms yielding \eqref{M pwave}.

For the case of a resonance of the third kind, the statement follows similarly using the formula \eqref{thirdk} for $B^{-1}$.
\end{proof}

We now consider the dispersive estimates in the case  when $H$ has  a  p-wave  resonance  at zero energy.
Comparing \eqref{M pwave} to the expansion in Corollary~\ref{M exp cor1},
we note that the many of the terms in the expansion for resonances
of the second kind are in the expansion  for resonances of the first kind. Accordingly, it suffices to establish the estimates for the contributions of the terms:
\begin{align*}
  \frac{S_2D_2S_2}{g_1^{\pm}(\lambda)}
		+Q\Gamma_{1}(\lambda)Q+Q\Gamma_2(\lambda) + \Gamma_3(\lambda)Q+
		\Gamma_4(\lambda).
\end{align*}
We start with the following.

\begin{lemma}\label{D2 lemma}

	We have the bound
  	\begin{align}\label{D2 int bound}
    		\bigg| \int_0^\infty e^{it\lambda^2} \lambda \chi(\lambda)\bigg\langle\bigg[
		\frac{R_0^{+}(\lambda^2)v S_2 D_2S_2 v R_0^{+}(\lambda^2)}{g_1^{+}(\lambda)}
    		-\frac{R_0^{-}(\lambda^2) v D_2  v R_0^{-}(\lambda^2)}{g_1^{-}(\lambda)}\bigg]
		f, g\bigg\rangle\, d\lambda \bigg|
		&\les 1.
  	\end{align}

\end{lemma}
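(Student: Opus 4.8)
The plan is to bound the inner $\lambda$-integral, for fixed $x,x_1,y,y_1$, by a kernel of the form $w(x,x_1)\,w(y,y_1)$ with $\sup_x\|k(x,\cdot)\langle\cdot\rangle^\tau v\|_2<\infty$, so that integrating against $v(x_1)\,|S_2D_2S_2(x_1,y_1)|\,v(y_1)\,|f(x)|\,|g(y)|$ and using $\|f\|_1=\|g\|_1=1$, the absolute boundedness of $S_2D_2S_2$, and the mechanism already used in \eqref{1st der bound} gives the bound $\les 1$. Since we only seek an $O(1)$ estimate with \emph{no} power of $|t|$, I would discard the oscillation $e^{it\lambda^2}$ and estimate the $\lambda$-integral in absolute value; the sole difficulty is the non-integrable singularity $\lambda/g_1^\pm(\lambda)\sim(\lambda\log\lambda)^{-1}$ at $\lambda=0$, which must be tamed using the orthogonality $S_2v=0$ (so that \eqref{Q trick} holds verbatim with $S_2D_2S_2$ in place of $S_1D_1S_1$) together with the difference of the $+$ and $-$ terms.

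The key algebraic step is the decomposition, writing $\mathcal R^\pm:=R_0^\pm(\lambda^2)(x,x_1)R_0^\pm(\lambda^2)(y,y_1)$,
\[
\frac{\mathcal R^+}{g_1^+(\lambda)}-\frac{\mathcal R^-}{g_1^-(\lambda)}
=\frac{\mathcal R^+-\mathcal R^-}{g_1^+(\lambda)}
+\mathcal R^-\Big(\frac{1}{g_1^+(\lambda)}-\frac{1}{g_1^-(\lambda)}\Big).
\]
A direct computation with $R_0^\pm=\pm\tfrac i4 H_0^\pm$ and $H_0^\pm=J_0\pm iY_0$ gives, with $p=|x-x_1|$, $q=|y-y_1|$,
$\mathcal R^+-\mathcal R^-=-\tfrac{i}{8}\big[J_0(\lambda p)Y_0(\lambda q)+Y_0(\lambda p)J_0(\lambda q)\big]$,
so each summand of the first term carries exactly one $J_0$-factor. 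For the second term, since $g_1^-=\overline{g_1^+}$ and $g_1^\pm(\lambda)=\lambda^2(\alpha\log\lambda+\beta_\pm)$ with $\Im\beta_+\neq0$, one has $\frac{1}{g_1^+}-\frac{1}{g_1^-}=\frac{-2i\lambda^2\,\Im\beta_+}{|g_1^+(\lambda)|^2}$, which is $O\big(\lambda^{-2}(\log\lambda)^{-2}\big)$, i.e.\ a full extra factor of $(\log\lambda)^{-2}$ compared with $1/g_1^\pm$ alone.

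To exploit these gains I would apply the orthogonality on both sides, subtracting from each Hankel factor a function of the outer variable alone. At low energies ($\lambda p,\lambda q<1$) this replaces $J_0(\lambda p)$ by $G(\lambda,x,x_1)$ and $Y_0(\lambda p)$ by $F(\lambda,x,x_1)$ as in Lemma~\ref{lem:FG}, and similarly in the $q$-variable. In the first term each product then carries one factor $G=O(\lambda^\tau\langle x_1\rangle^\tau)$, so the integrand is $\les \lambda^{\tau-1}(\log\lambda)^{-1}k(x,x_1)\langle y_1\rangle^\tau$ (and the symmetric term), which is integrable at $\lambda=0$ for any $\tau\in(0,1]$. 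In the second term the worst contribution comes from the non-decaying product $F(\lambda,x,x_1)F(\lambda,y,y_1)\les k(x,x_1)k(y,y_1)$; multiplied by $\lambda\cdot\lambda^2/|g_1^+|^2\sim(\lambda(\log\lambda)^2)^{-1}$ this is precisely integrable at the origin because $\int_0(\lambda(\log\lambda)^2)^{-1}\,d\lambda<\infty$. At high energies one replaces the Hankel factors by the auxiliary functions $\widetilde G^\pm$ of Lemma~\ref{G2 lemma}, whose $(\lambda|p-q|)^\tau\lambda^{-1/2}$ bounds supply both the $\lambda^\tau$ smallness and the spatial decay $p^{-1/2}$; since the cutoffs force $p,q>1/\lambda_1$ there is no local singularity, and absolute integration in $\lambda$ again produces an admissible kernel. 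Summing the pieces and carrying out the spatial integrations yields the claim.

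The main obstacle is the borderline second term: the factor $F\cdot F$ is bounded but has \emph{no} decay in $\lambda$, so the entire argument hinges on the exact extra $(\log\lambda)^{-2}$ produced by $\frac{1}{g_1^+}-\frac{1}{g_1^-}$ — equivalently on $\Im\beta_+\neq0$ — which is just enough to render $(\lambda(\log\lambda)^2)^{-1}$ integrable near zero. This is also the structural reason the bound is only $O(1)$ rather than $O(|t|^{-1})$: the surviving contribution concentrates at $\lambda=0$, where the phase $\lambda^2$ is stationary, so no time decay is available and this is exactly the term that must be absorbed into the operator $F_t$ of Theorem~\ref{big thm}.
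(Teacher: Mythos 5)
Your proposal is correct and takes essentially the same route as the paper: your add--and--subtract splitting $\frac{\mathcal R^+}{g_1^+}-\frac{\mathcal R^-}{g_1^-}=\frac{\mathcal R^+-\mathcal R^-}{g_1^+}+\mathcal R^-\big(\frac{1}{g_1^+}-\frac{1}{g_1^-}\big)$ is an equivalent regrouping of the paper's direct expansion \eqref{pwave hankel cancel} into $\big(J_0J_0-Y_0Y_0\big)/\big(\lambda^2[(\log\lambda+c_1)^2+c_2^2]\big)$ plus $\big(J_0Y_0+Y_0J_0\big)(\log\lambda+c_1)/\big(\lambda^2[(\log\lambda+c_1)^2+c_2^2]\big)$, and it exploits exactly the same two gains (the full extra $(\log\lambda)^{-2}$ coming from $\Im\beta_+\neq 0$ on the $Y_0Y_0$ part, and one $J_0$ factor combined with \eqref{Q trick}, valid since $S_2\leq Q$, on the mixed part). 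The subsequent estimates likewise coincide with the paper's: replacing $\chi Y_0$ and $\chi J_0$ by $F$ and $G$ of Lemma~\ref{lem:FG} at low energies, using $\widetilde G^{\pm}$ of Lemma~\ref{G2 lemma} with the bound $\les \lambda^{0+}|p-q|^{0+}$ at high energies, integrating absolutely against $(\lambda(\log\lambda)^2)^{-1}$ resp.\ $\lambda^{0+-1}|\log\lambda|^{-1}$, and closing with the spatial integration mechanism of \eqref{1st der bound}.
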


\begin{proof}

  We note that we must exploit some cancellation between the `$+$' and `$-$' terms.  Recall that
  $H_0^{\pm}(y)=J_0(y)\pm iY_0(y)$ and the definition of $g_1^{\pm}(\lambda)$ in Lemma~\ref{lem:M_exp} give us
  \begin{align}
  \label{pwave hankel cancel}  \frac{R_0^{+}(\lambda^2) R_0^{+}(\lambda^2)}{g_1^{+}(\lambda)}
    -\frac{R_0^{-}(\lambda^2)  R_0^{-}(\lambda^2)}{g_1^{-}(\lambda)}
    &=\frac{J_0(\lambda p)J_0(\lambda q)-Y_0(\lambda p)Y_0(\lambda q)}{\lambda^2[(\log \lambda +c_1)^2+c_2^2]} \\
    &+\frac{(J_0(\lambda p)Y_0(\lambda q)+Y_0(\lambda p)J_0(\lambda q)) (\log \lambda +c_1)}
    {\lambda^2[(\log \lambda +c_1)^2+c_2^2]}\nn
  \end{align}
    We again must use the cut-offs $\chi$ and $\widetilde\chi$ and consider the different cases depending
  the supports of the resolvents.  Let us first consider the case when both resolvents are supported on low energy.
  Contribution of the first term in \eqref{pwave hankel cancel} satisfies the required bound since $J_0=O(1)$, and $\frac{1}{\lambda(\log\lambda)^2}$ is integrable on $[0,\lambda_1]$. Since the other terms have additional powers $\log\lambda$ in the numerator, we need to use \eqref{Q trick} (recall that $S_2 \leq Q$).

  Consider the contribution of the second term  in \eqref{pwave hankel cancel}. Using \eqref{Q trick}, we replace $\chi Y_0$ with $F(\lambda,\cdot,\cdot)$, and using Lemma~\ref{lem:FG}, we obtain the bound:
 $$
        \bigg|\int_0^\infty \chi(\lambda) \frac{F(\lambda, x, x_1) F(\lambda,y,y_1)}
      {\lambda [(\log \lambda +c_1)^2+c_2^2]}\, d\lambda \bigg|
      \lesssim k(x,x_1)k(y,y_1) \int_0^1 \frac{1}
      {\lambda (\log \lambda)^2}\, d\lambda
      \lesssim k(x,x_1)k(y,y_1).
$$
  The mixed $J_0$ and $Y_0$ terms in the second part
  of \eqref{pwave hankel cancel} are bounded similarly using $|G(\lambda,x,x_1)|\les \lambda^{0+}\langle x_1\rangle^{0+}$.

  An analysis as in \eqref{1st der bound} shows that these terms satisfy the desired bound \eqref{D2 int bound}.

When one or both of the Bessel functions is supported on high energies, we use the functions $\widetilde G(\lambda,p,q)$ from Lemma~\ref{G2 lemma}.
The bound $|\widetilde G(\lambda,p,q)|\les \lambda^{0+}|p-q|^{0+}$ suffices for obtaining the required bound.
  The details are left to the reader.
\end{proof}

\begin{lemma}\label{low high YJ lemma3}

	For $\mathcal C_i(z)=J_0(z)$ or $Y_0(z)$ for $i=1,2$, we have the bound
	$$
    		\bigg| \int_{\R^8} \int_0^\infty e^{it\lambda^2} \lambda \chi(\lambda)
		\mathcal C_1(\lambda|x-x_1|)v(x_1) Q \Gamma_1(\lambda) Q (x_1,y_1) v(y_1)
		\mathcal C_2(\lambda|y-y_1|) \, d\lambda f(x) g(y) \, dx_1\, dy_1\, dx\, dy
		\bigg|\les 1.
  	$$

\end{lemma}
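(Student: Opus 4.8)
The plan is to estimate the $\lambda$-integral in absolute value. Since we only want a bound of size $1$ that is uniform in $t$ (this contribution will be absorbed into the operator $F_t$), there is no need for stationary phase or for extracting any power of $t$: I will simply use $|e^{it\lambda^2}|=1$ and show that what remains is integrable in $\lambda$ and bounded after integrating against $v$, $f$, $g$. The two facts that drive the argument are, first, the orthogonality $Qv=0$, which furnishes the cancellation \eqref{Q trick} separately at each fixed $\lambda$ (this is what saves us even though, in contrast to $S_1D_1S_1$ in Section~\ref{sec:swave}, the kernel $Q\Gamma_1(\lambda)Q$ now depends on $\lambda$), and second, the fact from Corollary~\ref{M exp cor2} that $Q\Gamma_1(\lambda)Q$ is absolutely bounded with $Q\Gamma_1(\lambda)Q=O(\lambda^{-2}(\log\lambda)^{-2})$, so that $\big\||Q\Gamma_1(\lambda)Q|\big\|_{L^2\to L^2}\les \lambda^{-2}(\log\lambda)^{-2}$.

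First I would split each Bessel factor into low and high energy via $1=\chi(\lambda|x-x_1|)+\widetilde\chi(\lambda|x-x_1|)$ and the analogue in $y$. For every fixed $\lambda$, $Qv=0$ gives $\int_{\R^2} v(x_1)[Q\Gamma_1(\lambda)Q](x_1,y_1)\,dx_1=0$ and likewise in $y_1$, so I may subtract from $\chi(\lambda|x-x_1|)\mathcal C_1(\lambda|x-x_1|)$ a function of $x$ alone and replace it by $F$ (if $\mathcal C_1=Y_0$) or $G$ (if $\mathcal C_1=J_0$) of Lemma~\ref{lem:FG}, and replace the high-energy piece in modulus by $\widetilde G$ of Lemma~\ref{G2 lemma} via \eqref{largeJYH}; the same substitutions are made in $y_1$. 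Setting $\Psi_1(\lambda,x_1)=v(x_1)\int_{\R^2}[\text{replaced }\mathcal C_1]\,f(x)\,dx$ and $\Psi_2(\lambda,y_1)=v(y_1)\int_{\R^2}[\text{replaced }\mathcal C_2]\,g(y)\,dy$, the inner $x_1,y_1$-integral is a bilinear pairing against $Q\Gamma_1(\lambda)Q$, whence
\[
\Big|\int_{\R^4}\Psi_1(\lambda,x_1)[Q\Gamma_1(\lambda)Q](x_1,y_1)\Psi_2(\lambda,y_1)\,dx_1\,dy_1\Big|\les \big\||Q\Gamma_1(\lambda)Q|\big\|_{L^2\to L^2}\|\Psi_1(\lambda)\|_2\|\Psi_2(\lambda)\|_2\les \frac{\|\Psi_1(\lambda)\|_2\,\|\Psi_2(\lambda)\|_2}{\lambda^2(\log\lambda)^2}.
\]

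Next I would bound the $L^2$ norms of $\Psi_1,\Psi_2$ uniformly in $\lambda\in(0,\lambda_1)$. For a low-energy $Y_0$ factor, $|F(\lambda,x,x_1)|\les k(x,x_1)$ gives $|\Psi_1(\lambda,x_1)|\les v(x_1)\int k(x,x_1)|f(x)|\,dx\les v(x_1)(1+\log^+|x_1|)$, so $\|\Psi_1(\lambda)\|_2\les 1$ using $v(x)\les\la x\ra^{-3-}$. For a low-energy $J_0$ factor, $|G(\lambda,x,x_1)|\les\lambda^\tau\la x_1\ra^\tau$ yields $\|\Psi_1(\lambda)\|_2\les\lambda^\tau$; and for a high-energy factor the bound of Lemma~\ref{G2 lemma}, together with $\widetilde\chi(\lambda p)/|\lambda p|^{1/2}\le 1$ on its support, again gives $\|\Psi_1(\lambda)\|_2\les\lambda^\tau$, for any $\tau\in[0,1]$. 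Feeding these into $\int_0^{\lambda_1}\lambda\,\lambda^{-2}(\log\lambda)^{-2}\|\Psi_1(\lambda)\|_2\|\Psi_2(\lambda)\|_2\,d\lambda$, the worst case is two low-energy $Y_0$'s, where $\|\Psi_1\|_2\|\Psi_2\|_2\les 1$ and the integral collapses to $\int_0^{\lambda_1}\frac{d\lambda}{\lambda(\log\lambda)^2}\les 1$; every other of the finitely many $\chi/\widetilde\chi$ and $J_0/Y_0$ combinations carries an extra $\lambda^\tau$ with $\tau>0$ and converges more easily.

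The main obstacle, and the reason \eqref{Q trick} is indispensable, is exactly the $Y_0$–$Y_0$ low-energy term: without subtracting a function of $x$ (resp.\ $y$) one only has $|\chi(\lambda p)Y_0(\lambda p)|\les 1+|\log\lambda|$, forcing $\|\Psi_i(\lambda)\|_2\les 1+|\log\lambda|$, which against the singularity $\lambda^{-2}(\log\lambda)^{-2}$ of $Q\Gamma_1Q$ produces a non-integrable $\lambda^{-1}$ at zero. Passing to $F$ trades this logarithmic growth in $\lambda$ for the $\lambda$-independent weight $k(x,x_1)$, which the decay of $v$ absorbs, and the surviving $(\log\lambda)^{-2}$ makes the integral finite. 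A secondary point to monitor is that $\Gamma_1$ depends on $\lambda$, so the operator bound must be applied pointwise inside the $\lambda$-integral rather than pulled out as in Section~\ref{sec:swave}; this causes no difficulty precisely because no $t$-decay is being sought here.
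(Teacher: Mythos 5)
Your argument is correct and is essentially the paper's proof: take absolute values (no oscillation is needed for an $O(1)$ bound), split into low and high energies, use \eqref{Q trick} at each fixed $\lambda$ (valid since $Qv=0$, despite the $\lambda$-dependence of $\Gamma_1$) to replace $\chi Y_0$ by $F$ and $\chi J_0$ by $G$, invoke the absolute bound $\Gamma_1(\lambda)=O(\lambda^{-2}(\log\lambda)^{-2})$ from Corollary~\ref{M exp cor2}, and conclude from the convergence of $\int_0^{\lambda_1}\frac{d\lambda}{\lambda(\log\lambda)^2}$; the only cosmetic difference is at high energies, where the paper dispenses with $\widetilde{G}$ and simply uses $\widetilde\chi(y)(|J_0(y)|+|Y_0(y)|)\les 1$. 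One step needs repair: the pointwise bound $\int k(x,x_1)|f(x)|\,dx\les 1+\log^+|x_1|$ is false with a constant depending only on $\|f\|_1$ (the local singularity $\log^-|x-x_1|$ is not controlled by the $L^1$ norm of $f$); instead estimate $\|\Psi_1(\lambda)\|_2\le \|f\|_1\,\sup_x\|k(x,\cdot)v(\cdot)\|_2\les 1$ by Minkowski's integral inequality, which is exactly the mechanism of \eqref{1st der bound}.
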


\begin{proof}

	Unlike in Lemma~\ref{D2 lemma}
	we do not need to use any cancellation between the `$+$' and `$-$' terms.  We consider the terms that arise when both
	$\mathcal C_1$ and $\mathcal C_2$ are supported on small energies.  Consider,
	\begin{align*}
		\int_0^\infty e^{it\lambda^2}\lambda \chi(\lambda) \chi(\lambda p)\mathcal C_1(\lambda p)
		vQ\Gamma_1(\lambda)Qv \chi(\lambda q)\mathcal C_2(\lambda q)
		\, d\lambda,
	\end{align*}
	where $p=|x-x_1|$, $q=|y-y_1|$. In the worst case when $\mathcal C_1=\mathcal C_2=Y_0$, using \eqref{Q trick}, we replace  $\chi Y_0 $ with $F $   to obtain
\begin{multline}
\label{FF Ci low}
		\bigg|\int_0^1 \lambda  F(\lambda,x,x_1) \Gamma_1(\lambda)F(\lambda,y,y_1)\, d\lambda\bigg|
		 \lesssim \bigg| \int_0^1
		 \frac{F(\lambda,x,x_1)F(\lambda,y,y_1)}{\lambda (\log\lambda)^2}\, d\lambda\bigg|
		 \sup_{0<\lambda<\lambda_1}|\lambda^2 (\log \lambda)^2 \Gamma_1(\lambda)|
		 \\ \lesssim k(x,x_1) k(y,y_1) \sup_{0<\lambda<\lambda_1}|\lambda^2 (\log \lambda)^2 \Gamma_1(\lambda)|.
	\end{multline}
	The last line follows from Lemma~\ref{lem:FG}. Since
  $\sup_{0<\lambda<\lambda_1} |\lambda^2 (\log \lambda)^2\Gamma_1(\lambda)|$
	defines a bounded operator on $L^2(\R^2)$ (by Corollary~\ref{M exp cor2}), we are done.
	The other low energy terms are similar using $G$ instead of $F$ from Lemma~\ref{lem:FG}.

	For the large energies, we note that the argument runs in a similar manner.  Using
	$\widetilde\chi(y) (|J_0(y)|+|Y_0(y)|)\lesssim 1$, and an argument as in \eqref{FF Ci low}, it easily follows that  the integral is bounded
	as desired.
\end{proof}

The following modification of Lemma~\ref{low high YJ lemma3} is necessary for the other $\Gamma_{i} (\lambda)$
terms.

\begin{corollary}\label{low high YJ cor3}

	For $\mathcal C_i(z)=J_0(z)$ or $Y_0(z)$ for $i=1,2$, we have the bound
	$$
    		\bigg|  \int_{\R^8} \int_0^\infty e^{it\lambda^2} \lambda \chi(\lambda)
		\mathcal C_1(\lambda|x-x_1|)v(x_1) Q\Gamma_2(\lambda)(x_1,y_1)   v(y_1)
\mathcal C_2(\lambda|y-y_1|) \, d\lambda  \, dx_1\, dy_1 \bigg|\les 1.$$

  The same bounds hold when $Q\Gamma_2(\lambda) $ is replaced by
  	$ \Gamma_3(\lambda)Q$ or $ \Gamma_4(\lambda $.

\end{corollary}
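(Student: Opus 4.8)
The plan is to mirror the proof of Lemma~\ref{low high YJ lemma3} almost verbatim; the one new feature is that the projection $Q$ is now present on at most one side of the sandwiched operator. As there, I would first split each Bessel factor via $1=\chi(\lambda|x-x_1|)+\widetilde\chi(\lambda|x-x_1|)$ (and the $y$-analogue) and reduce to the worst case $\mathcal C_1=\mathcal C_2=Y_0$ on the low-energy pieces; the $J_0$ factors and all high-energy contributions are strictly better and are disposed of through $\widetilde\chi(y)(|J_0(y)|+|Y_0(y)|)\les 1$, exactly as in Lemma~\ref{low high YJ lemma3}. For the term $Q\Gamma_2(\lambda)$ the projection sits on the left, so I would invoke \eqref{Q trick} in the $x_1$-variable to replace $\chi(\lambda|x-x_1|)Y_0(\lambda|x-x_1|)$ by $F(\lambda,x,x_1)$ from Lemma~\ref{lem:FG}, which is $\les k(x,x_1)$.

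The genuinely new point is the opposite side, where no projection is available and hence no cancellation can be used: there one must keep $\chi(\lambda|y-y_1|)Y_0(\lambda|y-y_1|)$ and estimate it directly. Using \eqref{Y0 def2} and \eqref{chichilog}, I would bound it by $(1+|\log\lambda|)(1+\log^-|y-y_1|)$. The extra logarithm so produced is precisely absorbed by the improved size estimate $\Gamma_2(\lambda)=O(\lambda^{-2}(\log\lambda)^{-3})$ of Corollary~\ref{M exp cor2}: factoring out the operator bound $\sup_{0<\lambda<\lambda_1}|\lambda^2(\log\lambda)^3\Gamma_2(\lambda)|$ as in \eqref{FF Ci low}, the $\lambda$-integral collapses to $\int_0^{\lambda_1}\frac{1+|\log\lambda|}{\lambda(\log\lambda)^3}\,d\lambda$, whose integrand is $\les \frac{1}{\lambda(\log\lambda)^2}$ near the origin and therefore integrable. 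No integration by parts is needed --- one simply takes absolute values under the oscillation --- so the bound is uniform in $t$, as befits a term of $F_t$. It then remains to integrate the surviving weights $k(x,x_1)(1+\log^-|y-y_1|)$ against $v(x_1)v(y_1)$ and the absolutely bounded kernel $\sup_{0<\lambda<\lambda_1}|\lambda^2(\log\lambda)^3\Gamma_2(\lambda)|$, which I would carry out by Cauchy--Schwarz using the uniform finiteness of $\sup_x\|k(x,\cdot)v\|_2$ and $\sup_y\|(1+\log^-|y-\cdot|)v\|_2$ under the standing hypothesis $v(x)\les\langle x\rangle^{-3-}$.

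The case $\Gamma_3(\lambda)Q$ is identical with the two variables interchanged, while $\Gamma_4(\lambda)$ is handled the same way except that, lacking a projection on either side, both low-energy $Y_0$ factors must be retained and each contributes a $(1+|\log\lambda|)$; these two logarithms are matched exactly by the sharper bound $\Gamma_4(\lambda)=O(\lambda^{-2}(\log\lambda)^{-4})$, again leaving the critical integrable integrand $\frac{1}{\lambda(\log\lambda)^2}$. I expect the only real obstacle to be the bookkeeping of logarithmic powers: one must verify that each side lacking a $Q$ contributes at most a single factor of $\log\lambda$, and that the gain recorded in Corollary~\ref{M exp cor2} (namely $(\log\lambda)^{-3}$ for $\Gamma_2,\Gamma_3$ and $(\log\lambda)^{-4}$ for $\Gamma_4$) exactly cancels this deficit, keeping the remaining integral at the borderline integrable order. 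Everything else is a direct transcription of Lemma~\ref{low high YJ lemma3}.
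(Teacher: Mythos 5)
Your proposal is correct and follows essentially the same route as the paper: replace $\chi Y_0$ by $F$ via \eqref{Q trick} only on the side carrying a projection, bound the unprojected factor by $(1+|\log\lambda|)(1+\log^- \cdot)$ using \eqref{chichilog}, factor out $\sup_{0<\lambda<\lambda_1}|\lambda^2(\log\lambda)^{3}\Gamma_2(\lambda)|$ (respectively $(\log\lambda)^4\Gamma_4$) as in \eqref{FF Ci low}, and absorb the surplus logarithms into the improved decay of $\Gamma_2,\Gamma_3,\Gamma_4$ from Corollary~\ref{M exp cor2}, leaving the integrable $\frac{1}{\lambda(\log\lambda)^2}$ and concluding with the Cauchy--Schwarz step of \eqref{1st der bound}; the high-energy pieces via $\widetilde\chi(y)(|J_0(y)|+|Y_0(y)|)\les 1$ also match the paper. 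Your log-bookkeeping check is exactly the point the paper's proof turns on, so there is nothing further to add.
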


\begin{proof}
	We repeat the analysis of Lemma~\ref{low high YJ lemma3}.
 Consider the case when both $\mathcal C_i(\lambda \cdot)$  are supported on
	low energies and both are $Y_0$.    We note that when $\lambda<1$, using \eqref{chichilog}, we have
	\begin{align}\label{Y0 low bd}
		|Y_0(\lambda p)\chi(\lambda p)| \lesssim (1+|\log\lambda|)(1+\log^- p).
	\end{align}
	Using this and replacing $\chi Y_0$ with $F$ on one side, we obtain the bound
	$$
      	  \int_0^{\lambda_1} \frac{|F(\lambda, x,x_1)| (1+\log^{-} q)}{\lambda |\log \lambda|^2}\, d\lambda
		    \sup_{0<\lambda<\lambda_1} |\lambda^2 (\log \lambda)^3 \Gamma_2(\lambda)|
      		 \lesssim  k(x,x_1) k(y,y_1)
		 \sup_{0<\lambda<\lambda_1} |\lambda^2 (\log \lambda)^3 \Gamma_2(\lambda)|.
  	$$
	The same bound holds for  $\Gamma_3(\lambda)Q$.  For the contribution of $ \Gamma_4(\lambda) $, we have
	\begin{align*}
      		&\bigg|\int_0^\infty \lambda \chi(\lambda) Y_0(\lambda p)  \Gamma_4(\lambda)  Y_0(\lambda q)
      		\, d\lambda \bigg| \\
      		&\lesssim  \int_0^1 \frac{(1+|\log \lambda|)(1+\log^{-} p)(1+|\log \lambda|)(1+\log^{-} q)}
		{\lambda |\log \lambda|^4}\, d\lambda   \sup_{0<\lambda<\lambda_1} |\lambda^2 (\log \lambda)^4\Gamma_4(\lambda)|\\
      		&\lesssim k(x,x_1) k(y,y_1)\sup_{0<\lambda<\lambda_1} |\lambda^2 (\log \lambda)^4\Gamma_4(\lambda)|.
  	\end{align*}
  The other cases are similar.

When one of the $\mathcal C_i(\lambda \cdot)$ is supported
	on high energies, the analysis is less delicate. The required bound follows from $\widetilde\chi(y)(|J_0(y)|+|Y_0(y)|)\les 1$.
\end{proof}
This completes the proof in the case of a resonance of the second kind.

We note that the above bounds in Lemma~\ref{low high YJ lemma3} and Corollary~\ref{low high YJ cor3}
also hold for the $\Gamma_i$ term in \eqref{M eval}.  Thus for a resonance of the third kind,
it suffices to consider the leading $\lambda^{-2}$ term in \eqref{M eval}.  Noting \eqref{K defn} and
the fact that the kernel of $D_3$ is real-valued, the following lemma completes the proof. We will prove in the next section that $G_0vS_3D_3S_3vG_0$ is the projection onto the zero eigenspace whose contribution disappears since we project away from the zero eigenspace. We will ignore this issue in the proof below since the eigenfunctions are bounded functions and hence the projection onto the zero eigenspace satisfies the desired bound, and since removing this operator requires more decay from the potential, see Section~\ref{sec:weighted}.
\begin{lemma}\label{D3 lemma}

	We have the bound
	\begin{align}\label{low high bound6}
    		\bigg| &\int_{\R^8} \int_0^\infty e^{it\lambda^2} \lambda \chi(\lambda)
		J_0(\lambda|x-x_1|)v(x_1) \frac{S_3D_3S_3 }{\lambda^2}  v(y_1)  Y_0(\lambda|y-y_1|) \, d\lambda   \, dx_1\, dy_1 \bigg|\les 1.
  	\end{align}
\end{lemma}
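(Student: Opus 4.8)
The net power of $\lambda$ in the integrand, after combining the measure $\lambda\,d\lambda$ with the $\lambda^{-2}$ coming from $S_3D_3S_3/\lambda^2$, is $\lambda^{-1}$, which is \emph{not} integrable near $\lambda=0$; the content of the lemma is that the projection $S_3$ supplies enough cancellation to overcome this. Since $S_3\leq S_2\leq S_1\leq Q$ and $Qv=0$, we have $S_3v=0$, hence $S_3D_3S_3v=vS_3D_3S_3=0$, so the orthogonality relation \eqref{Q trick} is available on \emph{both} sides: I may subtract from $J_0(\lambda|x-x_1|)$ any function of $(x,\lambda)$ and from $Y_0(\lambda|y-y_1|)$ any function of $(y,\lambda)$ without changing the kernel. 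Because we only seek a uniform-in-$t$ bound (this term feeds into $F_t$ and is not expected to decay), it suffices to bound the modulus of the $\lambda$-integral; no integration by parts or stationary phase is required.

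Write $p=|x-x_1|$, $q=|y-y_1|$ and split the $\lambda$-integral with $1=\chi(\lambda p)+\widetilde\chi(\lambda p)$ and $1=\chi(\lambda q)+\widetilde\chi(\lambda q)$. The singularity at $\lambda=0$ lives entirely in the low-low piece, since $\widetilde\chi(\lambda p)$ (resp.\ $\widetilde\chi(\lambda q)$) forces $\lambda\gtrsim 1/p$ (resp.\ $1/q$), keeping $\lambda$ away from $0$. In the low-low region I use \eqref{Q trick} to replace $\chi(\lambda p)J_0(\lambda p)$ by $G(\lambda,x,x_1)$ and $\chi(\lambda q)Y_0(\lambda q)$ by $F(\lambda,y,y_1)$ as in Lemma~\ref{lem:FG}. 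The bounds $|G(\lambda,x,x_1)|\les \lambda^\tau\langle x_1\rangle^\tau$ and $|F(\lambda,y,y_1)|\les k(y,y_1)$ then yield, for any fixed $\tau\in(0,1]$,
\[
\int_0^{\lambda_1}\lambda^{-1}|G(\lambda,x,x_1)|\,|F(\lambda,y,y_1)|\,d\lambda\les \langle x_1\rangle^\tau k(y,y_1)\int_0^{\lambda_1}\lambda^{\tau-1}\,d\lambda\les \langle x_1\rangle^\tau k(y,y_1).
\]
It is the gain of $\lambda^\tau$ from $G$ — that is, the cancellation $S_3v=0$ applied to $J_0(\lambda p)-1=O((\lambda p)^2)$ — that converts the nonintegrable $\lambda^{-1}$ into the integrable $\lambda^{\tau-1}$.

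For the mixed and high-high pieces $\lambda$ is bounded below, so $\lambda^{-1}$ is harmless and I only invoke the decay $|J_0(z)\widetilde\chi(z)|+|Y_0(z)\widetilde\chi(z)|\les |z|^{-1/2}$ from \eqref{largeJYH}. For instance, with $J_0$ at high energy and $Y_0$ at low energy I keep $F$ on the $Y_0$ side and bound $\int_{1/p}^{\lambda_1}\lambda^{-1}(\lambda p)^{-1/2}k(y,y_1)\,d\lambda\les k(y,y_1)\,p^{-1/2}\int_{1/p}^{\lambda_1}\lambda^{-3/2}\,d\lambda\les k(y,y_1)$; in the high-high region $\int \lambda^{-1}(\lambda p)^{-1/2}(\lambda q)^{-1/2}\,d\lambda\les (pq)^{-1/2}\min(p,q)\les 1$; and the case $J_0$ low, $Y_0$ high is the same using $G$ and noting that $\widetilde\chi(\lambda q)$ with $\lambda<\lambda_1$ forces $q\gtrsim\lambda_1^{-1}$. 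In every region the $\lambda$-integral is thus $\les\langle x_1\rangle^\tau k(y,y_1)$, uniformly in $x,y,t$. Integrating over $x_1,y_1$, the claim follows from
\[
\int_{\R^4}\langle x_1\rangle^\tau v(x_1)\,\big|[S_3D_3S_3](x_1,y_1)\big|\,v(y_1)\,k(y,y_1)\,dx_1\,dy_1\les \|\langle\cdot\rangle^\tau v\|_2\,\big\||S_3D_3S_3|\big\|_{2\to2}\,\sup_{y}\|k(y,\cdot)v\|_2\les 1,
\]
using that $S_3D_3S_3$ is absolutely bounded (indeed of finite rank) and that $\langle\cdot\rangle^\tau v$ and $k(y,\cdot)v$ lie in $L^2$ uniformly in $y$ when $v(x)\les\langle x\rangle^{-3-}$. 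The sole obstacle is the threshold singularity $\lambda^{-1}$, resolved precisely by $S_3v=0$; as indicated in the remark preceding the lemma, we deliberately retain the zero-energy eigenprojection $G_0vS_3D_3S_3vG_0$ here, since it is itself bounded $L^1\to L^\infty$ (its eigenfunctions being bounded), so the stated uniform bound holds with it included.
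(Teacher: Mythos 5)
Your proof is correct and follows essentially the paper's own argument: the $\lambda^{-1}$ threshold singularity is absorbed by the orthogonality \eqref{Q trick} (available since $S_3\leq Q$), replacing $\chi J_0$ and $\chi Y_0$ by $G$ and $F$ from Lemma~\ref{lem:FG}, bounding the $\lambda$-integral absolutely by $\la x_1\ra^{\tau}k(y,y_1)$, and closing with the absolute boundedness of $S_3D_3S_3$ together with $\|\la\cdot\ra^{\tau}v\|_2$, $\sup_y\|k(y,\cdot)v\|_2<\infty$ — and you handle the retained eigenprojection exactly as the paper's remark does. The only (harmless) deviation is at high energies, where the paper invokes the cancellation once more via $\widetilde G$ of Lemma~\ref{G2 lemma}, whereas you use the support condition $\lambda\gtrsim 1/p$ together with the $|z|^{-1/2}$ decay from \eqref{largeJYH} directly; both yield the same bounds.
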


\begin{proof}

	We provide a sketch of the proof.  Due to similarities to previous proofs, we leave the details to the reader.
	We again consider the case when the Bessel functions are supported on low energy first.  Accordingly,
	we wish to control
	\begin{align*}
		\bigg|\int_0^\infty e^{it\lambda^2}\lambda &\chi(\lambda) \chi(\lambda p) J_0(\lambda p)v(x_1)\frac{D_3}{\lambda^2}
		v(y_1) \chi(\lambda q) Y_0(\lambda q) \, d\lambda\bigg|\\
		&\lesssim \bigg| \int_0^\infty \chi(\lambda) \frac{G(\lambda,x,x_1)F(\lambda,y,y_1)}{\lambda}\, d\lambda
		\lesssim \langle x_1\rangle^{\tau}k(y,y_1).
	\end{align*}
	Where we used \eqref{Q trick}, Lemma~\ref{lem:FG} with any $\tau>0$.
	
	For the case when one function is supported on high energy, we have
	\begin{align*}
		\bigg|\int_0^\infty e^{it\lambda^2}\lambda &\chi(\lambda) \widetilde\chi(\lambda p)
		J_0(\lambda p)v(x_1)\frac{D_3}{\lambda^2}
		v(y_1) \chi(\lambda q) Y_0(\lambda q) \, d\lambda\bigg|\\
		&\lesssim \bigg| \int_0^\infty \chi(\lambda) \frac{\widetilde G(\lambda,x,x_1)F(\lambda,y,y_1)}{\lambda}\, d\lambda
		\lesssim \langle x_1\rangle^{\tau}k(y,y_1).
	\end{align*}
	Similarly one uses $\widetilde G(\lambda,y,y_1)$ instead of $F(\lambda, y,y_1)$ if we have $\widetilde\chi(\lambda q)$.
	
	When both functions are supported on high energy, we have
	\begin{align*}
		\bigg|\int_0^\infty e^{it\lambda^2}\lambda &\chi(\lambda) \widetilde\chi(\lambda p)
		J_0(\lambda p)v(x_1)\frac{D_3}{\lambda^2}
		v(y_1) \widetilde\chi(\lambda q) Y_0(\lambda q) \, d\lambda\bigg|\\
		&\lesssim \bigg| \int_0^\infty \chi(\lambda) \frac{\widetilde G(\lambda,x,x_1)\widetilde G(\lambda,y,y_1)}{\lambda}\, d\lambda
		\lesssim \langle x_1\rangle^{\tau}\langle y_1\rangle^{\tau}.
	\end{align*}
	An analysis as in \eqref{1st der bound} finishes the proof.
\end{proof}

We are now ready to prove the main theorem of this section.

\begin{theorem}\label{pwave thm}

	Let $V:\R^2\to \R$ be such that $|V(x)|\lesssim \langle x\rangle^{-\beta}$ for some $\beta>6$.  Further assume that
	$H=-\Delta+V$ has a resonance of the second or third kind at zero energy.  Then, there is a time dependent operator
	$F_t$ such that
	\begin{align*}
		\sup_t \| F_t\|_{L^1\to L^\infty}\lesssim 1,
		\qquad \|K_{\lambda_1} - F_t \|_{L^1\to L^\infty}\lesssim |t|^{-1},\,\,\,\,|t|>1.
	\end{align*}

\end{theorem}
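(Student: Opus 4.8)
The plan is to reduce everything to the low-energy kernel $K_{\lambda_1}$, insert the resolvent expansions of Corollary~\ref{M exp cor2} into the symmetric resolvent identity, and then partition the resulting terms into a block that reproduces the decaying first-kind estimates and a block of genuinely non-decaying pieces that I collect into $F_t$. Since $\beta>6>2$, Theorem~\ref{high eng thm} gives $\|e^{itH}P_{ac}-e^{itH}\chi(H)P_{ac}\|_{1\to\infty}\lesssim|t|^{-1}$, so it suffices to exhibit $F_t$ with $\sup_t\|F_t\|_{1\to\infty}\lesssim 1$ and $\|K_{\lambda_1}-F_t\|_{1\to\infty}\lesssim|t|^{-1}$. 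Inserting \eqref{resolvent id} into \eqref{klambda}, the contribution of the free term $R_0^\pm(\lambda^2)$ is the low-energy free evolution, which decays like $|t|^{-1}$ by the stationary phase analysis of Section~\ref{sec:swave}; it remains to treat the contribution of $-R_0^\pm(\lambda^2)vM^\pm(\lambda)^{-1}vR_0^\pm(\lambda^2)$. As $|V|\lesssim\langle x\rangle^{-\beta}$ with $\beta>6$ is equivalent to $v\lesssim\langle x\rangle^{-3-}$, Corollary~\ref{M exp cor2} applies and supplies the expansion \eqref{M pwave} of $M^\pm(\lambda)^{-1}$ for a resonance of the second kind and \eqref{M eval} for a resonance of the third kind.

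In both expansions the block $(M^\pm(\lambda)+S_1)^{-1}+\mathcal O_1(\lambda^{2-})$ is exactly what was already controlled in the first-kind analysis. Indeed, Lemma~\ref{M+S1inverse} expands $(M^\pm(\lambda)+S_1)^{-1}=h_\pm(\lambda)^{-1}S+QD_0Q+\mathcal O_1(\lambda^k)$, and the contributions of $h_\pm(\lambda)^{-1}S$, of $QD_0Q$, and of the $\mathcal O_1(\lambda^k)$ remainder (together with the separate $\mathcal O_1(\lambda^{2-})$ error) to \eqref{klambda} are shown to be $\lesssim|t|^{-1}$ in Proposition~\ref{S prop}, Proposition~\ref{S and S_1 terms lemma}, and Proposition~\ref{Error term lemma}; the hypothesis $\beta>6$ is stronger than what those propositions require. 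Hence this block contributes only $\lesssim|t|^{-1}$ to $K_{\lambda_1}$.

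I then define $F_t$ to be the contribution to $K_{\lambda_1}$ of the remaining, more singular terms: for a resonance of the second kind these are $g_1^\pm(\lambda)^{-1}S_2D_2S_2$, $Q\Gamma_1Q$, $Q\Gamma_2$, $\Gamma_3Q$, and $\Gamma_4$; for a resonance of the third kind one adds $\lambda^{-2}S_3D_3S_3$ and $g_1^\pm(\lambda)^{-1}S_2\mathcal D S_2$. Each piece enters \eqref{klambda} sandwiched between $R_0^\pm(\lambda^2)v$ and $vR_0^\pm(\lambda^2)$, and the required uniform bound is supplied term by term: Lemma~\ref{D2 lemma} bounds the $g_1^\pm(\lambda)^{-1}S_2D_2S_2$ contribution (and, identically, the $g_1^\pm(\lambda)^{-1}S_2\mathcal D S_2$ contribution) by $\lesssim 1$; Lemma~\ref{low high YJ lemma3} and Corollary~\ref{low high YJ cor3} bound the $Q\Gamma_1Q$, $Q\Gamma_2$, $\Gamma_3Q$, and $\Gamma_4$ contributions by $\lesssim 1$; and Lemma~\ref{D3 lemma} bounds the $\lambda^{-2}S_3D_3S_3$ contribution by $\lesssim 1$. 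Because each estimate is uniform in $t$ and has the form $|\langle\,\cdot\,f,g\rangle|\lesssim\|f\|_1\|g\|_1$, we obtain $\sup_t\|F_t\|_{L^1\to L^\infty}\lesssim 1$; combining with the previous paragraph gives $\|K_{\lambda_1}-F_t\|_{L^1\to L^\infty}\lesssim|t|^{-1}$, which proves the theorem.

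The conceptual obstacle, already resolved in the lemmas invoked above, is that the leading singularities $\lambda^{-2}S_3D_3S_3$ and $\lambda^{-2}(\log\lambda)^{-2}S_2D_2S_2$ cannot be integrated against $e^{it\lambda^2}\lambda\,d\lambda$ so as to yield any polynomial decay, so the best one can do is isolate them as a uniformly bounded operator $F_t$ rather than a decaying one. The mechanism that makes even the $\lesssim 1$ bound possible is the orthogonality identity \eqref{Q trick} (valid since $S_2,S_3\le Q$), which allows one to subtract functions of a single variable from the low-energy Bessel factors and thereby trade the surviving $\log\lambda$ and $\lambda^{-1}$ singularities for integrable ones. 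Once this is in hand the remaining work is purely organizational: checking that the shared block genuinely reduces to the first-kind propositions after the Lemma~\ref{M+S1inverse} expansion, which is immediate.
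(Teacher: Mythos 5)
Your proposal is correct and follows essentially the same route as the paper's own proof, which likewise defines $F_t$ as the contribution of the singular leading terms in \eqref{M pwave} and \eqref{M eval}, bounds it uniformly in $t$ via Lemmas~\ref{D2 lemma}, \ref{low high YJ lemma3}, \ref{D3 lemma} and Corollary~\ref{low high YJ cor3}, and observes that the remaining terms are identical in form to those treated in the first-kind analysis of Section~\ref{sec:swave}. The only difference is presentational: you spell out the reduction of the block $(M^{\pm}(\lambda)+S_1)^{-1}+\mathcal O_1(\lambda^{2-})$ to Propositions~\ref{S prop}, \ref{S and S_1 terms lemma}, and \ref{Error term lemma}, which the paper leaves implicit.
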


\begin{proof}
If we denote the terms that arise from the contribution of the terms in the first lines of  \eqref{M pwave} and \eqref{M eval} as $F_t$,
Lemmas~\ref{D2 lemma}, \ref{low high YJ lemma3}, and~\ref{D3 lemma}
and Corollary~\ref{low high YJ cor3} show that
\begin{align*}
	\sup_t \| F_t\|_{L^1\to L^\infty}\lesssim 1.
\end{align*}

As the remaining terms in \eqref{M pwave} and \eqref{M eval} are identical in form to those that arise in the proof of Theorem~\ref{swave thm}, we can use the bounds from the previous section to   establish the theorem.
\end{proof}

Finally, we note that Theorem~\ref{big thm} follows directly from
Theorem~\ref{high eng thm}, Theorem~\ref{swave thm}, Theorem~\ref{pwave thm}, and the first remark following Definition~\ref{resondef}.

\section{Spectral Structure of $-\Delta+V$ at Zero Energy}\label{sec:spectral}
In this section, we prove some of the claims made in the remark following Definition~\ref{resondef}.  In particular, we show the relationship between the
spectral subspaces $S_i L^2(\R^2)$ for $1=1,2,3$ and distributional solutions to $H\psi =0$.

Let $w=Uv$.
First we characterize $S_1L^2$. \\
\begin{lemma}\label{S1 characterization}
 	If $v\les \la x\ra^{-1-}$ and if $\phi\in S_1L^2$, then $\phi = w \psi$ where $\psi\in L^\infty$, $H\psi=0$
 	in the sense of distributions, and
	$$
		\psi=c_0-G_0v\phi,\,\,\,\,\,c_0=\frac1{\|V\|_{L^1}}\la v,T\phi \ra.
	$$
	Moreover, if $v\les\la x\ra^{-2-}$, then $\psi-c_0=-G_0v\phi\in L^p$ for any $p\in(2,\infty]$.
\end{lemma}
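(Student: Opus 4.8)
The plan is to decode the membership $\phi\in S_1L^2$ into two algebraic facts and then read off $\psi$. Since $S_1\le Q$, we have $Q\phi=\phi$, equivalently $P\phi=0$, i.e. $\la \phi,v\ra=0$; and $QTQ\phi=QT\phi=0$ forces $T\phi$ to lie in $PL^2=\mathbb C v$, say $T\phi=c\,v$. Pairing with $v$ and using $\|v\|_2^2=\|V\|_1$ identifies $c=\|V\|_1^{-1}\la v,T\phi\ra=c_0$. Recalling $T=U+vG_0v$ and $U^2=1$, I would multiply the identity $U\phi+vG_0v\phi=c_0v$ by $U$ to get $\phi=c_0 Uv-UvG_0v\phi=w(c_0-G_0v\phi)$, which is exactly $\phi=w\psi$ with $\psi=c_0-G_0v\phi$ as claimed.

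Next I would verify $H\psi=0$ distributionally. The kernel of $G_0$ is the fundamental solution of $-\Delta$ in $\R^2$, so $-\Delta G_0 f=f$; since $v\phi\in L^1$ (as $v,\phi\in L^2$) and $G_0 v\phi\in L^\infty\subset L^1_{\mathrm{loc}}$, this gives $-\Delta\psi=-v\phi$. On the other hand $V\psi=Uv^2\psi=v\,(Uv\psi)=v\,(w\psi)=v\phi$, using $\phi=w\psi$. Hence $H\psi=-\Delta\psi+V\psi=-v\phi+v\phi=0$.

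The substance is the $L^\infty$ bound on $G_0 v\phi$, where the mean-zero condition $\int v\phi=0$ is essential to tame the growth of $\log|x-y|$. I would write $G_0v\phi(x)=-\tfrac1{2\pi}\int(\log|x-y|-\log\la x\ra)(v\phi)(y)\,dy$ and use the elementary pointwise bound $|\log|x-y|-\log\la x\ra|\les 1+\log\la y\ra+\chi_{\{|x-y|<1\}}|\log|x-y||$, verified by splitting into the regimes $|x-y|<1$ and $|x-y|\ge1$. Against the first two terms Cauchy--Schwarz gives $\int(1+\log\la y\ra)|v\phi|\le\|(1+\log\la\cdot\ra)v\|_2\|\phi\|_2$, which is finite precisely because $v\les\la x\ra^{-1-}$ makes $(1+\log\la\cdot\ra)v\in L^2(\R^2)$; the local term is handled by $\|\log|\cdot|\|_{L^2(|\cdot|<1)}\|v\phi\|_2$, uniformly in $x$. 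Thus $G_0v\phi\in L^\infty$ and $\psi\in L^\infty$.

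Finally, under the stronger decay $v\les\la x\ra^{-2-}$ I would upgrade this to decay by showing $|G_0v\phi(x)|\les\la x\ra^{-1}$. Using $\int v\phi=0$ to subtract $\log|x|$ and splitting the $y$-integral at $|y|=|x|/2$: on $|y|<|x|/2$ one has $|\log|x-y|-\log|x||\les|y|/|x|$, contributing $|x|^{-1}\int|y|\,|v\phi|\les|x|^{-1}$ since $|\cdot|v\in L^2$; on $|y|\ge|x|/2$ the factor $v(y)\les\la x\ra^{-2-}$, together with $\log|x-y|\les\log\la y\ra$ for $|x-y|\ge1$ and the local $L^2$ estimate for $|x-y|<1$, yields decay faster than $|x|^{-1}$. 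Since $\la x\ra^{-1}\in L^p(\R^2)$ exactly for $p\in(2,\infty]$, the claim $\psi-c_0=-G_0v\phi\in L^p$, $p\in(2,\infty]$, follows. The main obstacle throughout is the correct mean-zero exploitation of the logarithmic kernel: extracting uniform (and then decaying) control of $G_0v\phi$ from the single orthogonality relation $\la\phi,v\ra=0$ and the prescribed decay of $v$.
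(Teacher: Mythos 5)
Your proposal is correct and takes essentially the same route as the paper: the same algebraic decoding of $\phi\in S_1L^2$ (namely $Q\phi=\phi$ and $QT\phi=0$, giving $T\phi=c_0v$ and hence $\phi=w\psi$), the same exploitation of the mean-zero condition $P\phi=0$ to subtract a logarithm of $x$ from the kernel of $G_0$ (you subtract $\log\langle x\rangle$ where the paper subtracts $\log|x|$ for $|x|>4$, a cosmetic difference), and the same split of the $y$-integral at $|y|=|x|/2$ with the Taylor expansion of the logarithm on the inner region to obtain the $O(\langle x\rangle^{-1})$ decay under $v\lesssim\langle x\rangle^{-2-}$, whence $\psi-c_0\in L^p$ for $p\in(2,\infty]$. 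Your only additions, the explicit distributional verification of $H\psi=0$ and the Cauchy--Schwarz bookkeeping, merely spell out steps the paper leaves implicit.
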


\begin{proof}
Since $\phi\in S_1L^2$, we have $Q\phi =\phi$.  Using this and $P=I-Q$, we have
$$
0=QTQ\phi=QT\phi=T\phi-PT\phi=U\phi+vG_0v\phi-PT\phi.
$$
Thus,
$$
\phi=-wG_0v\phi+UPT\phi=-wG_0v\phi+wc_0=w\psi.
$$
Also note that since $v(x)\lesssim \la x\ra^{-1-}$ and $\phi\in L^2$, we have  $-\Delta G_0(v\phi)=v\phi$.
Therefore, we see that $H\psi=0$ by taking the distributional derivative.

Now we prove that $\psi\in L^\infty$. The boundedness on $B(0,4)$ is clear. To see that $\psi$ is bounded for $|x|>4$, use $P\phi=0$ to obtain
$$
G_0v\phi(x)=-\frac1{2\pi}\int[\log(|x-y|)-\log(|x|)]v(y)\phi(y)dy=-\frac1{2\pi}\int \log\Big(\frac{|x-y|}{|x|}\Big) v(y)\phi(y)dy.
$$
The bound follows by using the inequality (for $|x|>4$)
\begin{align}	\label{logtrick}
	\Big|\log\Big(\frac{|x-y|}{|x|}\Big)\big|\les  1+\log(\la y\ra)+\log^-(|x-y|).
\end{align}
Note that this only requires that  $v(x)\lesssim \la x\ra^{-1-}$.

The final statement follows if we can prove that $G_0v\phi=O(|x|^{-1})$ for large $x$. To see this, write
\begin{align*}
G_0v\phi(x) &=-\frac1{2\pi}\int \log\Big(\frac{|x-y|}{|x|}\Big) v(y)\phi(y)dy\\
&=\int_{|y|>|x|/2}\log\Big(\frac{|x-y|}{|x|}\Big) v(y)\phi(y)dy+\int_{|y|<|x|/2}\log\Big(\frac{|x-y|}{|x|}\Big) v(y)\phi(y)dy.
\end{align*}
The first integral can be estimated by
$$
 \int_{|y|>|x|/2} [1+\log(\la y\ra)+\log^-(|x-y|)] \frac{|y| |v(y)\phi(y)|}{|x|}dy=O(1/|x|).
$$
On the other hand, the bound for the second integral follows from
$$
\Big|\log\Big(\frac{|x-y|}{|x|}\Big)\big|= \Big|\log\Big(1+\frac{|x-y|-|x|}{|x|}\Big)\big|
=O\Big(\frac{ ||x-y|-|x| |}{|x|} \Big)=O(|y|/|x|).
$$
\end{proof}

\begin{lemma}\label{Hpsi characterization}
 	Let $v\les \la x\ra^{-2-}$. Assume that the function $\psi=c+\Lambda_1+\Lambda_2$, with $\Lambda_1\in L^p$, for some $p\in(2,\infty)$,
	and $\Lambda_2\in L^2$, solves $H\psi=0$ in the sense of distributions. Then $\phi=w\psi\in S_1L^2$, and we have $\psi=c-G_0v\phi$, $c=\frac1{\|V\|_{L^1}}\la v,T\phi \ra$. In particular,
	by the previous claim, $\psi-c\in L^p$ for any $p\in(2,\infty]$.
\end{lemma}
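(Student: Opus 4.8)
The plan is to run the converse of Lemma~\ref{S1 characterization}: starting from the distributional equation I will first produce the representation $\psi=c-G_0v\phi$ by a Liouville-type argument, and then read off the two defining properties of $S_1L^2$, namely $Q\phi=\phi$ and $QTQ\phi=0$. To begin, I would record that $\phi=w\psi\in L^2$: writing $\psi=c+\Lambda_1+\Lambda_2$ and using $|w|=v\les\la x\ra^{-2-}$, the term $cw$ lies in $L^2$, $w\Lambda_2\in L^2$ since $w\in L^\infty$, and $w\Lambda_1\in L^2$ by H\"older because $w\in L^r$ for every $r\ge 1$. In particular $v\phi\in L^1\cap L^{4/3}$, so $G_0v\phi$ is well defined. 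The key observation is that $\psi+G_0v\phi$ is harmonic: since $V\psi=Uv^2\psi=v(Uv\psi)=v\phi$, the equation $H\psi=0$ reads $-\Delta\psi=-v\phi$, which combined with $-\Delta G_0(v\phi)=v\phi$ (valid for $v\les\la x\ra^{-1-}$, $\phi\in L^2$, exactly as in Lemma~\ref{S1 characterization}) gives $\Delta(\psi+G_0v\phi)=0$ in the distributional sense. By Weyl's lemma the function $h:=\psi+G_0v\phi$ agrees a.e.\ with a smooth harmonic function on $\R^2$.

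The heart of the argument is to show that $h$ is constant. Splitting $\log|x-y|=\log|x|+\log\frac{|x-y|}{|x|}$ and using the inequality \eqref{logtrick} together with $v\phi\in L^1\cap L^{4/3}$, I expect to obtain, for $|x|>4$,
\begin{align*}
G_0v\phi(x)=-\frac{\gamma}{2\pi}\log|x|+O(1),\qquad \gamma:=\int_{\R^2}v\phi\,dy,
\end{align*}
with the $O(1)$ uniform in $x$. Feeding this and the decomposition $\psi=c+\Lambda_1+\Lambda_2$ into the mean value property over $B_{|x|/2}(x)$, the $L^p$ and $L^2$ averages of $\Lambda_1,\Lambda_2$ tend to $0$, while the average of $G_0v\phi$ is $\les\log|x|+1$; hence $|h(x)|\les\log|x|+1=o(|x|)$. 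The Liouville theorem for harmonic functions of sublinear growth then forces $h\equiv\widetilde c$ for some constant, i.e.\ $\psi=\widetilde c-G_0v\phi$.

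It remains to identify the constants and verify the $S_1$ conditions. Since $\psi-c\in L^p+L^2$ admits no logarithmic growth, comparing it with $\psi-\widetilde c=-G_0v\phi=-\frac{\gamma}{2\pi}\log|x|+O(1)$ forces $\gamma=\int v\phi=0$; this is precisely $P\phi=0$, i.e.\ $Q\phi=\phi$. With $\gamma=0$ the term $G_0v\phi$ decays like $|x|^{-1}$ (as in Lemma~\ref{S1 characterization}), so both $\psi-c$ and $\psi-\widetilde c$ tend to $0$ at infinity, whence $\widetilde c=c$. Finally, using $\phi=cw-wG_0v\phi$ and $Uw=v$ I would compute $T\phi=U\phi+vG_0v\phi=cv$, so $QTQ\phi=QT\phi=cQv=0$ because $Qv=0$; this shows $\phi\in S_1L^2$, and $c=\|V\|_{L^1}^{-1}\la v,T\phi\ra$ follows from $\la v,T\phi\ra=c\|v\|_2^2=c\|V\|_{L^1}$. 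The concluding assertion $\psi-c\in L^p$ for all $p\in(2,\infty]$ is then immediate from the last statement of Lemma~\ref{S1 characterization}.

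The main obstacle is the middle step, proving that $h$ is constant. The delicate points are that $\Lambda_1$ and $\Lambda_2$ are controlled only in $L^p$ and $L^2$ and not pointwise, so the growth bound on $h$ must be extracted in an averaged sense through the mean value property rather than pointwise, and that one must correctly deduce the vanishing moment $\int v\phi=0$ from the fact that $\psi-c\in L^p+L^2$ cannot absorb the $\log|x|$ growth carried by $G_0v\phi$.
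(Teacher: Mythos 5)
Your proof is correct, but it reaches the representation $\psi=c-G_0v\phi$ by a different route than the paper. The paper's proof first obtains the moment condition $\int v\phi\,dy=0$ by citing \cite[Lemma 6.4]{JN}, which then makes $G_0v\phi$ bounded (via the argument of Lemma~\ref{S1 characterization}), so that $\psi+G_0v\phi$ is a harmonic function in $L^2+L^\infty$ and hence constant by a short Liouville argument. You invert this logic: you apply a sublinear-growth Liouville theorem directly to $h=\psi+G_0v\phi$, controlling the merely-$L^p$ and $L^2$ pieces through the mean value property and allowing $G_0v\phi$ its a priori $\log|x|$ growth, and only afterwards deduce $\int v\phi=0$ from the incompatibility of logarithmic growth with membership in $L^p+L^2$. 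This buys self-containedness --- you have essentially re-derived the content of the cited Jensen--Nenciu lemma --- at the cost of a more elaborate growth analysis; you also supply the verification that $\phi=w\psi\in L^2$, which the paper leaves implicit. Two small points to tighten: your phrase that $\psi-c$ and $\psi-\widetilde c$ ``tend to $0$ at infinity'' is too strong, since $\Lambda_1\in L^p$ and $\Lambda_2\in L^2$ need not decay pointwise; but the identification $\widetilde c=c$ follows from the same Chebyshev/finite-measure argument you already use for $\gamma=0$ (a nonzero constant cannot equal an element of $L^p+L^2$ plus a function decaying like $|x|^{-1}$ on a set of infinite measure). Likewise, make sure the uniform $O(1)$ in your expansion of $G_0v\phi$ is justified exactly as you indicate, via \eqref{logtrick} together with $\log\la y\ra\, v\phi\in L^1$ and $v\phi\in L^{4/3}$ for the local $\log^-$ singularity; both hold under $v\les\la x\ra^{-2-}$, $\phi\in L^2$. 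The remaining steps --- $T\phi=cv$, $QTQ\phi=0$, and $c=\|V\|_{L^1}^{-1}\la v,T\phi\ra$ --- coincide with the paper's computation.
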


\begin{proof}
Since $H\psi=0$, we have
$$
\Delta \psi=V\psi=v\phi.
$$
This easily implies that $\int v(y)\phi(y)dy =0$, see \cite[Lemma 6.4]{JN}.
Thus $\phi\in QL^2$.

Now consider the function
$\psi+G_0v\phi.$
By the calculation above, we see that $\Delta(\psi+G_0v\phi)=0$. Since  $\psi+G_0v\phi\in L^2+L^\infty$ (by assumption and the proof of the previous claim), we see that it has to be a constant. Thus
$$
\psi=c-G_0v\phi.
$$
Using this, we have
$$
TQ\phi=T\phi=U\phi+vG_0v\phi=U\phi-v\psi+cv= U\phi- U\phi+cv=cv,
$$
and hence $QTQ\phi=0$, and $\phi\in S_1L^2$. Finally, this implies that $c=\frac1{\|V\|_{L^1}}\la v,T\phi \ra$.
\end{proof}

Note that Lemma~\ref{S1 characterization} and Lemma~\ref{Hpsi characterization} imply that all zero eigenfunctions are bounded. We now characterize $S_2L^2$.

\begin{lemma}\label{S2 characterization}
Assume that $|v(x)|\les\la x\ra^{-3-}$. Then $\phi =w\psi \in S_2L^2$ if and only if
$ \psi \in L^p$, for all $p\in(2,\infty]$ (or equivalently $c_0=0$).
\end{lemma}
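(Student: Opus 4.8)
The plan is to characterize $S_2L^2$ as the subspace of those $\phi\in S_1L^2$ for which the constant $c_0$ of Lemma~\ref{S1 characterization} vanishes, and then to read off the $L^p$ statement from the decomposition $\psi=c_0-G_0v\phi$ established there. Since $S_2\le S_1$ as projections, any $\phi\in S_2L^2$ automatically lies in $S_1L^2$, so throughout I may assume $\phi\in S_1L^2$ and write $\phi=w\psi$ with $\psi=c_0-G_0v\phi$ and $c_0=\|V\|_{L^1}^{-1}\la v,T\phi\ra$, as in Lemma~\ref{S1 characterization}. This is legitimate because the hypothesis $v\les\la x\ra^{-3-}$ is stronger than the $v\les\la x\ra^{-2-}$ needed there.

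First I would dispose of the parenthetical equivalence, namely that $\psi\in L^p$ for all $p\in(2,\infty]$ if and only if $c_0=0$. By Lemma~\ref{S1 characterization} the term $-G_0v\phi=\psi-c_0$ already belongs to $L^p(\R^2)$ for every $p\in(2,\infty]$, so membership of $\psi$ in the finite-exponent spaces is governed entirely by the constant: if $c_0=0$ then $\psi=-G_0v\phi\in\bigcap_{2<p\le\infty}L^p$, while if $c_0\ne 0$ then $\psi$ is a nonzero constant plus an $L^p$ function, and since a nonzero constant does not lie in $L^p(\R^2)$ for any $p<\infty$ (the plane has infinite measure), $\psi\notin L^p$ for finite $p$. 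This gives the stated equivalence.

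The heart of the matter is the identity that $\phi\in S_2L^2$ holds if and only if $c_0=0$. Here I would use that $S_2$ is the Riesz projection onto $\ker T_1$ with $T_1=S_1TPTS_1$; since $T_1$ is self-adjoint this projection is orthogonal, so $\phi\in S_2L^2$ iff $\phi\in S_1L^2$ and $T_1\phi=0$. As $T$, $P$, $S_1$ are self-adjoint and $P$ is a projection, $T_1=(PTS_1)^*(PTS_1)\ge 0$, and for such a positive operator the condition $T_1\phi=0$ is equivalent to $\la T_1\phi,\phi\ra=\|PTS_1\phi\|_2^2=0$, i.e.\ to $PTS_1\phi=0$, which by $S_1\phi=\phi$ reads $PT\phi=0$. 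The key computation is then to evaluate this using the rank-one structure $P=v\la\cdot,v\ra\|V\|_{L^1}^{-1}$: since the kernels of $T$ and $v$ are real,
\[
PT\phi=\frac{\la T\phi,v\ra}{\|V\|_{L^1}}\,v=\frac{\la v,T\phi\ra}{\|V\|_{L^1}}\,v=c_0\,v .
\]
Because $v\not\equiv 0$, this vanishes precisely when $c_0=0$, which is exactly the desired equivalence; combining it with the previous step proves the lemma.

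I expect the only genuinely delicate point to be the bookkeeping in the first step: one must invoke the decay estimate $G_0v\phi=O(|x|^{-1})$ from the proof of Lemma~\ref{S1 characterization} in order to know that $\psi-c_0$ lies in \emph{every} $L^p$ with $p>2$ and not merely in $L^\infty$, and this is where the decay hypothesis on $v$ is actually consumed. The algebraic identity $PT\phi=c_0v$ together with the positivity of $T_1=(PTS_1)^*(PTS_1)$ is then routine, so no further obstacle is anticipated.
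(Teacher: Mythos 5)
Your proof is correct and follows essentially the same route as the paper: the paper also deduces $\|PT\phi\|^2=\la TPT\phi,\phi\ra=0$ from $S_1TPTS_1\phi=0$ (your positivity/factorization of $T_1$, made explicit), identifies $PT\phi=0$ with $c_0=0$, and reads the $L^p$ statement off the decomposition $\psi=c_0-G_0v\phi$ from Lemma~\ref{S1 characterization}. Your write-up merely spells out details the paper leaves implicit, such as the infinite-measure argument forcing $c_0=0$ when $\psi\in L^p$ for finite $p$.
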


\begin{proof}
Recall that $S_2\leq S_1$ is projection onto the kernel of $S_1TPTS_1$. We have (since $S_1\phi=\phi$)
\be\label{ptphi}
S_1TPTS_1\phi=0  \implies \|PT\phi\|^2=\la TPT\phi,\phi \ra=0,
\ee
and hence $c_0 =0$ and $\psi\in L^p$ for $p>2$.

On the other hand if $\psi\in L^p$ for $p>2$, we have $c_0=0$. This implies that $PT\phi=PTS_1\phi=0$ ,and hence $S_1TPTS_1\phi=0$.
\end{proof}

\begin{lemma}\label{G2 invert}
If $v\les \la x\ra^{-3-}$ then the kernel of the operator $S_3vG_2vS_3$ on $S_3L^2$ is trivial.
\end{lemma}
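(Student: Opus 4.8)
The plan is to reduce the triviality of the kernel to a positivity statement about the quadratic form of $G_2$. Suppose $\phi \in S_3 L^2$ satisfies $S_3 v G_2 v S_3 \phi = 0$. Since $S_3 \phi = \phi$ and the kernel of $G_2$ in \eqref{G2 def} is real and symmetric, pairing with $\phi$ gives $\langle G_2 g, g\rangle = 0$, where $g := v\phi$. Because $\phi \in L^2$ and $v \lesssim \langle x\rangle^{-3-}$, the function $g$ lies in $L^1 \cap L^2$ and has finite second moment $\int |y|^2 |g(y)|\, dy < \infty$. I will show that the two moment conditions $\int g = 0$ and $\int x\, g(x)\, dx = 0$ force $\langle G_2 g, g\rangle = \|G_0 g\|_{L^2}^2$, from which $G_0 g = 0$, hence $g = 0$ and $\phi = 0$.

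First I would record the moment conditions. Since $S_3 \le S_1 \le Q$ and $Q$ is the projection orthogonal to $v$, we have $\int g = \langle v, \phi\rangle = 0$. For the first moment, note that $\phi \in S_3 L^2$ means $\phi$ lies in the kernel of $T_2 = S_2 v G_1 v S_2$; using $S_2\phi = \phi$ and expanding the kernel $|x-y|^2 = |x|^2 - 2 x\cdot y + |y|^2$ of $G_1$ from \eqref{G1 def}, a direct computation gives
\[
0 = \langle T_2 \phi, \phi\rangle = \langle G_1 g, g\rangle = -2\Big|\int x\, g(x)\, dx\Big|^2,
\]
where the terms coming from $|x|^2$ and $|y|^2$ drop out because $\int g = 0$. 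Hence $\int x\, g(x)\, dx = 0$ as well.

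Next I would exploit the fact that $\tfrac{1}{8\pi}|x-y|^2\log|x-y|$ is the fundamental solution of the bilaplacian. A radial computation gives $\Delta_x\big(|x-y|^2\log|x-y|\big) = 4\log|x-y| + 4$, so, recalling \eqref{G0 def} and \eqref{G2 def} and using $\int g = 0$,
\[
-\Delta (G_2 g) = G_0 g, \qquad -\Delta(G_0 g) = g.
\]
The two vanishing moments make $G_0 g(x) = O(|x|^{-2})$ and $\nabla G_0 g(x) = O(|x|^{-3})$ at infinity (expand $\log|x-y|$ in $y$ and use $\int g = \int x\, g = 0$), so $G_0 g \in L^2$, while $G_2 g$ grows at most logarithmically. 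Integrating by parts twice,
\[
\langle G_2 g, g\rangle = \langle G_2 g, -\Delta(G_0 g)\rangle = \langle -\Delta(G_2 g), G_0 g\rangle = \|G_0 g\|_{L^2}^2,
\]
the boundary terms on $\partial B_R$ vanishing as $R \to \infty$ by the stated decay. Since the left side is zero, $G_0 g = 0$, and then $g = -\Delta(G_0 g) = 0$; by the characterizations in Lemmas~\ref{S1 characterization} and \ref{S2 characterization} (so that $c_0 = 0$) the associated function is $\psi = -G_0 v\phi = 0$, whence $\phi = w\psi = 0$.

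The main obstacle is the rigorous justification of the integration by parts, i.e. the identity $\langle G_2 g, g\rangle = \|G_0 g\|_{L^2}^2$: the kernel $|x-y|^2\log|x-y|$ grows at infinity, so $G_2 g$ is only logarithmically bounded and one must verify that the boundary contributions genuinely vanish — this is exactly where both moment conditions are needed. An equivalent route that sidesteps the growth of $G_2 g$ is to pass to the Fourier side: the vanishing of $\widehat{g}(0)$ and $\nabla\widehat{g}(0)$ gives $\widehat{g}(\xi) = O(|\xi|^2)$ near the origin, so $\widehat{g}(\xi)/|\xi|^2 \in L^2$, and Plancherel yields $\langle G_2 g, g\rangle = \int |\widehat{g}(\xi)|^2/|\xi|^4\, d\xi = \|G_0 g\|_{L^2}^2$ directly, the polynomial ambiguity in the distributional Fourier transform of $|x|^2\log|x|$ being annihilated precisely by the moment conditions.
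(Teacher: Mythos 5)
Your proposal is correct, and its skeleton coincides with the paper's: both arguments extract exactly the two moment conditions $\int vf=0$ (from $S_3\leq Q$) and $\int x\,v(x)f(x)\,dx=0$ (from $T_2f=0$ after writing $|x-y|^2=|x|^2-2x\cdot y+|y|^2$, which is \eqref{tempor} in the paper), and both reduce the triviality of the kernel to the positivity identity $\la G_2g,g\ra=\frac{1}{(2\pi)^4}\int_{\R^2}|\widehat{g}(\xi)|^2|\xi|^{-4}\,d\xi=\|G_0g\|_{L^2}^2$ (the paper's \eqref{G0 to G2 ident}), after which $g=v\phi=0$, $c_0=0$, $\psi=-G_0v\phi=0$, and $\phi=w\psi=0$ via Lemmas~\ref{S1 characterization} and \ref{S2 characterization}, just as in the paper. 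The genuine difference is how the key identity is justified. The paper realizes $G_2$ as a resolvent limit: by the expansion in Lemma~\ref{lem:M_exp}, $\la vG_2vf,f\ra=\lim_{\lambda\to0}\lambda^{-2}\la(R_0^+(\lambda^2)-g^+(\lambda)-G_0-g_1^+(\lambda)G_1)vf,vf\ra$, the moment conditions kill the $g^+(\lambda)$ and $G_1$ terms, and on the Fourier side the quotient $\lambda^{-2}\big((4\pi^2\xi^2+\lambda^2)^{-1}-(4\pi^2\xi^2)^{-1}\big)$ is monotone in $\lambda$, so monotone convergence yields the identity with no boundary terms and no distributional finite-part bookkeeping at all. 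Your primary route (integration by parts via $-\Delta G_2g=G_0g$, $-\Delta G_0g=g$) carries exactly the burden you flag: one must quantify the logarithmic growth of $G_2g$ and the $O(|x|^{-2})$, $O(|x|^{-3})$ decay of $G_0g$ and $\nabla G_0g$ to kill the contributions on $\partial B_R$ — feasible with the two vanishing moments, but it is the delicate step. Your Fourier fallback is essentially the paper's identity, though as stated (via the distributional transform of $|x|^2\log|x|$) it still requires the finite-part argument you allude to; the paper's monotone-convergence device is precisely the clean rigorous implementation of that second route. So nothing in your argument fails — the main identity is simply left at sketch level where the paper's resolvent-limit trick nails it down, and that trick is worth internalizing since it sidesteps both the growth of the $G_2$ kernel and the homogeneous-distribution ambiguity in one stroke.
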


\begin{proof}
Given $f$ in the kernel of $S_3vG_2vS_3$,  we have
\be \label{tempor}
\int_{\R^2} f(y)v(y)\, dy =0, \text{ and } S_2vG_1vS_2 f=0,
\ee
since $f\in S_3L^2\subset QL^2$.

Also note that the expansion we used for $R_0^+(\lambda^2)$ in the proof of Lemma~\ref{lem:M_exp} gives that
$$
R_0^+(\lambda^2)(x,y)=g^+(\lambda)+G_0(x,y)+g_1^+(\lambda)G_1(x,y)+  \lambda^2 G_2(x,y)
+O(\lambda^{2+}|x-y|^{2+}).
$$
This and the assumption $v\les \la x\ra^{-3-}$ imply that
\begin{align*}
	0&= \la S_3vG_2vS_3 f,f\ra = \la vG_2 vf, f\ra
	= \lim_{\lambda\to 0}\bigg\la \frac{R_0^+(\lambda^2)-g^+(\lambda)-G_0-g_1^+(\lambda)G_1}
	{\lambda^2}vf, vf\bigg\ra.
\end{align*}
Now, using \eqref{tempor},   we have
\begin{align*}
	0&=  \la vG_2 vf, f\ra=
	\lim_{\lambda\to0} \bigg\la \frac{R_0(\lambda^2)-G_0}{\lambda^2}vf,
	vf\bigg\ra=\lim_{\lambda\to 0} \frac{1}{\lambda^2} \int_{\R^2} \bigg(\frac{1}{4\pi^2\xi^2+\lambda^2}-
	\frac{1}{4\pi^2\xi^2}\bigg) |\widehat{vf}|^2\, d\xi\\
	&=\frac{1}{4\pi^2}\lim_{\lambda\to0} \int_{\R^2} \frac{1}{\xi^2(4\pi^2\xi^2+\lambda^2)} |\widehat{vf}|^2\, d\xi
	=\frac{1}{(2\pi)^4}\int_{\R^2} \frac{ |\widehat{vf}|^2}{\xi^4}\, d\xi.
\end{align*}
Where we used the monotone convergence theorem in the last step.  By the assumptions on $v$ and
$f$, $vf\in L^1$, and hence  $vf=0$. We also know that $f\in S_1L^2$ and hence $f=w\psi$, which implies that $f=0$.
This establishes the invertibility of the operator $S_3 vG_2vS_3$ on $S_3L^2$.

Further, we have the identity for any $f\in S_3L^2$,
\begin{align}\label{G0 to G2 ident}
	\la vG_2 vf, f\ra&=\frac{1}{(2\pi)^4}\int_{\R^2} \frac{ |\widehat{vf}|^2}{\xi^4}\, d\xi=\frac{1}{(2\pi)^4}\bigg\la \frac{\widehat{vf}(\xi)}{\xi^2},
	 \frac{\widehat{vf}(\xi)}{\xi^2}\bigg\ra=\la (-\Delta)^{-1} vf,(-\Delta)^{-1}vf\ra\nn\\
	 &=\la G_0vf,G_0vf\ra.
\end{align}
\end{proof}

\begin{lemma}\label{S3 characterization}
Assume that  $v(x)\les \la x\ra^{-3-}$. Then $\phi=w\psi\in S_3L^2$
if and only if $\psi\in L^2 $ with $H\psi=0$.
\end{lemma}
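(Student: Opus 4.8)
The plan is to reduce the statement to a moment condition on $v\phi$ and then read off $L^2$ membership of $\psi$ directly from the decay of $G_0 v\phi$ at infinity. Recall from Definition~\ref{resondef} that $S_3$ is the (orthogonal, by the remark following that definition) projection onto the kernel of $T_2 = S_2 v G_1 v S_2$ inside $S_2 L^2(\R^2)$, so my first step is to make this kernel explicit. For $\phi \in S_2 L^2 \subset QL^2$ we have $\int v\phi = 0$, so writing $g = v\phi$ and using $|x-y|^2 = |x|^2 - 2x\cdot y + |y|^2$ together with \eqref{G1 def},
$$\langle v G_1 v \phi, \phi\rangle = \int\int |x-y|^2 g(y)\overline{g(x)}\, dx\, dy = -2\Big|\int_{\R^2} y\, v(y)\phi(y)\, dy\Big|^2,$$
since the $|x|^2$ and $|y|^2$ contributions are annihilated by $\int g = 0$. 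Hence $T_2 \le 0$ on $S_2 L^2$, and $\phi \in S_3 L^2$ if and only if $\phi \in S_2 L^2$ and the first moment $M := \int_{\R^2} y\, v(y)\phi(y)\, dy$ vanishes. This identity drives both implications.

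For the forward direction, suppose $\phi = w\psi \in S_3 L^2$. Then $\phi \in S_2 L^2$, so Lemma~\ref{S2 characterization} (together with Lemma~\ref{S1 characterization}) gives $\psi = -G_0 v\phi$ with $H\psi = 0$. Using $\int v\phi = 0$ and $M = 0$, I would write, for large $|x|$,
$$\psi(x) = \frac{1}{2\pi}\int_{\R^2}\Big[\log\frac{|x-y|}{|x|} + \frac{x\cdot y}{|x|^2}\Big] v(y)\phi(y)\, dy,$$
which is legitimate precisely because both the zeroth and first moments of $v\phi$ vanish. Splitting the integral into $|y| < |x|/2$ and $|y| > |x|/2$, on the inner region the bracket is $O(|y|^2/|x|^2)$, while on the outer region one bounds the bracket crudely and exploits $|y| \gtrsim |x|$; the hypothesis $v \lesssim \langle x\rangle^{-3-}$ makes $\langle y\rangle^2 v \in L^2$, so both pieces are $O(|x|^{-2})$. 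Thus $\psi(x) = O(|x|^{-2})$ at infinity, is bounded near the origin, and therefore lies in $L^2(\R^2)$.

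For the converse, assume $\psi \in L^2$ with $H\psi = 0$. Applying Lemma~\ref{Hpsi characterization} with the decomposition whose constant part is $0$ yields $\phi = w\psi \in S_1 L^2$, $\psi = -G_0 v\phi$, and $c_0 = \|V\|_{L^1}^{-1}\langle v, T\phi\rangle = 0$; by Lemma~\ref{S2 characterization} this already places $\phi \in S_2 L^2$. It remains to show $M = 0$. For this I would use the expansion $\psi(x) = -\tfrac{1}{2\pi}\frac{x\cdot M}{|x|^2} + O(|x|^{-2})$ coming from $\int v\phi = 0$: if $M \ne 0$ the leading term behaves like $|x|^{-1}$ times a nontrivial angular factor, and in $\R^2$ such a function fails to be square integrable because $\int_{|x|>1}|x|^{-2}\,dx$ diverges logarithmically. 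This contradiction forces $M = 0$, and by the first step $\phi \in S_3 L^2$.

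The main obstacle is the borderline asymptotic analysis of $G_0 v\phi$: one must extract the coefficient of the $|x|^{-1}$ tail exactly, identify it with the first moment $M$, and show it is the \emph{only} obstruction to square integrability. The delicacy is that in two dimensions an $|x|^{-1}$ tail sits exactly at the threshold of $L^2$, so the equivalence hinges on the vanishing of $M$ rather than on any softer decay rate; carefully controlling the far region $|y| > |x|/2$ and the local logarithmic singularity of $\log|x-y|$ under the decay $v \lesssim \langle x\rangle^{-3-}$ is where the quantitative work lies.
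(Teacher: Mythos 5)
Your proof is correct, and while your converse matches the paper's argument, your forward direction takes a genuinely different route. The preliminary reduction is sound: since $S_2\le Q$ annihilates the $|x|^2+|y|^2$ part of $G_1(x,y)=|x|^2-2x\cdot y+|y|^2$, one gets $\la T_2\phi,\phi\ra=-2|M|^2$ with $M=\int y\,v(y)\phi(y)\,dy$, and by negative semidefiniteness (or directly, since $T_2\phi=-2S_2[v(x)\,x\cdot M]$) the kernel of $T_2$ in $S_2L^2$ is exactly $\{M=0\}$; the paper makes the same observation inside its converse step via the operator $W$ with kernel $-2x\cdot y$. Your converse then coincides with the paper's: expand $\log(|x-y|/|x|)$ for $|y|\ll |x|$, isolate the tail $-\frac{x\cdot M}{2\pi|x|^2}$, and use that this is not square integrable on $\R^2$ unless $M=0$ (the paper splits at $|y|<|x|/8$ and keeps the error only in $L^2$; your pointwise $O(|x|^{-2})$ error is also legitimate under $v\les\la x\ra^{-3-}$, since $\la y\ra^{2+}v\in L^2$). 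Where you genuinely diverge is the forward direction: the paper proves $\psi\in L^2$ abstractly via the Fourier-side identity \eqref{G0 to G2 ident}, namely $\|\psi\|_2^2=\la G_0v\phi,G_0v\phi\ra=\la vG_2v\phi,\phi\ra<\infty$, with finiteness coming from the Hilbert--Schmidt bound on $vG_2v$; you instead exploit both vanishing moments to subtract $\log|x|$ and $-\frac{x\cdot y}{|x|^2}$ and obtain the pointwise decay $\psi(x)=O(|x|^{-2})$ at infinity. Your version is more elementary (no Fourier transform or monotone convergence, and it does not need the identity from Lemma~\ref{G2 invert}) and yields quantitative decay of the zero eigenfunctions as a byproduct; the paper's version yields the exact identity $\|\psi\|_2^2=\la vG_2v\phi,\phi\ra$, which is reused in Lemma~\ref{eigenproj lem} to compute the Gram matrix $A_{ij}=\la\psi_i,\psi_j\ra$, so in the paper's overall scheme that identity is not dispensable. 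One small step to tighten: you say Lemma~\ref{Hpsi characterization} ``with constant part $0$'' gives $c_0=0$, but the lemma determines $c_0=\|V\|_{L^1}^{-1}\la v,T\phi\ra$ intrinsically; the vanishing follows because $\psi\in L^2$ while $\psi-c_0=-G_0v\phi\in L^p$ for some finite $p>2$, so a nonzero constant cannot lie in $L^2+L^p$ --- the same implicit step the paper takes when it asserts $c_0=0$.
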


\begin{proof}
For $\phi\in S_3L^2\subset   S_1L^2$,  we proved above that $\phi=w\psi$, with
$$
H\psi =0,\,\,\,\,\, \psi=c_0-G_0v\phi,\,\,\,\,\,c_0=\frac1{\|V\|_{L^1}}\la v,T\phi \ra.
$$
Also note that $c_0=0 $ by \eqref{ptphi} since $\phi\in S_2L^2$. Therefore, using \eqref{G0 to G2 ident}, we have
\begin{align*}
	\|\psi\|^2_2=\la \psi,\psi\ra=\la G_0v\phi,G_0v\phi\ra=\la v G_2v\phi,\phi\ra < \infty
\end{align*}
by the decay assumption on $v$.

On the other hand if we assume that $\psi \in L^2$ with $H\psi=0$, we   have that $c_0=0$, and hence by Lemma~\ref{S2 characterization}, we have
$\phi=w \psi\in S_2L^2$.  We need to prove that $S_2vG_1vS_2\phi=0.$
Note that, as operators on $L^2$,
$S_2 vG_1vS_2=S_2vWvS_2$, where $W$ is the integral operator with
kernel $-2x\cdot y$. This is because $G_1(x,y)=|x-y|^2=|x|^2-2x.y+|y|^2$, and the contribution of $|x|^2+|y|^2$ is zero since $PS_2=S_2P=0$.

We claim that if $\psi \in L^2$ with $H\psi=0$, then
\begin{align}\label{yvphi}
	\int_{\R^2} yv(y)\phi(y)\, dy=0.
\end{align}
This implies that
\begin{align*}
	S_2 v G_1 v S_2\phi=S_2vWvS_2\phi=-2S_2v(x)x\cdot  \int_{\R^2} yv(y)\phi(y)\, dy=0,
\end{align*}
and hence $\phi\in S_3L^2$.

It remains to prove the claim above. In what follows below we can assume that $|x|>4$ since $\psi\in L^\infty$.
Define the set $B:=\{ y\in \R^2: |y|<|x|/8\}$.
Recall that we have $\psi=-G_0v\phi$, and
as $P\phi=0$ we have
\begin{multline}\label{psiL2}
	\psi(x) =\frac{1}{4\pi} \int_{\R^2} \ln\bigg(\frac{|x-y|^2}{|x|^2}\bigg)v(y)\phi(y)\, dy\\
	 =\frac{1}{4\pi} \int_{B} \ln\bigg(\frac{|x-y|^2}{|x|^2}\bigg)v(y)\phi(y)\, dy
	+\frac{1}{4\pi} \int_{\R^2\setminus B} \ln\bigg(\frac{|x-y|^2}{|x|^2}\bigg)v(y)\phi(y)\, dy.
\end{multline}
First we note that the second term is in $L^2$. Indeed,    using \eqref{logtrick}, and then $1\lesssim \la y\ra/\la x\ra$, we see that
\begin{multline*}
	\Big|\int_{\R^2\setminus B} \ln\bigg(\frac{|x-y|^2}{|x|^2}\bigg)v(y)\phi(y)\, dy\Big|
	 \les \int_{\R^2\setminus B} (1+|y|^{0+}+|x-y|^{0-})|v(y)\phi(y)|\, dy\\
	 \les \frac{1}{\la x\ra^{1+}} \int_{\R^2\setminus B}\la y\ra^{1+}(1+|y|^{0+}+|x-y|^{0-})|v(y)\phi(y)|\, dy \les \frac{1}{\la x\ra^{1+}}\in L^2(\R^2).
\end{multline*}
We now examine the integral on $B$.  We note that on $B$, $\big||y|^2-2x\cdot y\big|/|x|^2<\frac{1}{2}$, and hence
\begin{align*}
	\ln\bigg(\frac{|x-y|^2}{|x|^2}\bigg) =\ln\bigg(1+\frac{|y|^2}{|x|^2}-\frac{2x\cdot y}{|x|^2}\bigg)
=-\frac{2x\cdot y}{|x|^2}+O\bigg( \frac{\la y\ra^{1+}}{\la x\ra^{1+}}\bigg).
\end{align*}
 So that
\begin{align*}
	\frac{1}{4\pi} \int_{B} &\ln\bigg(\frac{|x-y|^2}{|x|^2}\bigg)v(y)\phi(y)\, dy
=-\frac{x}{2\pi|x|^2}\cdot \int_{B}  y v(y)\phi(y)\, dy
	+O\bigg( \frac{\int_{B} \la y\ra^{1+} |v(y)\phi(y)|\, dy}{\la x\ra^{1+}} \bigg).
\end{align*}
The error term is in $L^2$.
We also note that
\begin{align*}
	\Big|\int_{\R^2\setminus B} \frac{x\cdot y}{|x|^2}v(y)\phi(y)\, dy \Big| \les
	\int_{\R^2\setminus B} \frac{\la y\ra^{1+}}{\la x\ra^{1+}}|v(y)\phi(y)|\, dy\les \la x\ra^{-1-}\in L^2(\R^2).
\end{align*}
Therefore, we can rewrite the main term as
\begin{align*}
	-\frac{x}{2\pi|x|^2}\cdot \int_{B}  y v(y)\phi(y)\, dy
	=-\frac{x}{2\pi|x|^2}\cdot \int_{\R^2} y v(y)\phi(y)\, dy +O_{L^2}(1).
\end{align*}
Using this in \eqref{psiL2}, we obtain
\begin{align*}
	\psi(x)= \Psi(x)-\frac{x}{2\pi|x|^2} \cdot \int_{\R^2} yv(y)\phi(y)\, dy
\end{align*}
with $\Psi\in L^2$.  As $x/|x|^2$ is not in $L^2(\R^2)$, we must have  \eqref{yvphi}.
\end{proof}

\begin{lemma}\label{eigenproj lem}
The operator $G_0v S_3[S_3vG_2vS_3]^{-1}S_3 vG_0$ is the orthogonal projection on $L^2 $
onto the zero energy eigenspace of $H=-\Delta+V$.
\end{lemma}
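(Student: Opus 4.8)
The plan is to recognize the operator as $\Upsilon(\Upsilon^*\Upsilon)^{-1}\Upsilon^*$ for a suitable injective operator $\Upsilon$ with closed range, which is the standard formula for the orthogonal projection onto $\operatorname{Ran}\Upsilon$. First I would set $\Upsilon := G_0 v S_3$, regarded as an operator from $S_3 L^2(\R^2)$ to $L^2(\R^2)$. This is well defined and bounded: $S_3 L^2$ is finite dimensional (zero is an eigenvalue of finite multiplicity), and for $\phi\in S_3 L^2$ Lemma~\ref{S3 characterization} gives $\phi = w\psi$ with $\psi = -G_0 v\phi\in L^2$, so $\Upsilon\phi = G_0 v\phi = -\psi\in L^2$. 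Its adjoint $\Upsilon^*\colon L^2\to S_3 L^2$ is then automatically bounded and equals $S_3 v G_0$ (since $G_0$, $v$, $S_3$ are all self-adjoint), which is exactly how I would give rigorous meaning to the outer factor $S_3 v G_0$ in the statement despite the unboundedness of $v G_0$ on $L^2$.

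Second, I would use identity \eqref{G0 to G2 ident} from Lemma~\ref{G2 invert} to factor $T_3$. That identity reads $\la v G_2 v f, f\ra = \la G_0 v f, G_0 v f\ra = \la \Upsilon f,\Upsilon f\ra$ for every $f\in S_3 L^2$, i.e.\ $\la S_3 v G_2 v S_3 f, f\ra = \la \Upsilon^*\Upsilon f, f\ra$. Since both $T_3 = S_3 v G_2 v S_3$ and $\Upsilon^*\Upsilon$ are self-adjoint on $S_3 L^2$, polarization upgrades this equality of quadratic forms to the operator identity
\[
 T_3 = S_3 v G_2 v S_3 = \Upsilon^*\Upsilon \quad \text{on } S_3 L^2 .
\]
Hence $D_3 = T_3^{-1} = (\Upsilon^*\Upsilon)^{-1}$, where invertibility is precisely Lemma~\ref{G2 invert}, and the operator in question becomes
\[
 G_0 v S_3 D_3 S_3 v G_0 = \Upsilon(\Upsilon^*\Upsilon)^{-1}\Upsilon^* =: P .
\]

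Third, I would verify the abstract projection facts. The operator $\Upsilon$ is injective on $S_3 L^2$: if $\Upsilon\phi = G_0 v\phi = 0$ then $\psi = -G_0 v\phi = 0$, whence $\phi = w\psi = 0$. Its range is finite dimensional, hence closed, and $\Upsilon^*\Upsilon$ is invertible, so the routine computation gives $P^* = P$ and $P^2 = P$, with $P$ acting as the identity on $\operatorname{Ran}\Upsilon$; thus $P$ is the orthogonal projection onto $\operatorname{Ran}\Upsilon$. Finally I would identify the range: by Lemma~\ref{S3 characterization} the map $\phi\mapsto \psi = -G_0 v\phi = -\Upsilon\phi$ is a bijection between $S_3 L^2$ and the $L^2$ zero-energy eigenspace $\{\psi\in L^2 : H\psi = 0\}$, so $\operatorname{Ran}\Upsilon$ coincides with that eigenspace (a linear subspace, so the overall sign is immaterial), which finishes the proof.

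The only genuinely delicate point is the first step, namely making rigorous sense of the unbounded factors $G_0 v$ and $v G_0$ on $L^2$; this is resolved exactly by the inclusion $S_3\le Q$ (orthogonality to $v$) together with the decay and $L^2$ statements of Lemma~\ref{S3 characterization}, which guarantee $\Upsilon\colon S_3 L^2\to L^2$ is bounded and let one interpret $S_3 v G_0$ as $\Upsilon^*$. Everything downstream is the clean abstract identity $\Upsilon(\Upsilon^*\Upsilon)^{-1}\Upsilon^*$, so no further estimates are needed.
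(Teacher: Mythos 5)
Your proof is correct and is essentially the paper's own argument in coordinate-free form: the paper chooses an orthonormal basis $\{\phi_j\}$ of $S_3L^2$, uses \eqref{G0 to G2 ident} to identify the matrix of $T_3=S_3vG_2vS_3$ with the Gram matrix $A_{ij}=\la \psi_i,\psi_j\ra$ of the eigenfunctions $\psi_j=-G_0v\phi_j$, and verifies directly that $P_e f=\sum_{i,j}\psi_i (A^{-1})_{ij}\la f,\psi_j\ra$ is self-adjoint and acts as the identity on $\mathrm{span}\{\psi_j\}$ --- which is precisely your identity $\Upsilon(\Upsilon^*\Upsilon)^{-1}\Upsilon^*$ written in a basis, with the range identified via Lemma~\ref{S3 characterization} in both treatments. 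The only presentational gain in your version is that the standard projection formula replaces the matrix computation, and you make explicit the (correct) interpretation of $S_3vG_0$ as $\Upsilon^*$, which the paper handles implicitly through \eqref{S2 sum}.
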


\begin{proof}
Let $\{\phi_j\}_{j=1}^N$ be an orthonormal basis for the $S_3L^2$, the range of $S_3$.  Then, we have
\begin{align}\label{psi j eqn}
	\phi_j+wG_0v\phi_j=0, \qquad 1\leq j\leq N.
\end{align}
We have $\phi_j=w\psi_j$ for each $j$ with $\psi_j\in L^2 $. Since $PS_3=0$, we also have
\begin{align*}
	\int_{\R^2} V(x)\psi_j(x)\, dx=\int_{\R^2} v(x)\phi_j(x)\, dx =0.
\end{align*}
Since  $\{\phi_j\}_{j=1}^N$ is linearly independent, we have that $\{\psi_j\}_{j=1}^N$ is linearly independent, and
it follows from \eqref{psi j eqn} that
\begin{align*}
	\psi_j+G_0V\psi_j=0, \quad 1\leq j\leq N.
\end{align*}
Using the orthonormal basis for $S_3 L^2$, we have that for any $f\in L^2(\R^2)$,
$S_3f=\sum_{j=1}^N \la f,\phi_j\ra \phi_j$. Therefore, we have
\begin{align}\label{S2 sum}
	S_3v G_0f=\sum_{j=1}^N \la f,G_0v\phi_j\ra \phi_j=-\sum_{j=1}^N \la f,\psi_j\ra \phi_j.
\end{align}
Let $A=\{A_{ij}\}_{i,j=1}^N$ be the matrix representation of $S_3 vG_2vS_3$ with respect to the
orthonormal basis of $S_3L^2$.   Using \eqref{G0 to G2 ident},
\begin{align*}
	A_{ij}&=\la \phi_i,S_3vG_2vS_3\phi_j\ra=\la G_0v\phi_i,G_0v\phi_j\ra =\la G_0V\psi_i,G_0V\psi_j\ra =\la \psi_i,\psi_j\ra.
\end{align*}
Let $P_e:=G_0vS_3[S_3vG_2vS_3]^{-1}S_3vG_0$.  Then by \eqref{S2 sum}, for any $f\in L^2(\R^2)$,
\begin{align*}
	P_e f&=-\sum_{j=1}^{N}G_0vS_3[S_3vG_2vS_3]^{-1}  \phi_j \la f,\psi_j\ra\\
	&=-\sum_{i,j=1}^N G_0vS_3(A^{-1})_{ij}\phi_i\la f,\psi_j\ra=\sum_{i,j=1}^N \psi_i (A^{-1})_{ij}
	\la f,\psi_j\ra.
\end{align*}
Note that for $f=\psi_k$, $1\leq k\leq N$,
\begin{align*}
	P_e \psi_k=\sum_{i,j=1}^N \psi_i (A^{-1})_{ij} \la \psi_k,\psi_j\ra =\sum_{i,j=1}^N \psi_i
	(A^{-1})_{ij} A_{jk}= \psi_k.
\end{align*}
Thus, we can conclude that the range of $P_e$ is
equal to the span of $\{\psi_j\}_{j=1}^N$ and that $P_e$ is the identity on the range of $P_e$.
Since $P_e$ is self-adjoint, the claim is proven.
\end{proof}

\section{A Weighted Estimate}\label{sec:weighted}
In this section we prove Theorem~\ref{eval only thm}. Recall that if zero is an eigenvalue but there are  neither  s-wave nor p-wave resonances at zero, then $S_1=S_2=S_3\neq 0$.
We note that in this case many terms in the expansions of $M^{\pm}(\lambda)^{-1}$
in Corollaries~\ref{M exp cor1} and ~\ref{M exp cor2} disappear.  This follows as now
\be\label{projec}
PS_1=S_1P=0, \,\,\,\,\,\,S_1TP=PTS_1=0,\,\,\,\,\,\, S_1vG_1vS_1=0.
\ee
We will also need a finer expansion for $M_0(\lambda)$ then it is given in Lemma~\ref{lem:M_exp} to prove the theorem.   Define $g_2^\pm(\lambda)=\lambda^4(a_2\log\lambda+b_{2,\pm})$ and $g_3(\lambda)=a_3\lambda^4$
with $a_2, a_3\in \R\setminus\{0\}$ and $b_{2,-}=\overline{b_{2,+}}$. Also let
$G_3$ be the integral operator with the kernel $|x-y|^4$, and $G_4$ with the kernel
$|x-y|^4\log |x-y|$.
Similar to the expansion given in Lemma~\ref{lem:M_exp} we obtain
\be\label{M0further}
M_0^{\pm}(\lambda)=g_1^{\pm}(\lambda)vG_1v+  \lambda^2 vG_2v
+g_2^{\pm}(\lambda)vG_3v+g_3(\lambda) vG_4v+\mathcal O_1(\lambda^{9/2}),
\ee
by expanding the Bessel functions to order $z^{6}\log z$ and estimating the error term as
in Lemma~\ref{lem:M_exp}. This requires that $|V(x)|\les \la x\ra^{-11-}$.
\begin{prop}\label{prop:Binverseeval}
Assume that $S_1=S_2=S_3$, and that $|V(x)|\les \langle x\rangle^{-11-}$. Then, $B_{\pm}$ is invertible on
$S_1L^2(\R^2)$, and we have
	\begin{multline}\label{B evalonly}
		B^{-1}_{\pm}=\frac{D_3}{\lambda^2}+\frac{g_2^{\pm}(\lambda)}{\lambda^4}D_3\Gamma_2D_3
		+D_3\Gamma_3D_3
		+\frac{g_1^{\pm}(\lambda)^2}{\lambda^4h^{\pm}(\lambda)}D_3\Gamma_4D_3
		+\frac{1}{h^{\pm}(\lambda)}D_3 \Gamma_5D_3\\
		+\frac{g_1^{\pm}(\lambda)}{\lambda^2h^{\pm}(\lambda)}D_3\Gamma_6D_3 +\mathcal O_1(\lambda^{1/2}),
	\end{multline}
where  $\Gamma_i$ are real-valued, absolutely bounded operators on $L^2$.
\end{prop}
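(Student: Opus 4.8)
The plan is to run the iterative scheme of Proposition~\ref{prop:Binverse2} in the fully degenerate situation $S_1=S_2=S_3$, where the identities \eqref{projec} and \eqref{yvphi} force extra cancellations and let one push the expansion down to an $\mathcal O_1(\lambda^{1/2})$ error. First I would record the algebraic simplification of $B$. Since $PTS_1=S_1TP=0$ by \eqref{projec}, the derivation of \eqref{A-1S2} applies with $S_2$ replaced by $S_1$, so that $A^{\pm}(\lambda)^{-1}S_1=S_1A^{\pm}(\lambda)^{-1}=S_1$, where $A^{\pm}(\lambda)^{-1}=h^{\pm}(\lambda)^{-1}S+QD_0Q$ from \eqref{Ainverse}. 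Writing $M^{\pm}(\lambda)+S_1=A(\lambda)+M_0(\lambda)$ and iterating the resolvent identity $(A+M_0)^{-1}=A^{-1}-A^{-1}M_0(A+M_0)^{-1}$ twice, using $A^{-1}S_1=S_1A^{-1}=S_1$, I would obtain
\[
B=S_1M_0(\lambda)S_1-S_1M_0(\lambda)A^{-1}(\lambda)M_0(\lambda)S_1+\mathcal O_1(\lambda^{9/2}),
\]
the remainder being controlled since $M_0=\mathcal O_1(\lambda^{2-})$ and $A^{-1}=\mathcal O_1(1)$. Note that $T_1=S_1TPTS_1=0$ here, so (unlike the first--kind case) the whole leading behaviour of $B$ sits inside these two quadratic terms.

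Next I would insert the refined expansion \eqref{M0further} of $M_0$ together with the splitting $A^{-1}=h^{-1}S+QD_0Q$, and collect terms by their power of $\lambda$ (up to logarithms) down to order $\lambda^{9/2}$. In the diagonal term $S_1M_0S_1$ the $g_1^{\pm}vG_1v$ piece drops because $S_1vG_1vS_1=0$ by \eqref{projec}, leaving the leading term $\lambda^2T_3$ with $T_3=S_1vG_2vS_1$, together with $g_2^{\pm}S_1vG_3vS_1$ and $g_3\,S_1vG_4vS_1$. The invertibility of $T_3$ on $S_3L^2=S_1L^2$, with $D_3=T_3^{-1}$, is exactly Lemma~\ref{G2 invert}, so $\lambda^2T_3$ is the unique leading term and everything else is a genuine correction.

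The heart of the matter, and the step I expect to be the main obstacle, is to show that the two contributions which would spoil the stated form, a $(\log\lambda)^2$ term and a stray $g_1^{\pm}/\lambda^2$ term, are actually absent; both originate in the cross term $-S_1M_0A^{-1}M_0S_1$. The tools are the identities $Qv=0$, $S_1v=0$, and the algebraic consequence of \eqref{yvphi}, namely $S_1vG_1vw=\la v,w\ra\,S_1(|\cdot|^2v)$ for every $w$, and $vG_1vS_1D_3=(\text{scalar})\,v$ because $\la v,D_3g\ra=0$ and $\int y\,v(D_3g)\,dy=0$ for $D_3g\in S_3L^2$. For the $QD_0Q$ part these give, first, $(g_1^{\pm})^2S_1vG_1vQD_0QvG_1vS_1=0$ since $\la v,QD_0Q(\cdot)\ra=\la Qv,\,\cdot\,\ra=0$, eliminating the would-be $(\log\lambda)^2$ term; and second, $QvG_1vS_1D_3=0$, which makes the remaining $g_1^{\pm}\lambda^2S_1vG_2vQD_0QvG_1vS_1$ vanish once it is sandwiched between the outer factors $D_3(\cdot)D_3$ through which it enters $B^{-1}$, removing the stray $g_1^{\pm}/\lambda^2$ prefactor. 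The $h^{-1}S$ part carries an extra $h^{-1}\sim(\log\lambda)^{-1}$ and so produces only $\log\lambda$, $1$, and $(\log\lambda)^{-1}$ contributions.

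Finally I would factor out the invertible leading term, $B=\lambda^2T_3+R(\lambda)$ with $R=\mathcal O_1(\lambda^{4-})$, and invert by
\[
B^{-1}=\big[\mathbbm 1+\lambda^{-2}D_3R(\lambda)\big]^{-1}\lambda^{-2}D_3=\lambda^{-2}D_3-\lambda^{-4}D_3R(\lambda)D_3+\mathcal O_1(\lambda^{1/2}),
\]
the second--order Neumann term being $\mathcal O_1(\lambda^{2-})\subset\mathcal O_1(\lambda^{1/2})$ and the $\mathcal O_1(\lambda^{9/2})$ part of $R$ contributing $\mathcal O_1(\lambda^{1/2})$ after the two factors $\lambda^{-2}D_3$. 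Substituting the surviving terms of $R$ reads off \eqref{B evalonly}: the $g_2^{\pm}S_1vG_3vS_1$ term gives $\Gamma_2$; the constant--prefactor pieces $g_3S_1vG_4vS_1$ and $\lambda^4S_1vG_2vQD_0QvG_2vS_1$ give $\Gamma_3$; and the three $h^{-1}S$ pieces give $\Gamma_4$ (prefactor $(g_1^{\pm})^2/(\lambda^4h^{\pm})$), $\Gamma_5$ (prefactor $1/h^{\pm}$), and $\Gamma_6$ (prefactor $g_1^{\pm}/(\lambda^2h^{\pm})$). Reality and absolute boundedness of each $\Gamma_i$ follow because $S$, $QD_0Q$, and all $vG_iv$ have real--valued, absolutely bounded kernels (Lemma~\ref{M+S1inverse} and remark~iii after Definition~\ref{resondef}), while the $\partial_\lambda$ bounds required for the $\mathcal O_1$ classification are obtained exactly as in Lemma~\ref{lem:M_exp} and Proposition~\ref{prop:Binverse2}.
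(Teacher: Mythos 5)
Your proposal is correct and follows essentially the same route as the paper: the reduction $B=S_3M_0S_3-S_3M_0A^{-1}M_0S_3+\mathcal O_1(\lambda^{9/2})$ via $A^{-1}S_1=S_1A^{-1}=S_1$, the refined expansion \eqref{M0further} with $S_3vG_1vS_3=0$ isolating $\lambda^2T_3$, and the Neumann inversion around $D_3$. Your identities $S_1vG_1vw=\la v,w\ra S_1(|\cdot|^2v)$ and $QvG_1vS_1D_3=0$ are precisely the paper's cancellation \eqref{qg1s} (derived there from $G_1(x,y)=|x|^2-2x\cdot y+|y|^2$, \eqref{yvphi}, and $P\perp Q\geq S_3$), so the elimination of the $(\log\lambda)^2$ and stray $g_1^{\pm}/\lambda^2$ terms matches the paper's argument as well.
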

\begin{proof}
We will modify the proof of Proposition~\ref{prop:Binverse2}.
Using \eqref{projec} in \eqref{newBdefn} we see that
$B=E(\lambda)$ where (from \eqref{E defn})
\begin{align*}
	E(\lambda)=S_1A^{-1}(\lambda)M_0(\lambda)A^{-1}(\lambda)S_1
-S_1A^{-1}(\lambda)
	[M_0(\lambda)A^{-1}(\lambda)]^2\big[\mathbbm{1}+M_0(\lambda)A^{-1}(\lambda)\big]^{-1}S_1.
\end{align*}
Since $S_1=S_2=S_3$, using \eqref{projec} and \eqref{Ainverse} we have
\be\label{A-1S}
S_1A^{-1}(\lambda)=A^{-1}(\lambda)S_1=S_3.
\ee
Using this, Lemma~\ref{lem:M_exp} and the fact that $A^{-1}(\lambda)=\mathcal O_1(1)$, we obtain
\begin{align}\label{new B expansion}
B&=E(\lambda)=S_3M_0(\lambda)S_3-S_3M_0(\lambda)A^{-1}(\lambda)M_0(\lambda)S_3
+\mathcal O_1(\lambda^{9/2}).
\end{align}
Using \eqref{M0further} and the fact that $S_3vG_1vS_3=0$, we get
$$
S_3 M_0(\lambda)S_3=  \lambda^2 S_3vG_2vS_3
+g_2(\lambda)S_3vG_3vS_3+g_3(\lambda) S_3vG_4vS_3+\mathcal O_1(\lambda^{9/2}).
$$
We now note that by writing $G_1(x,y)=|x|^2-2x\cdot y+|y|^2$,  and using \eqref{yvphi} and $P\perp Q>S_3$, one obtains
\be\label{qg1s}
S_3vG_1vQ=QvG_1vS_3=0.
\ee
Using this and \eqref{projec} in \eqref{Ainverse} and \eqref{M0further}, we have
\begin{align*}
	S_3M_0(\lambda)A^{-1}(\lambda)M_0(\lambda) S_3
	&=\frac{g_1(\lambda)^2}{h(\lambda)}S_3vG_1vPvG_1vS_3
	+\frac{ \lambda^4}{h(\lambda)}S_3vG_2vSvG_2vS_3\\
	&+\frac{ \lambda^2g_1(\lambda)}{h(\lambda)}[S_3vG_1vSvG_2vS_3+
	S_3vG_2vSvG_1vS_3]\\
	&+ \lambda^4S_3vG_2vQD_0QvG_2vS_3+\mathcal O_1(\lambda^{6-}).
\end{align*}
Therefore, using these expansions in \eqref{new B expansion}, we have
\begin{align*}
	B&=\lambda^2 \Gamma_1+g_2^{\pm}(\lambda)\Gamma_2+\lambda^4\Gamma_3
	+\frac{g_1^{\pm}(\lambda)^2}{h^{\pm}(\lambda)}\Gamma_4
	+\frac{\lambda^4}{h^{\pm}(\lambda)} \Gamma_5
	+\frac{\lambda^2g_1^{\pm}(\lambda)}{h^{\pm}(\lambda)}\Gamma_6
	+\mathcal O_1(\lambda^{9/2}),
\end{align*}
where $\Gamma_i$ are absolutely bounded operators on $L^2$ with
$\Gamma_i=S_3\Gamma_i S_3$, and $\Gamma_1^{-1}=D_3$.
Inverting this via Neumann Series yields the claim of the proposition.
\end{proof}

\begin{corollary}\label{M evalonlycor}

	Assume that $S_1=S_2=S_3$, and that $|V(x)|\les \la x\ra^{-11-}$.  Then
	\begin{multline}
		M^{\pm}(\lambda)^{-1}=\frac{D_3}{\lambda^2}+ (a_1\log\lambda+b_{1,\pm}) \Xi_1+\Big(1+\frac{b_{3,\pm}}{a_2\log\lambda+b_{2,\pm}}\Big)\Xi_2 \\
		+\frac{1}{h^{\pm}(\lambda)}\Xi_3+(M^{\pm}(\lambda)+S_1)^{-1}+ \mathcal O_1(\lambda^{1/2}).
		\label{evalyonly Minv}
	\end{multline}
	Here, $\Xi_i$ are real-valued absolutely bounded operators, $\Xi_2$ and $\Xi_3$ have a projection orthogonal to
	$P$ on at least one side, and $\Xi_1$ have orthogonal projections on both sides. Further $a_i\in\R\setminus\{0\}$ and
	$b_{i,+}=\overline{b_{i,-}}$.

\end{corollary}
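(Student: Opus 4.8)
The plan is to derive \eqref{evalyonly Minv} from Lemma~\ref{JNlemma}, now that Proposition~\ref{prop:Binverseeval} supplies the expansion \eqref{B evalonly} of $B_{\pm}^{-1}$. Applying Lemma~\ref{JNlemma} with $A=M^{\pm}(\lambda)$ and $S=S_1$ gives
\begin{align*}
M^{\pm}(\lambda)^{-1}=(M^{\pm}(\lambda)+S_1)^{-1}+(M^{\pm}(\lambda)+S_1)^{-1}S_1B_{\pm}^{-1}S_1(M^{\pm}(\lambda)+S_1)^{-1}.
\end{align*}
The first summand is recorded verbatim in \eqref{evalyonly Minv}, so the whole problem reduces to expanding the sandwich term to order $\mathcal O_1(\lambda^{1/2})$. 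The essential simplification in the present case $S_1=S_2=S_3$ is the identity \eqref{A-1S}, namely $S_1A^{-1}(\lambda)=A^{-1}(\lambda)S_1=S_3$; together with the Neumann series $(M^{\pm}(\lambda)+S_1)^{-1}=[\mathbbm{1}+A^{-1}M_0]^{-1}A^{-1}$ this yields
\begin{align*}
(M^{\pm}(\lambda)+S_1)^{-1}S_1=[\mathbbm{1}+A^{-1}(\lambda)M_0(\lambda)]^{-1}S_3=S_3-A^{-1}(\lambda)M_0(\lambda)S_3+\cdots,
\end{align*}
and symmetrically for $S_1(M^{\pm}(\lambda)+S_1)^{-1}$.

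First I would pin down the size of the correction $A^{-1}M_0S_3$. Using the finer expansion \eqref{M0further} together with the relation \eqref{qg1s} (so that the $vG_1v$ contribution is killed against $S_3$ on the relevant side, leaving only a factor $g_1^{\pm}(\lambda)/(\lambda^2h^{\pm}(\lambda))$ through the structure \eqref{Ainverse} of $A^{-1}$) one checks that $A^{-1}(\lambda)M_0(\lambda)S_3=\mathcal O_1(\lambda^{2})$ up to bounded logarithmic factors. Consequently the leading behaviour of the sandwich is governed by $S_3B_{\pm}^{-1}S_3$, and since $D_3=S_3D_3S_3$ the most singular term $D_3/\lambda^2$ of $B_{\pm}^{-1}$ passes straight through to give the leading $D_3/\lambda^2$ in \eqref{evalyonly Minv}. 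A one-sided correction of size $\mathcal O_1(\lambda^{2})$ multiplied by this $\lambda^{-2}$ produces $\mathcal O_1(1)$ contributions (with bounded logarithmic prefactors), while two-sided corrections are $\mathcal O_1(\lambda^{2})$ and are absorbed in the error.

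With the sandwich factors controlled, I would substitute the six-term expansion \eqref{B evalonly}, multiply out, and group the results according to their leading behaviour in $\lambda$ and $\log\lambda$. The terms of $B_{\pm}^{-1}$ that grow like $\log\lambda$ (those carrying $g_2^{\pm}(\lambda)/\lambda^4=a_2\log\lambda+b_{2,\pm}$ and $g_1^{\pm}(\lambda)^2/(\lambda^4h^{\pm}(\lambda))$) are flanked by the leading $S_3$'s on both sides and assemble into $(a_1\log\lambda+b_{1,\pm})\Xi_1$ with $\Xi_1=S_3\Xi_1S_3$; the scalars arise from the long division of $(\alpha\log\lambda+\beta_{\pm})^2/(a\log\lambda+z)$ combined with the coefficients of $g_2^{\pm}$, and the properties $a_1\neq0$, $b_{1,+}=\overline{b_{1,-}}$ descend from those of $g_1^{\pm},g_2^{\pm},h^{\pm}$ in Lemma~\ref{lem:M_exp}. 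The $\mathcal O_1(1)$-order pieces (the $D_3\Gamma_3D_3$ term, the constant remainders of the long divisions, and the one-sided cross terms above) organize into $\big(1+b_{3,\pm}/(a_2\log\lambda+b_{2,\pm})\big)\Xi_2$, and the $1/\log\lambda$-order pieces into $\Xi_3/h^{\pm}(\lambda)$; in both, the surviving $S_3$ on one side supplies the required projection orthogonal to $P$, whereas $\Xi_1$ retains $S_3$ on both. Reality of each $\Xi_i$ is inherited from the real-valued kernels of $D_3$, the $\Gamma_i$, $S_3$ and $QD_0Q$.

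The main obstacle is precisely this logarithmic bookkeeping. Because several distinct scalar prefactors---$a_2\log\lambda+b_{2,\pm}$, $(\alpha\log\lambda+\beta_{\pm})^2/(a\log\lambda+z)$, $1/(a\log\lambda+z)$ and $(\alpha\log\lambda+\beta_{\pm})/(a\log\lambda+z)$---occur simultaneously, one must expand each in inverse powers of $\log\lambda$ and verify that after recombination exactly the three scalar profiles displayed in \eqref{evalyonly Minv} survive, with everything else of size $\mathcal O_1(\lambda^{1/2})$; this is where the extra decay $|V(x)|\les\langle x\rangle^{-11-}$, and hence the sharper error in \eqref{B evalonly} and \eqref{M0further}, is consumed. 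The accompanying structural claims reduce to tracking which $S_3$ and $P$ factors survive each product, which is routine once the normal forms of the scalars are fixed.
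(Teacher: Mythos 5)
Your proposal is correct and follows essentially the same route as the paper: apply Lemma~\ref{JNlemma}, expand $(M^{\pm}(\lambda)+S_1)^{-1}S_1=[\mathbbm{1}+A^{-1}M_0]^{-1}S_3$ via Neumann series using \eqref{A-1S}, \eqref{projec} and \eqref{qg1s} (which is exactly how the paper obtains its explicit $\mathcal O_1(\lambda^{4-})$-accurate expansions of $(M+S_1)^{-1}S_3$ and $S_3(M+S_1)^{-1}$), then multiply against \eqref{B evalonly} and sort the resulting scalar prefactors by long division in $\log\lambda$. The only imprecision is the parenthetical claim that the $vG_1v$ contribution is ``killed against $S_3$'' --- \eqref{qg1s} kills only the $QD_0Q$-side of $A^{-1}$, while the $h^{-1}S$-part leaves the surviving terms $\frac{g_1^{\pm}}{h^{\pm}}[PvG_1vS_3-QD_0QTPvG_1vS_3]$ --- but your stated conclusion $A^{-1}M_0S_3=\mathcal O_1(\lambda^2)$ up to bounded logarithms, and everything downstream of it, is exactly what the paper uses.
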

We should note that in the statement of the corollary we listed only one term of each form. For example there are several different terms of the form $\frac{b_{3,\pm}}{a_2\log\lambda+b_{2,\pm}}\Xi_2$ in the expansion.
\begin{proof}[Proof of Corollary~\ref{M evalonlycor}]
Using \eqref{M plus S}, \eqref{Ainverse} and \eqref{A-1S}, and then \eqref{projec} and \eqref{qg1s}, we have
\begin{align*}
	&(M(\lambda)+S_1)^{-1}S_3=S_3+A^{-1}(\lambda)M_0(\lambda)S_3+\mathcal O_1(\lambda^{4-})\\
	&=S_3+\frac{g_1(\lambda)}{h(\lambda)}[PvG_1vS_3-QD_0QTPvG_1vS_3]
	+ \lambda^2 QD_0QvG_2vS_3 + \frac{\lambda^2}{h(\lambda)} SvG_2vS_3
	+\mathcal O_1(\lambda^{4-}),\\
	&S_3(M(\lambda)+S_1)^{-1}=S_3+S_3M_0(\lambda)A^{-1}(\lambda)+\mathcal O_1(\lambda^{4-})\\
	&=S_3+\frac{g_1(\lambda)}{h(\lambda)}[S_3vG_1vP-S_3vG_1vPTQD_0Q]
	+ \lambda^2 S_3vG_2vQD_0Q
	+ \frac{\lambda^2}{h(\lambda)} S_3vG_2vS
	+\mathcal O_1(\lambda^{4-}).
\end{align*}
Using these and \eqref{B evalonly} yields that
\begin{multline*}
	(M(\lambda)+S_1)^{-1}S_3 B^{-1}S_3 (M(\lambda)+S_1)^{-1}
	=\frac{D_3}{\lambda^2}+(a_1\log\lambda+b_{1,\pm}) \Xi_1
\\+\Big(1+\frac{b_{3,\pm}}{a_2\log\lambda+b_{2,\pm}}\Big)\Xi_2
	+\frac{1}{h^{\pm}(\lambda)}\Xi_3+\mathcal O_1(\lambda^{1/2}).
\end{multline*}
Applying Lemma~\ref{JNlemma} finishes the proof.
\end{proof}

Using Corollary~\ref{M evalonlycor} and Lemma~\ref{eigenproj lem} in \eqref{resolvent id}, we see that
the contribution of the $D_3/\lambda^2$ term can be written as
\begin{align}
		 &R_0^{+}(\lambda^2) v\frac{D_3}{\lambda^2}v R_0^{+}(\lambda^2)
		 =\frac1{\lambda^2}\big(R_0^{+}(\lambda^2) -g^+(\lambda)\big)  vD_3v
		\big(R_0^{+}(\lambda^2) -g^+(\lambda)\big)\nn\\
		&=\frac1{\lambda^2}\big(R_0^+(\lambda^2) -g^+(\lambda)-G_0 \big)vD_3v\label{Ozero+part}
		\big(R_0^+(\lambda^2) -g^+(\lambda)-G_0 \big)\\
		&+\frac1{\lambda^2}G_0  vD_3v \big(R_0^+(\lambda^2) -g^+(\lambda)
		-G_0 \big)\label{anotherpart} + \frac1{\lambda^2}\big(R_0^+(\lambda^2) -g^+(\lambda)-G_0 \big)vD_3v G_0  \\
		&+ \frac1{\lambda^2} P_e. \label{evalpart}
	\end{align}
In the first line above, we  used the fact that $PD_3=D_3P=0$ to subtract off $g^+(\lambda)$.
\begin{lemma}\label{D3 evalonlylem}

    Under the assumptions of Theorem~\ref{eval only thm},
    if we project away from the eigenspace of $H=-\Delta+V$ at zero energy, for Schwartz functions $f$ and $g$
    the following bound holds.
    \begin{align*}
		\bigg| \int_0^\infty& \frac{e^{it\lambda^2} \chi(\lambda)}{\lambda}
		\big\la[R_0^{+}(\lambda^2) v D_3
		v R_0^{+}(\lambda^2) -R_0^{-}(\lambda^2) v D_3
		vR_0^{-}(\lambda^2)]f,g\big\ra\, d\lambda \bigg| \les |t|^{-1} \|f\|_{L^{1,1+}}\|g\|_{L^{1,1+}}.
    \end{align*}

\end{lemma}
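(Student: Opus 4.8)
The plan is to substitute the decomposition \eqref{Ozero+part}--\eqref{evalpart} of $R_0^{+}(\lambda^2)vD_3vR_0^{+}(\lambda^2)$ (and its $-$ analogue) into the integrand. The term \eqref{evalpart}, $\lambda^{-2}P_e$, is independent of $\lambda$ and of the $\pm$ choice, so it drops out of the difference identically; this is the analytic manifestation of projecting away from the zero eigenspace. It therefore suffices to treat the two lines \eqref{Ozero+part} and \eqref{anotherpart}, all of whose factors carry the remainder $\mathcal R^{\pm}:=R_0^{\pm}(\lambda^2)-g^{\pm}(\lambda)-G_0$ on at least one side. The crucial elementary cancellation is that, since $J_0$ is entire and only $Y_0$ carries the logarithm,
\[
R_0^{+}(\lambda^2)(x,y)-R_0^{-}(\lambda^2)(x,y)=\tfrac{i}{2}\,J_0(\lambda|x-y|),\qquad g^{+}(\lambda)-g^{-}(\lambda)=\tfrac{i}{2},
\]
so that $\mathcal R^{+}-\mathcal R^{-}=\tfrac i2\big(J_0(\lambda|x-y|)-1\big)$ is log-free and vanishes to order $\lambda^2$. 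Writing the difference of the two products as $(\mathcal R^{+}-\mathcal R^{-})vD_3v\mathcal R^{+}+\mathcal R^{-}vD_3v(\mathcal R^{+}-\mathcal R^{-})$ on \eqref{Ozero+part}, and $G_0vD_3v(\mathcal R^{+}-\mathcal R^{-})$, $(\mathcal R^{+}-\mathcal R^{-})vD_3vG_0$ on \eqref{anotherpart}, every resulting term contains exactly one $J_0$-difference factor of size $\lambda^2$.

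The mechanism that upgrades the earlier $F_t$ bound to genuine $t^{-1}$ decay is that $D_3=S_3D_3S_3$ carries \emph{two} vanishing-moment conditions, not one: since $S_3\le Q$ we have $S_3v=0$, and \eqref{yvphi} gives $S_3(v\,y_j)=0$. Consequently, in each internal integration (say over $x_1$, against $v(x_1)[S_3D_3S_3](x_1,\cdot)$) I may subtract from the kernel in that slot any function affine in $x_1$, with coefficients depending freely on the partner variable $x$. Applying this to the factor $\tfrac i2 J_0(\lambda|x-x_1|)$ and subtracting its first-order $x_1$-Taylor polynomial at $x_1=0$ leaves a remainder bounded by $\lambda^2|x_1|^2$ \emph{uniformly in $x$} (using $|J_0'(z)|\les\min(z,1)$ and $|J_0''(z)|\les 1$), which removes all $x$-growth and is absorbed by $v(x_1)$. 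This is a second-order refinement of the auxiliary function $G$ of Lemma~\ref{lem:FG}: the single subtraction there yields only $\lambda^{\tau}\la x_1\ra^{\tau}$ ($\tau\le 1$), whereas the extra first moment of $S_3$ buys the full $\lambda^2$, which is the gain responsible for integrability in $t$.

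Having reduced each term to the form (a $\lambda^2$-small, growth-free factor) $\times\,vD_3v\times R_0^{\pm}$, I would handle the surviving $R_0^{\pm}$ factor by the machinery of Section~\ref{sec:swave}: split its argument by $\chi(\lambda|\cdot|)+\widetilde\chi(\lambda|\cdot|)$, replace $\chi Y_0$ by $F$ and $\chi J_0$ by $G$ from Lemma~\ref{lem:FG} (the reference subtraction at $|y|+1$, valid by $S_3v=0$, which cancels the $\log\lambda$ and leaves the mild factor $k(y,y_1)=1+\log^{+}|y_1|+\log^{-}|y-y_1|$, free of $y$-growth), and for the high-energy piece use $\widetilde G$ of Lemma~\ref{G2 lemma} together with the stationary-phase estimate Lemma~\ref{stat phase}. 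With all factors controlled, the full amplitude $a(\lambda)=\frac{\chi(\lambda)}{\lambda}\la[\cdots]f,g\ra$ obeys $|a(\lambda)|\les\lambda\,\|f\|_{L^{1,1+}}\|g\|_{L^{1,1+}}$, with $a(\lambda)/\lambda$ bounded and possessing an integrable $\lambda$-derivative. One integration by parts,
\[
\int_0^\infty e^{it\lambda^2}a(\lambda)\,d\lambda=\frac{1}{2it}\Big[\tfrac{a(\lambda)}{\lambda}e^{it\lambda^2}\Big]_0^\infty-\frac{1}{2it}\int_0^\infty e^{it\lambda^2}\,\partial_\lambda\Big(\tfrac{a(\lambda)}{\lambda}\Big)\,d\lambda,
\]
then gives the $|t|^{-1}$ bound: the $\infty$-boundary vanishes by $\mathrm{supp}\,\chi$, the $0$-boundary equals $\tfrac{1}{2it}\lim_{\lambda\to0^+}a(\lambda)/\lambda$ — a constant multiple of pairings such as $\la G_0vD_3vG_1f,g\ra$, which are $\les\|f\|_{L^{1,1+}}\|g\|_{L^{1,1+}}$ — and the remaining integral is $\les t^{-1}\int_0^{\lambda_1}|\partial_\lambda(a/\lambda)|\,d\lambda\les t^{-1}\|f\|_{L^{1,1+}}\|g\|_{L^{1,1+}}$.

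The main obstacle is the weighted spatial bookkeeping, and specifically the interaction between the two ingredients above. The gain of the second moment is clean only for the entire factor $J_0$: the logarithmic factor $Y_0$ cannot be Taylor-subtracted, since its second $x_1$-derivative is singular like $|x_1-x|^{-2}$, so on that side one is forced back onto the reference subtraction $F$, which supplies no extra power of $\lambda$. Keeping track of exactly where each subtraction is spent — the first moment on the $J_0$-difference factors to produce $\lambda^2$, the zeroth moment on the $Y_0$ factors to kill both $\log\lambda$ and the $x$-growth — while simultaneously controlling the $\lambda$-derivatives (where the $|\partial_\lambda F|\les\lambda^{-1}$ singularity must be shown to cancel against the $\lambda^2$ head factor) and the high-energy stationary-phase contributions, is where essentially all the work lies; this is also why the stronger hypothesis $\beta>11$, i.e. the refined expansion \eqref{M0further}, is needed to render the error terms harmless.
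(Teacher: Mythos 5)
Your proposal is correct in substance, but in the decisive low--energy step it takes a genuinely different route from the paper. Three comparisons are worth recording. First, your observation that the $\lambda^{-2}P_e$ piece \eqref{evalpart} is identical for the two signs and so cancels pointwise in the Stone difference is sound (the paper instead invokes the projection away from the eigenspace; the outcome is the same). Second, for \eqref{Ozero+part} the paper does not need any $\pm$ cancellation: it bounds each sign separately using $|\mathcal R^{\pm}|\les \lambda^{1+}(\la x\ra^{1+}+\la y\ra^{1+})$ and $|\partial_\lambda \mathcal R^{\pm}|\les\lambda^{0+}(\la x\ra^{1+}+\la y\ra^{1+})$, and one integration by parts closes, since two factors of $\mathcal R^{\pm}$ supply $\lambda^{2+}$; your difference splitting is harmless but more than is required there. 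Third, and this is the real divergence: for \eqref{anotherpart} at low energy the paper spends only the \emph{zeroth} moment ($S_3v=0$, via \eqref{Q trick}, a reference--point subtraction in the spirit of Lemma~\ref{lem:FG}) and extracts the crucial extra gain from a structural cancellation under $\partial_\lambda\lambda^{-2}$, namely $f(z)=\chi(z)[-2J_0(z)+zJ_0'(z)]=-2+O(z^{2+})$, $f'(z)=O(z^{1+})$ (the $z^2$ Taylor coefficient of $J_0$ is annihilated because $\lambda^2 r^2/\lambda^2$ is $\lambda$--independent); you instead spend the \emph{first} moment \eqref{yvphi}, i.e.\ $S_3(x_j v)=0$, to Taylor--subtract $J_0$ to first order, getting a remainder $O(\lambda^2\la y_1\ra^2)$ uniform in the free variable, which shifts all weight onto the inner variable where $v$ absorbs it — a cleaner weight bookkeeping than the paper's. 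One caveat: your uniform--in--$x$ gain at the level of the amplitude does not by itself make $\partial_\lambda(a/\lambda)$ integrable, since $\Psi=O(\lambda^2)$ and $\partial_\lambda\Psi=O(\lambda)$ only give $|\partial_\lambda(\Psi/\lambda^2)|=O(\lambda^{-1})$; you must either observe the same cancellation the paper isolates, namely
\begin{equation*}
zJ_0'(z)-2\bigl(J_0(z)-1\bigr)=O(z^{4}),
\end{equation*}
or use its interpolated $O(z^{2+})$ form to trade $\lambda$--powers for mild weights $\la y\ra^{0+}\la y_1\ra^{0+}$ and stay within the $\la\cdot\ra^{1+}$ budget. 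Your parenthetical flag about the $\lambda^{-1}$ singularity cancelling against the $\lambda^2$ head factor identifies exactly this point, so the gap is one of execution, not of mechanism. The high--energy regime (where the Taylor subtraction loses control of $\lambda$--derivatives because $\partial_\lambda$ brings down factors of $|x-x_1|$) you correctly delegate to $\widetilde G$ of Lemma~\ref{G2 lemma} and the stationary--phase bound Lemma~\ref{stat phase}, which is precisely what the paper does, with weights $p+q\les\la y\ra\la y_1\ra$ and $\tau=1$; and your identification of the $\lambda\to0$ boundary term as a bounded pairing of $G_0vD_3vG_1$ type (in which, by the two moment conditions, only the $|y_1|^2$ part of $G_1$ survives, so no weight falls on the free variable) is correct.
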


\begin{proof} First note that since we project away the zero eigenspace, the contribution of \eqref{evalpart} cancels out. To bound the contribution of other terms recall that
  the  expansion for $R_0^{\pm}(\lambda^2)$ used in Lemma~\ref{lem:M_exp} gives
	\begin{align*}
	 \big|R_0^+(\lambda^2)(x,y) -g^+(\lambda)-G_0(x,y)\big|&\lesssim \lambda^{1+}|x-y|^{1+}\lesssim \lambda^{1+}(\la x\ra^{1+}+\la y\ra^{1+}),\\
	\big|\frac{\partial}{\partial\lambda}\big(R_0^+(\lambda^2)(x,y) -g^+(\lambda)-G_0(x,y)\big)\big|&\lesssim \lambda^{0+}(\la x\ra^{1+}+\la y\ra^{1+}).
	\end{align*}
Note that if $|\Phi(\lambda)|+\lambda|\Phi^\prime(\lambda)|\lesssim \lambda^{1+}$ and same for $\Psi$, then
\begin{align}
		\bigg|\int_0^\infty \frac{e^{it\lambda^2} \chi(\lambda)}{\lambda} \Phi(\lambda)\Psi(\lambda)\, d\lambda\bigg|
		&\les |t|^{-1} \int_0^\infty \bigg|\frac{d}{d\lambda}\bigg(\frac{\chi(\lambda) \Phi(\lambda)\Psi(\lambda)}{\lambda^2}\bigg)\bigg|\, d\lambda
		\les |t|^{-1}.
	\end{align}
Also using that $|D_3|:L^2\to L^2$ and $\la x\ra^{1+}v(x)\in L^2$, the contribution of \eqref{Ozero+part} satisfies the claim of the lemma.
	
	For the contribution of the terms in \eqref{anotherpart}, we need to use the cancellation between the `+' and `-' terms in Stone's formula.
	\begin{multline*}
		G_0  vD_3v \big([R_0^+(\lambda^2) -g^+(\lambda)]
		-[R_0^-(\lambda^2) -g^-(\lambda)]
		\big)\\
		=G_0 vD_3v \big(2i J_0(\lambda|\cdot|)-2i \Im(z)\big)
		=2iG_0  vD_3v  J_0(\lambda|\cdot|).
	\end{multline*}
	Where we used \eqref{Q trick} in the last step.  As in
	the case of an s-wave resonance, we separate into the high and low energies. For the low energy part we use \eqref{Q trick} and investigate
	\begin{align}\label{low123}
		\int_0^\infty e^{it\lambda^2} \frac{\chi(\lambda)}{\lambda}\bigg( \chi(\lambda|y-y_1|)J_0(\lambda|y-y_1|)
		-\chi(\lambda(1+|y_1|))J_0(\lambda(1+|y_1|))\bigg)\, d\lambda.
	\end{align}
	After an integration by parts, the result relies on proving the following bound.
	\begin{align*}
		\int_0^1 \bigg|\frac{d}{d\lambda}\bigg(\frac{\chi(\lambda|y-y_1|)J_0(\lambda|y-y_1|)
		-\chi(\lambda(1+|y_1|)J_0(\lambda(1+|y_1|))}{\lambda^2}\bigg)\bigg|\, d\lambda < C_{y,y_1}.
	\end{align*}
	We need not consider when the derivative acts on the cut-off function as this restricts us to an annulus where
	$\lambda\sim |y-y_1|^{-1}$ or $\lambda \sim (1+|y_1|)^{-1}$ and we can bound \eqref{low123} by
	\begin{align*}
	  \int_{\lambda \sim |y-y_1|^{-1} }\frac{1}{\lambda^2}\, d\lambda \les |y-y_1|\les \la y\ra \la y_1 \ra.
	\end{align*}
	The analogous bound holds for $\lambda\sim (1+|y_1|)^{-1}$.
	With this in mind, it suffices to prove that
	\begin{align*}
	  \bigg|\int_0^1 \frac{f(\lambda |y-y_1|)-f(\lambda (1+|y_1|))}{\lambda^3}\, d\lambda \bigg|\les \la y\ra^{2+} \la y_1\ra^{1+}
	\end{align*}
	with $f(z):=\chi(z)[-2J_0(z)+zJ_0^\prime(z)]$.  As we are restricted to low energy, we have the expansion for
	$J_0(z)$ and its derivative in a powers of $z$ from \eqref{J0 def}.  So that
	\begin{align*}
	    f(z)=a_0+a_1 z+a_2 z^2+ O(z^{2+}).
	\end{align*}
	Since $f(0)=-2J_0(0)=-2$ we have that $a_0=-2$. Further $f'(0)=-J_0^\prime (0)+zJ_0''(z)\big|_{z=0}=0$, and $f''(z)=zJ_0'''(z)$. Therefore, we have $a_1=a_2=0$, and
	\begin{align}\nn
	    f(z)=-2+O(z^{2+}), \qquad f'(z)=O(z^{1+}).
	\end{align}
	Now,
	\begin{multline*}
	     \big|f(\lambda|y-y_1|)-f(\lambda(1+|y_1|))\big|=\Big|\int_{\lambda(1+|y_1|)}^{\lambda|y-y_1|} f'(z)\, dz\Big|\\
	     \lesssim \lambda^{2+} \big| |y-y_1|-(1+|y_1| \big|  \big( |y-y_1|^{1+}+\la y_1\ra^{1+}\big) \lesssim \lambda^{2+} \la y\ra^{2+} \la y_1 \ra^{1+}.
	\end{multline*}
	Thus, we have
	\begin{align*}
	    \eqref{low123}\les \Big|\int_0^1 \frac{f(\lambda |y-y_1|)-f(\lambda (1+|y_1|))}{\lambda^3}\, d\lambda\Big|
	    \les \la y\ra^{2+} \la y_1\ra^{1+} \int_0^1 \lambda^{-1+}\, d\lambda \les \la y\ra^{2+} \la y_1\ra^{1+},
	\end{align*}
	as desired.

	Now, for the high energy we proceed along the lines of the proof of Lemma~\ref{low high log YJ lemma}.  We employ the function $\widetilde{G}^{\pm}(\lambda, |y-y_1|,1+|y_1|)$ of Lemma~\ref{G2 lemma}.  Specifically, we need to
	bound
	\begin{align*}
	    \int_0^\infty e^{it\lambda^2}\chi(\lambda) G_0 \frac{vD_3v}{\lambda} \widetilde{G}^{\pm}(\lambda,p,q)\, d\lambda
	\end{align*}
	with $p=\max(|y-y_1|,1+|y_1|)$ and $q=\min(|y-y_1|,1+|y_1|)$.  We  will
	apply Lemma~\ref{stat phase} to
	\begin{align*}
	    a(\lambda)=\frac{\chi(\lambda)\widetilde{G}^{\pm}(\lambda)}{\lambda}.
	\end{align*}
	Using the bounds of Lemma~\ref{G2 lemma} with $\tau=1$, we have
	\begin{align*}
	    &|a(\lambda)|\les \la y\ra \bigg(\frac{\widetilde{\chi}(\lambda p)}{|\lambda p|^{1/2}}
	    +\frac{\widetilde{\chi}(\lambda q)}{|\lambda q|^{1/2}}\bigg)
	    \les \la y\ra \big(\widetilde{\chi}(\lambda p)\sqrt{p\lambda}
	    +\widetilde{\chi}(\lambda q) \sqrt{q\lambda}\big),\\
	    &|a^\prime(\lambda)|\les \frac{1}{\lambda}
	    \la y\ra \bigg(\frac{\widetilde{\chi}(\lambda p)}{|\lambda p|^{1/2}}
	    +\frac{\widetilde{\chi}(\lambda q)}{|\lambda q|^{1/2}}\bigg)
	    \les \la y\ra \bigg(\frac{p\widetilde{\chi}(\lambda p)}{|\lambda p|^{1/2}}
	    +\frac{q\widetilde{\chi}(\lambda q)}{|\lambda q|^{1/2}}\bigg).
	\end{align*}
	 Here we used that on the support of $\widetilde{\chi}(\lambda p)$, we have $1\les \lambda p$.  At this point,
	 the proof follows exactly along the lines of Lemma~\ref{low high log YJ lemma} with the extra weights
	 of $p+q\les \la y\ra \la y_1\ra$, which yields the required bound.
\end{proof}

We are now ready to prove the theorem.  We provide a sketch, as there is a significant overlap with the proofs of
previous estimates in Section~\ref{sec:swave}.

\begin{proof}[Proof of Theorem~\ref{eval only thm}]
We already proved the theorem for the contribution of the $D_3/\lambda^2$ term in
Corollary~\ref{M evalonlycor} to \eqref{resolvent id}. The contribution of the $\Xi_1$ term and the terms in the second line of \eqref{evalyonly Minv} satisfies the dispersive bound by the results of Section~\ref{sec:swave}.
It remains to control the contribution of
	\begin{align*}
\Big(1+\frac{b_{3,\pm}}{a_2\log\lambda+b_{2,\pm}}\Big)\Xi_2.
	\end{align*}
We will only provide a brief sketch. Recall that $\Xi_2$  has projection  orthogonal to $P$ only on one side, say on the right.
On high energy, we can use
$\lambda |x-x_1|\gtrsim 1$ to extract positive powers of $\lambda$ for the integration at the loss of
a weight as in the proof of Lemma~\ref{D3 evalonlylem}.
The polynomial weights arising are either ameliorated by the decay of the potential $v$ or goes into the weight of
the weighted dispersive bound. For the low energy part, the worst case is when we have $Y_0$ on both sides. This arises only with the term containing $\log\lambda$ in the denominator due to the cancellation between the $\pm$ terms.
On the right hand side, using \eqref{Q trick}, we replace $\chi Y_0$ with $F$ from Lemma~\ref{lem:FG} to
reduce to bounding the following integral
$$
 \bigg|\int_0^\infty   e^{it\lambda^2}\lambda \chi(\lambda)  \chi(\lambda|x-x_1|)
    Y_0(\lambda|x-x_1|) \frac{b_{3 }}{a_2\log\lambda+b_{2 }}
    F(\lambda,y,y_1)\, d\lambda\bigg|.
$$
After an integration by parts it suffices to prove that
\begin{multline*}
 \int_0^\infty   \bigg| \frac{d}{d\lambda}\bigg\{ \chi(\lambda)  \chi(\lambda|x-x_1|)
    Y_0(\lambda|x-x_1|) \frac{b_{3 }}{a_2\log\lambda+b_{2 }}
    F(\lambda,y,y_1)\,\bigg\}\bigg| d\lambda \\ \les k(x,x_1)k(y,y_1)\la x \ra^{1+}\la y \ra^{1+}\la x_1\ra^{1+}\la y_1\ra^{1+}.
\end{multline*}
We note, from \eqref{Y0 def}, that
\begin{align*}
    \chi(\lambda|x-x_1|)Y_0(\lambda |x-x_1|)=\frac{2}{\pi}[\log \lambda+\log |x-x_1|+\gamma]+ O(\lambda^{1+}|x-x_1|^{1+}).
\end{align*}
The first $\log\lambda$ is the most troubling, we note that to control it we use the following
facts
\begin{align*}
    \bigg| \log(\lambda) \frac{b_3}{a_2\log\lambda+b_2}\bigg|\les 1,\,\,\,\,\,\,\,\,\,\,
     \bigg|\frac{d}{d\lambda}\bigg(\log(\lambda) \frac{b_3}{a_2\log\lambda+b_2}\bigg)\bigg|
    \les \frac{1}{\lambda (\log\lambda)^2}.
\end{align*}
The contribution of the other terms can be bounded by similar arguments.
\end{proof}

\end{document}